\documentclass[11pt]{amsart}
\usepackage{amssymb}
\usepackage{amsmath}
\usepackage{amsrefs}
\usepackage{mathrsfs}
\usepackage{faktor}
\usepackage{tikz}
\usepackage{thmtools}
\usepackage{mathrsfs,amsmath, tikz,verbatim, wasysym,xcolor, amsfonts, amssymb, amsthm, graphicx, hyperref, enumerate, xfrac, xspace, mathtools}
\usepackage{thm-restate}
\usetikzlibrary{arrows}
\usetikzlibrary{shapes.geometric}
\usetikzlibrary{matrix}
\usetikzlibrary{decorations.pathreplacing, decorations.markings}
\usepackage[utf8]{inputenc}
\usepackage[top=1in, bottom=1in, right=1in, left=1in]{geometry}
\usepackage{hyperref}
\usepackage{graphicx}
\usepackage{xcolor}
\usepackage{tikz-cd}

\usepackage[capitalise]{cleveref}
\title{Counting automorphic orbits in finitely generated groups}
\author{Luna Elliott, Alex Evetts and Alex Levine}

\newtheorem{theorem}{Theorem}[section]

\newtheorem{lem}[theorem]{Lemma}
\newtheorem{lemma}[theorem]{Lemma}
\newtheorem{proposition}[theorem]{Proposition}

\newtheorem{cor}[theorem]{Corollary}

\newtheorem{question}[theorem]{Question}

\theoremstyle{definition}
\newtheorem{dfn}[theorem]{Definition}
\newtheorem{defn}[theorem]{Definition}
\newtheorem{ex}[theorem]{Example}
\newtheorem{rmk}[theorem]{Remark}

\newtheorem{ntn}[theorem]{Notation}

\numberwithin{theorem}{section}
\allowdisplaybreaks

\DeclareMathOperator{\sgn}{sgn}
\DeclareMathOperator{\id}{id}

\DeclareMathOperator{\im}{im}

\DeclareMathOperator{\Aut}{Aut}
\DeclareMathOperator{\Inn}{Inn}
\DeclareMathOperator{\Out}{Out}
\DeclareMathOperator{\End}{End}

\DeclareMathOperator{\GL}{GL}
\DeclareMathOperator{\SL}{SL}

\DeclareMathOperator{\Sym}{Sym}

\newcommand{\Z}{\mathbb{Z}}
\newcommand{\N}{\mathbb{N}}
\newcommand{\Q}{\mathbb{Q}}

\newcommand{\R}{\mathbb{R}}

\newcommand{\cO}{\mathcal{O}}

\usepackage{ifthen}
\newboolean{rightactions}
\setboolean{rightactions}{true}

\newcommand{\row}{%
\ifthenelse{\boolean{rightactions}}{row}{column}%
}
\newcommand{\column}{%
\ifthenelse{\boolean{rightactions}}{column}{row}%
}

\makeatletter
\newcommand{\comp}[1]{\ifthenelse{\boolean{rightactions}}{#1\@ifnextchar\bgroup{\compright@i}{}}{\compleft@i{#1}}}
\newcommand{\compright@i}[1]{#1\@ifnextchar\bgroup{\compright@i}{}}
\newcommand{\compleft@i}[2][]{%
  \@ifnextchar\bgroup
    {\compleft@i[{#2#1}]}
    {#2#1}%
}
\makeatother

\newcommand{\conjugate}[2]{\comp{#2^{-1}}{#1}{#2}}

\makeatletter
\newcommand{\compweird}[1]{\compweird@i{#1}}
\newcommand{\compweird@i}[2][]{%
  \@ifnextchar\bgroup
    {\compweird@i[{#2#1}]}
    {#2#1}%
}
\makeatother
\newcommand{\act}[2]{\comp{(#1)}{#2}}
\newcommand{\actweird}[2]{\compweird{(#1)}{#2}}

\newcommand{\makeset}[2]{\left\lbrace #1  : 
 \begin{tabular}{@{}l@{}}
   #2
  \end{tabular}
  \right\rbrace}

\newcommand{\makepres}[2]{\left\langle #1  \;\middle|\; 
 \begin{tabular}{@{}l@{}}
   #2
  \end{tabular}
  \right\rangle}

\subjclass[2020]{20F69, 20F65, 20F28}
\keywords{automorphic orbits, conjugacy growth}
\begin{document}
\begin{abstract}
    We study an analogue of the conjugacy growth function in finitely generated
    groups: the automorphic growth function. This counts the number of
    automorphic orbits that intersect the ball of radius \(n\) in the
    group. We show that this is not a commensurability invariant, by giving
    virtually abelian counterexamples. We classify the automorphic growth rate of all
    virtually abelian groups of rank at most \(2\), the Heisenberg group, finite
    rank free groups and Thompson's groups \(T\) and \(V\). This last computation
    allows to conclude that \(T\) and \(V\) have exponential conjugacy
    growth.
\end{abstract}
\maketitle
\section{Introduction}
Given a finitely generated group \(G\), one can study the \emph{conjugacy growth
function}; that is the function that maps \(n\) to the number of conjugacy
classes in \(G\) which intersect with the ball of radius \(n\). This is
asymptotically independent of the choice of finite generating set, and has been
computed for various classes of groups \cites{HeisenbergConjugacy, GubaSapir,
Osin, HullOsin, Babenko, GekhtmanYang, Greenfeld, BreuillarddeCornulier,
ConjLinear, CiobanuEvettsHo, Fink}. A natural analogue of conjugacy growth is to
instead count the number of automorphic orbits in \(G\) that intersect with the
\(n\)-ball - the \emph{automorphic growth function} \(\alpha_{G}\). As
with conjugacy growth, the growth rate is independent of the choice of finite
generating set. Unlike conjugacy, automorphism groups are defined for any
algebraic object, thus allowing automorphic growth to be generalised to other
areas within algebra, such as monoids. As conjugacy growth provides a lower
bound for standard growth, automorphic growth bounds conjugacy growth below.
Moreover, if \(G\) is characteristic in any overgroup \(H\), then the
automorphic growth of \(G\) is still a lower bound for the conjugacy growth of
\(H\), since \(H\) acts on \(G\) by conjugation.

When studying abelian groups, conjugacy growth and standard growth trivially
coincide, as the conjugacy classes are singletons. Full automorphism groups
of abelian groups can be fairly involved, and thus even \(\Z^m\) provides a
counterexample to the idea that conjugacy growth and automorphic growth may
coincide; the automorphic growth of \(\Z^m\) is linear for all \(m \geq 1\)
(\cref{thm:freeabelian}). Despite their conjugacy growth rates coinciding with their
standard growth rates \cite{HeisenbergConjugacy}, virtually abelian groups provide
counterexamples to commensurability invariance. With conjugacy growth, Hull and
Osin \cite{HullOsin} provide an example of a group with exponential conjugacy growth
that has a finite-index subgroup with constant conjugacy growth (and hence
automorphic growth), although the groups in question are not finitely presented,
and it is not immediately clear what the automorphic growth of the larger group
is. In the class of virtually abelian groups of fixed rank, there are groups
with every non-constant polynomial growth rate up to the upper bound of standard
growth:

\begin{theorem}[\cref{allpossibleVA}]
    For every \(m , k \in \mathbb{N}\setminus\{0\}\) with \(m \geq k\), there is a
    virtually abelian group with rank \(m\) and automorphic growth
    rate that is polynomial of degree \(k\).
\end{theorem}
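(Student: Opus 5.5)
Given \(m\ge k\ge 1\), the natural candidate is \(G = \Z^{m-k+1}\times (\Z^{k-1}\rtimes F)\), or more simply \(G = \Z \times H\) where \(H\) is virtually \(\Z^{m-1}\) with few automorphisms. I would instead aim for a cleaner product: \(G = \Z^{m-k+1} \times \Z^{k-1}\) twisted so that the first factor is ``collapsed'' by automorphisms and the second is nearly rigid. Concretely, take \(A=\Z^{m-k+1}\), whose automorphic growth is linear by \cref{thm:freeabelian}. Take \(B\) a rank \(k-1\) virtually abelian group with \(\Aut(B)\) finite modulo something that does not move the translation lattice, so that \(B\) has automorphic growth of degree \(k-1\). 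Then set \(G=A\times B\). The expected degree is \(1+(k-1)=k\), and the rank is \(m\).

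For \(B\) I would take \(\Z^{k-1}\rtimes C_2\), where \(C_2\) acts by \(-1\). The trouble is that \(\GL_{k-1}(\Z)\) still acts, so this does not make \(B\) rigid. Instead, choose \(B=\Z^{k-1}\rtimes C_p\) with \(C_p\) acting so that the centraliser of the action in \(\GL_{k-1}(\Z)\) is finite. Automorphisms of \(B\) preserve the translation subgroup \(\Z^{k-1}\), since it is the unique maximal abelian normal subgroup of finite index. They therefore induce elements of the normaliser of the image of \(C_p\) in \(\GL_{k-1}(\Z)\), which is finite modulo the centraliser. I would use a block-diagonal action whose blocks are pairwise non-isomorphic irreducible \(\Q C\)-modules with small unit groups. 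Alternatively, I would use a direct product of copies of a rank \(1\) or rank \(2\) piece, with distinct finite groups chosen so that \(\Aut\) is controlled. Either way, orbits in \(B\) have size at most polynomial of degree \(0\) times the contribution from conjugation/inner-derivation twists by \(H^1\). That contribution is bounded because it is a finite-rank torsion effect. Hence \(\alpha_B \sim n^{k-1}\).

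The main obstacle is the product. \(\Aut(A\times B)\) contains \(\Aut A\times\Aut B\), but also maps \(\mathrm{Hom}(B,Z(A))\) and \(\mathrm{Hom}(A,Z(B))\). I would arrange \(Z(B)\) to be finite, which holds if the action has no fixed vectors. Then \(\mathrm{Hom}(A,Z(B))\) is finite, while \(\mathrm{Hom}(B,A)\) factors through \(B^{ab}\), which is finite. So \(\Aut(G)\) is a finite extension of \(\Aut A\times\Aut B\), and the orbit count is comparable to \(\alpha_A\cdot\alpha_B\) up to constants, up to the product-ball comparison \(\max\) versus sum. The lower bound \(\gtrsim n\cdot n^{k-1}\) comes from counting pairs \((a,b)\) with \(a\) ranging over gcd-representatives \(d e_1\), \(d\le n\), and \(b\) in an \(n\)-ball of \(\Z^{k-1}\) modulo a finite group. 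The upper bound \(\lesssim n^k\) follows from the same normal forms. For \(k=1\) take \(G=\Z^m\) directly.
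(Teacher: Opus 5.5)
Your architecture matches the paper's: $\Z^{m-k+1}\times B$ with both factors characteristic, multiplicativity of $\alpha$ via \cref{dir-prod}, and $\alpha_B\sim n^{k-1}$ because $\Aut(B)$ acts on the translation lattice through a finite subgroup of $\GL_{k-1}(\Z)$. You get characteristicity from $Z(B)=1$ and $B^{ab}$ finite; the paper uses the centre and the commutator subgroup.

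The gap is that you never produce a valid $B$, and that existence statement is the real content here (it is \cref{va-polynomial-r}). Your concrete candidate cannot work in general. Suppose $C_p$ acts faithfully on $\Z^r$ without fixed vectors. Then $\Q^r\cong\Q(\zeta_p)^j$ with $r=j(p-1)$. The centraliser of $C_p$ in $\GL_r(\Z)$ is then the unit group of an order in $M_j(\Q(\zeta_p))$. That group is infinite unless $j=1$ and $p\le 3$, i.e.\ unless $r\le 2$. In fact no cyclic point group works in rank above $8$: the only admissible rational blocks are $\Q(\zeta_d)$ with $d\in\{1,2,3,4,6\}$, each occurring at most once. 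Gluing rank-$1$ or rank-$2$ pieces with distinct point groups cannot reach arbitrary rank either, since there are only finitely many such point groups. If you allow repeated pieces, you must still check that the combined point group has finite centraliser, which you do not do.

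The missing idea is a non-cyclic point group whose rational constituents are pairwise non-isomorphic characters. Take $B=D_\infty^{\,k-1}=\Z^{k-1}\rtimes C_2^{k-1}$, where the $i$th involution inverts only the $i$th coordinate; this is exactly the paper's group. Its point group has centraliser the diagonal $\pm1$ matrices in $\GL_{k-1}(\Z)$. Hence, by \cref{norm-cent} and \cref{barF_Aintro}, the image of $\Aut(B)$ on the lattice is finite. Moreover $Z(B)=1$ and $B^{ab}\cong C_2^{2(k-1)}$, so the rest of your product argument goes through.

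Separately, your claim that orbits in $B$ have bounded size is false off the lattice: in $D_\infty=\langle a,t\rangle$, $t$ is conjugate to every $ta^{2j}$. The $H^1$ discussion is also unnecessary. For the lower bound you only need that each orbit meets the characteristic lattice in boundedly many points; then \cref{lem:charsbgrp} and undistortedness give $\alpha_B\succcurlyeq n^{k-1}$. The upper bound is just standard growth.
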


In addition, we provide a full classification of the automorphic growth of virtually abelian
groups of rank up to \(2\).

\begin{theorem}[\cref{VAmain1}]
    Suppose that $G$ is virtually abelian with a finite index normal subgroup $A\cong \Z^m$. 
    \begin{enumerate}
        \item If \(m=0\) then $\alpha_G$ is constant;
        \item If \(m=1\) then $\alpha_G$ is linear;
        \item If \(m=2\) then $\alpha_G$ is linear or quadratic. In this case, the growth of \(G\) is linear if and only if there is a homomorphism $\operatorname{sgn}
      \colon G\to \{1,-1\}$ such that for all $g\in G$ and $x \in
      A$ we have $\conjugate{x}{g}= x^{\actweird{g}{\operatorname{sgn}}}$.
    \end{enumerate}
\end{theorem}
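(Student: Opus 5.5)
The plan is to reduce the count of automorphic orbits of \(G\) to counting orbits of a suitable group action on the lattice \(A\cong\Z^m\). Orbits meeting the finitely many cosets \(gA\), \(g\ne 1\), can be handled in the same way: each coset embeds affinely into a copy of \(\Z^m\). Two facts will be used throughout.

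First, \(A\) is undistorted in \(G\), so the ball of radius \(n\) in \(G\) meets \(A\) in a set comparable to a Euclidean ball of radius \(\asymp n\).

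Second, we may replace \(A\) by a characteristic finite-index free abelian subgroup. A canonical choice is the intersection of all subgroups of index \([G:A]\) isomorphic to \(\Z^m\), or alternatively the subgroup generated by \(k\)-th powers with \(k=[G:A]!\) intersected with \(A\). With this choice, \(\Aut(G)\) restricts to \(A\) and yields a subgroup \(\Gamma\le \GL_m(\Z)\), which contains the image \(F\) of \(G\) acting by conjugation.

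Upper bounds:

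\begin{itemize}
  \item If \(m=0\), \(G\) is finite and \(\alpha_G\) is constant.
  \item In general, every \(\Aut(G)\)-orbit of an element of \(A\) is contained in a \(\Gamma\)-orbit and contains the \(F\)-orbit. Since \(F\) is finite, this gives \(\alpha_G(n)\preceq n^m\).
  \item If \(m=1\), we use \(\Gamma\le\{\pm1\}\), but an upper bound of \(n\) is automatic anyway.
  \item If \(m=2\), the bound \(n^2\) is automatic.
\end{itemize}

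Lower bounds:

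\begin{itemize}
  \item For a lower bound that is at least linear when \(m\ge1\), use that automorphisms preserve \(\|\cdot\|\) only up to a bounded factor on each orbit. More cleanly, an automorphism preserves the torsion-free-part structure, and for \(x\in A\) the \emph{divisibility} of \(x\) in \(A\) (the largest \(d\) with \(x\in dA\)) is an \(\Aut(G)\)-invariant, up to the bounded index issues coming from the characteristic subgroup. The elements \(x_0^d\), \(d\le n\), for a primitive \(x_0\) therefore lie in pairwise distinct orbits (up to a bounded multiplicity), giving \(\alpha_G\succeq n\).
  \item For \(m=1\), this settles linearity.
\end{itemize}

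The remaining case, and the main obstacle, is the dichotomy for \(m=2\).

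\emph{The sign-homomorphism case.} Suppose such a \(\sgn\) exists. Then every \(g\) acts on \(A\) as \(\pm\id\), so \(G\) is a central-ish extension in which the automorphisms \(x\mapsto x^{\varphi}\) with \(\varphi\in \GL_2(\Z)\) arbitrary should extend to \(G\). I would try to construct these explicitly. Choose a finite-index subgroup \(B\le A\), characteristic and central-up-to-sign, such that \(G/B\) is finite. Show that for \(\varphi\) in a finite-index subgroup of \(\GL_2(\Z)\), namely the congruence subgroup trivial modulo \(|G/A|\cdot\) something, the map that is \(\varphi\) on \(A\) and fixes a set of coset representatives extends to an automorphism. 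The key check is that the cocycle relations \(t_gt_h=t_{gh}c(g,h)\) with \(c(g,h)\in A\) are preserved, which requires \(\varphi\) to fix the finitely many values \(c(g,h)\) modulo the relevant subgroup and to commute with the action \(\pm1\). Commuting with \(\pm1\) is automatic, and the fixing condition holds for a congruence subgroup. A finite-index subgroup of \(\SL_2(\Z)\) has finitely many orbits on primitive vectors modulo scaling by divisibility classes, so the orbits in \(A\) are parametrised, up to bounded multiplicity, by the divisibility \(d\), giving linear growth.

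\emph{The converse.} If no such \(\sgn\) exists, then \(F\) contains an element not equal to \(\pm\id\) on \(A\). I would show that \(\Gamma\) normalises \(F\) (since \(\Aut(G)\) maps inner automorphisms to inner ones), and that a subgroup of \(\GL_2(\Z)\) normalising a finite subgroup not contained in \(\{\pm1\}\) is itself finite. The reason is that the finite subgroup fixes a line or a non-scalar quadratic form, and the normaliser preserves a finite set of such objects. Hence \(\Gamma\) is finite, the orbits in the \(n\)-ball of \(A\) have bounded size, and \(\alpha_G\asymp n^2\).

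Finally, the cosets \(gA\) with \(g\ne1\) contribute at most \(O(n^2)\) in general. In the linear case I expect the same congruence automorphisms to act on each coset affinely with finitely many orbits per divisibility-type level, giving \(O(n)\). This last bookkeeping, together with verifying that the cocycle-preserving extension works, is where I expect most of the technical difficulty.
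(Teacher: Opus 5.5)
Your treatment of parts (1) and (2), your linear lower bound (divisibility in a characteristic \(A\) is \(\Aut(G)\)-invariant), and your direction ``no \(\sgn\) \(\Rightarrow\) quadratic'' are all sound. The last is even shorter than the paper's argument. The centraliser in \(\GL_2(\Z)\) of a non-scalar finite-order matrix is finite: either it preserves two rational eigenlines and acts by \(\pm1\) on each, or it is the unit group of an order in an imaginary quadratic field. Since \(N(F)/C(F)\) embeds in \(\Aut(F)\), \(\Gamma\) is finite, so the infinite-cyclic case treated in \cref{VirtuallyZ2Quad} never arises.

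The genuine gap is the linear \emph{upper} bound when \(\sgn\) exists, which is where the paper does its real work (\cref{autA-is-finiteindex} and the coset analysis in \cref{payoff}). Your extension step is false as stated. Write elements of \(G\) as pairs \((a,g)\) with \(a\in A\), \(g\in Q=G/A\), and \((a,g)(b,h)=(a+\sgn(g)b+c(g,h),gh)\). Then \((a,g)\mapsto(\varphi(a),g)\) is a homomorphism only if \(\varphi(c(g,h))=c(g,h)\) exactly. Stabilisers of nonzero vectors have infinite index in \(\GL_2(\Z)\), so in general no finite-index subgroup satisfies this. For example, take \(G=\Z^2\), \(A=2\Z^2\) and transversal \(\{0,1\}^2\). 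The cocycle values include \((2,0)\) and \((0,2)\), a basis of \(A\), so only \(\varphi=\mathrm{id}\) fixes the representatives, although every automorphism of \(A\) extends to \(G\).

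The missing idea is to let the representatives move, \((0,g)\mapsto(\gamma(g),g)\). The condition then becomes \(\varphi\circ c-c=\delta\gamma\), where \((\delta\gamma)(g,h)=\gamma(g)+\sgn(g)\gamma(h)-\gamma(gh)\). The extension class is killed by \(N=|Q|\); explicitly \(Nc=\delta\beta\) with \(\beta(g)=\sum_{h}c(g,h)\). Hence for \(\varphi=I+NM\) you can take \(\gamma_\varphi=M\beta=(\varphi-I)\beta/N\), using that \(M\) commutes with \(\pm I\). The paper reaches the same conclusion, \cref{autA-is-finiteindex}, through its presentation matrices \(L,\Lambda,W\), with \cref{schur-cor} as the key input.

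The contribution of the cosets \(gA\neq A\) is only asserted (``I expect''). It needs more than the bare existence of some extension of each \(\varphi\): on a coset the action is affine, so you must control the translation parts, and you need a \emph{group} of automorphisms. With the explicit \(\gamma_\varphi\) above, one checks \(\gamma_{\chi\circ\varphi}=\chi\gamma_\varphi+\gamma_\chi\), so these extensions do form a group. On the coset of \(g\) they act by \(a\mapsto\varphi\bigl(a+\beta(g)/N\bigr)-\beta(g)/N\). This is the linear action of the finite-index subgroup \(\{\varphi\equiv I \bmod N\}\) on \(\tfrac1N\Z^m\), conjugated by a translation, and it gives \(O(n)\) orbits in the \(n\)-ball. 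This is exactly the paper's conjugation by \(\rho_t\) (translation by \(f_t/\beta_d\)) in \cref{payoff}. Without these two steps, the ``linear'' half of part (3) remains unproved.
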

The next example we consider is the Heisenberg group \(H(\Z)\). In the case of
conjugacy growth, this is an interesting example: the growth rate is \(n^2 \log
n\) \cite{Babenko} (see also \cite{HeisenbergConjugacy}); strictly smaller than
the standard growth of \(H(\Z)\), and not even a (truly) polynomial growth rate
- something that never occurs with standard growth due to Gromov's Theorem
\cite{bass, Gromov}. It also differs from its cousins the higher Heisenberg groups
\cite{HeisenbergConjugacy}, in this case, as these all have truly polynomial
growth rates. Unlike conjugacy growth however, the automorphic growth is again
exactly a polynomial rate, providing the first example where all three growth
rates are distinct.
    \begin{theorem}[\cref{thm:Heis}]
        The automorphic growth of the Heisenberg group is quadratic.
    \end{theorem}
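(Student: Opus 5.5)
We write \(H = H(\Z) = \langle a, b \mid [a,b]=c \text{ central}\rangle\). Elements are triples \((x,y,z)\), meaning \(a^x b^y c^z\). The plan is to describe the orbits of \(\Aut(H)\) explicitly, count how many meet the ball of radius \(n\), and prove matching upper and lower bounds of order \(n^2\).

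We use two standard facts about this word metric.
\begin{itemize}
\item An element of the ball of radius \(n\) has \(|x|,|y| \le n\) and \(|z| \le Cn^2\).
\item Conversely, \(c^z\) has length roughly \(\sqrt{|z|}\).
\end{itemize}

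\textbf{Step 1: describe \(\Aut(H)\).} It is generated by three kinds of automorphism:
\begin{itemize}
\item inner automorphisms;
\item central automorphisms \(a \mapsto a c^{p}\), \(b \mapsto b c^{q}\);
\item lifts of \(\GL_2(\Z)\) acting on \(H/Z(H) \cong \Z^2\).
\end{itemize}
Any automorphism acts on the centre \(\langle c\rangle\) by \(\det = \pm 1\).

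\textbf{Step 2: the orbit invariants.} For a non-central element \(g=(x,y,z)\), set \(d=\gcd(x,y) \neq 0\). Using \(\GL_2(\Z)\) we can move the image of \(g\) in \(\Z^2\) to \((d,0)\). Then conjugating by \(b^k\) multiplies by \(c^{\pm dk}\), and the central automorphism \(a\mapsto ac^p\) changes \(z\) by \(dp\). So \(g\) lies in the same orbit as \((d,0,0)\), i.e.\ as \(a^d\). Orbit invariants are preserved and orbits really are distinct:
\begin{itemize}
\item \(d\) is determined as the largest integer such that the image of \(g\) in the abelianisation modulo torsion is divisible by \(d\);
\item an element is central iff it lies in \(Z(H)\), and central elements \(c^z\) have orbits \(\{c^{z}, c^{-z}\}\).
\end{itemize}
Hence the orbits are
\[
\{a^d\text{-orbit} : d \ge 1\} \cup \{\{c^{\pm z}\} : z \ge 0\}.
\]
If instead some elements with the same \(d\) lay in different orbits, the count would only grow. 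So I would double-check the central-automorphism step carefully; it is the main point where \(H\) differs from \(\Z^2\). I expect it to work, because \(\gcd\) arguments allow adjusting \(z\) modulo \(d\) via conjugation and by arbitrary multiples of \(d\) via central automorphisms.

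\textbf{Step 3: count the orbits, and locate the obstacle.} If Step 2 is right, the number of non-central orbits meeting the \(n\)-ball is about \(n\), since \(a^d\) has length \(d\). The central orbits meeting the ball are those with \(|z| \le Cn^2\), giving \(\asymp n^2\) orbits. So \(\alpha_H(n) \asymp n^2\).

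The main obstacle is the distortion estimate \(|c^z| \asymp \sqrt{|z|}\). The upper bound comes from \([a^k,b^k]=c^{k^2}\) plus a remainder of length \(O(\sqrt{z})\), so \(|c^z| \le C'\sqrt{|z|}\). The lower bound uses that a word of length \(n\) has \(|z| \le n^2\), via the area interpretation of \(z\). Both facts are standard; I would cite them or sketch the area argument.

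Finally, combine the two bounds. For the lower bound, \(\alpha_H(n) \ge \#\{z \ge 0 : C'\sqrt{z} \le n\} \gtrsim n^2\). For the upper bound, \(\alpha_H(n) \le n + Cn^2 + 1\). Together these show that the automorphic growth of \(H(\Z)\) is quadratic.
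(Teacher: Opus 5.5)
Your Step 2 is false, and the upper bound in Step 3 depends on it. Both moves you apply to \(a^dc^z\) change \(z\) only by multiples of \(d\). The first is conjugation by powers of \(b\). The second is the central automorphisms \(a\mapsto ac^p\), \(b\mapsto bc^q\), which in \(H(\Z)\) are exactly the inner automorphisms and so add nothing. These moves let you reduce \(z\) modulo \(d\), but not to \(0\), and in general \(g\) is not automorphic to \(a^d\).

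For instance, \((a^xb^yc^z)^2=a^{2x}b^{2y}c^{2z-xy}\). So \(a^2c\) is not a square, while \(a^2\) is. Automorphisms preserve squares, so \(a^2c\) and \(a^2\) lie in different orbits.

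More generally, let \(d=\gcd(x,y)>0\) and write \(g=h^dc^k\) with \(h\) mapping to \((x,y)/d\). Then \(k \bmod d\) is well defined, and an automorphism changes it only by a sign. Hence the non-central orbits are indexed by \(d\) together with \(\pm k \bmod d\). Since \(|a^dc^k|\le d+O(\sqrt{d})\), there are \(\asymp n^2\) such orbits meeting the \(n\)-ball, not about \(n\). So your estimate \(\alpha_H(n)\le n+Cn^2+1\) is not justified as written.

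The repair is exactly the paper's argument. First, a lift of an element of \(\GL_2(\Z)\) moves the image of \(g\) to \((d,0)\). You only need such lifts to exist, which the paper checks on a generating pair of \(\GL_2(\Z)\); the full description of \(\Aut(H)\) in your Step 1 is unnecessary. Then conjugation by powers of \(b\) brings any non-central \(g\) of length at most \(n\) to some \(a^dc^k\) with \(1\le d\le n\) and \(0\le k<d\). This gives at most \(n^2\) non-central orbits meeting the \(n\)-ball. Together with your treatment of the central orbits \(\{c^{\pm z}\}\) and the estimate \(|c^z|\asymp\sqrt{|z|}\), this yields the quadratic upper bound. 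Your lower bound uses only the pairwise non-automorphic elements \(c^z\) with \(z\ge 0\); it is correct and is the same as the paper's.
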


We next move to groups with exponential standard growth. It is conjectured that
for `ordinary' groups (such as amenable groups), exponential standard growth
implies exponential conjugacy growth \cite{GubaSapir}. Osin proved that this is
not true for all groups with exponential growth rate, as one exists that has two
conjugacy classes \cite{Osin}. Naturally, this will also have bounded
automorphic growth. This group is not finitely presented, and no finitely
presented such example has been found. In fact, for the cases of finitely
presented groups with exponential standard growth we consider, the automorphic
growth rate is exponential. The first class of groups we consider with
exponential standard growth is the class of finite rank free groups. We use
Whitehead's algorithm \cite{whitehead_algm} to show that this is also
exponential.

  \begin{theorem}[\cref{main_free}]
     Non-abelian free groups of finite rank have exponential automorphic growth.
  \end{theorem}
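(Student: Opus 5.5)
The upper bound is trivial: the number of automorphic orbits meeting the ball of radius \(n\) is at most the number of elements in that ball, which is at most \((2r)^n\) for \(F_r\). So the task is an exponential lower bound. The plan is to exhibit exponentially many \emph{minimal-length} elements of length at most \(n\) (minimal in the sense of Whitehead's algorithm, i.e.\ of minimal length in their \(\Aut(F_r)\)-orbit). Then control how many such minimal elements can share an orbit.

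\textbf{Step 1: reduce to a rank-\(2\) type family.} Work in \(F_r=F(a_1,\dots,a_r)\) with \(r\ge 2\). Let \(W_n\) be the set of words of length \(n\) in the positive letters \(a_1,a_2\) in which every two-letter subword \(a_ia_j\) with \(i,j\in\{1,2\}\) occurs, for instance words beginning with \(a_1a_1a_2a_2a_1\). Then \(|W_n|\ge 2^{n-5}\). These are cyclically reduced.

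\textbf{Step 2: each \(w\in W_n\) is Whitehead-minimal.} By Whitehead's peak-reduction theorem, a cyclically reduced word is of minimal length in its orbit iff no Whitehead automorphism strictly shortens it. Check this with the Whitehead graph criterion: a Whitehead automorphism \((A,x)\) changes the cyclic length by \(|A| - |A^c|\)-type cut counts in the Whitehead graph. For our words the Whitehead graph on \(\{a_1^{\pm1},a_2^{\pm1}\}\) contains the edges from every \(a_ia_j\) and from the cyclic wrap. Every cut separating \(x\) from \(x^{-1}\) then has weight at least the degree of \(x\). Alternatively, give a direct count: for \(x=a_1\), the number of \(a_1^{\pm1}\) letters adjacent to letters in \(A\) cannot exceed those adjacent to \(A^c\) because the word is positive. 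So cancellation never beats insertion. Letters \(a_3,\dots,a_r\) do not occur, and permutations and inversions preserve length. This is the step I expect to be the main obstacle: making the cut inequality airtight with a clean choice of family. If it is messy, I would restrict to a sparser but still exponential family with large blocks, e.g.\ words in the blocks \(a_1^2a_2^2\) and \(a_1^3a_2^3\), where the counting is easier to verify.

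\textbf{Step 3: bound orbit overlaps.} Whitehead's algorithm also says that two minimal elements in the same orbit are connected by a chain of length-preserving Whitehead automorphisms through minimal elements of the same length. Conjugation is included, so up to cyclic permutation, elements of \(W_n\) lie in a single connected component of a finite graph of minimal words of length \(n\). A cruder bound suffices instead. Every orbit meets the minimal-length words of length \(n\) in a set, and the worry is that a single orbit swallows many elements of \(W_n\). To avoid this, use an invariant that is constant on orbits but varies exponentially on \(W_n\). A natural choice is the orbit's intersection with the cyclic-word set, which we control as follows. Length-preserving Whitehead moves on positive words in two letters only permute or invert letters, or act as automorphisms fixing length. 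The image of a positive word with all subwords present under a nontrivial type-two Whitehead move has length \(>n\), by strict inequality in Step 2, made strict by the subword condition. Hence the orbit of \(w\) meets words of length \(n\) only in the cyclic permutations of the images of \(w\) under the finite group of permutations and inversions. That is at most \(C_r\,n\) words, with \(C_r=2^r r!\).

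Therefore the number of orbits meeting the \(n\)-ball is at least \(|W_n|/(C_r n)\ge 2^{n-5}/(C_r n)\), which is exponential. Combined with the trivial upper bound, this gives exponential automorphic growth. The strictness claim in Step 3 is really the same obstacle as Step 2, and is where the choice of family must be made carefully.
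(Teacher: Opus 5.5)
Your Step 2 is false for the family \(W_n\), and Step 3 fails with it. Positivity does not stop a Whitehead automorphism from shortening a word. Under \(a_1\mapsto a_1\), \(a_2\mapsto a_1^{-1}a_2\), each letter \(a_2\) contributes one new letter. Every cyclic occurrence of \(a_1a_2\), however, produces a cancellation \(a_1a_1^{-1}\), and nothing further cancels. Write a positive cyclic word as \(a_1^{i_1}a_2^{j_1}\cdots a_1^{i_k}a_2^{j_k}\) with all \(i_\ell,j_\ell\geq 1\). Its cyclic length then changes by exactly \(\sum_\ell j_\ell-2k\), and by \(\sum_\ell i_\ell-2k\) under \(a_1\mapsto a_2^{-1}a_1\). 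Your subword condition only forces one block of each letter to have length at least \(2\), so it does not control the sign of this quantity.

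Concretely, \(a_1^2a_2^2(a_1a_2)^m\) lies in \(W_{2m+4}\); it even begins with your suggested prefix \(a_1a_1a_2a_2a_1\). It is sent to the cyclically reduced word \(a_1a_2a_1^{-1}a_2^{m+1}\), of length \(m+4<2m+4\). In Whitehead-graph terms, the cut \(\{a_1,a_2^{-1}\}\mid\{a_1^{-1},a_2\}\) has capacity \(2\) while \(a_1\) has degree \(m+2\), contradicting your cut claim. So \(W_n\) contains non-minimal words. The strict-increase statement, on which your bound of \(C_r n\) words per orbit rests, is also false.

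The missing ingredient is a lower bound on every block length, not a condition on which subwords occur. By the formula above, positive words with all exponents at least \(2\) are minimal. With all exponents at least \(3\), every Whitehead automorphism strictly increases cyclic length unless it acts on the word as a letter permutation followed by a cyclic shift. Moves with multiplier \(a_3^{\pm1},\dots\) cannot shorten, since inserted letters only cancel each other; they strictly lengthen unless they act as a conjugation, because \(a_1a_1\), \(a_2a_2\), \(a_1a_2\) and \(a_2a_1\) all occur.

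This is precisely the paper's family \(a^{i_1}b^{j_1}\cdots a^{i_k}b^{j_k}\) with exponents at least \(3\), used in \cref{free-gp-automorphically-minimal} and \cref{free-gp-aut-norm-form}. After that, peak reduction and a count finish along the lines of your Step 3. Your fallback family, built from blocks \(a_1^2a_2^2\) and \(a_1^3a_2^3\), is essentially this and does work, with one exception. For \((a_1^2a_2^2)^k\), the move \(a_2\mapsto a_1^{-1}a_2\) preserves length, giving \((a_1a_2a_1^{-1}a_2)^k\), so strictness fails there. That single word per length is harmless for the count. It does show that the strictness in Step 3 must be derived from the exponents, not assumed.
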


We conclude with groups where the conjugacy growth has yet to be studied -
Thompson's groups \(T\) and \(V\). We chose these groups, in part, due to their
very well-studied automorphism groups. Due to its finite outer automorphism
group \cite{autft}, computing the automorphic growth rate for \(T\) is not
difficult. The case for \(V\) is much more involved. 

\begin{theorem}[\cref{Tautgrowth} and \cref{thm:V}]
    The automorphic growth rates of Thompson's groups \(T\) and \(V\) are exponential.
\end{theorem}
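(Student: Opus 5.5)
For both \(T\) and \(V\), the plan is to find a family of elements whose number grows exponentially in word length and which automorphisms cannot pair off too much. Since \(\alpha_G(n)\) is at most the standard growth, and both groups are finitely generated with exponential growth, only the exponential lower bound needs proof.

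\textbf{The group \(T\).} By \cite{autft}, \(\Out(T)\cong \Z/2\Z\), generated by the class of the orientation-reversing homeomorphism \(x\mapsto 1-x\) of the circle. So each automorphic orbit is a union of at most two conjugacy classes, and
\[
\alpha_T(n)\geq \tfrac12\, c_T(n),
\]
where \(c_T\) is the conjugacy growth function. It therefore suffices to show that \(T\) has exponential conjugacy growth. I would get this from Thompson's group \(F\le T\), using a conjugacy invariant that is preserved under conjugation in \(T\). The first choice is the germ data at fixed points. Take elements of \(F\) supported on a small interval \([0,\varepsilon]\), built as words in \(x_0,x_1\) (for instance in a free submonoid, or from a binary sequence encoding breakpoints). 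Record the cyclically ordered sequence of slopes (powers of \(2\)) at the fixed points on the circle; this is a conjugacy invariant in \(\mathrm{Homeo}^+(S^1)\) up to cyclic rotation. From a word of length \(n\) I expect to realise exponentially many distinct such sequences, say \(2^{cn}\) binary patterns of slopes \(2^{\pm1}\) at consecutive fixed points. Dividing by the cyclic rotations, at most linearly many, keeps the count exponential.

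\textbf{The group \(V\).} Here \(\Aut(V)\) is the normaliser of \(V\) in \(\mathrm{Homeo}(\mathfrak C)\), the homeomorphism group of the Cantor set (Rubin's theorem; described by Brin--Guzmán and by Bleak et al.). \(\Out(V)\) is infinite, so we cannot simply divide. The invariant I would use is a dynamical one preserved by any homeomorphism of the Cantor set. For a torsion element, or an element built from disjoint "attractor--repeller" pieces, the multiset of periods of finite orbits, or the number of periodic orbits of each period, is invariant under conjugation by any homeomorphism. The plan is to construct elements of word length \(O(n)\) realising exponentially many distinct such multisets. One option is a torsion element: a product of permutations of the \(2^{k}\) cones at depth \(k\), with cycle type an arbitrary partition of \(2^k\), where \(k\) is about \(\log n\). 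But this gives only \(e^{O(\sqrt{2^k})}\) types, which is subexponential in \(n\). So instead I would use elements with many periodic points of prescribed slopes. For hyperbolic-type elements, the multiset of periodic orbits together with their multipliers (derivatives \(2^{j}\) along the orbit) should be invariant under the normaliser. This is the step needing care: automorphisms of \(V\) are rational homeomorphisms that need not preserve derivatives. I would first try counting the number and periods of attracting periodic orbits, which are purely topological. Encoding a binary string of length about \(n\) as the periods of successive attracting orbits, placed in disjoint cones addressed by short prefixes, should give \(2^{cn}\) pairwise non-automorphic elements of length \(O(n)\).

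\textbf{Main obstacle.} The hardest part is the \(V\) case: proving that the chosen elements have word length linear in the encoding, while their topological dynamical invariants still separate exponentially many of them. For the word-length bound I would use a tree-pair or normal-form estimate, namely that the word length is comparable to the number of carets. For the separation I would rely on the fact that any automorphism is induced by a homeomorphism of the Cantor set, so it preserves these orbit invariants. As a corollary of the \(T\) and \(V\) cases, since automorphic growth bounds conjugacy growth below, both \(T\) and \(V\) have exponential conjugacy growth.
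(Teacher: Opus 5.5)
\textbf{The \(V\) half has a genuine gap: the family you propose collapses under inner automorphisms, so no invariant can separate its members.} A homeomorphism of \(\mathfrak{C}\) need not preserve any order, so \emph{successive} orbits carry no information.

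Concretely, suppose that for a string \(s\) the element \(v_s\) acts on each disjoint cone \(c_i\mathfrak{C}\) (\(1\le i\le n\)) by a rigidly transported copy of one of two fixed elements, chosen by the bit \(s_i\). Conjugating by the element of \(V\) that rigidly permutes these cones, and is the identity elsewhere, turns \(v_s\) into \(v_{s'}\) for a permuted string \(s'\). So only the multiset of periods survives, and your \(2^n\) elements lie in at most \(n+1\) conjugacy classes. Recording multipliers as well changes nothing, since they are attached to individual orbits and are permuted with them. This is the same phenomenon that defeated your torsion count.

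To make a period-type invariant work, you would have to pack the information into the period values themselves at linear word-length cost, and your sketch gives no such construction. This is not absurd: the paper notes that, by work of Bishop, Bodart, Issini and Perego, even orders of torsion elements of \(V\) take exponentially many values. But it needs a real construction.

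Separately, the length estimate you plan to use fails in \(V\). A fixed complete prefix code of size \(N\) supports \(N!\) elements, which is more than a ball of radius \(O(N)\) contains. So word length in \(V\) is not bounded by a constant times the number of carets, and you need explicit words, as in \cref{constructed_elements}(2).

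What is missing is an invariant that can carry a whole element of \(V\), together with control of how \(B_{2,1}\) acts on it. The paper uses the decoration map \(d_v\) of \cref{decmaps}, which records how \(v\) transports neighbourhoods of source decorations to sink decorations. It shows (\cref{trans-action}, \cref{action_on_decoration_maps}) that a bisynchronising \(c\) acts on \(d_v\) by twisting with the local rational map at the state reached along the periodic point. In the elements \(v'\) of \cref{constructed_elements}, two source--sink pairs carry the generators \(a,b\). The distinct periods \(1,2,3,5,7,11\) let one replace \(c\) by \(c\,(w')^k\), which fixes all the periodic points. After that, the local twist must commute with \(a\) and \(b\), so it is trivial, and the third pair recovers \(v\) itself (\cref{notautoV}). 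Together with \(|v'|\le m|v|+m\), this gives \(\alpha_V\succcurlyeq\) the standard growth of \(V\).

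Your \(T\) argument, via \(\Out(T)\cong\Z/2\Z\) and necklaces of slopes at fixed points, is a valid alternative to the paper's route. The paper instead uses the elements \(y_w\), whose invariant is the word \(w\) up to cyclic permutation, read off rigidly at the periodic point \(0www\cdots\). Note that your slope data is invariant under conjugation by PL maps (by the chain rule at a fixed point), not under \(\mathrm{Homeo}^+(S^1)\) as you state; PL invariance is enough here.

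The family you only expect to exist is easy to produce. Put a rescaled copy of \(x_0\) or \(x_0^{-1}\) on each of the \(2^k\) dyadic intervals of depth \(k\). This uses \(O(2^k)\) carets, and in \(F\), unlike \(V\), word length is linear in the number of carets. The one-sided slopes at the \(2^k\) endpoints then recover the cyclic sign sequence.
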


It is worth mentioning that growth relating to automorphisms has been studied before. Some studies fix an automorphism \(\phi\) of a group \(G\), and then study the growth of
the sequence obtained by iteratively applying \(\phi\) to a conjugacy class of \(G\) \cites{Fioravanti26, Levitt09, Coulon22}. Lee has worked on counting the number
of minimal length cyclic words in an automorphic orbit of a free group
\cites{Lee1, Lee2}. 

We cover the preliminaries and basic definitions of automorphic growth in
\cref{sec:basic}. \cref{sec:found} includes various lemmas that involve studying
how automorphic growth behaves when moving to subgroups, which are of particular
importance when studying virtually abelian groups in \cref{sec:VA}. We compute
the growth of the Heisenberg group in \cref{sec:Heis} and of free groups in
\cref{sec:free}. We conclude with Thompson's groups \(T\) and \(V\) in \cref{sec:V}.

\section{Basic definitions}
\label{sec:basic}

We begin with notational conventions used throughout.

\begin{ntn}
    Functions (with the exception of operators) will be written to the right of their
    arguments, to allow composition to be read from left to right; that is, we write
    \((x)f\), rather than \(f(x)\). Growth functions will be considered operators here,
    and thus written to the left of their arguments, as with the majority of the literature
    on growth of groups.

    For a set \(\Sigma\), we denote the free monoid generated by \(\Sigma\) with 
    \(\Sigma^\ast\), and we write \(\Sigma^n\) to denote all words in
    \(\Sigma^\ast\) of length \(n\). The length of a word \(w \in \Sigma^\ast\) is
    denoted \(|w|\). We use \(\varepsilon\) to denote the empty word.
    We consider \(0 \in \N\).
\end{ntn}

    We now define the automorphic growth of a group with respect to a given finite
    generating set.
  \begin{dfn}
    Let \(G\) be a group, \(\Sigma\) be a finite generating set
    for \(G\), and $\Omega\leq\Aut(G)$. For \(g\in G\), we write \(|g|_\Sigma
    \coloneqq \min \makeset{n\in \N}{\(g\in (\Sigma\cup \Sigma^{-1})^{n}\)}\) (when \(\Sigma\) is
    implicit from the context we will simply write \(|g|\)). Let \(\act{g}{\Omega}\)
    be an orbit of \(G\) under the action of \(\Omega\), and let \(G/\Omega\) be the set of all such orbits. The \textit{length} of
    \(\act{g}{\Omega
    }\) with respect to \(\Sigma\), denoted \(|\act{g}{\Omega}|_\Sigma\)
    (or simply \(|\act{g}{\Omega}|\)), is defined by
    \[
      |\act{g}{\Omega}| = \min \{|h| :  h \in\act{g}{\Omega}\}.
    \]
    The \textit{(cumulative) $\Omega$-automorphic growth function} of \((G,
    \Sigma)\) is defined to be the function from \(\N\) to itself that maps
    \(n\) to the number of $\Omega$-orbits of \(G\) with length  at most \(n\)
    with respect to \(\Sigma\). We use \(\alpha_{G^\Omega, \Sigma}\) to denote
    the $\Omega$-automorphic growth function. We will be most frequently
    interested in the case where $\Omega=\actweird{G}{\Aut}$, which we simply
    call \emph{automorphic growth}, and denote by $\alpha_{G,\Sigma}$.

    Write \(\pi \colon (\Sigma\cup \Sigma^{-1})^\ast \to G\) for the natural
    map. A word \(w \in \Sigma^\ast\) is called
    \textit{\(\Omega\)-automorphically minimal} in \(G\), if $w$ is a
    minimal-length representative of the orbit $
    \act{\act{w}{\pi}}{\Omega}$. If \(\Omega = \actweird{G}{\Aut}\) we just
    say \emph{automorphically
    minimal}.
  \end{dfn}
  When talking about the growth rate of a group, as is standard with other forms
  of growth (including standard growth), we only consider this up to
  the following equivalence. 

  \begin{dfn}\label{growtheq}
      We define a preorder \(\preccurlyeq\) on the set \(\End^+(\N, \leq)\) of
      non-decreasing non-zero functions from \(\N\) to itself by \(f \preccurlyeq g\)
      whenever there exists $\lambda\in \N  \setminus  \{0\}$ such that for
      every $n\in \N$, we have $\act{n}{f} \leq \lambda g (\lambda n + \lambda) +
      \lambda$. 
      We write $\sim $ for the equivalence relation given by $f\sim g$
      if both $f\preccurlyeq g$ and $g\preccurlyeq f$. We then view
      \(\preccurlyeq\) as a partial order on the \(\sim\) classes of
      \(\End^+(\N,\leq)\) in the usual fashion and we use \(\sim\) and \(=\)
      interchangeably for a pair of these classes.
  \end{dfn}
  In light of \cref{well-defined}, the $\sim$-equivalence class of the
  $\Omega$-automorphic growth of a finitely generated group $G$ does not depend
  on the choice of finite generating set. We thus define $\alpha_{G^\Omega}$ to
  be the $\sim$-equivalence class (and write $\alpha_{G}$ to mean
  $\alpha_{G^{\Aut(G)}}$). Note that the usual conjugacy growth is just
  $\alpha_{G^\Omega}$ where \(\Omega\) is the inner automorphism group of \(G\).
  We will often add or multiply \(\sim\)-equivalence classes of functions.
  This is all well-defined, as \(\preccurlyeq\) preserves addition and multiplication.
  We refer the reader to \cite{Meier}*{Section 11} for more details.

\begin{rmk}\label{growth-comparison-1}
    Note that if \(G\) is a group and \(\Omega_1\leq \Omega_2\leq \actweird{G}{\Aut}\), then every orbit of \(\Omega_2\) is a union of orbits of \(\Omega_1\). 
In particular, smaller groups of automorphisms have more (smaller) orbits than larger groups of automorphisms. 
This implies that \(\alpha_{G^{\Omega_2}}\preccurlyeq \alpha_{G^{\Omega_1}}\) and in particular \(\alpha_{G}\preccurlyeq\alpha_{G^{\Omega}}\).
This also implies that the automorphic growth is a lower bound for the conjugacy growth. 
\end{rmk}

In order to study the automorphic growth of groups, we will occasionally require a
generalisation of the notion of automorphic growth, where we instead we act on a locally
finite metric space rather than a group.
\begin{dfn}
    \label{action-growth-obj}
     We say that a metric space is \emph{locally finite} if all subsets of
     finite diameter are finite. We define an \emph{action growth object} to be
     a pair $(M,\Omega)$ where $M$ is a locally finite metric space and $\Omega$
     is a group acting on the underlying set of $M$. If $(M,\Omega)$ is an
     action growth object then we define the growth $\alpha_{(M,\Omega)}$ of the
     object to be the $\sim$-class (see \cref{growtheq}) of the map sending $n$
     to the cardinality of the set
    \[\makeset{J \in M/\Omega}{\(J\) contains a element of distance at most $n$ from $x_0$},\]
    where $x_0\in M$ is fixed (it is routine to verify that the class of this function is independent of the choice of $x_0$).
\end{dfn}
In particular, if $G$ is a finitely generated group equipped with the word
metric then $(G,\actweird{G}{\Aut})$ is an action growth object and its growth is the
automorphic growth of $G$. Note that this action does not necessarily preserve
the structure of the metric space \(M\) in any reasonable sense.
\begin{rmk}\label{actobjiso}
    If $(M_1,\Omega_1), (M_2,\Omega_2)$ are action growth objects and $\phi \colon M_1\to M_2$ is a bijective quasi-isometry which preserves whether or not points are in the same orbit, then $\alpha_{(M_1,\Omega_1)}=\alpha_{(M_2,\Omega_2)}$.
\end{rmk}
 Applying \cref{actobjiso} using the identity map from a group with one
    generating set to the same group with another, we obtain the following:
\begin{rmk}\label{well-defined}
    The automorphic growth of a finitely generated group is well-defined independent of generating set.
\end{rmk}

When studying any kind of growth of subgroups (or even subsets) of groups, one
must deal with the issue that the word metrics of the subgroup and the larger group
may not be quasi-isometric, and thus there will be multiple potential growth rates to consider.
To deal with this issue, we define the relative automorphic growth, which is the automorphic
growth of a subset of a group, but using the inherited metric from the group itself.

\begin{dfn}
    Let \(G\) be a finitely generated group. Let \(U \subseteq G\) and \(\Omega\leq \actweird{U}{\Sym}\).
    The \emph{relative \(\Omega\)-automorphic growth}  \(\alpha_{U^\Omega\subseteq G}\) of
    \(U\) in \(G\) with respect to $\Omega$ is defined to be the growth of the
    action growth object \((U,\Omega)\) using the inherited metric from a word metric for \(G\). If $\Omega$ is the group of permutations of $U$ which extend to automorphisms of $G$ then we omit $\Omega$ and simply write \(\alpha_{U\subseteq G}\).
\end{dfn}

Naturally, one case where one does not need the notion of relative growth is for undistorted
subgroups, as in these cases the two relevant metrics will be quasi-isometric, and thus
growth rates will not depend on which metric is used.

\begin{dfn}
    We say that a finitely generated subgroup of a finitely generated group is \emph{undistorted} if the inclusion map is a quasi-isometry to its image with the inherited metric.
\end{dfn}

\section{Foundational lemmas}\label{sec:found}
We might consider this section to be along the lines of a `closure properties'
section, although in the case of automorphic growth, nothing so strong can be
said. As we see in \cref{sec:VA}, automorphic growth rate is not a
commensurability invariant, and it is well-known that even characteristic
subgroup's automorphism group need not share many properties with the original
group's automorphism group. Nonetheless, in various restricted cases, some
things can be said, if one is willing to move from generic automorphic growth to
\(\Omega\)-automorphic growth for a relevant group \(\Omega\) of automorphisms.
Naturally, when studying virtually abelian groups in \cref{sec:VA}, these
results are particularly important, since in this class of groups, much of
the structure comes from a subgroup.

We begin with two observations that follow immediately from \cref{actobjiso}; in one case
we vary the group of automorphisms used, and in the other we pass to an undistorted subgroup.
\begin{lem}\label{conjugate-aut-groups}
    Let \(M\) be a locally finite metric space and \(\Omega\leq \Sym(M)\).
    If \(\theta \colon M\to M\) is a bijective quasi-isometry and \(\Lambda=\conjugate{ \Omega }{\theta}\)
    then
    \(
    \alpha_{(M,\Omega)} =  \alpha_{(M,\Lambda)}.
    \)
\end{lem}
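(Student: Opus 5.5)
We apply \cref{actobjiso} directly, taking \(\theta\) itself as the map between the two action growth objects \((M,\Omega)\) and \((M,\Lambda)\). Because \(\theta\) is by hypothesis a bijective quasi-isometry \(M\to M\), what remains is to check that it preserves whether or not points lie in the same orbit. This means showing that for \(x,y\in M\), the points \(x\) and \(y\) are in the same \(\Omega\)-orbit exactly when \(\act{x}{\theta}\) and \(\act{y}{\theta}\) are in the same \(\Lambda\)-orbit.

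With right actions, \(\Lambda=\conjugate{\Omega}{\theta}\) consists of the maps \(\theta^{-1}\omega\theta\) (composed left to right) for \(\omega\in\Omega\). Suppose \(y=\act{x}{\omega}\) with \(\omega\in\Omega\). Then
\[
\act{\act{x}{\theta}}{\theta^{-1}\omega\theta}=\act{\act{x}{\omega}}{\theta}=\act{y}{\theta},
\]
so \(\act{x}{\theta}\) and \(\act{y}{\theta}\) are in the same \(\Lambda\)-orbit. Conversely, suppose \(\act{y}{\theta}=\act{\act{x}{\theta}}{\theta^{-1}\omega\theta}\) for some \(\omega\in\Omega\). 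Applying \(\theta^{-1}\), which exists because \(\theta\) is bijective, gives \(y=\act{x}{\omega}\). Hence \(\theta\) induces a bijection \(M/\Omega\to M/\Lambda\), and \cref{actobjiso} yields \(\alpha_{(M,\Omega)}=\alpha_{(M,\Lambda)}\).

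The only point needing care is that \(\Lambda\) is again a group acting on \(M\), so that \((M,\Lambda)\) is an action growth object. This holds because conjugation by \(\theta\) is an automorphism of \(\Sym(M)\). There is no real obstacle. If one wanted to avoid citing \cref{actobjiso}, one would instead check by hand that \(\theta\) sends an orbit meeting the \(n\)-ball about \(x_0\) to an orbit meeting the \((\lambda n+\lambda)\)-ball about \(\act{x_0}{\theta}\), and similarly for \(\theta^{-1}\). Independence of the basepoint, noted after \cref{action-growth-obj}, then gives both inequalities \(\preccurlyeq\).
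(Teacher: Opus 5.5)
Your proposal is correct and follows essentially the same route as the paper, whose proof is simply ``See \cref{actobjiso}'', applied with \(\theta\) as the bijective quasi-isometry. You additionally write out the routine check, left implicit in the paper, that \(\theta\) carries \(\Omega\)-orbits bijectively onto \(\Lambda\)-orbits.
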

\begin{proof}
See \cref{actobjiso}.
\end{proof}

\begin{lem}\label{relativeundistorted}
    Let $H$ be an undistorted subgroup of $G$, and $\Omega\leq \actweird{G}{\Aut}$ be a group fixing \(H\) setwise, $P$ the subgroup of $\actweird{H}{\Aut}$ consisting of the restriction of $\Omega$ to $H$. Then the relative $P$-automorphic growth of $H$ in $G$ is equivalent to the $P$-automorphic growth of $H$, that is $\alpha_{H^P\subseteq G}\sim\alpha_{H^P}$
\end{lem}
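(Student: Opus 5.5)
The plan is to apply \cref{actobjiso} with the identity map on \(H\). There are two action growth objects here. The first is \((H,P)\), where \(H\) carries the metric \(d_G\) inherited from a word metric on \(G\); its growth is \(\alpha_{H^P\subseteq G}\). The second is \((H,P)\), where \(H\) carries a word metric \(d_H\) coming from a finite generating set of \(H\); its growth is \(\alpha_{H^P}\). The identity map \(\id\colon (H,d_H)\to (H,d_G)\) is a bijection, and it trivially preserves whether two points lie in the same \(P\)-orbit, since the acting group is literally the same on both sides. Note that \(P\) acts on the underlying set \(H\) by restricting elements of \(\Omega\); this is well defined because \(\Omega\) fixes \(H\) setwise. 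Given this, \cref{actobjiso} yields \(\alpha_{H^P\subseteq G}=\alpha_{H^P}\) directly.

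First I will check that both objects really are action growth objects, that is, that each metric space is locally finite. For \(d_H\) this holds because \(H\) is finitely generated. For \(d_G\) it holds because balls in \(G\) are finite, so any subset of \(H\) of finite \(d_G\)-diameter is finite. Next I will use the undistortedness hypothesis. By definition, the inclusion \(H\hookrightarrow G\) is a quasi-isometry onto its image with the inherited metric. So there is \(\lambda\geq 1\) with
\[
\tfrac{1}{\lambda}d_H(x,y)-\lambda\leq d_G(x,y)\leq \lambda d_H(x,y)+\lambda
\]
for all \(x,y\in H\). Hence \(\id\colon (H,d_H)\to(H,d_G)\) is a bijective quasi-isometry. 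One should also fix the base point \(x_0=1\in H\) in both objects; the growth class does not depend on this choice, as noted after \cref{action-growth-obj}.

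There is no substantial obstacle. The only step needing care is making sure the statement of \cref{actobjiso} is the one we want, namely that a bijective quasi-isometry matching orbits gives \(\sim\)-equivalent counting functions. If one wants to verify it directly rather than cite it, the argument is short. Let \(J\) be a \(P\)-orbit containing a point \(h\) with \(d_H(1,h)\leq n\). Then \(d_G(1,h)\leq \lambda n+\lambda\), so \(J\) is counted by the \(d_G\)-counting function at radius \(\lambda n+\lambda\). This shows \(\alpha_{H^P}(n)\leq \alpha_{H^P\subseteq G}(\lambda n+\lambda)\), and the reverse inequality is symmetric. By \cref{growtheq}, these two inequalities give exactly the required equivalence \(\sim\).
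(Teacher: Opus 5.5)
Your proposal is correct and follows the paper's own route: the paper's proof is simply ``see \cref{actobjiso}'', i.e.\ apply that remark to the identity map \(\id\colon (H,d_H)\to(H,d_G)\), which is a bijective quasi-isometry (by undistortedness) and matches \(P\)-orbits. You spell out in addition the local finiteness check and the direct inequality \(\alpha_{H^P}(n)\leq \alpha_{H^P\subseteq G}(\lambda n+\lambda)\) together with its symmetric counterpart.
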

\begin{proof}
    See \cref{actobjiso}.
\end{proof}
  Unlike standard ang conjugacy growth, there is no direct relationship between the
  automorphic growth of a group and its quotients, as the automorphism
  group can change drastically when quotienting. However, if the
  normal subgroup in question is characteristic (that is, fixed setwise
  under automorphisms), then the expected inequality does hold.

  \begin{lem}\label{quotient-growth}
  	If $Q$ is a quotient of $G$ by a characteristic subgroup $N$, then
  	$\alpha_Q\preccurlyeq\alpha_G$.
  \end{lem}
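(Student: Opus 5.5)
Write \(q\colon G\to Q=G/N\) for the quotient map. Fix a finite generating set \(\Sigma\) for \(G\) and take \(\act{\Sigma}{q}\) as the generating set for \(Q\). With these choices \(|\act{g}{q}|\le|g|\) for every \(g\in G\), and every \(x\in Q\) has a preimage \(g\) with \(|g|=|x|\): lift a geodesic word letter by letter. By \cref{well-defined} it is enough to compare the growth functions for these two particular generating sets.

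The key structural step is to show that every automorphism of \(G\) descends to \(Q\). Since \(N\) is characteristic, each \(\phi\in\actweird{G}{\Aut}\) satisfies \(\act{N}{\phi}=N\). Hence \(\phi\) induces an automorphism \(\bar\phi\) of \(Q\) with \(\act{\act{g}{q}}{\bar\phi}=\act{\act{g}{\phi}}{q}\). This gives a homomorphism \(\actweird{G}{\Aut}\to\actweird{Q}{\Aut}\); call its image \(\Omega\). It follows that \(q\) maps each \(\actweird{G}{\Aut}\)-orbit of \(G\) onto a single \(\Omega\)-orbit of \(Q\). The induced map on orbit sets \(G/\actweird{G}{\Aut}\to Q/\Omega\) is surjective, because \(q\) is.

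Next I count. Let \(\act{x}{\Omega}\) be an \(\Omega\)-orbit of length at most \(n\), realised by some \(x\) with \(|x|\le n\). Choose a lift \(g\) of \(x\) with \(|g|\le n\). Then the \(\actweird{G}{\Aut}\)-orbit of \(g\) has length at most \(n\) and maps onto \(\act{x}{\Omega}\). So the orbit map restricts to a surjection from the orbits of length at most \(n\) in \(G\) onto the \(\Omega\)-orbits of length at most \(n\) in \(Q\). This gives \(\act{n}{\alpha_{Q^\Omega,\act{\Sigma}{q}}}\le\act{n}{\alpha_{G,\Sigma}}\) for all \(n\), and hence \(\alpha_{Q^\Omega}\preccurlyeq\alpha_G\).

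The remaining point, and the only real subtlety, is that \(\Omega\) may be a proper subgroup of \(\actweird{Q}{\Aut}\), because not every automorphism of \(Q\) need lift to \(G\). This goes in the right direction: by \cref{growth-comparison-1}, \(\alpha_Q=\alpha_{Q^{\actweird{Q}{\Aut}}}\preccurlyeq\alpha_{Q^\Omega}\). Chaining the two inequalities gives \(\alpha_Q\preccurlyeq\alpha_G\). I expect no real obstacle beyond checking that \(\bar\phi\) is well defined and bijective. Bijectivity holds because \(\phi^{-1}\) also preserves \(N\) and induces the inverse of \(\bar\phi\).
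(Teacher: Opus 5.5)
Your proposal is correct and follows essentially the same route as the paper: automorphisms of \(G\) descend to \(Q\) because \(N\) is characteristic, geodesic representatives are lifted letter by letter, and lifts of distinct \(\Aut(Q)\)-orbits therefore lie in distinct \(\Aut(G)\)-orbits. The only difference is cosmetic: you name the group \(\Omega\) of induced automorphisms and invoke \cref{growth-comparison-1} explicitly, whereas the paper folds that comparison into a single sentence.
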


  \begin{proof}
    For each \(\theta \in \actweird{G}{\Aut}\) define \(\overline{\theta} \in \actweird{Q}{\Aut}\) by
    \(\act{g / N}{\overline{\theta}} = N \cdot \act{g}{\theta}\) for all \(g\in G\). We
    show that \(\overline{\theta}\) is indeed an automorphism. For each
    $\gamma\in G/N$, choose a lift $\tilde{\gamma}\in G$. Then define
    $\act{\gamma}{\overline{\theta}} = N\cdot \act{\tilde{\gamma}}{\theta}$. Since $N$ is
    characteristic, \(\overline{\theta}\) is a well-defined homomorphism; that
    is, $\act{Ng}{\overline{\theta}} = N \cdot \act{g}{\theta}$. Now suppose that $\act{\gamma}{
    \comp{\overline{\theta}}} = \act{\delta}{\overline{\theta}}$ for some
    $\gamma,\delta\in G/N$. Then $N \cdot \act{\tilde{\gamma}}{ \theta} = N \cdot
    \act{\tilde{\delta}}{\theta}$, which implies $\act{\tilde{\gamma}}{\theta} \cdot
    (\act{\tilde{\delta}}{\theta})^{-1} \in N$ and so \(\tilde{\gamma}
    \tilde{\delta}^{-1} \in \act{N}{\theta} = N\) and $\gamma=\delta$. Thus
    $\overline{\theta}$ is injective.
    For surjectivity, let $\delta\in G/N$. Since $\theta$ is surjective, there
    exists $g\in G$ with $\act{g}{\theta} = \tilde{\delta}$. We have
    $\act{Ng}{\overline{\theta}} = N \cdot \act{g}{\theta} = N {\tilde{\delta}}=\delta$, and
    hence $\overline{\theta}$ is indeed surjective.
  
  	Choose a finite, generating set $\Sigma$ for $G$, and let
  	$\overline{\Sigma}$ denote the image of $\Sigma$ in $Q$ which is a finite
  	generating set for $Q$. Suppose $q\in Q$ has length $n$ with respect to
  	$\overline{\Sigma}$. Then there exist $\sigma_1,\sigma_2,\ldots,\sigma_n\in
  	\Sigma \cup \Sigma^{-1}$ such that
  	$q=\overline{\sigma}_1\overline{\sigma}_2\cdots\overline{\sigma}_n$. Then
  	$g \coloneqq \sigma_1\sigma_2\cdots\sigma_n\in G$ is a lift of $q$, and so
  	$|g|_{\Sigma}\leq n$.

  	Now note that if $q,q'\in Q$ are in distinct automorphic orbits of \(Q\), we
  	have that they cannot be in the same orbit using automorphisms of the form
  	\(\overline{\theta}\) for some \(\theta \in \actweird{G}{\Aut}\). Thus any pair of lifts
  	of $q,q'$ are in distinct automorphic orbits in $G$. So any set of geodesic 
  	representatives for automorphic orbits in $Q$, each of length at most $n$
  	lifts to a set of representatives for distinct automorphic orbits of $G$,
  	each of length at most $n$. Therefore \(\alpha_{Q}\preccurlyeq\alpha_{G}\),
  	proving the claim.
  \end{proof}

 We next relate the automorphic growth of a subgroup to the automorphic growth
 of a characteristic subgroup (or at least a subgroup preserved by the relevant
 group of automorphisms). 

  \begin{lem}\label{lem:charsbgrp}
    Let $H$ be a subgroup of $G$, \(\Omega\leq \actweird{G}{\Aut}\) be such that \(\act{H}{\phi}
    = H\) for all \(\phi \in \Omega\), and let $P=\{\phi|_H :
    \phi\in\Omega\}\leq\Aut(H)$ be the group of automorphisms of $H$ arising
    from restricting automorphisms in \(\Omega\). Then \(\alpha_{H^P\subseteq G}
    \preccurlyeq \alpha_{G^\Omega}\).
  \end{lem}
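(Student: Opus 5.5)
The plan is a direct injection argument on orbits, with no metric distortion issues, since the relative growth \(\alpha_{H^P\subseteq G}\) already measures lengths of elements of \(H\) using a word metric \(|\cdot|_\Sigma\) of \(G\). Fix a finite generating set \(\Sigma\) of \(G\). Take the basepoint \(x_0\) in the definition of action growth objects to be the identity in both \((H,P)\) and \((G,\Omega)\). Then \(\act{n}{\alpha_{H^P\subseteq G}}\) counts the \(P\)-orbits in \(H\) containing an element \(h\) with \(|h|_\Sigma\le n\). Likewise \(\act{n}{\alpha_{G^\Omega,\Sigma}}\) counts the \(\Omega\)-orbits in \(G\) containing an element of \(\Sigma\)-length at most \(n\). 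The goal is to show the first count is at most the second for every \(n\); this gives \(\preccurlyeq\) with \(\lambda=1\).

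The key step is to define the map \(H/P\to G/\Omega\) by \(\act{h}{P}\mapsto \act{h}{\Omega}\) and check that it is well defined and injective. It is well defined because every \(\phi|_H\in P\) is the restriction of some \(\phi\in\Omega\). Hence \(\act{h}{P}=\act{h}{\Omega}\cap H\supseteq\) and in fact \(\act{h}{P}\subseteq \act{h}{\Omega}\). For injectivity, suppose \(h,h'\in H\) satisfy \(\act{h}{\Omega}=\act{h'}{\Omega}\). Then \(h'=\act{h}{\phi}\) for some \(\phi\in\Omega\). Since \(\act{H}{\phi}=H\), the restriction \(\phi|_H\) lies in \(P\) and sends \(h\) to \(h'\), so \(\act{h}{P}=\act{h'}{P}\). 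This is the only point where the hypothesis that \(\Omega\) preserves \(H\) is used. It is also where care is needed, because an arbitrary automorphism of \(H\) need not extend to \(G\); that is exactly why the statement uses \(P\) rather than \(\Aut(H)\).

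Finally, the map preserves the length bound. If \(\act{h}{P}\) contains an element \(h_0\) with \(|h_0|_\Sigma\le n\), then \(h_0\in\act{h}{\Omega}\), so the image orbit has length at most \(n\). Thus the injection restricts to an injection from the \(P\)-orbits counted at radius \(n\) into the \(\Omega\)-orbits counted at radius \(n\), which gives \(\act{n}{\alpha_{H^P\subseteq G}}\le\act{n}{\alpha_{G^\Omega,\Sigma}}\). By \cref{well-defined} and the independence of basepoint noted after \cref{action-growth-obj}, this inequality of representatives yields the claimed inequality of \(\sim\)-classes. I expect no real obstacle: the argument is bookkeeping, and the main thing to verify is the orbit-injectivity step above.
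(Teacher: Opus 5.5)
Your proof is correct and is essentially the paper's argument. The paper observes that two elements of \(H\) lie in the same \(P\)-orbit if and only if they lie in the same \(\Omega\)-orbit, so \(\alpha_{H^P\subseteq G}(n)\) is exactly the number of \(\Omega\)-orbits of length at most \(n\) contained in \(H\); this is your injection phrased as a subset count. Since \(\Omega\) preserves \(H\), one in fact has \((h)P=(h)\Omega\) as sets, which would tidy up your slightly garbled containment line.
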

  \begin{proof}
    Let $h_1,h_2\in H$. Then there exists $\phi_H\in P$ such that $\act{h_1}{\phi_H} =
    h_2$ if and only if there exists \(\phi_G\in \Omega\) such that $\act{h_1}{ \phi_G} = h_2$.
    Thus $\alpha_{H^P\subseteq G}(n)$ is precisely the
    number of $\Omega$-orbits of $G$ of length at most $n$ which are contained
    in $H$, and thus is at most the total number of $\Omega$-orbits of length at
    most $n$, which is the claimed result.
  \end{proof}

  An unusual construction that doesn't require characteristic subgroups to compute a lower
  bound for automorphic growth is direct products. This follows easily since the
  direct product of the two relevant automorphism groups embeds into the automorphism
  group of the direct product.

  \begin{lem}
  \label{dir-prod}
    Let \(G = H \times K\) where
    \(H\) and \(K\) are finitely generated groups. Then \(\alpha_G
    \preccurlyeq \alpha_H \cdot \alpha_K\).
    If, in addition, \(H\) and \(K\) are characteristic in \(G\), then
    \(\alpha_G \sim \alpha_H \cdot \alpha_K\).
  \end{lem}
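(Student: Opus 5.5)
The plan is to compare orbits on \(G=H\times K\) with orbits on \(H\) and \(K\) separately, using the generating set \(\Sigma=\Sigma_H\cup\Sigma_K\). For this generating set \(|(h,k)|=|h|_{\Sigma_H}+|k|_{\Sigma_K}\), since letters from the two factors commute and can be separated.

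For the first inequality, consider \(\Omega=\Aut(H)\times\Aut(K)\), which embeds in \(\Aut(G)\) by acting coordinatewise. By \cref{growth-comparison-1}, \(\alpha_G\preccurlyeq\alpha_{G^\Omega}\), so it suffices to show \(\alpha_{G^\Omega}\preccurlyeq\alpha_H\cdot\alpha_K\). The \(\Omega\)-orbit of \((h,k)\) is exactly \(\act{h}{\Aut(H)}\times\act{k}{\Aut(K)}\), so \(\Omega\)-orbits are in bijection with pairs of orbits. The length of this orbit is \(|\act{h}{\Aut(H)}|+|\act{k}{\Aut(K)}|\), because the minimum over a product splits into the two separate minima. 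Hence the number of \(\Omega\)-orbits of length at most \(n\) is at most the number of pairs of orbits each of length at most \(n\). This gives \(\alpha_{G^\Omega,\Sigma}(n)\le\alpha_{H,\Sigma_H}(n)\,\alpha_{K,\Sigma_K}(n)\).

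For the second statement, assume \(H\) and \(K\) are characteristic. Then every \(\phi\in\Aut(G)\) preserves both factors, so \(\phi=\phi|_H\times\phi|_K\), with each restriction an automorphism of the respective factor. This gives \(\Aut(G)=\Aut(H)\times\Aut(K)\), so \(\alpha_G=\alpha_{G^\Omega}\). For the lower bound, take a pair of orbits of length at most \(n\) each. The corresponding \(\Omega\)-orbit has length at most \(2n\) and determines the pair. Hence \(\alpha_H(n)\alpha_K(n)\le\alpha_G(2n)\), which gives \(\alpha_H\cdot\alpha_K\preccurlyeq\alpha_G\).

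The only delicate step is checking that automorphisms preserving both factors really decompose coordinatewise. This holds because \(\act{(h,k)}{\phi}=\act{(h,1)}{\phi}\,\act{(1,k)}{\phi}\), and each factor maps into its own subgroup. The rest is bookkeeping with the equivalence of \cref{growtheq} and the generating-set independence of \cref{well-defined}.
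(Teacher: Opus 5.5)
Your proof is correct and is essentially the paper's argument: embed \(\Aut(H)\times\Aut(K)\) coordinatewise in \(\Aut(G)\), note that its orbits are products \(\mathcal{O}_H\times\mathcal{O}_K\) of length \(|\mathcal{O}_H|+|\mathcal{O}_K|\), and observe that when both factors are characteristic every automorphism of \(G\) has this form. Your bookkeeping is in fact more accurate than the paper's displayed chain. You get \(\alpha_G(n)\le\alpha_H(n)\,\alpha_K(n)\) in general and \(\alpha_H(n)\,\alpha_K(n)\le\alpha_G(2n)\) in the characteristic case, whereas the paper bounds \(\alpha_G(2n)\) rather than \(\alpha_G(n)\) from above, which fails for example when \(H=\Z\) and \(K\) is trivial, though this does not affect the conclusion up to \(\sim\).
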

  \begin{proof}
    We first consider the bound for generic \(H\) and \(K\).
    For every \(\phi \in \actweird{H}{\Aut}\) and
    \(\psi \in \actweird{K}{\Aut}\), define
    \begin{align*}
      \theta_{\phi, \psi} \colon G & \to G \\
      hk & \mapsto \act{h}{\phi}\cdot \act{k}{\psi} .
    \end{align*}
    We will show that \(\theta_{\phi ,\psi} \in \actweird{G}{\Aut}\). Let \(h_1 k_1, \ h_2
    k_2 \in G\). Then
    \begin{align*}
      \act{h_1 k_1 h_2 k_2}{\theta_{\phi, \psi}}  & = \act{h_1 h_2 k_1 k_2}{\theta_{\phi, \psi}}\\
      & = \act{h_1}{\phi} \cdot \act{h_2}{\phi} \cdot \act{k_1}{\psi} \cdot  \act{k_2}{\psi} \\
      & = \act{h_1}{\phi} \cdot  \act{k_1}{\psi} \cdot \act{h_2}{\phi} \cdot  \act{k_2}{\psi} \\
      & =\act{h_1 k_1}{\theta_{\phi, \psi}}\cdot \act{h_2 k_2} {\theta_{\phi, \psi}}.
    \end{align*}
    So \(\theta_{\phi, \psi}\) is a homomorphism. As \(\phi\) and \(\psi\) are
    both surjective, it follows that \(\theta_{\phi, \psi}\) is also surjective.
    Let \(hk \in \ker (\theta_{\phi, \psi})\). Then \(\act{h}{\phi} \cdot \act{k}{\psi}  = 1\), and
    so \(H \ni (\act{h}{\phi} ) = ( \act{k}{\psi})^{-1}\). It follows that \( \act{k}{\psi}  = 1\) and
    so \(k = 1\). Thus \(\act{h}{\phi} = 1\) and \(h = 1\). We can conclude that
    \(\theta_{\phi, \psi}\) is injective and thus an automorphism.
    
    It follows that if \(h_1 k_1\) and \(h_2 k_2\) are not in the same automorphic orbit of \(G\),
    then \(h_1\) is not in the same automorphic orbit of \(H\) to \(h_2\) or \(k_1\) is not in the same automorphic orbit of \(K\) to \(k_2\).
    
     Let \(\Sigma_H, \Sigma_K\) be finite generating sets for \(H\) and \(K\) respectively. 
    Thus
    \begin{align*}
        \actweird{n}{\alpha_{H, \Sigma_H}}
        \cdot \actweird{n}{\alpha_{K, \Sigma_K}}
        & = \left| \makeset{\cO_H \times \cO_K}{\(\cO_H \in H/\Aut(H)\), \(|\cO_H|_{\Sigma_H} \leq n\),\\ \(\cO_K \in K/\Aut(K)\), \(|\cO_K|_{\Sigma_K} \leq n\)}\right| \\
        & \geq \left|\{\cO \in G/\Aut(G)  : |\cO|_{\Sigma_H \cup \Sigma_K} \leq 2n\}\right|\\
        & = \actweird{2n}{\alpha_{G, \Sigma_H \cup \Sigma_K}}. 
    \end{align*}
    It now suffices to consider the case where \(H\) and \(K\) are
    characteristic in \(G\). In this case, note that if \(hk \in G\) and \(\phi
    \in \actweird{G}{\Aut}\), then \(\act{hk}{\phi} = \act{h}{\phi} \cdot \act{k}{\phi} = \act{h}{\phi} |_H \cdot
    \act{k}{\phi}|_K\). Thus if \(\mathcal O\) is an automorphic orbit in \(G\), then
    when expressing \(G\) using the Cartesian product of \(H\) and \(K\), we
    have that \(\mathcal O = \mathcal O_H \times \mathcal O_K\), for some orbits
    \(\mathcal O_H\) of \(H\) and \(\mathcal O_K\) of \(K\). Thus the \(\geq\)
    in the above system of inequalities is \(=\) in this case, and so \(\alpha_G
    \sim \alpha_H \cdot \alpha_K\).
  \end{proof}

We return to looking at varying the group of automorphisms used. Unlike the base
group they act on, passing to finite index here does not affect the growth rate.

\begin{lem}\label{finite-index-aut-actions}
      Let \(G\) be a finitely generated group, and \(\Omega \leq \Lambda \leq 
      \actweird{G}{\Aut}\) be such that \(\Omega\) has finite index in \(\Lambda\). Then
      \(\alpha_{G^\Omega} \sim \alpha_{G^\Lambda}\).
\end{lem}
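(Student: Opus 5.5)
Since \(\Omega \leq \Lambda\), \cref{growth-comparison-1} gives \(\alpha_{G^\Lambda} \preccurlyeq \alpha_{G^\Omega}\) at once, so only the reverse inequality \(\alpha_{G^\Omega} \preccurlyeq \alpha_{G^\Lambda}\) needs proof. The plan is to show that every \(\Lambda\)-orbit splits into at most \(k \coloneqq [\Lambda : \Omega]\) many \(\Omega\)-orbits, and that each of these pieces has length controlled linearly by the length of the \(\Lambda\)-orbit.

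For the counting step, pick right coset representatives \(\lambda_1, \ldots, \lambda_k\), so that \(\Lambda = \bigcup_i \Omega\lambda_i\). With the paper's right-action conventions, write \((g)\Lambda = \bigcup_i (g)\Omega\lambda_i\). I would first check that each set \((g)\Omega\lambda_i\) is a single \(\Omega\)-orbit. This needs a little care, because \(\Omega\) need not be normal in \(\Lambda\). Instead I will use left cosets \(\Lambda = \bigcup_i \lambda_i\Omega\), which gives
\[
(g)\Lambda = \bigcup_i \big((g)\lambda_i\big)\Omega .
\]
This is a union of at most \(k\) \(\Omega\)-orbits, namely the orbits of the points \((g)\lambda_i\). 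So every \(\Lambda\)-orbit is a union of at most \(k\) \(\Omega\)-orbits.

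The key difficulty is length control. Suppose the \(\Lambda\)-orbit has length \(n\), realised by some \(h\) with \(|h| = n\). The \(\Omega\)-orbits contained in it are then exactly the orbits of the points \((h)\lambda_i\), since we may as well base the decomposition at \(h\). Each \(\lambda_i\) is an automorphism of a finitely generated group, so it is Lipschitz: with \(C \coloneqq \max_{i,\,s \in \Sigma} |(s)\lambda_i|\), we get \(|(h)\lambda_i| \leq C n\). Consequently every \(\Omega\)-orbit inside a \(\Lambda\)-orbit of length at most \(n\) has length at most \(Cn\).

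It then remains to count. Every \(\Omega\)-orbit lies inside some \(\Lambda\)-orbit, and the length of a \(\Lambda\)-orbit is at most the length of any \(\Omega\)-orbit it contains. I will bound \(\alpha_{G^\Omega,\Sigma}\) from above using the smaller-radius \(\Lambda\)-count: the \(\Omega\)-orbits whose ambient \(\Lambda\)-orbit has length at most \(n\) number at most \(k\,\alpha_{G^\Lambda}(n)\), and by the previous step all of them have length at most \(Cn\). That is not the direction needed, so I would flip the argument. An \(\Omega\)-orbit of length at most \(n\) lies in a \(\Lambda\)-orbit of length at most \(n\). Each such \(\Lambda\)-orbit contains at most \(k\) \(\Omega\)-orbits in total, so
\[
\alpha_{G^\Omega}(n) \leq k\,\alpha_{G^\Lambda}(n).
\]
This gives \(\alpha_{G^\Omega} \preccurlyeq \alpha_{G^\Lambda}\) with \(\lambda = k\) in \cref{growtheq}. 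The Lipschitz estimate therefore turns out to be unnecessary; the crux of the proof is simply the left-coset decomposition showing that there are at most \(k\) \(\Omega\)-orbits per \(\Lambda\)-orbit.
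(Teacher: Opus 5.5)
Your argument is correct and is essentially the paper's proof. Both take a left transversal \(T\) of \(\Omega\) in \(\Lambda\), observe that the \(\Lambda\)-orbit of \(g\) is the union of the \(\Omega\)-orbits of the points \((g)\psi\) for \(\psi \in T\), and deduce \(\alpha_{G^\Omega}(n) \leq [\Lambda:\Omega]\,\alpha_{G^\Lambda}(n)\), with the other inequality coming from \cref{growth-comparison-1}; the right-coset false start and the Lipschitz digression can simply be deleted from a final write-up.
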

  \begin{proof}
      We need only show that \(\alpha_{G^{\Omega}}  \preccurlyeq
      \alpha_{G^\Lambda}\). We begin by showing that every \(\Lambda\)-orbit is
      the union of at most $[\Lambda : \Omega]$ \(\Omega\)-orbits. Choose a
      left transversal $T\subseteq \actweird{G}{\Aut}$ for the cosets of $\Omega$ in
      \(\Lambda\). For any $g\in G$ we have
      \begin{align*}
          \{\act{g}{\phi} : \phi\in \Lambda\} 
          &= \bigcup_{\psi\in T}\{\act{g}{\comp{\psi}{\tau}}: \tau \in \Omega\}.
      \end{align*}
      and so the \(\Lambda\)-orbit of $g$ is a union of \([\Lambda:\Omega]\) \(\Omega\)-orbits, namely of \(\{\act{g}{\psi} : \psi \in T\}\). Thus the number of \(\Omega\)-orbits of length at most \(n\) is at most $[\Lambda : \Omega]$ times the number of \(\Lambda\)-orbits of length at most \(n\).
      In particular \(\alpha_{G^\Lambda} \succcurlyeq
      \alpha_{G^{\Omega}}\).
  \end{proof}

  If \(G\) is a group such that \(\Out(G)\) is finite, then \(\Inn(G)\) has finite index
  in \(\Aut(G)\), and thus we have the following:

   \begin{cor}\label{finiteOut}
      If $\actweird{G}{\Out}$ is finite, then the automorphic growth function of
      $G$ is equivalent to its conjugacy growth function.
  \end{cor}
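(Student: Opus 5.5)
The plan is to apply \cref{finite-index-aut-actions} with \(\Omega = \actweird{G}{\Inn}\) and \(\Lambda = \actweird{G}{\Aut}\). First I will check the hypotheses. \(\actweird{G}{\Inn}\) is a normal subgroup of \(\actweird{G}{\Aut}\), since \(\conjugate{\gamma_h}{\phi} = \gamma_{\act{h}{\phi}}\) for each inner automorphism \(\gamma_h\) and each \(\phi \in \actweird{G}{\Aut}\). Hence \(\actweird{G}{\Out} = \actweird{G}{\Aut}/\actweird{G}{\Inn}\) is a group, and its order is exactly the index \([\actweird{G}{\Aut} : \actweird{G}{\Inn}]\). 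So if \(\actweird{G}{\Out}\) is finite, then \(\actweird{G}{\Inn}\) has finite index in \(\actweird{G}{\Aut}\). We may take \(G\) finitely generated throughout, since the growth functions are only defined in that setting.

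Next, \cref{finite-index-aut-actions} gives \(\alpha_{G^{\Inn(G)}} \sim \alpha_{G^{\Aut(G)}}\). By definition, the right-hand side is the automorphic growth \(\alpha_G\).

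Finally, the \(\actweird{G}{\Inn}\)-orbits of \(G\) are exactly its conjugacy classes. Therefore \(\alpha_{G^{\Inn(G)}}\) is the conjugacy growth function, as already noted after \cref{growtheq}. This completes the argument.

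There is no real obstacle here. The only point needing care is that the index of \(\actweird{G}{\Inn}\) in \(\actweird{G}{\Aut}\) equals \(|\actweird{G}{\Out}|\), and this holds by definition of the quotient.
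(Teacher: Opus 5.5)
Your proposal is correct and follows the paper's own argument: the paper likewise deduces the corollary by observing that finiteness of \(\Out(G)\) means \(\Inn(G)\) has finite index in \(\Aut(G)\), and then applying \cref{finite-index-aut-actions} with \(\Omega = \Inn(G)\) and \(\Lambda = \Aut(G)\). The extra checks you include, namely normality of \(\Inn(G)\) and that its orbits are the conjugacy classes, are the implicit details the paper leaves unstated.
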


  Applying \cref{finiteOut} to special linear groups gives the following. A similar
  argument is used when computing the automorphic growth of Thompson's group \(T\).
  \begin{cor}
           For \(m \geq 3\), \(\SL_m(\Z)\) has exponential automorphic growth.
  \end{cor}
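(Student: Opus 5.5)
Following the hint before the statement, I would reduce via \cref{finiteOut} to the conjugacy growth of \(\SL_m(\Z)\), and then bound that below by an exponential.

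First I need that \(\Out(\SL_m(\Z))\) is finite for \(m\geq 3\). This is classical (Hua--Reiner). Every automorphism of \(\SL_m(\Z)\) is a product of an inner automorphism by an element of \(\GL_m(\Z)\), possibly the inverse-transpose map \(A\mapsto (A^{-1})^{T}\), and possibly a central twist \(A \mapsto \chi(A)A\) with \(\chi\) a homomorphism to the centre. For \(m\geq3\), \(\SL_m(\Z)\) is perfect, so central twists are trivial. Hence \(\Out(\SL_m(\Z))\) has order at most \(4\). Alternatively, Mostow--Margulis rigidity gives finiteness directly. By \cref{finiteOut}, \(\alpha_{\SL_m(\Z)}\) is then equivalent to the conjugacy growth function of \(\SL_m(\Z)\).

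It remains to show the conjugacy growth is exponential. The upper bound is immediate: conjugacy growth is at most standard growth, which is at most exponential. For the lower bound, one option is to cite the known result that non-virtually-solvable linear groups have exponential conjugacy growth (Breuillard--de Cornulier \cite{BreuillarddeCornulier}). A more hands-on option is to use traces, which are conjugation invariants. Consider the embedded copy of \(\SL_2(\Z)\) in the top-left corner, extended by the identity. It contains the matrices \(a=\begin{pmatrix}1&2\\0&1\end{pmatrix}\) and \(b=\begin{pmatrix}1&0\\2&1\end{pmatrix}\), which generate a free group. Words of length \(n\) in positive powers of \(a,b\) have entries growing exponentially in \(n\). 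So I would exhibit exponentially many positive words of length at most \(n\) with pairwise distinct traces. For instance, take the words \(a^{k_1}b^{k_2}\cdots\) and track \(\operatorname{tr}\) as a polynomial in the exponents. A cleaner route is to use the words \(ab^{j}\) and their products and count distinct traces via the continued-fraction correspondence.

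The main obstacle is this distinct-traces count. Any single trace value is attained by only finitely many conjugacy classes (finitely many classes of integer matrices with given characteristic polynomial, for hyperbolic elements). So it suffices to show that the set of traces of elements of length \(\leq n\) has size \(\succcurlyeq e^{cn}\), but the traces are integers bounded by \(C^n\), so a pigeonhole argument alone is insufficient. I would therefore rely on the cited theorem for exponential conjugacy growth of \(\SL_m(\Z)\), which is what I expect the paper does.
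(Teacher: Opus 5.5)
Your proposal is correct and is essentially the paper's proof. Like the paper, you get finiteness of \(\Out(\SL_m(\Z))\) for \(m\geq 3\) from Hua--Reiner \cite{HuaReiner}, use \cref{finiteOut} to pass to conjugacy growth, and then cite a theorem for exponential conjugacy growth; the trace-counting digression is not needed. One correction: the result you want for non-virtually-solvable linear groups such as \(\SL_m(\Z)\) is Breuillard--Cornulier--Lubotzky--Meiri \cite{ConjLinear}*{Theorem 1.1}, which is what the paper cites, whereas \cite{BreuillarddeCornulier} concerns solvable groups.
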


  \begin{proof}
     By \cite{ConjLinear}*{Theorem 1.1}, \(\SL_m(\Z)\) has
     exponential conjugacy growth for \(m \geq 2\), and by \cite{HuaReiner}*{Theorem 3},
     \(\Out(\SL_m(\Z))\) is finite for \(m \geq 3\). The result now follows
     from \cref{finiteOut}.
  \end{proof}

\section{Virtually abelian groups}
\label{sec:VA}
    In this section we study the automorphic growth of virtually abelian groups.
    Automorphism groups of free abelian groups are very well understood; they are
    the general linear groups, and as such computing automorphic orbits is
    straightforward. Despite often being considered to be a `tame' class of
    groups, virtually abelian groups have somewhat complicated automorphism
    groups, related to subgroups of general linear groups. Eick has studied
    these automorphism groups, showing that their generating sets are
    computable \cite{Eick}. The case of virtually abelian groups with
    rank at most \(2\) is less involved due to the fewer possibilities
    of what the automorphic growth can be, with the quadratic upper bound
    provided by standard growth. In the general case, there are virtually
    abelian groups of every possible (exactly) polynomial rank.

    We begin with a classification of the growth of finitely generated free
    abelian groups.
  \begin{theorem}\label{thm:freeabelian}
    If \(A\) is a non-trivial finitely generated free abelian group then
    \(\actweird{n}{\alpha_A} = n + 1\), with respect to the standard generating set.
  \end{theorem}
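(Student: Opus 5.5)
Identify \(A\) with \(\Z^m\), \(m\geq 1\), with the standard generating set \(e_1,\dots,e_m\). The word length is then the \(\ell^1\)-norm, \(|x| = \sum_i |x_i|\), and \(\actweird{A}{\Aut} = \GL_m(\Z)\) acting on row vectors. The plan is to describe the orbits completely and then compute each orbit's minimal length.

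\textbf{Step 1: the orbits.} For \(x \in \Z^m\) let \(d(x) = \gcd(x_1,\dots,x_m)\), with \(d(0)=0\). I will show that the \(\GL_m(\Z)\)-orbits are exactly the level sets of \(d\).
\begin{itemize}
  \item \(d\) is invariant. If \(x\) has content \(d\), then \(x\phi\) is an integer combination of the \(x_i\), so \(d \mid d(x\phi)\). Applying the same argument to \(\phi^{-1}\) gives equality.
  \item Each level set is a single orbit. For \(x \neq 0\), write \(x = d y\) with \(y\) primitive. A primitive vector extends to a basis of \(\Z^m\); equivalently, there is a matrix in \(\GL_m(\Z)\) with first row \(y\). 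This can be seen either via Bézout and induction on \(m\), or by Euclidean-algorithm row operations (elementary matrices) reducing \(y\) to \(e_1\).
  \item Hence some \(\phi\) satisfies \(e_1\phi = y\), so \((d e_1)\phi = x\).
\end{itemize}
So the orbits are \(\{0\}\) together with one orbit \(\cO_d\) for each \(d \geq 1\), and \(\cO_d\) contains \(d e_1\).

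\textbf{Step 2: orbit lengths.} I claim \(|\cO_d| = d\).
\begin{itemize}
  \item Upper bound: \(d e_1 \in \cO_d\) has length \(d\).
  \item Lower bound: every nonzero \(x \in \cO_d\) has all coordinates divisible by \(d\), and at least one coordinate is nonzero. That coordinate has absolute value at least \(d\), so \(|x| \geq d\).
\end{itemize}
The trivial orbit has length \(0\).

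\textbf{Step 3: counting.} The orbits of length at most \(n\) are \(\{0\}, \cO_1, \dots, \cO_n\). This gives \(\actweird{n}{\alpha_A} = n+1\) exactly.

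The only step with real content is the transitivity claim in Step 1, that primitive vectors lie in one \(\GL_m(\Z)\)-orbit. I would cite it as the standard fact that primitive vectors complete to a basis, possibly with a one-line Euclidean-algorithm argument. Everything else is immediate.
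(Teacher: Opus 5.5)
Your proposal is correct and follows essentially the paper's argument: the orbits are classified by the gcd, with representative $d e_1$ reached by Euclidean-algorithm row operations, and the $\ell^1$ bound shows each orbit has length exactly $d$, giving $n+1$. The one minor difference is how you separate orbits. You use invariance of the content under integer matrices, whereas the paper computes a determinant to show that any matrix sending $(x,0,\dots,0)$ to $(y,0,\dots,0)$ has $m_{11}=\pm 1$. Your version is slightly cleaner and also gives $|x|\geq d$ directly for every element of the orbit.
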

  \begin{proof}
    Consider the finitely generated free abelian group \(\mathbb{Z}^r\), where
    \(r > 0\). We show that two elements of $\Z^r$ are in the same orbit if and
    only if they have the same greatest common divisor. Note that replacing one
    generator of \(\mathbb{Z}^r\) with its sum with another generator (and
    fixing all other generators) is an automorphism of $G$. Thus the operation
    of adding one entry of a vector to another entry preserves which orbit the
    element belongs to. Let $x=(x_1,\ldots,x_r)\in \Z^r$. By applying the
    Euclidean algorithm (and possibly negating entries at times) we can replace
    $x$ with another element in the same automorphic orbit with all entries the
    same except that the rightmost non-zero entry is now zero, and the entry to
    the left of which is the greatest common divisor of the these two entries.
    We can repeat this until all the entries of $x$ except possibly the first
    are zero. The first entry will be the greatest common divisor of the
    original entries.

    
    We now show these are unique representatives
    for all automorphic orbits. So suppose
    \(M = [m_{ij}] \in \GL_r(\Z)\) is such that
    \(\comp{[x\ 0\ \ldots\ 0]^{\ifthenelse{\boolean{rightactions}}{}{T}}}{\cdot}{M} = [y\ 0\ \ldots\ 0]^{\ifthenelse{\boolean{rightactions}}{}{T}}\),
    where \(x, y \in \N \setminus \{0\}\). Then
    \(m_{\comp{1}{j}} =0\) for all \(j \geq 2\). Using the
    standard formula, \(\actweird{M}{\det} = m_{11} \cdot d\),
    where \(d\) is the determinant of \([m_{ij}]_{i, j = 2}^n\). Since \(m_{11}, d \in \Z\) and \(|\actweird{M}{\det}| = 1\), it follows that
    \(m_{11} = \pm 1\). Thus \(x = \pm y\). Since
    \(x, y \geq 0\), we can conclude that \(x = y\). So these are indeed unique representatives.
    Since 
    \(\actweird{x_1, \ldots, x_r}{\gcd} \leq x_1 + \cdots + x_r\), these are minimal representatives for their automorphic orbits.

    So the number of
    automorphic orbits of \(\Z^r\) of length
    at most \(n\) is just the number of vectors
    of the form \((x, 0, \ldots, 0)\), where
    \(x \in \{0, \ldots, n\}\), and so there are
    exactly \(n + 1\) of these.
  \end{proof}
  Before diving into the characterisation of the automorphic growth of virtually
  abelian groups, we first explicitly compute the growth in a well-known example - 
  the Klein bottle group.
  \begin{ex}
      We show that the Klein bottle group \(\langle a, b \mid \conjugate{a}{b} = a^{-1} \rangle\) is a virtually abelian group with quadratic automorphic growth.
      
      The Klein bottle group has quadratic standard growth, and so the
      automorphic growth is at most quadratic. It therefore suffices to show
      that the growth is at least quadratic. Let \(G\) denote the Klein bottle
      group. Firstly, note that \(\langle a, b^2 \rangle\) is an abelian normal
      subgroup of \(G\). The quotient has size 2, and \(\{1, b\}\) is a
      transversal. So every element of \(G\) can be expressed uniquely as \(a^i
      b^j \), where \(i, j \in \mathbb{Z}\). 
      We begin by showing \(\actweird{G}{Z} = \langle b^2 \rangle\), and so
      \(\langle b^2 \rangle\) is characteristic. From before, we have that \(a\)
      is an element of the abelian group \(\langle a, b^2\rangle\). Thus
      \(C_G(a)\supseteq \langle a, b^2\rangle\). As \(\langle a, b^2\rangle\)
      has index 2, this is a maximal subgroup of \(G\). As \(a\) does not
      commute with \(b\), it follows that \(C_G(a)= \langle a, b^2\rangle\). In
      addition, if \(k \in \mathbb{Z}\), \(\conjugate{a^k}{b} =
      (\conjugate{a}{b})^k = a^{-k}\), and so \(\actweird{b}{C_G} = \langle b
      \rangle\). Thus \(\actweird{G}{Z} = C_G(a) \cap C_G(b) =  \langle b^2
      \rangle\), and \(\langle b^2 \rangle\) is characteristic.

      We next show that \([G, G] = \langle a^2 \rangle\), and so \(\langle a^2
      \rangle\) is also characteristic. The
      abelianisation has presentation \(\makepres{a,b}{\(a^2\)}\) as an abelian
      group. Thus \(a^ib^j\) belongs to the commutator subgroup if and only if
      \(i\) is even and \(j=0\). In particular, \([G, G] = \langle a^2 \rangle\),
      and this is a characteristic subgroup of \(G\). As \(\langle a^2 \rangle\)
      and \(\langle b^2 \rangle\) are infinite cyclic characteristic subgroups
      of \(G\) and \(\actweird{\mathbb{Z}}{\Aut} \cong C_2\), if \(\phi \in
      \actweird{G}{\Aut}\), then \(\act{a^2}{\phi} \in \{a^2, a^{-2}\}\) and
      \(\act{b^2}{\phi} \in \{b^2, b^{-2}\}\). Thus if \(i, j, k, l \in
      \mathbb{N}\setminus \{0\}\) and \(\phi \in \actweird{G}{\Aut}\) are such that
      \(\act{a^{2i} b^{2j}}{\phi} = a^{2k} b^{2l}\), then \(a^{\pm 2i} b^{\pm
      2j} = a^{2k} b^{2l}\). Since \(i, j, k, l > 0\), it follows that \(i = k\)
      and \(j = l\). We have thus shown that \(R = \makeset{a^{2i} b^{2j}}{ \(i,
      j \in \mathbb{Z}_{> 0} \)}\) is a set of pairwise non-automorphic elements
      of \(G\). As \(\langle a^2, b^2 \rangle\) is a finite-index subgroup of
      \(G\), it is undistorted (see, e.g. \cite{Mann}), and so \(| a^{2i}
      b^{2j} | = 4 \mu ij + \mu\) for some (fixed) constant \(\mu\). Since
      this set has quadratic growth, we have thus shown that the automorphic
      growth of \(G\) is at least quadratic, as required.
  \end{ex}

The following lemma allows us to assume that our finite-index free abelian
subgroup is characteristic. This has already been used in the study of
automorphism groups of virtually abelian groups
in \cite{Eick}.
\begin{lem}[{\cite{Eick}*{Lemma 2.1}}]
    \label{virt-ab-char}
    Let \(G\) be a finitely generated virtually abelian group. Then \(G\) admits
    a free abelian characteristic subgroup of finite index.
\end{lem}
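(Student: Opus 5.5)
We want a finite-index subgroup of \(G\) that is free abelian and preserved by every automorphism of \(G\). The plan is to start from an arbitrary finite-index free abelian subgroup and shrink it canonically, so that the resulting subgroup is determined by \(G\) alone.

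\textbf{Step 1 (a canonical finite-index abelian subgroup).} Since \(G\) is finitely generated, for each \(k\) it has only finitely many subgroups of index \(k\). Fix a free abelian subgroup \(B \leq G\) of finite index \(k\); this exists because a finitely generated abelian group has a finite-index free abelian subgroup. Define
\[
  C \coloneqq \bigcap \makeset{K \leq G}{\([G:K] = k\) and \(K\) is abelian}.
\]
This is a finite intersection of finite-index subgroups, so \(C\) has finite index in \(G\). It is abelian, since it lies inside \(B\). It is characteristic, because automorphisms preserve both the index and abelianness, and therefore permute the family being intersected.

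\textbf{Step 2 (removing torsion).} The subgroup \(C\) is finitely generated abelian but may have torsion. Let \(e\) be the exponent of the torsion subgroup of \(C\), and set
\[
  A \coloneqq C^e = \{c^e : c \in C\}.
\]
Because \(C\) is abelian, \(A\) is a subgroup. It is characteristic in \(G\): an automorphism \(\phi\) satisfies \(\act{C}{\phi} = C\), hence \(\act{c^e}{\phi} = (\act{c}{\phi})^e \in C^e\).

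\textbf{Step 3 (checking \(A\) is free abelian of finite index).} Write \(C \cong \Z^r \times T\) with \(T\) finite. Then \(C^e \cong (e\Z)^r\), since \(T^e = 1\) by the choice of \(e\). So \(A\) is free abelian, and \([C : A] = e^r\,|T|\) is finite. Combining with Step 1, \(A\) has finite index in \(G\).

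The only genuinely non-formal input is the fact that a finitely generated group has finitely many subgroups of a given index. This holds because such subgroups correspond to transitive actions on \(k\) points, and there are finitely many homomorphisms from a finitely generated group to \(\Sym(k)\).
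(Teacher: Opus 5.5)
Your proof is correct. The paper gives no argument of its own here: the lemma is simply cited from Eick's Lemma~2.1. So there is no in-paper route to compare against, and your argument works as a self-contained replacement. The intersection of all abelian subgroups of index \(k\) is finite-index and characteristic for exactly the reasons you give. Your justification of the finiteness of the family via \(\mathrm{Hom}(G,\Sym(k))\) is also fine: every index-\(k\) subgroup is a point stabiliser for some such homomorphism.

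Two remarks on what your construction actually yields.

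\textbf{Step 2 is vacuous.} Since \(B\) is itself one of the subgroups being intersected, you have \(C \leq B \cong \Z^r\). Every subgroup of a finitely generated free abelian group is free abelian, so \(C\) is already torsion-free. Your exponent \(e\) is therefore \(1\) and \(A = C\).

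\textbf{It proves slightly more than the statement.} The same containment shows that the characteristic subgroup can be taken inside any prescribed finite-index free abelian subgroup \(B\). This is precisely the form in which the paper uses the lemma, even though the cited statement does not say so. In \cref{VirtuallyZ2Quad} the authors need a characteristic \(A' \leq A\), because they argue that an automorphism of \(A'\) has at most one extension to \(A\). In \cref{payoff} they replace \(A\) by a finite-index characteristic subgroup while keeping the hypothesis \(\conjugate{x}{g} = x^{\actweird{g}{\sgn}}\) on it. Taking \(B = A\) in your Step~1 delivers this containment directly.

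Citing Eick buys brevity. Your argument buys self-containment and the stronger containment property that the later proofs silently rely on.
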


When dealing with the quadratic case, we have to study normalisers. Since centralisers
are usually easier to understand, we employ the following easy fact:

\begin{lem}
    \label{norm-cent}
    Let \(G\) be a group and \(F \leq G\) be finite. Then \(\actweird{F}{C_G}\)
    is a finite-index normal subgroup of \(\actweird{F}{N_G}\).
\end{lem}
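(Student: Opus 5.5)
The plan is to use the conjugation action of \(\actweird{F}{N_G}\) on the finite set \(F\). Each \(g \in \actweird{F}{N_G}\) satisfies \(\conjugate{F}{g} = F\). So \(g\) determines a permutation \(\pi_g \colon F \to F\) given by \(x \mapsto \conjugate{x}{g}\). I will check that \(g \mapsto \pi_g\) defines a homomorphism \(\rho \colon \actweird{F}{N_G} \to \actweird{F}{\Sym}\). This is immediate from \(\conjugate{x}{gh} = \conjugate{(\conjugate{x}{g})}{h}\); with right actions written as in the paper, composition reads left to right, so no anti-homomorphism issue arises.

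Next I identify the kernel of \(\rho\). An element \(g \in \actweird{F}{N_G}\) lies in \(\ker\rho\) if and only if \(\conjugate{x}{g} = x\) for all \(x \in F\), that is, if and only if \(g\) commutes with every element of \(F\). So \(\ker \rho = \actweird{F}{C_G}\). Every element of \(\actweird{F}{C_G}\) certainly normalises \(F\), so this is consistent with \(\actweird{F}{C_G} \leq \actweird{F}{N_G}\). Being a kernel, \(\actweird{F}{C_G}\) is normal in \(\actweird{F}{N_G}\).

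Finally, by the first isomorphism theorem, \(\actweird{F}{N_G}/\actweird{F}{C_G}\) embeds in \(\actweird{F}{\Sym}\). The latter has order \(|F|!\), which is finite because \(F\) is finite. Hence the index \([\actweird{F}{N_G} : \actweird{F}{C_G}]\) is at most \(|F|!\) and in particular finite.

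There is no real obstacle here. The only point needing care is the bookkeeping of the right-action conventions in the homomorphism check, and I will not need the fact that \(F\) is a subgroup, only that it is a finite subset.
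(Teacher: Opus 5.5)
Your proposal is correct and is essentially the paper's argument: the paper likewise maps \(\actweird{F}{N_G}\) to \(\actweird{F}{\Sym}\) via conjugation, identifies the kernel as \(\actweird{F}{C_G}\), and concludes from the finiteness of the target. Your remarks on the left-to-right composition convention and on only needing \(F\) to be a finite subset are accurate extra detail the paper leaves implicit.
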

\begin{proof}
    Define \(\phi\colon \actweird{F}{N_G} \to \actweird{F}{\Sym}\) by \(\act{a}{(\act{f}{\phi})} = \conjugate{a}{f}\). This is clearly a homomorphism, the kernel is \(\actweird{F}{C_G}\) and the image is contained in a finite group.
\end{proof}

The following gives a condition on which automorphisms of our finite-index free
abelian characteristic subgroup \(A\) extend to automorphisms of our virtually
abelian group \(G\). Our proof of the quadratic case is done by showing that
`few' automorphisms satisfy this condition, and thus the relative automorphic
growth of \(A\) must be `close' to its standard growth, which is
quadratic. Since the relative
automorphic growth of \(A\) is a lower bound for the automorphic growth of
\(G\), showing that the relative automorphic growth of \(A\) is quadratic
is sufficient to show that growth of \(G\) is at least quadratic.
\begin{lem}\label{barF_Aintro}
    Suppose that \(G\) is a finitely generated group and \(A\leq G\) is a characteristic abelian subgroup of finite index.
    Let \(\kappa_{A, G}\colon G\to \actweird{G}{\Aut}\) be the usual action by conjugation and let \(\rho_{A, G}\colon \actweird{G}{\Aut}\to \actweird{A}{\Aut}\) be defined by restriction to \(A\).
        Then \(\bar{F}_{A, G} \coloneqq \actweird{\comp{\kappa_{A, G}}{\rho_{A, G}}}{\im}\leq \actweird{A}{\Aut}\) is finite and normal in \(P_{A, G} \coloneqq \actweird{\kappa_{A, G}}{\im}\).
        In particular, \(P_{A, G}\) is contained in the normaliser of \(\bar{F}_{A, G}\) in \(\actweird{A}{\Aut}\).
\end{lem}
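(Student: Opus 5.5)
Throughout I read \(P_{A,G}\) as the image of the restriction map \(\rho_{A,G}\), that is, the group of automorphisms of \(A\) that extend to automorphisms of \(G\). This is the reading under which \(P_{A,G}\leq \actweird{A}{\Aut}\) and under which the final sentence of the statement makes sense; the image of \(\kappa_{A,G}\) itself is \(\Inn(G)\), which is not a subgroup of \(\actweird{A}{\Aut}\). The plan has three short steps: check that \(\rho_{A,G}\) is a homomorphism, show that \(\bar{F}_{A,G}\) is finite, and show that it is normalised by \(P_{A,G}\). The last sentence of the statement then follows directly from the definition of the normaliser.

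\textbf{Step 1: \(\rho_{A,G}\) is a homomorphism.} Since \(A\) is characteristic, every \(\phi\in\actweird{G}{\Aut}\) satisfies \(\act{A}{\phi}=A\), and likewise for \(\phi^{-1}\). Hence \(\phi|_A\) is a bijection \(A\to A\) and an automorphism of \(A\). Restriction respects composition, so \(\rho_{A,G}\) is a homomorphism. Composing with the homomorphism \(\kappa_{A,G}\) shows that \(\bar{F}_{A,G}\) is a subgroup of \(\actweird{A}{\Aut}\), and that \(P_{A,G}\) is also a subgroup.

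\textbf{Step 2: finiteness.} Because \(A\) is abelian, conjugation by any \(a\in A\) restricts to the identity on \(A\). So \(A\leq\ker(\comp{\kappa_{A,G}}{\rho_{A,G}})\), and the map factors through \(G/A\). This quotient is finite because \(A\) has finite index. Hence \(|\bar{F}_{A,G}|\leq[G:A]<\infty\).

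\textbf{Step 3: normality.} Take \(\phi\in\actweird{G}{\Aut}\) and \(g\in G\). In \(\actweird{G}{\Aut}\) I will verify the identity \(\comp{\phi^{-1}}{\kappa_g}{\phi}=\kappa_{\act{g}{\phi}}\), where \(\kappa_g\) denotes the image of \(g\) under \(\kappa_{A,G}\). With the paper's right-action conventions this is checked by evaluating both sides on an arbitrary \(x\in G\), using that \(\phi\) is a homomorphism. Applying the homomorphism \(\rho_{A,G}\) then gives
\[
(\phi|_A)^{-1}\,(\kappa_g|_A)\,\phi|_A=\kappa_{\act{g}{\phi}}|_A\in\bar{F}_{A,G}.
\]
As \(\phi|_A\) ranges over \(P_{A,G}\), this shows that \(P_{A,G}\) normalises \(\bar{F}_{A,G}\). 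Since \(\bar{F}_{A,G}\subseteq P_{A,G}\) (inner automorphisms are automorphisms), \(\bar{F}_{A,G}\) is a normal subgroup of \(P_{A,G}\). The final clause of the statement is just a restatement of this.

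The only step needing any care is the conjugation identity in Step 3. Its orientation depends on the convention of writing maps on the right: conjugation by \(g\) may be \(x\mapsto g^{-1}xg\) or \(x\mapsto gxg^{-1}\), and the sign in \(\kappa_{\act{g}{\phi}}\) versus \(\kappa_{\act{g^{-1}}{\phi}}\) changes accordingly. Either way the conjugate lies in \(\bar{F}_{A,G}\), so the argument does not depend on which convention is used.
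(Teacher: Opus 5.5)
Your proof is correct and follows the paper's argument. Finiteness holds because \(A\) lies in the kernel of \(\kappa_{A,G}\rho_{A,G}\). Normality holds because \(\Inn(G)\) is normal in \(\Aut(G)\) and this survives the surjection \(\rho_{A,G}\) onto \(P_{A,G}\); the paper's proof uses the same corrected reading \(P_{A,G}=(\Aut(G))\rho_{A,G}\) that you adopt, and your explicit conjugation identity \(\phi^{-1}\kappa_g\phi=\kappa_{(g)\phi}\) just replaces the paper's appeal to the Correspondence Theorem with the one fact actually needed, namely that surjective homomorphisms carry normal subgroups to normal subgroups.
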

\begin{proof}
To show that \(\bar{F}_{A, G}\) is finite, we need only show that \(A\leq \actweird{\comp{\kappa_{A, G}}{\rho_{A, G}}}{\ker}\). This occurs precisely because \(A\) is abelian.
By the Correspondence Theorem,  \(\bar{F}_{A, G}=\act{\actweird{\kappa_{A, G}}{\im}}{\rho_{A, G}}\) is normal in \(P_{A, G}=(\Aut(G))\rho_{A, G}\) if and only if \(\actweird{\kappa_{A, G}}{\im}\) is normal in \(\actweird{G}{\Aut}\). 
But \(\actweird{\kappa_{A, G}}{\im}\) is the inner automorphism group of \(G\), so this is immediate. 
\end{proof}

The next four lemmas are various `standard' facts about matrix groups, which we
require in the proof that rank 2 virtually abelian groups have quadratic growth
if the action on the finite-index characteristic free abelian subgroup is not
entirely by identity or inversion maps. The following is a well-known corollary
to a well-known result. We give the brief proof of the corollary below.

\begin{lem}
    \label{fin-ord-diag}
    If \(r\in \N\) and \(M \in \actweird{\mathbb{C}}{\GL_r}\) is finite order, then it is diagonalisable.
\end{lem}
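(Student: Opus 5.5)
The paper calls this a well-known corollary of a well-known result. The result I would invoke is that a complex matrix is diagonalisable if and only if its minimal polynomial has no repeated roots. The proof is then a short computation with the minimal polynomial.

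Let \(k \geq 1\) be the order of \(M\), so \(M^k = I\). Then \(M\) is annihilated by \(p(x) = x^k - 1\), and the minimal polynomial \(\mu_M\) of \(M\) divides \(p\).

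Next I check that \(p\) has \(k\) distinct roots over \(\mathbb{C}\). The derivative is \(p'(x) = kx^{k-1}\), whose only root is \(0\), and \(0\) is not a root of \(p\). So \(\gcd(p, p') = 1\) and \(p\) is separable. Equivalently, its roots are the \(k\) distinct \(k\)-th roots of unity. Since \(\mu_M\) divides \(p\), it also has distinct roots, each of which lies in \(\mathbb{C}\). By the criterion above, \(M\) is diagonalisable.

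If I wanted to avoid quoting the criterion, I would argue via the Jordan normal form instead. Suppose \(M\) had a Jordan block \(J = \lambda I + N\) of size at least \(2\), where \(N\) is nonzero and nilpotent. From \(M^k = I\) we get \(J^k = I\), so \(\lambda^k = 1\) and in particular \(\lambda \neq 0\). Expanding binomially, \(J^k = \lambda^k I + k\lambda^{k-1} N + (\text{higher powers of } N)\). The superdiagonal entry of \(J^k\) is therefore \(k\lambda^{k-1}\), which is nonzero because the field has characteristic zero. This contradicts \(J^k = I\), so every Jordan block has size \(1\).

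The only step that needs any care is the separability of \(x^k - 1\), and this uses that \(\mathbb{C}\) has characteristic zero. The argument would fail in characteristic dividing \(k\).
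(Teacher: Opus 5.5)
Your proof is correct and follows the paper's argument: the paper likewise cites the criterion that a complex matrix is diagonalisable if and only if its minimal polynomial has distinct roots, and notes that the minimal polynomial divides \(x^k-1\), which has distinct roots. Your derivative check of separability and your alternative Jordan-form argument are both valid; they just spell out what the paper takes for granted.
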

\begin{proof}
    \cite{HoffmanKunze}*{Chapter 6, Theorem 6} states that the minimal polynomial of $M$ has distinct roots if and only if it is diagonalisable. Since $M$ is finite order, we have $M^k=I$ for some $k$, and hence $x^k-1$ is an annihilating polynomial for $M$, and has distinct roots. The minimal polynomial divides $x^k-1$ and therefore also has distinct roots.
\end{proof}

The following, which we expect is known, describes centralisers in
\(\actweird{\mathbb{C}}{\GL_r}\).
\begin{lem}\label{glnZ-centraliser}
    Suppose that \(M\in \actweird{\mathbb{C}}{\GL_r}\) is diagonalisable with eigenvalues
    \(\lambda_1,\ldots,\lambda_k\in \mathbb{C}\) and multiplicities \(m_1,\ldots,
    m_k\), respectively. Then \(\actweird{M}{C_{\actweird{\mathbb{C}}{\GL_r}}}\cong \oplus_{i=1}^k
    \actweird{\mathbb{C}}{\GL_{m_i}}\) (with equality when \(k=1\)).
\end{lem}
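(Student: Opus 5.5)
We prove the statement by first diagonalising \(M\) and then solving the commutation equation entrywise.

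\emph{Step 1: reduce to a block-scalar matrix.} Since \(M\) is diagonalisable, choose \(Q \in \actweird{\mathbb{C}}{\GL_r}\) such that \(D \coloneqq QMQ^{-1}\) is diagonal. Reordering the basis if necessary, we may assume \(D\) is block diagonal with blocks \(\lambda_1 I_{m_1}, \ldots, \lambda_k I_{m_k}\), where the \(\lambda_i\) are pairwise distinct. Conjugation by \(Q\) is an automorphism of \(\actweird{\mathbb{C}}{\GL_r}\) carrying the centraliser of \(M\) onto the centraliser of \(D\). It therefore suffices to show that the centraliser of \(D\) is exactly the group of invertible block-diagonal matrices with blocks of sizes \(m_1, \ldots, m_k\). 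That group is visibly isomorphic to \(\oplus_{i=1}^k \actweird{\mathbb{C}}{\GL_{m_i}}\).

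\emph{Step 2: compute the centraliser of \(D\).} Write \(X \in \actweird{\mathbb{C}}{\GL_r}\) in block form \(X = [X_{ij}]\), with \(X_{ij}\) of size \(m_i \times m_j\). The \((i,j)\) block of \(DX\) is \(\lambda_i X_{ij}\), and that of \(XD\) is \(\lambda_j X_{ij}\). Hence \(XD = DX\) if and only if \((\lambda_i - \lambda_j) X_{ij} = 0\) for all \(i, j\). Since the eigenvalues are distinct, this forces \(X_{ij} = 0\) whenever \(i \neq j\), so \(X\) is block diagonal. Conversely, every block-diagonal matrix commutes with \(D\), because each block of \(D\) is scalar.

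It remains to note that a block-diagonal matrix is invertible if and only if each diagonal block is invertible, since its determinant is the product of the blocks' determinants. So the centraliser of \(D\) is the block-diagonal group. The map sending such a matrix to its tuple of diagonal blocks is a group isomorphism onto \(\oplus_{i=1}^k \actweird{\mathbb{C}}{\GL_{m_i}}\), because block-diagonal matrices multiply blockwise.

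\emph{The case \(k=1\).} Here \(M = Q^{-1} \lambda_1 I_r Q = \lambda_1 I_r\) is scalar, so it commutes with every matrix. Its centraliser is therefore literally \(\actweird{\mathbb{C}}{\GL_r}\), giving equality rather than just an isomorphism.

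The argument has no substantive obstacle. The only points needing care are recording that conjugation by \(Q\) transports centralisers, so the isomorphism passes back from \(D\) to \(M\), and ordering the eigenbasis so that equal eigenvalues sit in contiguous blocks.
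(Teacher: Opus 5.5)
Your proof is correct and is essentially the paper's argument. The paper works without coordinates: it shows that any \(B\) commuting with \(M\) maps each eigenspace \(E_i\) onto itself, and conversely that any tuple of automorphisms of the \(E_i\) assembles into a matrix commuting with \(M\). In an eigenbasis this is exactly your statement that the centraliser of \(D\) is the group of invertible block-diagonal matrices, and your identity \((\lambda_i-\lambda_j)X_{ij}=0\) makes both directions explicit; your treatment of \(k=1\) via \(M\) being scalar matches the paper's remark that the isomorphism is then an equality.
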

\begin{proof}
    Let \(E_{i}\leq \mathbb{C}^r\) be the eigenspaces of \(M\) with values \(\lambda_i\) (here we view the elements $v\in E_i$ as \row\ matrices such that $\comp{v}{\cdot}{M}=\lambda_i v$).
    If \(B\in \actweird{M}{C_{\actweird{\mathbb{C}}{\GL_r}}}\), then for all \(v_i\in E_i\) we have 
    \[\comp{v_i}{B}{M}=\comp{v_i}{M}{B}=\comp{(\lambda_i v_i)}{B}=\lambda_i \comp{v_i}{B}.\]
    So \(M\) scales all elements of \(\comp{E_i}{B}\) by \(\lambda_i\).
    It follows from the definition of \(E_i\) that \(\comp{E_i}{B}=E_i\).
    So for all \(B\in \actweird{M}{C_{\actweird{\mathbb{C}}{\GL_n}}}\) and all \(i\in \{1, \ldots, k\}\) we can define \(B_i \colon E_i\to E_i\) be the automorphism induced by \(B\).

    Conversely, if \(B_i\colon E_i\to E_i\) for \(i\in \{1, \ldots, k\}\) are any
    automorphisms of \(E_i\), then there exists \(B\in \actweird{\mathbb{C}}{\GL_r}\) which
    extends each \(B_i\) and such that \(BM = MB\) (as \(\conjugate{M}{B}\)
    scales any eigenvector of \(M\) as $M$ does). We now have
    \(\actweird{M}{C_{\actweird{\mathbb{C}}{\GL_r}}} \cong \oplus_{i=1}^k \actweird{E_i}{\Aut}\cong
    \oplus_{i=1}^k \actweird{\mathbb{C}}{\GL_{m_i}}\). Note that in the case \(k=1\) the
    first isomorphism is actually equality.
\end{proof}

Using the well-known result that \(\actweird{\Z}{\GL_2}\) is virtually free, we describe
its centralisers.
\begin{lem}
    \label{finite-ord-cent-Z2}
    If $M\in \actweird{\Z}{\GL_2}$ has finite order, then we have one of 
    \begin{enumerate}
        \item \(M\) is a scalar matrix and \(\actweird{M}{C_{\actweird{\Z}{\GL_2}}} = \actweird{\Z}{\GL_2}\);
        \item  $\actweird{M}{C_{\actweird{\Z}{\GL_2}}}$ is abelian and virtually cyclic.
    \end{enumerate}
\end{lem}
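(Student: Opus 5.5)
Since \(M\) has finite order, \cref{fin-ord-diag} shows it is diagonalisable over \(\mathbb{C}\); its eigenvalues are roots of unity and its characteristic polynomial is a degree \(2\) integer polynomial. I split into cases according to whether \(M\) has one eigenvalue or two distinct ones.

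\emph{Single eigenvalue.} If \(M\) has a repeated eigenvalue \(\lambda\), then, being diagonalisable, \(M=\lambda I\). The entries of \(M\) are integers and \(\det M=\pm1\), so \(\lambda=\pm 1\). Scalar matrices commute with everything, so \(\actweird{M}{C_{\actweird{\Z}{\GL_2}}}=\actweird{\Z}{\GL_2}\), which is case (1).

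\emph{Two distinct eigenvalues.} Suppose \(M\) has distinct eigenvalues \(\lambda_1\neq\lambda_2\), with one-dimensional eigenspaces \(E_1,E_2\). By \cref{glnZ-centraliser}, the centraliser of \(M\) in \(\actweird{\mathbb{C}}{\GL_2}\) is \(\mathbb{C}^\times\oplus\mathbb{C}^\times\): it is the group of matrices diagonal in the eigenbasis of \(M\), and in particular it is abelian. The centraliser in \(\actweird{\Z}{\GL_2}\) is a subgroup of it, hence also abelian.

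For virtual cyclicity I want to avoid number theory and use only group theory. As the paper indicates, \(\actweird{\Z}{\GL_2}\) is virtually free. A subgroup of a virtually free group is virtually free. A virtually free group that is abelian is virtually cyclic: a finite-index free subgroup is abelian, hence trivial or \(\Z\) (free groups of rank at least \(2\) are non-abelian). This gives case (2).

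As a cross-check, one can argue concretely instead. Integral matrices commuting with \(M\) are exactly the elements of \(\Z[M]\), up to a finite-index issue; since \(M\) is non-scalar in dimension \(2\), its centraliser in \(M_2(\mathbb{Q})\) is \(\mathbb{Q}[M]\). Hence the centraliser is the unit group of an order in \(\mathbb{Q}[M]\), which is either \(\mathbb{Q}\times\mathbb{Q}\) or an imaginary quadratic field. In either case Dirichlet's unit theorem gives a finite unit group, or at most rank \(1\) in the real split case.

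The step I expect to be the only real obstacle is making sure the virtual freeness citation covers arbitrary subgroups, not just finitely generated ones. Virtual freeness passes to all subgroups by the Kurosh/Nielsen–Schreier theorem. The abelian free group is then trivial or \(\Z\) regardless of finite generation, so this causes no difficulty.
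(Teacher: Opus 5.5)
Your main argument is correct and is essentially the paper's proof: diagonalise using the finite-order lemma, observe that a repeated eigenvalue forces \(M\) to be scalar, and otherwise embed the centraliser in the abelian group \(\mathbb{C}^\times\oplus\mathbb{C}^\times\), then intersect with a finite-index free subgroup of \(\GL_2(\Z)\) to get a free abelian, hence trivial or infinite cyclic, subgroup of finite index. Your number-theoretic cross-check is loose but harmless, since every order in \(\mathbb{Q}\times\mathbb{Q}\) or in an imaginary quadratic field has finite unit group, so the centraliser in case (2) is in fact finite; nothing in your main argument depends on it.
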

\begin{proof}
By \cref{fin-ord-diag}, \(M\) is diagonalisable. 
Thus we can apply \cref{glnZ-centraliser}. If \(M\) has a unique eigenvalue then $M$ is scalar and we are in case (1).

Suppose that \(M\) has two distinct eigenvalues. Then \cref{glnZ-centraliser} implies
that $\actweird{M}{C_{\actweird{\Z}{\GL_2}}}$ embeds in $\actweird{\mathbb{C}}{\GL_1}\oplus \actweird{\mathbb{C}}{\GL_1}$.
This group is abelian, so $\actweird{M}{C_{\actweird{\Z}{\GL_2}}}$ is abelian. Recall the standard result that $\actweird{\Z}{\GL_2}$ is virtually free and let \(H\) be a finite-index free subgroup. 
It follows that \(H \cap \actweird{M}{C_{\actweird{\Z}{\GL_2}}}\)
is both free and abelian, and so \(H \cap \actweird{M}{C_{\actweird{\Z}{\GL_2}}}\) is either infinite
cyclic or trivial. 
Moreover \(H\cap \actweird{M}{C_{\actweird{\Z}{\GL_2}}}\) is finite index in
\(\actweird{M}{C_{\actweird{\Z}{\GL_2}}}\) so the result follows.
\end{proof}
The following is well-known, but we include a short proof for completeness.
\begin{lem}\label{removing complexity}
   If \(M\in \actweird{\Z}{\GL_2}\) has a non-real eigenvalue, then either \(M^{6}\) or \(M^{4}\) is the identity matrix.
\end{lem}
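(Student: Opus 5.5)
We work with the characteristic polynomial \(\chi_M(x) = x^2 - tx + d\), where \(t = \operatorname{tr}(M) \in \Z\) and \(d = \det(M)\). Since \(M \in \actweird{\Z}{\GL_2}\), we have \(d = \pm 1\).

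The first step is to use the hypothesis. \(M\) has a non-real eigenvalue \(\lambda\), and \(\chi_M\) has real coefficients, so the other eigenvalue is \(\bar\lambda\). Hence \(d = \lambda\bar\lambda = |\lambda|^2 > 0\), which forces \(d = 1\) and \(|\lambda| = 1\).

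Next, non-real roots require a negative discriminant, so \(t^2 - 4 < 0\). As \(t \in \Z\), this leaves only \(t \in \{-1, 0, 1\}\), and we treat these three cases separately.

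\begin{itemize}
\item If \(t = 0\), then \(\chi_M(x) = x^2 + 1\). By Cayley--Hamilton, \(M^2 = -I\), and so \(M^4 = I\).
\item If \(t = 1\), then \(\chi_M(x) = x^2 - x + 1\), which divides \(x^3 + 1\). Cayley--Hamilton gives \(M^3 = -I\), and so \(M^6 = I\).
\item If \(t = -1\), then \(\chi_M(x) = x^2 + x + 1\), which divides \(x^3 - 1\). Cayley--Hamilton gives \(M^3 = I\), and so again \(M^6 = I\).
\end{itemize}

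In every case either \(M^4\) or \(M^6\) is the identity, which proves the claim. Note that Cayley--Hamilton makes the argument independent of diagonalisability, so \cref{fin-ord-diag} is not needed.

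The only point requiring care is the first step: ruling out \(\det M = -1\) and then bounding the trace. Everything after that is a direct computation.
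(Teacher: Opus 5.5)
Your proof is correct and follows the paper's argument: both first force \(\det M = 1\) and \(\operatorname{tr} M \in \{-1,0,1\}\) from the negative discriminant. The only difference is the last step. The paper lists the resulting eigenvalues (primitive third, fourth or sixth roots of unity), notes they form a distinct conjugate pair so \(M\) is diagonalisable, and reads off the order. You instead apply Cayley--Hamilton to the three possible characteristic polynomials, which avoids diagonalisation altogether and also justifies \(\det M = 1\), a step the paper leaves implicit.
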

\begin{proof}

    It is well known and easy to check that the characteristic polynomial of \(M\) is
  \(x^2-\operatorname{trace}(M)x+\operatorname{det}(M)\).
  Thus the quadratic formula implies that the eigenvalues of \(M\) are
  \[\frac{\actweird{M}{\operatorname{trace}}}{2} \pm \sqrt{\frac{\actweird{M}{\operatorname{trace}}^2}{4}-\actweird{M}{\det}}.\]

 It follows that \(\actweird{M}{\det}=1\) and \(\actweird{M}{\operatorname{trace}}\in \{-1,0, 1\}\). 
  Thus the only possible eigenvalues are 
  \[e^{\frac{i\pi}{3}},e^{\frac{2i\pi}{3}},e^{-\frac{i\pi}{3}},e^{-\frac{2i\pi}{3}}, i,\text{ and }-i.\]
  As these eigenvalues must come in a conjugate pair, it follows that \(M\) is diagonalisable with diagonalisation
  \(\begin{bmatrix}
      e^{i\theta } & 0\\
     0& e^{-i\theta }\\
  \end{bmatrix}\) where \(\theta\) is an integer multiple of \(\frac{\pi}{3}\) or \(\frac{\pi}{2}\). The result follows.
 \end{proof}

We have now completed enough setup to embark on the proof of the case when the
conjugation action of the virtually abelian group \(G\) on the finite-index
characteristic free abelian subgroup is not entirely by the identity map or
inversion. Despite having seemingly `weaker' assumptions, this case ends up being
far simpler than the remaining case, due to the fact that to show that the
growth is quadratic, a lower bound of quadratic growth is all that is needed.
\begin{proposition}\label{VirtuallyZ2Quad}
    Let \(G\) be a group with a finite-index normal subgroup $A\cong \Z^2$. 
    Suppose there is $g\in G$ such that the automorphism $a\mapsto \conjugate{a}{g}$ of \(A\) is neither the identity map nor the inversion map.
    Then \(G\) has quadratic automorphic growth.
\end{proposition}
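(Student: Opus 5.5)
The plan is to squeeze \(\alpha_G\) between two quadratic bounds. The upper bound is immediate: \(G\) is virtually \(\Z^2\), so its standard growth is quadratic, and automorphic growth is bounded above by standard growth. All the work is in the quadratic lower bound.

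\emph{Reduction to a characteristic subgroup.} By \cref{virt-ab-char}, \(G\) has a characteristic free abelian subgroup \(A'\) of finite index. Since \(A\cap A'\) has finite index in both, \(A'\cong\Z^2\). The conjugation action of \(g\) on \(A'\otimes\Q\) agrees with its action on \(A\otimes\Q\), because both restrict to the same action on \((A\cap A')\otimes \Q\). The only scalar matrices in \(\actweird{\Z}{\GL_2}\) are \(\pm I\), so the matrix \(M\) of conjugation by \(g\) on \(A'\) is not scalar. Moreover \(M\) lies in \(\bar F_{A',G}\), which is finite by \cref{barF_Aintro}, so \(M\) has finite order.

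\emph{Controlling the restricted automorphisms.} By \cref{barF_Aintro}, the group \(P=P_{A',G}\) of restrictions of automorphisms of \(G\) normalises \(\bar F_{A',G}\). By \cref{norm-cent}, \(C(\bar F_{A',G})\) has finite index in the normaliser. Also \(C(\bar F_{A',G})\le C_{\actweird{\Z}{\GL_2}}(M)\). I claim this centraliser is finite; this is the step needing the most care. Since \(M\) is non-scalar, its commutant in \(M_2(\Q)\) is \(\Q[M]\).
\begin{itemize}
\item Suppose \(M\) has real eigenvalues. Being non-scalar of finite order, these are \(1\) and \(-1\), with rational eigenlines. Any \(N\) commuting with \(M\) preserves both lines, so it is diagonal over \(\Q\) with integral eigenvalues. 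As \(\det N=\pm1\), these eigenvalues are \(\pm1\), and \(N\) has order at most \(2\). The centraliser therefore embeds in \(\{\pm1\}^2\).
\item Suppose \(M\) has a non-real eigenvalue (cf.\ \cref{removing complexity}). Then \(\Q[M]\) is an imaginary quadratic field. The centraliser consists of units of an order in this field preserving \(\Z^2\), and such unit groups are finite.
\end{itemize}
(Alternatively, one could start from \cref{finite-ord-cent-Z2}, which already gives that the centraliser is virtually cyclic, and then rule out an infinite-order element by the argument above.) Either way, \(P\) is finite.

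\emph{Counting orbits in \(A'\).} By \cref{lem:charsbgrp}, \(\alpha_{A'^{P}\subseteq G}\preccurlyeq\alpha_G\). Since \(A'\) has finite index in \(G\), it is undistorted, and \cref{relativeundistorted} turns the left-hand side into \(\alpha_{A'^P}\). Every \(P\)-orbit has at most \(|P|\) elements, and the ball of radius \(n\) in \(\Z^2\) contains on the order of \(n^2\) elements. Hence there are at least \(cn^2/|P|\) orbits of length at most \(n\), so \(\alpha_{A'^P}\) is at least quadratic. Combined with the upper bound, \(\alpha_G\) is quadratic.
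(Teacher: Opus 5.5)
Your proof is correct, and it departs from the paper's at the key step.

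\textbf{The paper's route.} From \cref{finite-ord-cent-Z2}, which rests on virtual freeness of $\GL_2(\Z)$, the paper only extracts that the centraliser of $\bar F_{A,G}$, and hence $P_{A,G}$, is virtually cyclic. It therefore has to treat the case where $P_{A,G}$ is virtually $\langle M\rangle$ with $M$ of infinite order. There it normalises $M$ to have determinant $1$ and positive real eigenvalues, and splits into a hyperbolic case and a unipotent case. In each case it exhibits a set $B\subseteq\Z^2$ of quadratic growth whose $\langle M\rangle$-translates are pairwise disjoint, so $B$ meets each orbit at most once.

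\textbf{Your route.} You prove the sharper fact that the centraliser in $\GL_2(\Z)$ of a non-scalar finite-order matrix is finite. When the eigenvalues are $\pm1$, a commuting matrix preserves the two rational eigenlines and so has eigenvalues $\pm1$. Otherwise it lies in the unit group of an order in an imaginary quadratic field. Hence $P_{A',G}$ is finite, and orbits in $A'$ have uniformly bounded size. In particular, the infinite-order case in the paper's proof can never occur under the hypotheses of the proposition.

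\textbf{Comparison.} Your argument is shorter and gives a stronger structural conclusion: the restricted automorphism group is finite, not just virtually cyclic. The price is a little arithmetic (integrality of eigenvalues, finiteness of units in imaginary quadratic orders) in place of the softer virtual-freeness input. The paper's disjoint-translates argument proves something more robust: for any single infinite-order $M\in\GL_2(\Z)$, the group $\langle M\rangle$ still leaves quadratically many orbits in the $n$-ball of $\Z^2$. That would be exactly what is needed if one only knew $P$ to be virtually cyclic, but your finiteness result makes it unnecessary here.
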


\begin{proof}
    By \cref{virt-ab-char} there exists a finite index characteristic subgroup \(A'\) of
    \(G\), with \(A' \leq A\). The conjugation action of $g$ on $A'$ is still neither inversion nor the identity as an automorphism of $A'$ can have at most one extension to $A$.
    Thus without loss of generality, we can use \(A'\) as \(A\); that is, we can
    assume \(A\) is characteristic.

    Fix a (finite) right transversal \(T\) for \(A\) in \(G\).
    Let \(P_{A, G} \leq \actweird{A}{\Aut}\) be the group of automorphisms of \(G\) restricted to
  \(A\). Let \(\bar{F}_{A,G} \leq \actweird{A}{\Aut}\) be the (finite) subgroup of automorphisms defined by
  conjugation of \(A\) by elements of \(T\).

  By \cref{lem:charsbgrp}, it suffices to show that \(\alpha_{A^{P_{A, G}}\subseteq G}\) is
  quadratic. Note that since $A$ has finite index, it is undistorted, so $\alpha_{A^{P_{A,G}}\subseteq G}\sim\alpha_{A^{P_{A,G}}}$ by \cref{relativeundistorted}. From \cref{barF_Aintro}, we have \(P_{A, G} \leq N_{\Aut(A)}(\bar{F}_{A,
  G})\). Since \(\bar{F}_{A, G}\) is finite, \cref{norm-cent} tells us that
  \(\actweird{\bar{F}_{A, G}}{N_{\actweird{A}{\Aut}}}\) contains \(\actweird{\bar{F}_{A, G}}{C_{\actweird{A}{\Aut}}}\) as a
  finite-index normal subgroup. Finally, since \(\bar{F}_{A, G}\) contains non-scalar
  matrices, \cref{finite-ord-cent-Z2} tells us that \(\actweird{\bar{F}_{A,G}}{C_{\actweird{\Z}{\GL_2}}}\)
  is virtually cyclic. Thus \(\actweird{\bar{F}_{A, G}}{N_{\actweird{A}{\Aut}}}\) is virtually cyclic and so
  \(P_{A, G}\) is virtually cyclic; that is, virtually \(\Z\) or finite. If
  \(P_{A, G}\) is finite, it is virtually trivial so \cref{finite-index-aut-actions} implies that
  \(\alpha_{A^{P_{A, G}}}\) is equivalent to the standard growth of $A$, which is
  quadratic.

  So suppose that \(P_{A, G}\) is virtually \(\langle M \rangle\), where \(M\) has
  infinite order. By \cref{removing complexity}, the eigenvalues of \(M\) are
  real. By squaring \(M\) if needed, we may also assume that \(M\) has no
  negative eigenvalues and \(\actweird{M}{\det}=1\). Then \cref{finite-index-aut-actions}
  implies that it suffices to show that \(\alpha_{A^{\langle M \rangle}}\) is at least
  quadratic, noting the \(\alpha_{A^{\langle M \rangle}}\) is bounded above by the
  standard growth of \(A\), and so can be at most quadratic.
  
  Let \(\lambda\) be the largest (row) eigenvalue for \(M\) and let \(v\) be a corresponding real eigenvector.
  If it is possible, choose \(w\) to be a real eigenvector (not necessarily corresponding to
  \(\lambda\)) linearly independent with
  \(v\); if there are no such vectors, then let \(w\) be any real vector linearly
  independent with \(v\). 
  Let \(l, a \in \R\) be the scalars such that \(\act{w}{M}=lw+av\).
  From the equations
  \[\act{v}{M}=\lambda v, \qquad \act{w}{M}=av+lw\]
  we see that the characteristic polynomial of $M$ is $(x-l)(x-\lambda)$.
  Thus \(l\) is an eigenvalue of \(M\) and \(l\lambda=\actweird{M}{\det}=1\). 
  Thus \(\act{w}{M}=\frac{1}{\lambda}w+av\), and \(\lambda\geq 1\) (since \(\lambda\) was chosen to be the largest eigenvalue). 
  As \(M\) is not the identity matrix, either \(a\neq 0\) or \(\lambda> 1\). 
  The choice of \(w\) implies that we cannot have both of these, for otherwise \(M\) would have two distinct eigenvalues and \(w\) would not be an eigenvector corresponding to either of them.
  In summary, we may assume that exactly one of
  the following holds:
  \begin{enumerate}[(a)]
      \item \(\lambda>1\) and \(\act{w}{M} = \frac{1}{\lambda} w\);
      \item \(\lambda=1\), \(\act{v}{M}=v\), and \(\act{w}{M} =w+av\) for some \(a\in \R \setminus \{0\}\).
       \end{enumerate}
  In case (a), consider the set
 \[B \coloneqq \makeset{p\in \Z^2}{\(p=xv+yw\) where \(\frac{1}{\lambda}<|\frac{x}{y}|<\lambda\)}.\]
 Note that for all \(k\in \Z\), we have that if \(p \in \Z^2\), writing \(\act{p}{M^k} = x_k v +y_k w\),
 then 
 \begin{align*}
    p \in \act{B}{M^k} & \iff \act{p}{M^{-k}} \in B \\
    & \iff \lambda^{-1} <  \left| \frac{x_{-k}}{y_{-k}} \right| < \lambda \\
    & \iff \lambda^{-1} < \left| \frac{x_0 \lambda^{-k}}{y_0 \lambda^k} \right|<\lambda \\
    & \iff \lambda^{-1} < \lambda^{-2k} \left| \frac{x_0}{y_0} \right| < \lambda.
 \end{align*}
 Thus
 \begin{align*}
     \act{B}{M^k} &=\makeset{p\in \Z^2}{\(p=xv+yw\) where
     \(\lambda^{-1}<|\frac{x}{y}|\lambda^{-2k}<\lambda\)}\\
     &=\makeset{p\in \Z^2}{\(p=xv+yw\) where \(\lambda^{2k-1}<|\frac{x}{y}|<\lambda^{2k+1}\)}\\
 \end{align*}
 which are pairwise disjoint as $k$ ranges over $\Z$. Since each of these sets grow quadratically (as subsets
 of \(\Z^2\)), it follows that the growth \(\alpha_{A^{\langle M \rangle}}\) must
 be at least quadratic.
 
    In case (b), consider the set
 \[B \coloneqq \makeset{p\in \Z^2}{\(p=xv+yw\) where \(|x|<\frac{1}{2}|ay|\)}.\] 
 As \(\act{xv+yw}{M}=xv+yw+yav=(x+ay)v+yw\), we have for all \(k \in \Z\) that
 \[\act{B}{M^k}=\makeset{p\in \Z^2}{\(p=xv+yw\) where \(\frac{2k-1}{2}|ay|<x<\frac{2k+1}{2}|ay|\)}\]
 which are pairwise disjoint as \(k\) ranges over \(\Z\). Since each of these
 sets grow quadratically (as subsets of \(\Z^2\)), it follows that the growth
 \(\alpha_{A^{\langle M \rangle}}\) must be at least quadratic.
\end{proof}

The following lemma allows us to deal with virtually abelian groups where the
conjugation action of the group on the finite-index characteristic free abelian
subgroup is by only scalar matrices (that is, by the identity matrix or
inversion). In the case when the action is just by the identity matrix; that is,
our extension is central, we have a finite-index centre. Schur's Theorem
(see for example \cite{IsaacsFinGpTheory}*{Theorem 5.7}) states that a group
with a finite-index centre has a finite commutator subgroup. To deal with the
non-central case, we need an analogue of the commutator subgroup that deals
with this `inversion' action.  This turns out to be the kernel of the map
from our virtually abelian group \(G\) to \(\Z^n \rtimes C_2\), with the
\(C_2\) correpsonding to the inversion action.

\begin{lem}\label{schur-cor}
    Suppose that $G$ is a group with a finite-index characteristic subgroup $A\cong \Z^r$. Suppose further that there is a homomorphism $\operatorname{sgn}
      \colon G\to \{1,-1\}$ such that for all $g\in G$ and $x \in
      A$ we have $\conjugate{x}{g}= x^{\actweird{g}{\operatorname{sgn}}}$. 
      Then there is a homomorphism $\sigma\colon G\to \Z^r \rtimes \{1,-1\}$ with $\{1,-1\}$ acting by inversion on $\Z^r$ such that $\sigma|_A$ is injective and the $\{1,-1\}$ coordinate of $\act{g}{\sigma}$ agrees with $\actweird{g}{\sgn}$.
\end{lem}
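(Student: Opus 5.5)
The plan is to build \(\sigma\) from a transfer homomorphism, first on the kernel of \(\operatorname{sgn}\), and then extend it across the remaining coset by antisymmetrising. Let \(K = \actweird{\operatorname{sgn}}{\ker}\), which has index at most \(2\) in \(G\) and contains \(A\). Every element of \(K\) acts trivially on \(A\) by conjugation, so \(A\) is central in \(K\) and of finite index. Write \(n = [K:A]\). I would take the transfer \(V\colon K \to A\), written additively with \(A \cong \Z^r\). Since \(A\) is central, the standard transfer evaluation gives \(\act{a}{V} = a^n\), that is \(n a\) additively, for \(a \in A\). As \(A\) is torsion-free, \(V|_A\) is injective. (Alternatively one could use Schur's Theorem to see that \([K,K]\) is finite and map \(K/[K,K]\) modulo torsion, but the transfer is cleaner and is what I would try first.)

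If \(\operatorname{sgn}\) is trivial, then \(K=G\) and \(\sigma(g) = (\act{g}{V}, 1)\) already works. Otherwise fix \(t \in G\) with \(\act{t}{\operatorname{sgn}} = -1\), so that \(G = K \sqcup Kt\) and \(t^2 \in K\). Define \(\varphi\colon K \to \Z^r\) by
\[
\act{k}{\varphi} = \act{k}{V} - \act{\conjugate{k}{t}}{V}.
\]
This is a homomorphism because \(K\) is normal and \(V\) is a homomorphism. Since \(V\) maps into an abelian group, it is invariant under conjugation by \(t^2 \in K\). Using this, \(\act{\conjugate{k}{t}}{\varphi} = -\act{k}{\varphi}\). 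On \(A\) we have \(\conjugate{a}{t} = a^{-1}\), so \(\act{a}{\varphi} = 2n a\), which is still injective. Also \(\act{t^2}{\varphi} = \act{t^2}{V} - \act{t^2}{V} = 0\).

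Now set \(\act{k}{\sigma} = (\act{k}{\varphi}, 1)\) and \(\act{kt}{\sigma} = (\act{k}{\varphi},1)\cdot(0,-1)\). The element \(\tau = (0,-1)\) satisfies \(\tau^2 = (0,1) = \act{t^2}{\sigma}\), and conjugation by \(\tau\) inverts the \(\Z^r\) coordinate, which matches \(\act{\conjugate{k}{t}}{\varphi} = -\act{k}{\varphi}\). The main (though routine) obstacle is checking that \(\sigma\) is a homomorphism on all four coset products \(K\cdot K\), \(K\cdot Kt\), \(Kt\cdot K\), \(Kt\cdot Kt\). For instance, \(kt\cdot k't = k(tk't^{-1})t^2\) should map to \(\act{k}{\sigma}\,\tau\act{k'}{\sigma}\tau^{-1}\,\tau^2\). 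This follows from the two identities just noted, taking care with the paper's right-action conventions for conjugation, since \(tk't^{-1}\) is conjugation by \(t^{-1}\), which also negates \(\varphi\).

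Finally, \(\sigma|_A = \varphi|_A\) (in the first coordinate) is injective, and the \(\{1,-1\}\) coordinate of \(\act{g}{\sigma}\) is \(1\) exactly on \(K\), so it equals \(\act{g}{\operatorname{sgn}}\).
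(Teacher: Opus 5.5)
Your proposal is correct, and it takes a different route from the paper.

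\textbf{The paper's route.} The paper applies Schur's theorem to $H=\ker(\sgn)$ to get $[H,H]$ finite. It lets $C_H$ be the preimage of the torsion of $H/[H,H]$, which is finite and characteristic in $H$, hence normal in $G$. It then takes $\sigma$ to be the quotient map $G\to G/C_H$, after arguing that $G/C_H$ is either $\Z^r$ or a split extension $\Z^r\rtimes C_2$. The splitting holds because the image of $g^2$ lies in $\Z^r$ and commutes with $g$, which inverts $\Z^r$.

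\textbf{Your route.} You use the transfer $V\colon K\to A$ directly, which is essentially the engine behind Schur's theorem. You then handle the index-two extension by antisymmetrising and writing $\sigma$ down explicitly. The four coset checks do go through, using $\varphi(t^{-1}kt)=\varphi(tkt^{-1})=-\varphi(k)$ and $\varphi(t^2)=0$.

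\textbf{Comparison.} Your argument is self-contained and explicit. It avoids both the torsion quotient and the splitting argument, including a step the paper leaves implicit: that $g$ inverts all of $H/C_H$ and not only the image of $A$, which needs torsion-freeness. The paper's argument gives a canonical surjective $\sigma$ whose kernel is exactly $C_H$. Your $\sigma$ is not surjective in general, since it sends $A$ onto $2nA$. Its kernel is still finite, because $\ker\varphi\cap A$ is trivial. The lemma only asks for injectivity on $A$ and agreement with $\sgn$. That is also all the proof of \cref{autA-is-finiteindex} uses, since every $(x,-1)$ in the target is an involution inverting $\Z^r$. So your version suffices.

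\textbf{One line to add.} You should justify $A\le K$. An $a\in A$ with $\sgn(a)=-1$ would act on the abelian group $A$ both trivially and by inversion, which forces $A\cong\Z^r$ to be trivial.
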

\begin{proof}
    Let $H$ be the kernel of $\sgn$ and note that $A$ is central in $H$. By
    Schur's Theorem, (see for example \cite{IsaacsFinGpTheory}*{Theorem 5.7}),
    we have \([H,H]\) is finite, and so $[H,H]\cap A=\{1_G\}$. 
    Let $C_H$ be the
    subgroup of $H$ which is the preimage of the torsion subgroup of
    $H/[H,H]$ and note that $C_H$ is characteristic in $H$. 
    Note
    that \(C_H\) is the extension of a subgroup of \([H, H]\) (which is
    therefore finite) by the torsion subgroup of \(H / [H, H]\), which is
    also finite, and thus \(C_H\) itself is finite. The fact that
    \(C_H\) is characteristic in \(H\) together with
    the fact that $H$ is normal in $G$ implies that $C_H$ is normal
    in $G$. 
    Thus $H/C_H \cong \Z^r$. Thus the quotient $G/C_H$ is an index \([G:
    H] \leq 2\) extension of $\Z^r$. The index is two if and only if there
    exists $g\in G$ with $\actweird{g}{\sgn}=-1$, and note that \(\{1, g\}\) is a
    transversal for \(\Z^r\) in \(G / C_H\). If no such element exists
    then we are done. Otherwise, there exists $g\in G$ such that $\sgn(g)=-1$.
    We have that $C_H g$ acts by conjugation on \(G / C_H\) as
    required in the statement of the lemma. We need only show that the extension
    splits. This holds as $C_H g^2$ is an element of the $\Z^r$ part of the
    quotient which commutes with $g$ so must be the identity.
\end{proof}

Before we can complete the proof of this remaining case when the action of
\(G\) on \(A\) is by scalar matrices, we must first understand how `often'
automorphisms of \(A\) extend to automorphisms of \(G\). This culminates
in \cref{autA-is-finiteindex} which shows that the subgroup of such automorphisms
of \(A\) has finite index in \(\actweird{A}{\Aut}\). We first require two technical lemmas
before we can prove this. Ultimately, this lemma and the following lemma,
\cref{matrix-lem} study a set \(N_G\) which considers which automorphisms
extend. 

A monoid presentation is called \emph{special} if every relation is of the form
\(w = 1\), for some word \(w\) over the generators. We frequently abuse notation
and consider the relations of a special monoid presentation to be the set of
words \(w\), rather than the set of pairs \((w, 1)\), as one commonly does for
group presentations.
  \begin{lem}\label{va-extend-aut}
      Suppose that we have the following:
      \begin{enumerate}
          \item $G$ is a group with a finite index characteristic 
      subgroup $A\cong \Z^m$. We denote the isomorphism between \(A\)
      and \(\Z^m\) by \(a\mapsto a^Z\) and the element of \(A\) mapped to the \(i^{th}\) standard generator of \(\Z^m\) by \(a_i\) (for an endomorphism \(\gamma\) of \(A\) we also denote the corresponding endomorphism of \(\Z^m\) by \(\gamma^Z\));
      \item fthere is a homomorphism $\operatorname{sgn}
      \colon G\to \{1,-1\}$ such that for all $g\in G$ and $x \in
      A$ we have $\conjugate{x}{g}= x^{\actweird{g}{\operatorname{sgn}}}$;
      \item \(\langle \Sigma \mid R \rangle\) is a finite monoid presentation, where
      all relations in \(R\) are of the form \(r = 1\) (and thus simply considered
      to be positive words \(r\)),
      \(\Sigma\subseteq G\) and the map \(s\mapsto sA\) for \(s\in \Sigma\)
      defines an isomorphism from the group presented by \(\langle \Sigma \mid R
      \rangle\) to \(G/A\);
      \item for all \(r\in R\),  $r = r_1\ldots r_{|r|}$, with \(r_i \in \Sigma\), \(x_r\in (\{a_1,\ldots, a_n\}\cup \{a_1^{-1},\ldots, a_n^{-1}\})^\ast\) is a string such that the relation \(r=x_r\) holds in \(G\);
      \item \[ 
        N_G=\makeset{(\phi,\gamma)}{\(\phi\in \actweird{A}{\Aut}\), \(\gamma \colon
        \Sigma\to A\) and for all $r\in R$\\ $\sum_{i=1}^{|r|} \actweird{r_1\ldots
        r_{i - 1}}{\sgn} (\act{r_{i}}{\gamma})^Z=\act{x_r^Z}{\phi^Z}-x_r^Z$};
      \]
      \item 
       \[P=\makepres{\{a_1, \ldots, a_n\} \cup \Sigma}{for all \(i,j\in \{1,\ldots,n\},s\in \Sigma,r\in R\)\\ 
       \(a_i a_j = a_j a_i, \conjugate{a_i}{s} = a_i^{\actweird{s}{\sgn}}, r = x_r\)}.
      \]
      \end{enumerate} 
      Then \(P\) is a presentation for \(G\) with the generators in the presentation representing themselves and
      \[ 
          N_G=\makeset{(\phi,\gamma)}{\(\gamma \colon \Sigma\to A\) and there is
          $\tilde{\phi}\in \actweird{G}{\Aut}$ with $\tilde{\phi}|_A=\phi$ and\\ for $s\in
          \Sigma$ we have $\act{s}{\tilde{\phi}}=\act{s}{\gamma} \cdot s$}.
      \]
  \end{lem}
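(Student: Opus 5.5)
The proof has two parts. First we show that \(P\) presents \(G\). Then we identify \(N_G\) with the set of pairs that extend to automorphisms, by checking relators of \(P\) one at a time. Conventions: conjugation is \(\conjugate{x}{g}=g^{-1}xg\), and functions act on the right.

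\emph{Step 1: \(P\) presents \(G\).} Let \(\tilde G\) be the group presented by \(P\), and let \(\pi\colon\tilde G\to G\) send each generator to itself. The map is well defined because every relation of \(P\) holds in \(G\): the \(a_i\) commute since \(A\) is abelian, conjugation by \(s\) acts by \(\sgn\) by hypothesis (2), and \(r=x_r\) holds by (4).

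The map \(\pi\) is surjective. Any \(g\in G\) lies in a coset \(wA\) with \(w\) a positive word over \(\Sigma\), using hypothesis (3) and the fact that a monoid presentation of a group realises every element by a positive word. Since the \(a_i\) generate \(A\), \(g\) is then a product of generators.

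For injectivity, let \(\tilde A=\langle a_1,\dots,a_n\rangle\le\tilde G\). The conjugation relations make \(\tilde A\) normal in \(\tilde G\), and it is abelian. The quotient \(\tilde G/\tilde A\) is presented by \(\langle\Sigma\mid R\rangle\), since killing the \(a_i\) turns \(r=x_r\) into \(r=1\). By (3) this quotient maps isomorphically onto \(G/A\). Also \(\pi|_{\tilde A}\colon\tilde A\to A\cong\Z^m\) is surjective, and \(\tilde A\) is an abelian group generated by \(m\) elements, so this restriction is injective (by Hopficity of \(\Z^m\), or because it splits the rank). The five lemma, or a direct diagram chase, now shows \(\pi\) is an isomorphism.

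\emph{Step 2: pairs that extend lie in \(N_G\).} Suppose \(\tilde\phi\) restricts to \(\phi\) on \(A\) and satisfies \(s\tilde\phi=(s\gamma)s\) for \(s\in\Sigma\). Apply \(\tilde\phi\) to \(r=x_r\). The right side becomes \(x_r\phi\). On the left, we push the factors \(s\gamma\) to the front past the \(r_j\) using \(s\,a=a^{\sgn(s)}s\). This gives
\[
(r_1\gamma)r_1(r_2\gamma)r_2\cdots=\Big(\prod_{i}(r_i\gamma)^{\sgn(r_1\cdots r_{i-1})}\Big)\,r .
\]
Replacing \(r\) by \(x_r\) and passing to \(\Z^m\) yields exactly the defining equation of \(N_G\) in (5). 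The exact exponent convention, \(\sgn\) of the prefix versus its inverse, is fixed by this step. Because \(\sgn\) is \(\pm1\)-valued, the two conventions agree.

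\emph{Step 3: pairs in \(N_G\) extend.} Given \((\phi,\gamma)\in N_G\), define \(\tilde\phi\) on the generators of \(P\) by \(a_i\mapsto a_i\phi\) and \(s\mapsto(s\gamma)s\). By Step 1, this gives an endomorphism of \(G\) provided each relator of \(P\) is preserved.

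\begin{itemize}
\item The commutation relators are preserved because \(\phi\) maps \(A\) into \(A\).
\item The conjugation relators are preserved because \((s\gamma)s\) acts on \(A\) by \(\sgn(s)\), and \(\phi\) commutes with inversion.
\item The relators \(r=x_r\) are preserved by the computation in Step 2 read backwards.
\end{itemize}

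It remains to show \(\tilde\phi\) is bijective. It restricts to the bijection \(\phi\) on \(A\), and it induces the identity on \(G/A\) because \(s\mapsto(s\gamma)s\). It is therefore injective: if \(g\tilde\phi=1\) then \(g\in A\), so \(g=1\). It is also surjective, since it maps each coset \(gA\) onto itself (\(A\) is hit, and \(g\tilde\phi\in gA\)).

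\emph{Main obstacle.} The only delicate point is the bookkeeping in Step 2: getting the sign prefix \(\sgn(r_1\cdots r_{i-1})\) to match the paper's right-action conventions. The rest is routine.
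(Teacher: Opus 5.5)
Your proposal is correct and follows essentially the same route as the paper: you show \(P\) presents \(G\) by combining injectivity on \(\langle a_1,\ldots,a_n\rangle\) with the fact that killing the \(a_i\) yields a presentation of \(G/A\). You then reduce the existence of an extension of \((\phi,\gamma)\) to preservation of the relators, with only \(r=x_r\) being nontrivial and pushing the \(\gamma\)-factors left giving the \(N_G\) equation, and deduce bijectivity from \(\tilde{\phi}\) restricting to \(\phi\) on \(A\) and fixing each coset of \(A\). Your version is slightly more explicit than the paper's: it checks surjectivity of the quotient map and spells out the Hopfian argument for \(\langle a_1,\ldots,a_n\rangle\cong A\), but the argument is the same.
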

    \begin{proof}
        We first show that \(P\) presents \(G\). Let \(\xi \colon P\to G\) be
        the canonical quotient map. We show \(\xi\) is injective. 
        Since \(\langle a_1,\ldots, a_n\rangle \cong \Z^m\) in \(G\), and these elements commute in \(P\), it follows that \(\xi|_{\langle a_1,\ldots, a_n\rangle} \colon \langle a_1,\ldots, a_n\rangle\to A\) is an isomorphism.
        Thus \(\actweird{\xi}{\ker}\cap \langle a_1,\ldots, a_n\rangle=\{1_P\}\).        
        From the definition of \(R\), the presentation obtained from \(P\) by setting \(a_1=\ldots=a_n=1\) is a presentation for \(G/A\). 
        Thus the kernel of the composition of \(\xi\) with the quotient map from \(G\) to \(G/A\) is \(\langle\langle a_1,\ldots, a_n\rangle\rangle=\langle a_1,\ldots, a_n\rangle\). Thus any element of the kernel of \(\xi\) belongs to \(\langle a_1,\ldots, a_n\rangle\) implying that it is trivial. It follows that \(\xi\) is injective, as required.

       Let $\phi\in \actweird{A}{\Aut}$ and \(\gamma \colon \Sigma\to A\) be a function.
        From the presentation \(P\), it follows that there is $\tilde{\phi}\in \End(G)$ with
        $\tilde{\phi}|_A=\phi$ and $\act{s}{\tilde{\phi}}=\act{s}{\gamma} \cdot s$ for all \(s\in \Sigma\) if and only if the following conditions hold:
        \begin{enumerate}
            \item $\act{a_i}{\phi} \cdot \act{a_j}{\phi} = \act{a_j}{\phi} \cdot \act{a_i}{\phi}$
            for all \(i, j \in  \{1, \ldots, n\}\);
            \item $\comp{(\act{s}{\gamma} \cdot s)^{-1}} {\cdot} {\act{a_i}{\phi}} {\cdot} {(\act{s}{\gamma} \cdot s)} = (\act{a_i}{\phi})^{\actweird{s}{\sgn}}$ for $i \in \{1, \ldots, n\}$ and $s\in \Sigma$,
            \item $\prod_{i=1}^{|r|}(\act{r_i}{\gamma} \cdot r_i) =\act{x_r}{\phi}$ for all \(r\in R\).
        \end{enumerate}
        Condition (1) always holds as $\phi$ is a homomorphism and condition (2) always holds as
        \begin{align*}
            \conjugate{\cdot \act{a_i}{\phi} \cdot}{(\act{s}{\gamma} \cdot s)} = \conjugate{\cdot (\act{a_i}{\phi})\cdot}{s}=((a_i)\phi)^{\actweird{s}{\sgn}}.
        \end{align*}
        Thus, there is a homomorphism $\tilde{\phi} \colon G\to G$ with
        $\tilde{\phi}|_A=\phi$ and $\act{s}{\tilde{\phi}}=\act{s}{\gamma} \cdot s$ for all \(s\in \Sigma\) if and only if 
        \[\prod_{i=1}^{|r|}(\act{r_i}{\gamma} \cdot r_i) =\act{x_r}{\phi}\]
        for all \(r\in R\). 
        Applying the relations of \(P\) to the left hand side of this equality yields
        the following equivalent equality:
        \[\left(\prod_{i=1}^{|r|}(\act{r_i}{\gamma})^{\actweird{r_1r_2\cdots r_{i-1}}{\sgn}} \right)x_r=\act{x_r}{\phi}.\]
        Using the structure of \(A\), this is in turn equivalent to
       \[\sum_{i=1}^{|r|} \actweird{r_1\ldots
        r_{i - 1}}{\sgn} (\act{r_{i}}{\gamma})^Z=\act{x_r^Z}{\phi^Z}-x_r^Z.\]
       So we now have
        \[
            N_G=\makeset{(\phi,\gamma)}{there is a homomorphism $\tilde{\phi} \colon G\to G$ such that\\
            $\tilde{\phi}|_A=\phi$ is an automorphism of \(A\) and for all\\ $s\in \Sigma$ we have
            $\act{s}{\tilde{\phi}}=\act{s}{\gamma}\cdot s$}.
        \]
        All of these homomorphisms \(\tilde{\phi}\) above permute \(A\) and
        preserve each coset setwise. Thus the homomorphisms \(\tilde{\phi}\)
        above are surjective. They are also injective since if \(s\in \Sigma^\ast\)
        and \(a,b\in A\), then
        \[\act{as}{\tilde{\phi}}=\act{bs}{\tilde{\phi}}\Rightarrow \act{a}{\tilde{\phi}}\act{s}{\tilde{\phi}}=\act{b}{\tilde{\phi}}\act{s}{\tilde{\phi}}\Rightarrow \act{a}{\tilde{\phi}}=\act{b}{\tilde{\phi}}\Rightarrow a=b\Rightarrow as=bs.\]
        Thus
        \[
            N_G=\makeset{(\phi,\gamma)}{there is $\tilde{\phi}\in \actweird{G}{\Aut}$ such that \(\tilde{\phi}|_A=\phi\) and for $s\in \Sigma$ we have
            $\act{s}{\tilde{\phi}}=\act{s}{\gamma}\cdot s$}
        \]
        as required.
\end{proof}

The following is mostly a restatement of \cref{va-extend-aut} into matrix notation,
which allows its application in the proof of \cref{autA-is-finiteindex}.
\begin{lem}\label{matrix-lem}
Suppose that we have the following:
      \begin{enumerate}
          \item $G$ is a group with a finite index characteristic 
      subgroup $A\cong \Z^m$. We denote the isomorphism by \(a\mapsto a^Z\) and the element of \(A\) mapped to the \(i^{th}\) standard generator of \(\Z^m\) by \(a_i\) (for \(\gamma \colon A\to A\) we also denote the corresponding self-map of \(\Z^m\) by \(\gamma^Z\));
      \item there is a homomorphism $\operatorname{sgn}
      \colon G\to \{1,-1\}$ such that for all $g\in G$ and $x \in
      A$ we have $\conjugate{x}{g}= x^{\actweird{g}{\operatorname{sgn}}}$;
      \item \(\langle \Sigma \mid R \rangle\) is a finite monoid presentation, where
      all relations in \(R\) are of the form \(r = 1\) (and thus simply considered
      to be positive words \(r\)),
      \(\Sigma\subseteq G\) and the map \(s\mapsto sA\) for \(s\in \Sigma\)
      defines an isomorphism from the group presented by \(\langle \Sigma \mid R
      \rangle\) to \(G/A\);
      \item for all \(r\in R\),  $r = r_1\ldots r_{|r|}$, with \(r_i \in \Sigma\) and \(x_r\in (\{a_1,\ldots, a_m\}\cup \{a_1^{-1},\ldots, a_m^{-1}\})^\ast\) is a string such that the relation \(r=x_r\) holds in \(G\);
      \item for all $\phi\in \actweird{A}{\Aut}$, we write $\gamma_\phi \colon A \to A$ for
      the map $x\mapsto (x)\phi\cdot x^{-1}$ and \(\Gamma_\phi\) for the \(m
      \times m\) matrix which acts on \(\Z^m\) as \(\gamma_\phi^Z\) does (where we view elements of \(\Z^m\) as \row\ vectors);
      \item  $\Lambda_{\langle \Sigma |R\rangle}$ is the $\comp{R}{\times}{\Sigma}$ matrix with integer
    entries defined by
    \[
        \Lambda_{\langle \Sigma |R\rangle} = \left[\mathop{\sum_{i = 1}^{|r|}}_{r_i=t} \actweird{r_{1 } \cdots r_{i - 1}}{\sgn} \right]_{(\comp{r}{,}{t})} ;
    \]
    \item \(W_{\langle \Sigma |R\rangle}\) is the \(\comp{R}{\times}{m}\) matrix whose \(r^{th}\) \row\ is \(x_r^Z\);
    \item \(N_G\) is the set from \cref{va-extend-aut}.
      \end{enumerate}
Then there is a full rank $R\times R$ integer matrix $L_{\langle \Sigma |R\rangle}$ such that $\comp{L_{\langle \Sigma |R\rangle}}{\Lambda_{\langle \Sigma |R\rangle}}$ is an integer multiple of a reduced \row\ echelon matrix and
    \begin{align*}
        N_G&=\makeset{(\phi,\gamma)}{\(\phi\in \actweird{A}{\Aut}\), the $\comp{\Sigma}{\times}{n}$
        matrix \(X\) whose \(t^{th}\) \row\ is \((\act{t}{\gamma})^Z\) \\satisfies
        \(\comp{L_{\langle \Sigma |R\rangle}}{\Lambda_{\langle \Sigma |R\rangle}}{X} =  \comp{L_{\langle \Sigma |R\rangle}}{W_{\langle \Sigma |R\rangle}}{\Gamma_\phi}\)}
    \end{align*}    
\end{lem}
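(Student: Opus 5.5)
The plan is to rewrite the defining condition of \(N_G\) from \cref{va-extend-aut} in matrix form, and then choose \(L_{\langle \Sigma |R\rangle}\) by integer row reduction. Three ingredients are needed.

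\textbf{Step 1: the equation in matrix form.} Fix \(r\in R\). Group the left-hand side of the condition in \cref{va-extend-aut} by the generator \(t=r_i\). This gives
\[\sum_{i=1}^{|r|} \actweird{r_1\ldots r_{i - 1}}{\sgn} (\act{r_{i}}{\gamma})^Z=\sum_{t\in\Sigma}\Big(\mathop{\sum_{i}}_{r_i=t}\actweird{r_1\cdots r_{i-1}}{\sgn}\Big)(\act{t}{\gamma})^Z,\]
which is exactly the \(r^{th}\) \row\ of \(\comp{\Lambda_{\langle \Sigma |R\rangle}}{X}\).

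For the right-hand side, note that \(\act{x_r^Z}{\phi^Z}-x_r^Z=(x_r^Z)\gamma_\phi^Z\). This holds because \(\gamma_\phi\) is a homomorphism, since \(A\) is abelian. So the right-hand side is the \(r^{th}\) \row\ of \(\comp{W_{\langle \Sigma |R\rangle}}{\Gamma_\phi}\).

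Hence \cref{va-extend-aut}(5) says precisely that
\[N_G=\{(\phi,\gamma) : \comp{\Lambda_{\langle \Sigma |R\rangle}}{X}=\comp{W_{\langle \Sigma |R\rangle}}{\Gamma_\phi}\}.\]

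\textbf{Step 2: choosing \(L_{\langle \Sigma |R\rangle}\).} Row reduce \(\Lambda_{\langle \Sigma |R\rangle}\) over \(\Q\). This gives an invertible rational matrix \(L'\) such that \(\comp{L'}{\Lambda_{\langle \Sigma |R\rangle}}\) is in reduced \row\ echelon form. Let \(d\) be a common denominator of the entries of \(L'\), and set \(L_{\langle \Sigma |R\rangle}=dL'\).

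Then \(L_{\langle \Sigma |R\rangle}\) is an integer matrix. It is full rank, being a nonzero multiple of an invertible matrix. Moreover \(\comp{L_{\langle \Sigma |R\rangle}}{\Lambda_{\langle \Sigma |R\rangle}}=d\cdot(\comp{L'}{\Lambda_{\langle \Sigma |R\rangle}})\) is an integer multiple of a reduced \row\ echelon matrix, as required.

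\textbf{Step 3: the equations are unchanged.} Since \(L_{\langle \Sigma |R\rangle}\) is invertible over \(\Q\), and all the matrices in the equation from Step 1 are integer (hence rational), multiplying both sides on the appropriate side by \(L_{\langle \Sigma |R\rangle}\) gives an equivalent equation. This yields the stated description of \(N_G\).

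The only point needing care is bookkeeping, not difficulty. With the paper's right-action conventions the products must be read with \(L\) acting on the \(R\)-indexed \row s, so one has to check that the matrix sizes compose. The genuine content is Step 1, where one must verify that \(\gamma_\phi\) is additive so that the right-hand side is linear in \(x_r^Z\).
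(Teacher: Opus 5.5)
Your proposal is correct and follows the paper's proof essentially step for step. You rewrite the defining equations of \(N_G\) from \cref{va-extend-aut} as \(\Lambda X = W\Gamma_\phi\), using that \(\gamma_\phi\) is an endomorphism because \(A\) is abelian. You then take \(L\) to be a positive integer multiple of a rational row-reducing matrix and cancel it, since it is invertible over \(\Q\). One small point: the identity \((x_r^Z)\phi^Z - x_r^Z = (x_r^Z)\gamma_\phi^Z\) holds directly from the definition of \(\gamma_\phi\), and the endomorphism property is what guarantees the matrix \(\Gamma_\phi\) exists — which is how your final paragraph puts it anyway.
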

\begin{proof}
    If \(\tilde{\phi} \in \actweird{G}{\Aut}\) is such that $\tilde{\phi}$ fixes each
    coset in \(G / A\) setwise, then we define $\gamma_{\tilde{\phi}} \colon
    G\to A$ by $\act{g}{\gamma_{\tilde{\phi}}}=\act{g}{\tilde{\phi}} \cdot g^{-1}$.  \cref{va-extend-aut} says that the
    possible pairs $(\tilde{\phi}|_A, \gamma_{\tilde{\phi}}|_\Sigma)$ are
    described by
    \[
        N_G=\makeset{(\phi,\gamma)}{\(\phi\in \actweird{A}{\Aut}\), \(\gamma \colon \Sigma\to
        A\) and for all $r = r_1 \cdots r_{|r|}\in R$\\ $\sum_{i=1}^{|r|} \actweird{r_1\ldots r_{i-1}}{\sgn} \cdot
        (\act{r_i}{\gamma})^Z=\act{x_r^Z}{\phi^Z}-x_r^Z$}.
    \]
    Since \(A\) is abelian, for each \(\phi\in \actweird{A}{\Aut}\),
    $\gamma_\phi$ is always an endomorphism of $A$. The system of equations in the set \(N_G\) above written as a matrix equation says exactly that
    \[\comp{\Lambda_{\langle \Sigma |R\rangle}}{X} = \comp{W_{\langle \Sigma |R\rangle}}{\Gamma_\phi},\]
    where \(X\) is the \(\comp{\Sigma}{\times}{m}\) matrix whose \(t^{th}\) \row\ is \((\act{t}{\gamma})^Z\). It follows that
    \begin{align*}
        N_G&=\makeset{(\phi,\gamma)}{\(\phi\in \actweird{A}{\Aut}\), the $\comp{\Sigma}{\times}{m}$
        matrix \(X\) whose \(t^{th}\) \row\ is \((\act{t}{\gamma})^Z\) \\satisfies
        \(\comp{\Lambda_{\langle \Sigma |R\rangle}}{X} = \act{W_{\langle \Sigma |R\rangle}}{\Gamma_\phi}\)}.
    \end{align*}
    Let \(U \in \actweird{\Q}{\GL_m}\) be such that \(\comp{U}{\Lambda_{\langle \Sigma |R\rangle}}\) is in
     reduced \row\ echelon form. Let \(L_{\langle \Sigma |R\rangle}\) be a positive integer multiple of \(U\) with integer entries. As \(L_{\langle \Sigma |R\rangle}\) is invertible over \(\mathbb{Q}\), it follows that this matrix can be cancelled on the left. The result follows.
\end{proof}

Before we can show that the set of automorphisms of \(A\) that extend to automorphisms
of \(G\) has finite index in \(\actweird{A}{\Aut}\), we require the result which gives the 
sufficient condition for a subgroup to have finite index in \(\actweird{A}{\Aut}\) which we show is satisfied in
the proof of \cref{autA-is-finiteindex}.
  \begin{lem}\label{finite-index-stab}
      Let \(H\) be finite index subgroup of a finitely generated group \(G\). 
      Then the group 
      \[
          K \coloneqq \makeset{\phi\in \actweird{G}{\Aut}}{\(\act{H}{\phi} = H\)}
      \]
      has finite index in \(\actweird{G}{\Aut}\).
  \end{lem}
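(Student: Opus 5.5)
The plan is to show that the orbit of \(H\) under \(\actweird{G}{\Aut}\) is finite. Then the orbit--stabiliser theorem gives the result, because \(K\) is exactly the stabiliser of \(H\) under the action of \(\actweird{G}{\Aut}\) on the set of subgroups of \(G\). So the index \([\actweird{G}{\Aut}:K]\) equals the size of the orbit \(\makeset{\act{H}{\phi}}{\(\phi \in \actweird{G}{\Aut}\)}\).

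To bound the orbit, let \(d = [G:H]\). Every automorphism \(\phi\) sends a subgroup of index \(d\) to a subgroup of index \(d\), since \(\phi\) induces a bijection \(G/H \to G/\act{H}{\phi}\), \(Hg \mapsto \act{H}{\phi} \cdot \act{g}{\phi}\). So the orbit of \(H\) lies inside the set of subgroups of \(G\) of index exactly \(d\). The key step is the standard fact that a finitely generated group has only finitely many subgroups of a given finite index \(d\). I would prove this directly, as follows.

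Fix a finite generating set \(\Sigma\) of \(G\). To each subgroup \(L\) of index \(d\), associate the action of \(G\) on the right cosets \(L\backslash G\), after choosing a bijection \(L\backslash G \to \{1,\ldots,d\}\) that sends \(L\) to \(1\). This gives a homomorphism \(G \to \Sym(\{1,\ldots,d\})\), and \(L\) is recovered as the stabiliser of the point \(1\). Such a homomorphism is determined by the images of the finitely many elements of \(\Sigma\). Hence there are at most \((d!)^{|\Sigma|}\) such homomorphisms, and so at most that many subgroups of index \(d\).

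Combining these, the orbit of \(H\) has at most \((d!)^{|\Sigma|}\) elements, and therefore \(K\) has finite index in \(\actweird{G}{\Aut}\). The only step requiring any care is the counting argument for finite-index subgroups, which needs finite generation of \(G\). That hypothesis is available here, so I do not expect a real obstacle.
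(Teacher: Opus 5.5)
Your proposal is correct and follows the same route as the paper: \(K\) is the stabiliser of \(H\), and the orbit of \(H\) lies among the finitely many subgroups of index \([G:H]\) in the finitely generated group \(G\). The paper simply cites this finiteness fact, whereas you also supply its standard proof via homomorphisms \(G \to \Sym(\{1,\ldots,d\})\), which is valid.
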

  \begin{proof}
     Let \(J \coloneqq \makeset{\act{H}{\phi}}{\(\phi\in
     \actweird{G}{\Aut}\)}\). It suffices to show that \(J\) is finite. The set
     \(J\) consists only of index \([G:H]\) subgroups of \(G\), of which there
     are finitely many. 
  \end{proof}

We continue with our another technical lemma in a similar vein to
\cref{matrix-lem}, except this time we are able to conclude something about the
group of automorphisms of \(A\) that extend to automorphisms of \(G\).

\begin{lem}\label{lem:finindextend}
    Suppose that 
    \begin{enumerate}
        \item  $G$ is a group with a finite-index characteristic subgroup $A\cong \Z^m$;
        \item there is a homomorphism $\operatorname{sgn}
      \colon G\to \{1,-1\}$ such that for all $g\in G$ and $x \in
      A$ we have $\conjugate{x}{g}= x^{\actweird{g}{\operatorname{sgn}}}$;
        \item  \(\langle \Sigma \mid R\rangle\), \(L_{\langle \Sigma
        |R\rangle}\), \(W_{\langle \Sigma
        |R\rangle}\), \(\Lambda_{\langle \Sigma |R\rangle}\), and \(\gamma_\phi\) are as in \cref{matrix-lem};
        \item when $r\in R$ corresponds to a zero \row\ of $\comp{L_{\langle \Sigma
        |R\rangle}}{\Lambda_{\langle \Sigma |R\rangle}}$, the corresponding
        \row\ of $\comp{L_{\langle \Sigma |R\rangle}}{W_{\langle \Sigma
        |R\rangle}}$ is also a zero \row;
        \item \(D\subseteq R\) is the set of non-zero rows of \(\comp{L_{\langle \Sigma |R\rangle}}{W_{\langle \Sigma
        |R\rangle}}\), \(E \subseteq \Sigma\) corresponds to the set of non-pivot columns of \(\comp{L_{\langle \Sigma |R\rangle}}{W_{\langle \Sigma
        |R\rangle}}\) and \(\Omega\leq \Aut(G)\) is the group of automorphisms which fix each element of \(E\).
    \end{enumerate}
      Then the group \(P_{A,G,\Omega}\) of automorphisms of \(A\) extendable to
      automorphisms in \(\Omega\) has finite index in \(\actweird{A}{\Aut}\) and whenever \(\phi\in \Aut(A)\) has an extension \(\tilde{\phi}\in \Omega\), we have 
    \[\beta_d \cdot
(\act{p_d}{\tilde{\phi}}\cdot p_d^{-1})^Z=\act{\actweird{\comp{L_{\langle \Sigma |R\rangle}}{W_{\langle \Sigma |R\rangle}}}{\operatorname{\row}_d}}{\gamma_\phi^Z}\]
        for all \(d\in D\), where \(\beta_d\) is the first non-zero entry of \row\ \(d\) in $\comp{L_{\langle \Sigma
        |R\rangle}}{\Lambda_{\langle \Sigma |R\rangle}}$  and \(p_d\in \Sigma\) corresponds to the column containing the entry \(\beta_d\).
\end{lem}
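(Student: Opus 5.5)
We prove the two conclusions in turn, starting with the displayed identity, which is a direct read-off from \cref{matrix-lem}. Let \(\phi\in\Aut(A)\) have an extension \(\tilde\phi\in\Omega\). The automorphism \(\tilde\phi\) fixes each element of \(E\subseteq\Sigma\). We also want \(\tilde\phi\) to preserve every coset of \(A\); this requires shrinking \(\Omega\) by a finite-index amount, which we handle in the second part. Granting it, \(\gamma\colon s\mapsto \act{s}{\tilde\phi}\cdot s^{-1}\) takes values in \(A\), and \((\phi,\gamma)\in N_G\). By \cref{matrix-lem}, the matrix \(X\) whose \(t^{th}\) \row\ is \((\act{t}{\gamma})^Z\) satisfies
\[
\comp{L_{\langle \Sigma |R\rangle}}{\Lambda_{\langle \Sigma |R\rangle}}{X}=\comp{L_{\langle \Sigma |R\rangle}}{W_{\langle \Sigma |R\rangle}}{\Gamma_\phi}.
\]
Rows of \(X\) indexed by \(E\) are zero, since \(\tilde\phi\) fixes those generators. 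The matrix \(\comp{L}{\Lambda}\) is an integer multiple of a reduced echelon matrix, so in a nonzero row \(d\) the only nonzero entry in a pivot column is \(\beta_d\), in column \(p_d\). All other nonzero entries of that row lie in non-pivot columns. We interpret \(E\) as the non-pivot columns of \(\comp{L}{\Lambda}\); the statement writes \(W\), which we take to be a typo. Those non-pivot columns multiply zero rows of \(X\). Row \(d\) of the identity therefore reduces to \(\beta_d(\act{p_d}{\tilde\phi}p_d^{-1})^Z=\act{\operatorname{row}_d(\comp{L}{W})}{\gamma_\phi^Z}\), which is the claim, since \(\Gamma_\phi\) is the matrix of \(\gamma_\phi^Z\).

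For finite index, we construct extensions directly. By hypothesis (4), zero rows of \(\comp{L}{\Lambda}\) impose no condition. So \(\phi\) extends, with \(\gamma\) vanishing on \(E\), provided that for every \(d\in D\) the vector \(\operatorname{row}_d(\comp{L}{W})\cdot\Gamma_\phi\) is divisible by \(\beta_d\) in \(\Z^m\). In that case we set \((\act{p_d}{\gamma})^Z\) equal to the quotient and \(\gamma=1\) on \(E\). The resulting \(X\) solves the reduced system, hence the original system, since \(L\) is invertible over \(\Q\). \Cref{va-extend-aut} then supplies \(\tilde\phi\in\Aut(G)\), which fixes \(E\) pointwise and so lies in \(\Omega\).

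It remains to find a finite-index subgroup of \(\Aut(A)\) on which the divisibility holds. Let \(\beta=\prod_d|\beta_d|\), and take the principal congruence subgroup \(\Gamma(\beta)=\{\phi:\phi^Z\equiv I \bmod \beta\}\). For \(\phi\in\Gamma(\beta)\), \(\Gamma_\phi=\phi^Z-I\) has all entries divisible by \(\beta\), hence by each \(\beta_d\), so every condition holds. \(\Gamma(\beta)\) has finite index in \(\GL_m(\Z)\), being the kernel of reduction to the finite group \(\GL_m(\Z/\beta)\). Thus \(\Gamma(\beta)\le P_{A,G,\Omega}\), and \(P_{A,G,\Omega}\) is a subgroup, so it has finite index in \(\Aut(A)\).

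The main obstacle is the subtlety in the first part: an arbitrary \(\tilde\phi\in\Omega\) need not preserve the cosets of \(A\), whereas \(N_G\) only describes coset-preserving extensions. I would first try to argue that fixing \(E\) pointwise, together with the relations, forces \(\tilde\phi\) to induce the identity on \(G/A\). If that fails, I would restrict to the finite-index subgroup of \(\Omega\) acting trivially on the finite group \(G/A\), in the spirit of \cref{finite-index-stab}. Then both conclusions hold with \(\Omega\) replaced by this subgroup, which is what later applications use.
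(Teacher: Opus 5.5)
Your proposal is correct and follows the paper's proof essentially step by step. The identity is read off row $d$ of $L\Lambda X=LW\Gamma_\phi$ once the rows of $X$ indexed by $E$ vanish, with $E$ read as the non-pivot columns of $L\Lambda$, as the paper's own proof does. Finite index comes from the automorphisms fixing every coset of $(\prod_{d\in D}\beta_d)\Z^m$, which is exactly your congruence subgroup $\Gamma(\beta)$.

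Restricting $\Omega$ to automorphisms acting trivially on $G/A$ is what the paper's proof tacitly uses and what \cref{payoff} assumes. Your first alternative, deriving this from the relations, cannot work in general. For $G=\Z^2$, $A=2\Z^2$ and $\Sigma=\{e_1,e_2\}$ one gets $E=\varnothing$, so $\Omega=\Aut(G)$ contains the coordinate swap, which does not preserve the cosets of $A$.
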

\begin{proof}
From point (4), it follows that for all \(\Sigma \times m\) integer matrices \(X\), the equation \(\comp{L_{\langle \Sigma |R\rangle}}{\Lambda_{\langle \Sigma |R\rangle}}{X} =  \comp{L_{\langle \Sigma |R\rangle}}{W_{\langle \Sigma |R\rangle}}{ \Gamma_\phi}\) holds if and only if it holds in the rows which are not zero \row s of $\comp{L_{\langle \Sigma |R\rangle}}{\Lambda_{\langle \Sigma |R\rangle}}$.
From \cref{matrix-lem}, we know that the sets
\[\makeset{(\phi,\gamma)}{\(\phi\in \actweird{A}{\Aut}\), \(\gamma \colon \Sigma\to A\), the $\comp{\Sigma}{\times}{m}$
        matrix \(X\) whose \(t^{th}\) row \\ is \((\act{t}{\gamma})^Z\) satisfies
        \(\comp{L_{\langle \Sigma |R\rangle}}{\Lambda_{\langle \Sigma |R\rangle}}{X} = \comp{L_{\langle \Sigma |R\rangle}}{W_{\langle \Sigma |R\rangle}}{\Gamma_\phi}\) in the\\ \row s from \(D\).}
        \]
         \[\makeset{(\phi,\gamma)}{\(\phi\in \Aut(A)\), \(\gamma \colon \Sigma \to A\) and there is
         $\tilde{\phi}\in \Aut(G)$ with $\tilde{\phi}|_A=\phi$ and\\ for $t\in
         \Sigma$ we have $\act{t}{\tilde{\phi}}=\act{t}{\gamma} \cdot t$}\]
         are equal. 
         In particular, the case that 
         \(\tilde{\phi}\in \Omega\) corresponds to when the rows of \(X\) from \(E\) are \(\bold{0}\).
For all $d\in D$, let
     $\beta_d\in \mathbb{Z}$ be the only non-zero entry in the \(d^{th}\) row of
     \(\comp{L_{\langle \Sigma |R\rangle}}{\Lambda_{\langle \Sigma |R\rangle}}\)
     whose column is not from \(E\), which exists due to the structure of the matrix \(\comp{L_{\langle \Sigma
     |R\rangle}}{\Lambda_{\langle \Sigma |R\rangle}}\) 
     described in \cref{matrix-lem}. For each \(e\in E\), let \(\beta_{d,e}\) be the entry of
     \(\comp{L_{\langle \Sigma |R\rangle}}{\Lambda_{\langle \Sigma |R\rangle}}\)
     in the row \(d\) and column \(e\). When $X$ is a \(\Sigma\times m\) matrix
     and \(\phi\in \Aut(A)\), it follows that \(\comp{L_{\langle \Sigma
     |R\rangle}}{\Lambda_{\langle \Sigma |R\rangle}}{X} = \comp{L_{\langle
     \Sigma |R\rangle}}{W_{\langle \Sigma |R\rangle}}{\Gamma_\phi}\) if and only
     if for all \(d\in D\) we have 
    \[\beta_d \cdot
        \actweird{X}{\operatorname{\row}_{p_d}}+\sum_{e\in E} \beta_{d,e}\cdot \actweird{X}{\operatorname{\row}_e}=\act{\actweird{\comp{L_{\langle \Sigma |R\rangle}}{W_{\langle \Sigma |R\rangle}}}{\operatorname{\row}_d}}{\gamma_\phi^Z}.
    \]
When the entries of \(X\) in the rows from \(E\) are zero, the above equation reduces to
    \[\beta_d \cdot
        \actweird{X}{\operatorname{\row}_{p_d}}=\act{\actweird{\comp{L_{\langle \Sigma |R\rangle}}{W_{\langle \Sigma |R\rangle}}}{\operatorname{\row}_d}}{\gamma_\phi^Z}.
    \]
    
   Thus $\phi \in \actweird{A}{\Aut}$ always extends to an element $\tilde{\phi}\in \Omega$ when there is a matrix \(X\) such that the above equality holds for all
   $d\in D$. Moreover, in this case \(\actweird{X}{\operatorname{\row}_{p_d}}=(p_d)\tilde{\phi}\cdot p_d^{-1}\).
   We have shown the required equality.
   We have also shown that \(\phi\in \Aut(A)\) has an extension in \(\Omega\) if and only if for all \(d\in D\) we have
    \[\act{\actweird{\comp{L_{\langle \Sigma |R\rangle}}{W_{\langle \Sigma
    |R\rangle}}}{\operatorname{\row}_d}}{\phi^Z}\in \beta_d\cdot \Z^m +
    \actweird{\comp{L_{\langle \Sigma |R\rangle}}{W_{\langle \Sigma
    |R\rangle}}}{\operatorname{\row}_d}.\]
    To show the remaining statement that \(P_{A,G,\Omega}\) has finite index in
    \(\Aut(A)\), it thus suffices to show that the subgroup of $\actweird{\Z^m}{\Aut}$ fixing
    all the cosets of $\beta_d \cdot \Z^m$ setwise for all \(d\in D\) has finite
    index in $\actweird{\Z^m}{\Aut} \cong \actweird{A}{\Aut}$. This follows if
    we show that the subgroup of $\actweird{\Z^m}{\Aut}$ fixing all the cosets
    of $H=\left(\prod_{d\in D}\beta_d\right) \cdot \Z^m$ has finite index in
    $\actweird{\Z^m}{\Aut}$. By \cref{finite-index-stab}, the subgroup of
    $\actweird{\Z^m}{\Aut}$ fixing $H$ setwise has finite index in
    $\actweird{\Z^m}{\Aut}$. The group of automorphisms fixing all cosets is
    then a finite-index subgroup of that group, as required.
\end{proof}  

The fourth hypothesis of the previous lemma is unfortunate as it references the details of a presentation as well as the technical definitions from \cref{matrix-lem}.
In the following proposition we have our first `non-technical' result. 
From there we will be close to the main results of the section.

\begin{lem}\label{autA-is-finiteindex}
    Suppose that $G$ is a group with a finite-index characteristic subgroup $A\cong \Z^m$. Suppose further that for all $g\in G$ and $x \in A$ we have $\conjugate{x}{g}= x^{\actweird{g}{\operatorname{sgn}}}$ where $\operatorname{sgn}
      \colon G\to \{1,-1\}$ is a homomorphism. 
      Then the group \(P_{A,G}\) of automorphisms of \(A\) extendable to
      automorphisms of \(G\) has finite index in \(\actweird{A}{\Aut}\).
\end{lem}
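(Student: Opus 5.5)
The plan is to apply \cref{lem:finindextend} to a suitable presentation of \(G/A\), with \(\Omega\) the group of automorphisms of \(G\) fixing each element of \(E\). Every automorphism of \(A\) that extends to an element of \(\Omega\) also extends to an automorphism of \(G\), so \(P_{A,G,\Omega}\leq P_{A,G}\). Since \(P_{A,G,\Omega}\) has finite index in \(\actweird{A}{\Aut}\) by that lemma, so does \(P_{A,G}\).

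To set up the lemma, I fix a finite special monoid presentation \(\langle \Sigma\mid R\rangle\) of the finite group \(G/A\), for instance the multiplication-table presentation on the non-identity elements. I then choose lifts \(\Sigma\subseteq G\) and words \(x_r\) with \(r=x_r\) in \(G\), and form \(\Lambda=\Lambda_{\langle \Sigma|R\rangle}\), \(W=W_{\langle \Sigma|R\rangle}\) and \(L=L_{\langle \Sigma|R\rangle}\) as in \cref{matrix-lem}. Hypotheses (1)--(3) and (5) of \cref{lem:finindextend} are then automatic. The only work is hypothesis (4): every zero \row\ of \(\comp{L}{\Lambda}\) must give a zero \row\ of \(\comp{L}{W}\).

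I expect hypothesis (4) to be the main obstacle. It would hold at once if \(W=\comp{\Lambda}{Y}\) for some rational \(\Sigma\times m\) matrix \(Y\), since then \(\comp{L}{W}=\comp{L}{\Lambda}{Y}\). To produce such a \(Y\), I would use \cref{schur-cor}, which gives a homomorphism \(\sigma\colon G\to \Z^m\rtimes\{1,-1\}\) that is injective on \(A\) and whose sign coordinate is \(\sgn\).

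Write \(\act{g}{\sigma}=(c_g,\actweird{g}{\sgn})\) with \(c_g\in\Z^m\). Applying \(\sigma\) to the relation \(r_1\cdots r_{|r|}=x_r\), the multiplication rule in \(\Z^m\rtimes\{1,-1\}\) gives
\[
  \sum_{i=1}^{|r|}\actweird{r_1\cdots r_{i-1}}{\sgn}\, c_{r_i} \;=\; c_{x_r}.
\]
Because \(\sigma|_A\) is an injective homomorphism \(\Z^m\to\Z^m\), its image has finite index. Hence there is a matrix \(Q\in\actweird{\Q}{\GL_m}\) with \(c_a=\comp{a^Z}{Q}\) for all \(a\in A\). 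In matrix form the displayed identity reads \(\comp{\Lambda}{C}=\comp{W}{Q}\), where \(C\) is the \(\Sigma\times m\) matrix with \row s \(c_t\). Setting \(Y=\comp{C}{Q^{-1}}\) gives \(W=\comp{\Lambda}{Y}\), and hypothesis (4) follows.

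One point to check is that the identity from the semidirect product carries the same signs as the ones built into \(\Lambda\) in \cref{matrix-lem}. Both come from moving each \(r_i\) past the earlier letters \(r_1\cdots r_{i-1}\), so I expect them to agree. With (4) established, \cref{lem:finindextend} applies and the argument in the first paragraph completes the proof.
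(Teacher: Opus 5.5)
Your proof is correct. It has the same skeleton as the paper's: reduce to \cref{lem:finindextend}, and verify its hypothesis (4) using the homomorphism \(\sigma\) of \cref{schur-cor} together with injectivity of \(\sigma|_A\). The difference is in how (4) is checked.

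The paper first chooses a presentation with exactly one generator \(s\) of sign \(-1\). It then introduces a signed abelianisation \(Y\colon \Sigma^\ast\to\Z^\Sigma\) whose value on each relator is the corresponding row of \(\Lambda\), and argues one row at a time. If row \(r\) of \(L\Lambda\) vanishes, then the word \(\prod_{r'}(r')^{\ell_{r,r'}}\) has \(Y\)-value zero, so its image under \(\sigma\) is trivial. Hence the corresponding product of the \(x_{r'}\), whose image in \(\Z^m\) is row \(r\) of \(LW\), is trivial in \(A\).

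You instead push every relation through \(\sigma\) at once. This gives the cocycle identity \(\Lambda C=WQ\), so \(W=\Lambda CQ^{-1}\), and (4) holds for every admissible \(L\). Your version has three advantages:
\begin{enumerate}
\item it needs no special choice of presentation;
\item it avoids making sense of negative powers \((r')^{\ell_{r,r'}}\) as elements of the domain of \(Y\), which is defined only on positive words;
\item it makes plain that (4) just says the columns of \(W\) lie in the rational column space of \(\Lambda\).
\end{enumerate}
The paper's argument is more hands-on but establishes the same fact.

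The sign check you left open does go through, provided you write \((g)\sigma=c_g\varepsilon_g\) with the \(\Z^m\)-component on the left. Then \((c_g\varepsilon_g)(c_h\varepsilon_h)=(c_g+\varepsilon_g c_h)\varepsilon_g\varepsilon_h\), which produces exactly the coefficients \(\sgn(r_1\cdots r_{i-1})\) of \(\Lambda\). You should also state that \(a\mapsto c_a\) is additive on \(A\) because \(\sgn\) is trivial on \(A\). For \(m\geq 1\), an element of \(A\) centralises \(A\) and so cannot act on it by inversion; for \(m=0\) the lemma is trivial.
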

\begin{proof}
We use the notation from \cref{matrix-lem}. If $\operatorname{sgn}$ is surjective then specifically choose $\langle
        \Sigma \mid R\rangle$ such that exactly one element of $ s\in \Sigma$ has
        \(\actweird{s}{\sgn}=-1\).        By \cref{lem:finindextend}, it suffices to show condition (4) holds.
 
  Suppose that $r\in R$ corresponds to a zero \row\ of $\comp{L_{\langle \Sigma
        |R\rangle}}{\Lambda_{\langle \Sigma |R\rangle}}$.
        We show that the corresponding
        \row\ of $\comp{L_{\langle \Sigma |R\rangle}}{W_{\langle \Sigma
        |R\rangle}}$ is also a zero \row. 
         In the case that \(s\in \Sigma\) with \(\sgn(s)=-1\) does not exist,
        for point (2) we take \(\{s\} = \varnothing\), and point (3)
        is not considered in the following definition of \(Y\).
        We define $Y\colon \Sigma^\ast \to \Z^\Sigma$ inductively by 
        \begin{enumerate}
            \item $\act{t}{Y}$ is the generator of \(\Z^\Sigma\) corresponding to  $t$
            for $t\in \Sigma$;
            \item  $\act{tu}{Y}$ is $\act{t}{Y}+\act{u}{Y}$, for
            \(t\in \Sigma \setminus \{s\}\) and \(u \in \Sigma^\ast\);
            \item  $\act{su}{Y}$ is $\act{s}{Y}-\act{u}{Y}$, for \(u \in \Sigma^\ast\).
        \end{enumerate}
        In particular, \(\act{r}{Y}\) is the \(r^{th}\) row of \(\Lambda_{\langle \Sigma |R\rangle}\) for all \(r\in R\).
        Let \(L_{\langle \Sigma |R\rangle}=[\ell_{\comp{r}{,}{r'}}]_{(\comp{r}{,}{r'})}\). We have
        \[
            \actweird{L_{\langle \Sigma |R\rangle}\Lambda_{\langle \Sigma |R\rangle}}{\operatorname{\row}_r} = \sum_{r' \in R} \ell_{r,r'} \actweird{\Lambda_{\langle \Sigma |R\rangle}}{\operatorname{\row}_{r'}}.
        \]
        As it is a product of relations of \(G\), it follows that the relation
        \[
            \prod_{r'\in R}{(r')}^{\ell_{\comp{r}{,}{r'}}} =\prod_{r'\in R} x_{r'}^{\ell_{\comp{r}{,}{r'}}}
        \]
        holds in $G$ (for any ordering on \(R\) used to define the product). We also have by the definition of matrix multiplication that 
        \[\left(\prod_{r'\in R} x_{r'}^{\ell_{\comp{r}{,}{r'}}}\right)^Z=\sum_{r'\in R} \ell_{\comp{r}{,}{r'}}\cdot x_{r'}^Z =\actweird{\comp{L_{\langle \Sigma |R\rangle}}{W_{\langle \Sigma |R\rangle}}}{\operatorname{\row}_r}.\]
        Note also that for all $r'\in R$, we have an even
        number of letters in \(r\) equal to $s$ (none, if \(s\) is undefined) since their
        product is trivial in $G/A$. 
        Thus
        \[
            \comp{\left(\prod_{r'\in R}(r')^{\ell_{\comp{r}{,}{r'}}}\right)}{Y}=\sum_{r'\in R} \ell_{\comp{r}{,}{r'}}\cdot \act{r'}{Y}=\sum_{r' \in R} \ell_{\comp{r}{,}{r'}} \actweird{\Lambda_{\langle \Sigma |R\rangle}}{\operatorname{\row}_{r'}}=\actweird{\comp{L_{\langle \Sigma |R\rangle}}{\Lambda_{\langle \Sigma |R\rangle}}}{\operatorname{\row}_r}=\mathbf{0}.
        \]
        We show that if $u\in \Sigma^\ast$ has an even number of occurrences of
        letters \(s\) (which may not be defined, but if so, there are an even
        number of occurrences) and $\act{u}{Y} = \bold{0}$ then $u$ represents the
        identity in the group $\act{G}{\sigma}$ from \cref{schur-cor}. From this
        it will follow that $\prod_{r'\in R}(r')^{\ell_{\comp{r}{,}{r'}}}
        =\prod_{r'\in R} x_{r'}^{\ell_{\comp{r}{,}{r'}}}$ is the identity of $A\leq
        G$, as required.
        
        Suppose that $u=u_0 s u_1 s u_2 \cdots s u_{2k}$ where each \(u_i\) does not
        contain \(s\), and thus if \(s\) is not defined, we have \(u =  u_0\).
        Then
        \begin{equation}
            \label{eqn_Y_0}
          \bold{0}= \act{u}{Y} = \act{u_0}{Y} - \act{u_1}{Y}+\act{u_2}{Y}-\ldots + \act{u_{2k}}{Y}
        \end{equation}
        and $u =_{\act{G}{\sigma}} u_0 u_1^{-1}u_2 u_3^{-1}\ldots u_{2k}s^{2k}=u_0
        u_1^{-1}u_2 u_3^{-1}\ldots u_{2k}$. 
        As the product $u_0 u_1^{-1}u_2
        u_3^{-1}\ldots u_{2k}$ in $\act{G}{\sigma}$ is in an abelian group and from
        \eqref{eqn_Y_0} the corresponding product in a free abelian group
        is zero, it follows that $u$ is trivial in \(\act{G}{\sigma}\),  as required.
 
\end{proof}

\cref{finite-index-aut-actions} allows us to conclude that the relative
automorphic growth of our finite-index free abelian characteristic subgroup
\(A\) is linear. The remainder of the proof that the automorphic growth of
these virtually \(\Z^m\) groups \(G\) is linear must consider the other cosets of \(A\)
in \(G\) and compute their growth. Since these are not subgroups but sets, we cannot
define their `automorphic growth'. However, they do have a metric associated with them
(albeit one that the action of \(\actweird{G}{\Aut}\) does not necessarily behave well with), and
thus we can use the action growth objects defined in \cref{action-growth-obj} to
consider these. The majority of the following proof is occupied with these
non-subgroup cosets, and computing their growth.

\begin{proposition}\label{payoff}
   Suppose that $G$ is a group with a finite-index normal subgroup $A\cong \Z^m$. Suppose further that there is a homomorphism $\operatorname{sgn}
      \colon G\to \{1,-1\}$ such that for all $g\in G$ and $x \in
      A$ we have $\conjugate{x}{g}= x^{\actweird{g}{\operatorname{sgn}}}$. 
      Then $\alpha_G$ is linear.
\end{proposition}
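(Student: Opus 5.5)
The lower bound is immediate. An automorphism preserves element orders, so infinite-order elements cannot be sent to the identity. More usefully, \cref{quotient-growth} or a direct argument shows \(\alpha_G\) is at least linear whenever \(m \geq 1\). Concretely, \(G\) is infinite, and there is a characteristic subgroup \(A' \leq A\) of finite index, \(A' \cong \Z^m\), by \cref{virt-ab-char}. By \cref{lem:charsbgrp} and \cref{relativeundistorted}, \(\alpha_{A'^{P}} \preccurlyeq \alpha_G\). Since \(P\) is a subgroup of \(\Aut(A')\), \cref{thm:freeabelian} and \cref{growth-comparison-1} give \(\alpha_{A'^P} \succcurlyeq \alpha_{A'}\), which is linear. (If \(m=0\) the statement should be read as bounded; I will assume \(m \geq 1\).) So the real work is the upper bound \(\alpha_G \preccurlyeq n\).

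For the upper bound, first replace \(A\) by a characteristic finite-index \(A' \leq A\) via \cref{virt-ab-char}. The hypothesis passes to \(A'\) with the same \(\sgn\), so I assume \(A\) is characteristic. Now \(G = \bigsqcup_{t \in T} At\) for a finite transversal \(T\). It suffices to show that, for each coset \(At\), the number of \(\Aut(G)\)-orbits meeting \(At \cap B(n)\) is \(O(n)\). Let \(\Omega\) be the group of automorphisms fixing every coset of \(A\) that extend elements of \(P_{A,G}\). Fixing the finitely many cosets is a finite-index condition by \cref{finite-index-stab}, and \(P_{A,G}\) has finite index in \(\Aut(A)\) by \cref{autA-is-finiteindex}. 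Hence the restriction image of \(\Omega\) in \(\Aut(A)\) is a finite-index subgroup \(Q \leq \GL_m(\Z)\). I will count \(\Omega\)-orbits; by \cref{growth-comparison-1} this only overcounts.

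Fix \(t\) and identify \(At\) with \(\Z^m\) via \(at \mapsto a^Z\); this is a bijective quasi-isometry onto \(At\) with the inherited metric. For \(\tilde\phi \in \Omega\) with restriction \(\phi\), we have \((at)\tilde\phi = (a)\phi \cdot (t)\tilde\phi = (a)\phi \cdot c_\phi t\) for some \(c_\phi \in A\). So \(\Omega\) acts on \(\Z^m\) by affine maps \(x \mapsto x\phi^Z + c_\phi^Z\) whose linear parts range over \(Q\). The key step is to control the translation parts. Taking \(\tilde\phi\) to be conjugation by \(a' \in A\) gives \((at)\tilde\phi = a \cdot a'^{-1} t a' = a \cdot a'^{\,\sgn(t)-1} t\). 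When \(\sgn(t) = -1\) this supplies all translations by \(2A\). In that case, orbits on \(At\) are unions of \(Q\)-orbits of cosets of \(2\Z^m\), of which there are finitely many, so this coset contributes only \(O(1)\) orbits.

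When \(\sgn(t) = 1\), I instead use the orbit of the linear action. Consider the map \(Q \to A\), \(\phi \mapsto c_\phi\), which is only defined up to the choice of extension. I expect it to satisfy a cocycle identity, so the action is \(x \mapsto x\phi + c_\phi\). I would show the cocycle is a coboundary on a finite-index subgroup, that is, \(\Omega\) fixes some rational point \(x_0\) up to finite index. Then \(\Omega\)-orbits in \(At\) correspond, after translating by \(x_0\) and passing to a finite-index sublattice, to \(Q\)-orbits on \(\Z^m\). These are essentially gcd classes, as in \cref{thm:freeabelian}: a \(Q\)-orbit contains all primitive multiples \(dv\) with \(v\) in finitely many congruence classes, so there are \(O(n)\) orbits within radius \(n\). \textbf{The main obstacle} is this cocycle step. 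I would attack it by using \cref{lem:finindextend}, whose displayed formula gives \((p_d)\tilde\phi\, p_d^{-1}\) linearly in \(\gamma_\phi = \phi - I\). This suggests that on the finite-index subgroup \(\Omega\) the translation \(c_\phi\) equals \(y(\phi - I)\) for a fixed rational vector \(y\), which is exactly a coboundary. Summing the \(O(n)\) or \(O(1)\) bounds over the finitely many \(t \in T\) gives \(\alpha_G \preccurlyeq n\).
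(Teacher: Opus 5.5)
\paragraph{Where the proposal falls short.} Your lower bound is correct. Your treatment of cosets with \(\sgn(t)=-1\) is also correct, and it is a shortcut the paper does not use: conjugation by \(a'\in A\) translates \(At\) by \(2a'\), so at most \(2^m\) automorphic orbits meet such a coset.

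The gap is the case \(\sgn(t)=1\). This case carries the whole upper bound, and you only conjecture it. The coboundary property is not automatic. For \(m=2\), a free finite-index subgroup of \(\GL_2(\Z)\) has non-zero \(H^1\) with coefficients in \(\Q^2\), and this persists on every finite-index subgroup. So the property has to come from the structure of \(G\).

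Your suggested attack via \cref{lem:finindextend} is the paper's route, but it does not apply as you state it:
\begin{enumerate}
\item The lemma's displayed formula holds only for extensions in the subgroup fixing the non-pivot generators \(E\) pointwise. For your larger \(\Omega\), the pivot equations also involve the free rows \(X_e\), \(e\in E\).
\item You must take the transversal \(T\) itself as \(\Sigma\). Only then is every coset either in \(E\) (zero translation) or a pivot \(p_d\).
\item Hypothesis (4) must be verified for that presentation. The paper does this by observing that a row of \(LW\) lying over a zero row of \(L\Lambda\) is fixed by \(P_{A,G}\). That group has finite index in \(\GL_m(\Z)\) by \cref{autA-is-finiteindex}, so the row must be zero.
\end{enumerate}

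\paragraph{A short way to close the gap.} There is a much shorter fix inside your own decomposition. If \(\sgn(t)=1\), then \(t\) centralises \(A\). Let \(k\) be the order of \(tA\) in \(G/A\) and put \(a_0=t^k\in A\). For \(\tilde\phi\in\Omega\) restricting to \(\phi\), write \((t)\tilde\phi=ct\) with \(c\in A\). Since \(c\) commutes with \(t\),
\[
(a_0)\phi=(t^k)\tilde\phi=(ct)^k=c^k a_0 .
\]
Hence \(c^Z=y(\phi^Z-I)\) with \(y=\tfrac1k a_0^Z\), for every element of your \(\Omega\), with no passage to a subgroup. So \(\Omega\) acts on \(At\cong\Z^m\) by \(x\mapsto(x+y)\phi^Z-y\). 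The translation \(x\mapsto x+y\) conjugates this into the linear action of \(Q\) on the lattice \(\tfrac1k\Z^m\); note this is a superlattice of \(\Z^m\), not a sublattice. Then \cref{finite-index-aut-actions} and \cref{thm:freeabelian} give \(O(n)\) orbits, as at the end of the paper's proof.

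With this step your sign-splitting argument is complete. It also bypasses \cref{va-extend-aut}, \cref{matrix-lem} and \cref{lem:finindextend} entirely. Without it, the upper bound is not established.
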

\begin{proof}
By \cref{virt-ab-char} we can replace \(A\) with a finite-index subgroup
characteristic in \(G\). Using the fact that finite-index subgroups are
undistorted (see, for example \cite{Mann}), \cref{relativeundistorted}, \cref{autA-is-finiteindex},
\cref{finite-index-aut-actions} and \cref{thm:freeabelian}, imply that the
relative automorphic growth of $A$ in $G$ is linear. We use the notation of
\cref{matrix-lem} using a monoid presentation $\langle \Sigma \mid R\rangle$ where
all relations in \(R\) are of the form \(r = 1\),
such that \(\Sigma\) is a transversal for \(A\) in \(G\). As such we denote
\(\Sigma\) by \(T\).

    Let $D$ be the set of elements of $T$ corresponding to non-zero \row s of $\comp{L_{\langle T |R\rangle}}{\Lambda_{\langle T |R\rangle}}$ and  let \(E\subseteq \{1,\ldots,m\}\) correspond to the set of non-pivot columns of \(\comp{L_{\langle T |R\rangle}}{\Lambda_{\langle T |R\rangle}}\).
    Consider the
    group $\Omega \leq \actweird{G}{\Aut}$ of automorphisms acting trivially on $G/A$ and fixing the set $E$
    pointwise. 
    If the $r^\textrm{th}$ \row\ of $\comp{L_{\langle T| R\rangle}}{\Lambda_{\langle T| R\rangle}}$ is a zero \row\ then the
    corresponding \row\ of $\comp{L_{\langle T| R\rangle}}{W_{\langle T| R\rangle}}{\Gamma_\phi}$ must be zero when $\comp{L_{\langle T| R\rangle}}{\Lambda_{\langle T| R\rangle}}{X}
    =\comp{L_{\langle T| R\rangle}}{W_{\langle T| R\rangle}}{\Gamma_\phi}$ has a solution $X$. 
    It thus follows from \cref{matrix-lem} that 
    \[\comp{\actweird{\comp{L_{\langle T| R\rangle}}{W_{\langle T| R\rangle}}}{\operatorname{\row}_r}}{\Gamma_\phi}=\mathbf{0}\]
    whenever $\phi\in \actweird{A}{\Aut}$ has an extension in \(\actweird{G}{\Aut}\).
    Thus from the definition of \(\Gamma_\phi\), the element of \(A\) corresponding to \(\actweird{\comp{L_{\langle T| R\rangle}}{W_{\langle T| R\rangle}}}{\operatorname{\row}_r}\) is fixed by the group of elements of \(\actweird{A}{\Aut}\) which extend to \(\actweird{G}{\Aut}\).
      In other words, \(P_{A,G}\) is contained in the stabiliser in \(\Aut(A)\) of the point corresponding to \(\actweird{\comp{L_{\langle T| R\rangle}}{W_{\langle T| R\rangle}}}{\operatorname{\row}_r}\).
    From \cref{autA-is-finiteindex}, it thus follows that the stabiliser of the element \(\actweird{L_{\langle T| R\rangle}W_{\langle T| R\rangle}}{\operatorname{\row}_r}\) in \(\GL_{m}(\Z)\) has finite index whenever \(\actweird{\comp{L_{\langle T| R\rangle}}{\Lambda_{\langle T| R\rangle}}}{\operatorname{\row}_r}=\mathbf{0}\). 
    Thus if \(\actweird{\comp{L_{\langle T| R\rangle}}{\Lambda_{\langle T| R\rangle}}}{\operatorname{\row}_r}=\mathbf{0}\), we must have \(\actweird{L_{\langle T| R\rangle}W_{\langle T| R\rangle}}{\operatorname{\row}_r}=\bold{0}\). We can therefore apply \cref{lem:finindextend} and conclude that the group \(P_{A,G,\Omega}\leq \Aut(A)\) of automorphisms with extensions in \(\Omega\) has finite index and whenever \(\phi\in \Aut(A)\) has an extension \(\tilde{\phi}\in \Omega\), we have 
    \begin{equation}\label{eql}
        \beta_d \cdot
       ((p_d)\tilde{\phi}\cdot p_d^{-1})^Z=\act{\actweird{\comp{L_{\langle T |R\rangle}}{W_{\langle T |R\rangle}}}{\operatorname{\row}_d}}{\gamma_\phi^Z}
    \end{equation}
        for all \(d\in D\).

 We show that $\alpha_{G^\Omega}$ is
    linear (this is a stronger claim than $\alpha_{G}$ being linear).
   To do this, we show that $\alpha_{(At)^\Omega\subseteq G}$ is at most linear for all $t\in T$.        
    Let $t\in E$. 
    Then $\act{t}{\tilde{\phi}}\cdot t^{-1}=1_G$ for all
    $\tilde{\phi}\in \Omega$. Thus for all $x\in A$, the orbit of $xt$ under
    $\Omega$ is equal to the orbit of $x$ under $\Omega$ shifted by $t$.
    Thus $\alpha_{(At)^\Omega\subseteq G}=\alpha_{A^\Omega\subseteq G}$ is linear using \cref{finite-index-aut-actions} and \cref{relativeundistorted}.
    
    Let $t\in T\setminus E$ and \(d\in D\) be the row whose pivot is in column \(t\). In particular, \(t=p_d\).
    For all $\phi\in \actweird{A}{\Aut}$ with extensions in $\Omega$, let $\phi_t\in \actweird{A}{\Sym}$ be such that \(\phi\) and
    \(xt\mapsto \act{x}{\phi_t} \cdot t\) are restrictions of a common element
    \(\tilde{\phi}\in\Omega\). Let \(\gamma_{\tilde{\phi}} \colon G\to G\) be the
    map \(g\mapsto \act{g}{\tilde{\phi}} \cdot g^{-1}\). Then 
    \[
        \act{x}{\phi_t}\cdot t=\act{x}{\tilde{\phi}}\act{t}{\tilde{\phi}}=\act{x}{\phi}\cdot \act{t}{\gamma_{\tilde{\phi}}}\cdot
        t.
    \]
    so \(\act{x}{\phi_t}=\act{x}{\phi}\cdot \act{t}{\gamma_{\tilde{\phi}}}=(x)\phi\cdot (t)\tilde{\phi}\cdot t^{-1}\).
    From \eqref{eql}, it then follows that for all \(x\in A\) we have 
    \[\act{x^Z}{\phi_t^Z}=
    \act{x^Z}{\phi^Z} + \frac{1}{\beta_d}\cdot \actweird{\comp{L_{\langle T |R\rangle}}{W_{\langle T |R\rangle}}{\Gamma_{\phi}}}{\operatorname{\row}_d}.\] 
    The map \(\phi^Z\in \Aut(\Z^m)\) has a unique linear extension in \(\Aut(\frac{1}{\beta_d}\Z^m)\).
    Therefore, we can use the above equality to extend the map \(\phi_t^Z\) to a permutation of \(\frac{1}{\beta_d}\mathbb{Z}^m\).
    We have $\alpha_{(At)^\Omega\subseteq G}=\alpha_{(A,P_{A,G,t})}$  where
    \(P_{A,G,t} \coloneqq \makeset{\phi_{t}}{$\phi\in P_{A,G,\Omega}$}\). 
    Let \(f_t \coloneqq \actweird{\comp{L_{\langle T |R\rangle}}{W_{\langle T |R\rangle}}}{\operatorname{\row}_d}\).
    Let $\rho_t\colon \frac{1}{\beta_d}\mathbb{Z}^m\to \frac{1}{\beta_t}\mathbb{Z}^m$ be the map $\act{x}{\rho_t}=x+\frac{1}{\beta_d}\cdot f_{t}$. For all $\phi\in P_{A,G,\Omega}$ and $x\in \frac{1}{\beta_d}\mathbb{Z}^m$ we obtain
    \[
        \act{x}{\phi_{t}^Z}=\act{x}{\phi^Z} + \frac{1}{\beta_d}\cdot f_{t}\Gamma_{\phi}=\act{x}{\phi^Z}+ \frac{1}{\beta_d}\cdot (\act{f_{t}}{\phi^Z}-f_{t})=\act{x +\frac{1}{\beta_d}\cdot f_{t}}{\phi^Z} - \frac{1}{\beta_d}\cdot f_{t}=\act{x}{\comp{\rho_t}{\phi^Z}{\rho_t^{-1}}}.
    \]
    The displayed equation says 
    that $P_{A,G,t}=\comp{\rho_t}{P_{A,G,\Omega}}{\rho_t^{-1}}$ when both groups are viewed as acting on \(\frac{1}{\beta_d}\mathbb{Z}^m\).
    As \(P_{A,G,\Omega}\) was a finite-index subgroup of \(\actweird{\Z^m}{\Aut}\), \cref{finite-index-stab} implies that \(P_{A,G,\Omega}\) can be viewed as a finite-index subgroup of \(\actweird{\frac{1}{\beta_d}\Z^m}{\Aut}\).
    It follows from \cref{actobjiso}, \cref{finite-index-aut-actions}, and \cref{thm:freeabelian} that \(\alpha_{(\frac{1}{\beta_d}\Z^m,P_{A,G,\Omega})}\) is linear with respect to the standard metric on \((\frac{1}{\beta_d}\Z^m,+)\) which is quasi-isometric to the Cayley metric.  \cref{conjugate-aut-groups} then implies that \(\alpha_{(\frac{1}{\beta_d}\Z^m,P_{A,G,t})}\) is linear. As \(\alpha_{(\frac{1}{\beta_d}\Z^m,P_{A,G,t})}\succcurlyeq \alpha_{(\Z^m,P_{A,G,t})}\), we have thus shown that \(\alpha_{(At)^\Omega\subseteq G}\) is at most linear for all \(t \in T\).

    To see that \(\alpha_G\) is linear, we first note that since
    \(\Omega \leq \actweird{G}{\Aut}\), we have that \(\alpha_G \preccurlyeq \alpha_{G^\Omega}\).
    Thus it suffices to show that \(\alpha_{G^\Omega}\) is linear. Since \(\Omega\)
    fixes \(At\) setwise for each \(t \in T\), we have that \(\alpha_{G^\Omega}
    = \sum_{t \in T} \alpha_{(At)^\Omega \subseteq G}\), and thus
    \(\alpha_{G^\Omega}\) is linear, as required.
\end{proof}

We next fully classify the automorphic growth of virtually abelian groups of rank at
most \(2\).
\begin{theorem}\label{VAmain1}
    Suppose that $G$ is virtually abelian with a finite-index normal subgroup $A\cong \Z^m$.
    \begin{enumerate}
        \item If \(m=0\) then $\alpha_G$ is constant.
        \item If \(m=1\) then $\alpha_G$ is linear.
        \item If \(m=2\) then $\alpha_G$ is linear or quadratic. In this case, the growth of \(G\) is linear if and only if there is a homomorphism $\operatorname{sgn}
      \colon G\to \{1,-1\}$ such that for all $g\in G$ and $x \in
      A$ we have $\conjugate{x}{g}= x^{\actweird{g}{\operatorname{sgn}}}$.
    \end{enumerate}
\end{theorem}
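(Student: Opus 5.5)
The plan is to treat the three values of \(m\) separately, using the general upper bound that automorphic growth is at most standard growth. This bound holds because each orbit of length at most \(n\) contains an element of the \(n\)-ball. Since \(G\) is virtually \(\Z^m\), its standard growth is polynomial of degree \(m\).

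For \(m=0\), \(G\) is finite. Its automorphic growth is then bounded by \(|G|\), and it is non-zero since the orbit of the identity has length \(0\), so \(\alpha_G\) is constant.

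For \(m=1\), \(\actweird{\Z}{\Aut}=\{\pm 1\}\), so every conjugation action of \(g\in G\) on \(A\) is either the identity or inversion. We define \(\actweird{g}{\sgn}\) to be the sign by which \(g\) acts. This is a homomorphism \(G\to\{1,-1\}\), because conjugation gives a homomorphism \(G\to\actweird{A}{\Aut}\cong\{\pm1\}\). Then \cref{payoff} applies directly and \(\alpha_G\) is linear.

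For \(m=2\), the upper bound gives that \(\alpha_G\) is at most quadratic. We split into two cases according to whether the conjugation homomorphism \(G\to \actweird{A}{\Aut}\cong\actweird{\Z}{\GL_2}\) has image contained in \(\{I,-I\}\).

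If the image lies in \(\{I,-I\}\), it defines the required homomorphism \(\sgn\), and \cref{payoff} gives linear growth. If some \(g\) acts by neither the identity nor inversion, \cref{VirtuallyZ2Quad} gives quadratic growth.

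Since linear and quadratic are distinct \(\sim\)-classes, this dichotomy also yields the ``if and only if'' statement. Suppose such a \(\sgn\) exists. Then every \(g\) acts by \(\pm\) the identity, so we are not in the setting of \cref{VirtuallyZ2Quad}, and the growth is linear. Conversely, if no such \(\sgn\) exists, then the conjugation map does not land in \(\{\pm I\}\). The point to check is that if every element acts by \(\pm I\), then \(g\mapsto\) (the scalar by which \(g\) acts) is automatically a homomorphism, since conjugation is a homomorphism and \(\{\pm I\}\cong\{1,-1\}\). So failing to have \(\sgn\) forces some \(g\) to act by neither, and the growth is quadratic.

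There is no serious obstacle; the only care needed is this equivalence between ``every element acts by a scalar'' and ``a homomorphism \(\sgn\) exists''. The substantive work was already done in \cref{payoff} and \cref{VirtuallyZ2Quad}, which are stated for an arbitrary normal finite-index \(A\cong\Z^m\), so no characteristic replacement is needed here.
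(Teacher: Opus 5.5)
Your proposal is correct and takes the same approach as the paper, whose proof simply cites \cref{VirtuallyZ2Quad} and \cref{payoff}. You also spell out the details the paper leaves implicit: the finite case \(m=0\), that \(\Aut(\Z)=\{\pm 1\}\) puts \(m=1\) under \cref{payoff}, and the routine check that an action of every element by \(\pm I\) automatically yields the homomorphism \(\sgn\).
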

\begin{proof}
Follows immediately from \cref{VirtuallyZ2Quad} and \cref{payoff}.
\end{proof}

The purpose of the following construction is to show that there is a virtually
abelian group rank \(r\) of automorphic growth rate of every non-constant
polynomial rate up to degree \(r\). We begin by constructing a virtually abelian
group of rank \(r\) and automorphic growth rate polynomial of degree \(r\). 
\begin{proposition}
\label{va-polynomial-r}
    Let \(r \in \mathbb{N}\setminus \{0\}\). Then the group 
    \[
        G = \makepres{ a_1, \ldots, a_r, t_1, \ldots, t_r}{\(a_i a_j = a_j a_i,
        t_i t_j = t_j t_i, t_i^2 = 1, \conjugate{a_j}{t_i} = \begin{cases}
            a_j & i \neq j \\
            a_i^{-1} & i = j
        \end{cases} \)}.
    \]
    is virtually abelian of rank \(r\), and has automorphic growth function
    \(\alpha_G \sim (n \mapsto n^r)\).
    Furthermore, \([G, G] = \langle a_1^2, \ldots, a_r^2 \rangle\).
\end{proposition}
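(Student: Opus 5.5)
The plan is to obtain the quadratic-type upper bound from standard growth, and the matching lower bound from a characteristic subgroup on which automorphisms act only by signed permutations.

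First I record the structure. The relations say that \(G\cong D_1\times\cdots\times D_r\), where \(D_i=\langle a_i,t_i\rangle\cong\Z\rtimes C_2\) is infinite dihedral. Hence \(A=\langle a_1,\ldots,a_r\rangle\cong\Z^r\) is normal of index \(2^r\), every element can be written uniquely as \(a_1^{k_1}\cdots a_r^{k_r}t_1^{\epsilon_1}\cdots t_r^{\epsilon_r}\), and \(G\) is virtually abelian of rank \(r\). Since the standard growth of \(G\) is \(n^r\), and this bounds \(\alpha_G\) from above (alternatively, \cref{dir-prod} gives \(\alpha_G\preccurlyeq\prod\alpha_{D_i}\), which is \(n^r\) by \cref{VAmain1}), only the lower bound remains.

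For the commutator subgroup:
\begin{enumerate}
\item Since \(a_i^{t_i}=a_i^{-1}\), the commutator of \(a_i\) and \(t_i\) is \(a_i^{\pm2}\), so \(\langle a_1^2,\ldots,a_r^2\rangle\le[G,G]\).
\item Adding the relations \(a_i^2=1\) to the presentation makes all generators commute, giving \(C_2^{2r}\). Thus \(\langle a_i^2\rangle\) is normal (conjugation only inverts the \(a_j\)) with abelian quotient.
\item Therefore \([G,G]=\langle a_1^2,\ldots,a_r^2\rangle=:B\cong\Z^r\).
\end{enumerate}
In particular \(B\) is characteristic and of finite index, hence undistorted. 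By \cref{lem:charsbgrp} and \cref{relativeundistorted}, it suffices to show \(\alpha_{B^{P}}\succcurlyeq n^r\), where \(P\le\Aut(B)\cong\GL_r(\Z)\) is the group of restrictions of automorphisms of \(G\). By \cref{finite-index-aut-actions}, if \(P\) is finite then \(\alpha_{B^P}\) is equivalent to the standard growth of \(B\), which is \(n^r\). So the key step is showing \(P\) is finite.

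To show \(P\) is finite, I would use centralisers, which automorphisms preserve up to index. For \(x=a_1^{2k_1}\cdots a_r^{2k_r}\in B\), an element \(a^{\mathbf m}t_1^{\epsilon_1}\cdots t_r^{\epsilon_r}\) commutes with \(x\) exactly when \(\epsilon_i=0\) for every \(i\) with \(k_i\ne0\). Hence \([G:C_G(x)]=2^{|\{i : k_i\neq0\}|}\). An automorphism \(\phi\) maps \(C_G(x)\) onto \(C_G((x)\phi)\) and preserves indices. It therefore preserves the set
\[
S=\{x\in B: [G:C_G(x)]=2\}=\bigcup_{i=1}^r\bigl(\langle a_i^2\rangle\setminus\{1\}\bigr).
\]
The maximal cyclic subgroups of \(B\) meeting \(S\) are exactly the \(\langle a_i^2\rangle\), so \(\phi|_B\) permutes these lines. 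Being an automorphism of \(B\), it sends each generator \(a_i^2\) to some \(a_{\pi(i)}^{\pm2}\). Thus \(P\) lies in the finite group of signed permutation matrices.

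I expect the main obstacle to be the bookkeeping in the centraliser computation: checking that a \(t_j\) with \(k_j\neq0\) really fails to commute with \(x\) (it inverts \(a_j^{2k_j}\), and the coordinates are independent), and that no other elements of \(G\) centralise \(x\). Once that is in place, the lower bound and hence \(\alpha_G\sim(n\mapsto n^r)\) follow immediately.
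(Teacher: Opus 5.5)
Your proof is correct, but the key finiteness step takes a different route from the paper's. Both arguments reduce to showing that the group $P$ of restrictions of automorphisms of $G$ to $B=[G,G]$ is finite.

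The paper does this with its general virtually-abelian machinery:
\begin{enumerate}
\item by \cref{barF_Aintro}, $P$ normalises the finite image $\bar F_{A,G}$ of the conjugation action, which here is the group of diagonal $\pm1$ matrices;
\item by \cref{norm-cent}, the centraliser of $\bar F_{A,G}$ in $\GL_r(\Z)$ has finite index in that normaliser;
\item a direct matrix computation shows this centraliser consists only of diagonal $\pm1$ matrices, so it is finite.
\end{enumerate}

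You instead use an intrinsic invariant, the index $[G:C_G(x)]$ (the size of the conjugacy class of $x$). It shows that $\Aut(G)$ preserves the union of the coordinate axes of $B$, so $P$ lies in the signed permutation matrices.

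The two arguments are closely related. Normalising $\bar F_{A,G}$ is exactly what forces automorphisms to preserve the sizes of $\bar F_{A,G}$-orbits, which is what your index count detects. Your version is more elementary: it avoids the two auxiliary lemmas and gives an explicit description of $P$, of order at most $2^r r!$. The paper's version is the same mechanism used in \cref{VirtuallyZ2Quad}. It applies uniformly whenever the holonomy image has finite centraliser in $\GL_r(\Z)$, without needing to find a distinguished invariant subset by hand.

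Two smaller points in your favour. Your computation of $[G,G]$ via the abelian quotient $G/\langle a_1^2,\ldots,a_r^2\rangle$ is cleaner than the paper's expansion of a general commutator $[as,bt]$. The identification $G\cong D_\infty^r$ is also a useful structural observation that the paper does not make.
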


\begin{proof}
   First note that \(G = \langle a_1, \ldots, a_r \rangle \rtimes \langle t_1,
   \ldots, t_r \rangle\), and that \(G / \langle a_1, \ldots, a_r \rangle \cong
   C_2^r\), and so \(G\) is indeed virtually abelian of rank \(r\). We next show
   that \( \langle a_1^2, \ldots, a_r^2 \rangle = [G, G]\). We have that
   \(a_i^2\) is trivial in the abelianisation, as the relation \(\conjugate{a_i}{t_i} = a_i^{-1}\) abelianises to the relation \(a_i = a_i^{-1}\).  We can write any commutator as \([as, bt]\) for some \(a,b \in \langle
   a_1, \ldots, a_r \rangle\) and \(s, t \in \langle t_1, \ldots, t_r \rangle\).
   Using \(\phi_s, \phi_t \in \actweird{\langle a_1, \ldots, a_r \rangle}{\Aut}\) to denote
   the conjugation actions of \(s\) and \(t\) on \(\langle a_1, \ldots, a_r
   \rangle\), we have that 
   \[[as, bt] =\comp{s^{-1}}{a^{-1}}{t^{-1}}{b^{-1}}{a}{s}{b}{t}= \comp{\act{a^{-1}}{\phi_s}}{\act{b^{-1}}{\phi_{st}}}{\act{a}{\phi_{st}}}{\act{b}{\phi_t}}=\comp{\act{a^{-1}}{\phi_s}}{\act{a}{\phi_{st}}}{\act{b^{-1}}{\phi_{st}}}{\act{b}{\phi_t}}.\] 
   Since \(\phi_s\) and \(\phi_t\) both
   just invert certain generators in \(\langle a_1, \ldots, a_r \rangle\) and
   fix the rest, any generators that are inverted will double up with those in
   \(a\) and \(b\), and any that are fixed will cancel with those in \(a\) and
   \(b\). Thus the resulting element will be generated by squares of elements,
   and so \([G, G] \subseteq \langle a_1^2, \ldots, a_r^2 \rangle\). We have
   thus shown that \([G, G] = \langle a_1^2, \ldots, a_r^2 \rangle \).
   
   Thus \(A \coloneqq \langle a_1^2, \ldots, a_r^2 \rangle\) is a free abelian
   characteristic subgroup of \(G\). Since \(A\) has finite index in \(\langle
   a_1, \ldots, a_r \rangle\), and this has finite index in \(G\), it follows
   that \(A\) has finite index in \(G\). Let \(P_{A, G} \leq \actweird{A}{\Aut}\) be the
   set of automorphisms of \(A\) that extend to automorphisms of \(G\). By
   \cref{lem:charsbgrp}, \(\alpha_{A^{P_{A, G}} \subseteq G} \preccurlyeq
   \alpha_G\). Since \(A\) has finite index in \(G\), it is undistorted (see for
   example \cite{Mann}), and thus \(\alpha_{A^{P_{A, G}}} \sim \alpha_{A^{P_{A,
   G}} \subseteq G} \preccurlyeq \alpha_G\). Since \(G\) has standard growth
   that is polynomial of degree \(r\), it follows that \(\alpha_G \preccurlyeq
   (n \mapsto n^r)\), and so it suffices to show that \((n \mapsto n^r)
   \preccurlyeq \alpha_{A^{P_{A, G}}}\).

   Let \(\bar{F}_{A, G} \leq \actweird{A}{\Aut}\) be the group of automorphisms of \(A\)
   induced by the conjugation action of \(G\) on \(A\). Then \cref{barF_Aintro} tells us
   that \(P_{A, G} \leq \actweird{\bar{F}_{A,G}}{N_{\actweird{A}{\Aut}}}=:N\), and so
   \(\alpha_{A^{N}} \preccurlyeq \alpha_{A^{P_{A, G}}}\). 
   Next,
   \cref{norm-cent} tells us that \(C:=\actweird{\bar{F}_{A, G}}{C_{\actweird{A}{\Aut}}}\) is a finite
   index subgroup of \(\actweird{\bar{F}_{A, G}}{N_{\actweird{A}{\Aut}}}\). 
    Then \cref{finite-index-aut-actions} tells us
   that \(\alpha_{A^C} \sim \alpha_{A^{N}}
   \preccurlyeq \alpha_{A^{P_{A, G}}}\). Hence it suffices to show that
   \((n \mapsto n^r) \preccurlyeq \alpha_{A^C}\).

   We next compute \(C\). When written as matrices, the generators of
   \(\bar{F}_{A, G}\) are diagonal matrices with one occurrence of \(-1\) and
   \(1\)s elsewhere on the diagonal. Write \(M_i\) for one of these matrices
   with \(-1\) in the \((i, i)\)th entry (corresponding to the generator
   \(t_i\)). If \(X = [x_{kl}] \in \actweird{\Z}{\GL_r} \cong \actweird{A}{\Aut}\), then if
   \(\delta_{ij}\) denotes the Kronecker delta function applied to \((i, j)\),
   \(M_i X = [(-1)^{\delta_{li}} x_{kl}]\)
   and
   \(X M_i = [(-1)^{\delta_{ki}} x_{kl}]\).
   . These can only be equal
   if for all \(k\) and \(l\) such that exactly one of \(k\) and \(l\)
   is \(i\), we have \(x_{kl} = 0\). Thus if \(X\) commutes with all
   \(M_i\)s, it follows that \(X\) is necessarily diagonal. Since
   \(\actweird{X}{\det} \in \{-1, 1\}\), and \(\actweird{X}{\det}\) is the product of its
   diagonal entries, all of its diagonal entries are either \(1\) or
   \(-1\). There are finitely many choices for these, and so \(C\) is
   finite.

   We can thus use \cref{finite-index-aut-actions} to conclude
   that \(\alpha_{A^C} \sim \alpha_{A^{\{\id\}}}\). Since the latter
   is just the standard growth of \(A\), we have \(\alpha_{A^C}
   \sim \alpha_{A^{\{\id\}}} \sim (n \mapsto n^r)\), as required.
\end{proof}

We conclude the section by showing that when considering (exact) polynomial
growth rates, all non-constant possibilities occur for polynomials of lower degree than
standard growth.

\begin{theorem}\label{allpossibleVA}
    Let \(G\) be the rank \(r\) virtually abelian group from
    \cref{va-polynomial-r} and let \(s \in \N\setminus \{0\}\). Then \(\alpha_{G \times
    \Z^s} \sim (n \mapsto n^{r + 1})\). 
    In particular, for every \(m , k \in \mathbb{N}\setminus \{0\}\) with \(m \geq k\),
    there is a virtually abelian group with rank \(m\) and automorphic growth
    rate \(n \mapsto n^k\).
\end{theorem}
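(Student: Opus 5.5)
We show the two inequalities \(\alpha_{G\times\Z^s}\succcurlyeq n^{r+1}\) and \(\alpha_{G\times\Z^s}\preccurlyeq n^{r+1}\), and then deduce the ``in particular'' clause from them.

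\emph{Lower bound.} Let \(A=\langle a_1^2,\ldots,a_r^2\rangle\) and \(Z=\{1\}\times\Z^s\). In \(H:=G\times\Z^s\) we have \([H,H]=[G,G]\times\{1\}=A\) by \cref{va-polynomial-r}, so \(A\) is characteristic in \(H\). The centre of \(G\) is trivial, since an element \(xt\) with \(t\neq 1\) fails to commute with some \(a_i\), and a nontrivial \(x\in\langle a_i\rangle\) is inverted by some \(t_i\). Hence \(Z(H)=Z\) is also characteristic, and therefore so is \(A\times Z\cong\Z^{r+s}\). This subgroup has finite index in \(H\), so it is undistorted.

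Any automorphism of \(H\) preserves \(A\) and \(Z\) separately, so it restricts to \(A\times Z\) as a block-diagonal map \((\psi,\chi)\). Here \(\psi\) lies in \(P_{A,G}\)-type data: it normalises the finite group \(\bar F_{A,H}=\bar F_{A,G}\) of conjugation actions, because \(\Z^s\) acts trivially. By the computation in \cref{va-polynomial-r}, the group \(C\) is finite and has finite index in the normaliser, so \(\psi\) ranges over a finite group. On the \(Z\) side, \(\chi\in\GL_s(\Z)\) is arbitrary, and every orbit of \(\GL_s(\Z)\) on \(\Z^s\) is determined by a gcd. So the restricted group \(P\) is contained in \(F\times\GL_s(\Z)\) with \(F\) finite.

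By \cref{lem:charsbgrp}, \cref{relativeundistorted} and \cref{finite-index-aut-actions} (the latter applied to pass from \(F\times \GL_s(\Z)\) to \(\{1\}\times\GL_s(\Z)\)), it suffices to count orbits of \(\{1\}\times\GL_s(\Z)\) on \(\Z^r\times\Z^s\). These orbits are exactly the pairs (\(a\in\Z^r\), gcd of the \(\Z^s\)-part). The corresponding representatives are the elements \((a,(k,0,\ldots,0))\), and there are \(\asymp n^r\cdot n\) of them in the \(n\)-ball. This gives the bound \(n^{r+1}\) from below.

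\emph{Upper bound.} By \cref{dir-prod}, \(\alpha_{G\times\Z^s}\preccurlyeq\alpha_G\cdot\alpha_{\Z^s}\sim n^r\cdot n\), using \cref{va-polynomial-r} and \cref{thm:freeabelian}.

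\emph{In particular.} The group \(G\times\Z^s\) has rank \(r+s\). Given \(m\ge k\ge1\), we argue in cases.
\begin{itemize}
\item If \(k=m\), take the group of \cref{va-polynomial-r} with \(r=m\).
\item If \(k=1\), take \(\Z^m\) and apply \cref{thm:freeabelian}.
\item Otherwise take \(r=k-1\ge1\) and \(s=m-k+1\ge1\). The group \(G\times\Z^s\) then has rank \(m\) and growth \(n^{k}\).
\end{itemize}

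The main obstacle is the lower-bound step: showing that automorphisms of \(H\) restricted to \(A\times Z\) really are block diagonal with a finite \(A\)-part. The key points are that \(A\) and \(Z\) are both characteristic, and that the normaliser argument from \cref{va-polynomial-r} carries over unchanged because \(\bar F_{A,H}=\bar F_{A,G}\).
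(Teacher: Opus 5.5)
Your proof is correct, but it reaches the lower bound by a different route from the paper.

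\textbf{The paper's route.} The paper proves that \(G\) itself is characteristic in \(G\times\Z^s\). Besides using that \(\Z^s=Z(G\times\Z^s)\) and \(A=[G\times\Z^s,G\times\Z^s]\) are characteristic, it argues as follows: if \(p\) is the exponent of \(G/A\), then \(((g)\phi)^p\in A\) for every automorphism \(\phi\), and this forces \((g)\phi\in G\) because \(\Z^s\) is torsion-free. With both factors characteristic, the second half of \cref{dir-prod} says automorphic orbits split as products of orbits. This gives \(\alpha_{G\times\Z^s}\sim\alpha_G\cdot\alpha_{\Z^s}\) in one step, with \cref{va-polynomial-r} and \cref{thm:freeabelian} used purely as black boxes.

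\textbf{Your route.} You never need \(G\) to be characteristic. The upper bound comes from the unconditional half of \cref{dir-prod}. For the lower bound you reopen the proof of \cref{va-polynomial-r} and run its normaliser/centraliser computation on the finite-index characteristic subgroup \(A\times Z(H)\cong\Z^{r+s}\). This traps the restricted automorphism group inside \(F\times\GL_s(\Z)\) with \(F\) finite, after which you count \(\GL_s(\Z)\)-orbits directly.

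\textbf{Comparison.} The paper's route is shorter and yields the exact orbit structure of \(G\times\Z^s\). Yours uses only that the derived subgroup and the centre are characteristic, at the cost of redoing the matrix argument.

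\textbf{Two small remarks.} First, \cref{barF_Aintro} is stated for finite-index subgroups, and \(A\) has infinite index in \(H\). The normalising property you use still holds for any characteristic subgroup: writing \(\kappa_h\) for conjugation by \(h\), one has \(\phi^{-1}\kappa_h\phi=\kappa_{(h)\phi}\). Alternatively, apply the lemma to \(A\times Z(H)\) and read off the \(A\)-block. Second, your separate case \(k=m\) is unnecessary, since \(r=k-1\) and \(s=m-k+1\geq 1\) already work whenever \(k>1\).
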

\begin{proof}
    We show that \(G\) and \(\Z^s\) are both characteristic in \(G \times \Z^s\),
    which will allows us to apply \cref{dir-prod}.
    First note that \(\actweird{G \times \Z^s}{Z} = \actweird{G}{Z} \times \actweird{\Z^s}{Z} = \Z^s\), and so
    \(\Z^s\) is characteristic in \(G\). 
    We next compute the commutator
    subgroup. If \(g_1 h_1, g_2 h_2 \in G \times \Z^s\), then \([g_1h_1, g_2
    h_2] = [g_1, g_2]\), and so \([G \times \Z^s, G \times \Z^s] = [G, G]\).
    \cref{va-polynomial-r} tells us that \([G, G] = \langle a_1^2, \ldots, a_r^2
    \rangle\). It follows that \(A \coloneqq \langle a_1^2, \ldots, a_r^2
    \rangle\) is also characteristic in \(G \times \Z^s\). Since \(A\)
    has finite index in \(G\), there
    exists \(p > 0\) (the exponent of \(G / A\)) such that for all \(g \in G\),
    \(g^p \in A\). Since \(A\) is characteristic in \(G \times \Z^s\), it
    follows that if \(g \in G\) and \(\phi \in \actweird{G \times \Z^s}{\Aut}\), then \((\act{g}{\phi})^p \in A\). Since \(\Z^s\) is
    torsion-free, this implies that \(\act{g}{\phi} \in G\), and so \(G\) is
    characteristic in \(G \times \Z^s\).

    We have now shown that both \(G\) and \(\Z^s\) are characteristic in
    \(G \times \Z^s\), and so \cref{dir-prod} tells us that
    \(\alpha_{G \times \Z^s} \sim \alpha_G \cdot \alpha_{\Z^s}\). \cref{va-polynomial-r} tells us that \(\alpha_G \sim (n \mapsto n^r)\) and
    \cref{thm:freeabelian} tells us that \(\alpha_{\Z^s} \sim (n \mapsto n)\).
    Thus \(\alpha_{G \times \Z^s} \sim (n \mapsto n^{r + 1})\), as required.

    For the final claim, note that if \(k = 1\), then \(\Z^m\) is an example
    (by \cref{thm:freeabelian}), and if \(k > 1\), then \(G \times \Z^s\),
    with \(r + 1 = k\) and \(r + s = m\) is an example.
\end{proof}

\section{The Heisenberg group}
\label{sec:Heis}
    This section covers the Heisenberg group. Usually considered be the
    `easiest' example of a nilpotent group that is not virtually abelian, it
    nonetheless exhibits various `strange' behaviour. In the case of growth it
    is the first (and only) example we consider where the standard, conjugacy
    and automorphic growth rates are all distinct. The standard growth rate is
    quartic \cite{bass}, the conjugacy is \(n^2\log n\)
    \cite{Babenko} and the automorphic, as we see below, is quadratic.

    \begin{theorem}
        \label{thm:Heis}
        The automorphic growth of the Heisenberg group is quadratic.
    \end{theorem}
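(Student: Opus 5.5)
We split the count into two pieces: the orbits of central elements and the orbits of non-central elements. Write \(H = H(\Z) = \langle x, y \mid [x,y]=z \text{ central}\rangle\), so every element is uniquely \(x^a y^b z^c\). The centre \(Z(H)=\langle z\rangle\) is characteristic, and \(H/Z(H)\cong \Z^2\). The map \(\Aut(H)\to \Aut(\Z^2)=\GL_2(\Z)\) is surjective, because any matrix lifts by sending \(x,y\) to the corresponding words. Its kernel consists of the central automorphisms \(x\mapsto xz^{p}\), \(y\mapsto yz^{q}\), and these include the inner automorphisms. An automorphism inducing a matrix \(M\) acts on \(Z(H)\) by \(z\mapsto z^{\det M}\).

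For the upper bound we first handle a non-central element \(g=x^ay^bz^c\) with \((a,b)\neq(0,0)\). Let \(d=\gcd(a,b)\). Applying a lift of a suitable matrix in \(\GL_2(\Z)\), as in \cref{thm:freeabelian}, we may assume \(g=x^dz^{c'}\). Conjugating by \(y^k\) multiplies \(x^d\) by \(z^{\pm dk}\), so \(c'\) can be reduced modulo \(d\). Alternatively, the central automorphism \(x\mapsto xz^{p}\) sends \(x^dz^{c'}\) to \(x^dz^{c'+dp}\), with the same effect. Hence every non-central orbit contains some \(x^dz^{e}\) with \(0\le e<d\).

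These representatives have length \(O(d+\sqrt{e})=O(d)\). Any element of the orbit of \(g\) has image in \(\Z^2\) with gcd \(d\), and its length is at least the \(\ell^1\)-norm of that image, which is \(\ge d\). So every non-central orbit of length \(\le n\) has \(d\le n\). For each such \(d\) there are at most \(d\) orbits, giving at most \(\sum_{d\le n} d = O(n^2)\) non-central orbits.

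Next the central elements. The element \(z^c\) has length \(\asymp \sqrt{|c|}\), and \(z^c\sim z^{-c}\). So there are \(O(n^2)\) central orbits of length \(\le n\). Combining the two counts gives \(\alpha_H\preccurlyeq n^2\).

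For the lower bound we use the central elements \(z^c\) with \(c\ge 0\). Since \(Z(H)\) is characteristic and automorphisms act on it by \(\pm1\), these lie in distinct orbits. Because \(|z^c|\asymp\sqrt c\), the elements with \(c\le n^2/\lambda\) lie in the \(n\)-ball, which yields \(\succcurlyeq n^2\). This is \cref{lem:charsbgrp} applied to \(Z(H)\) with its distorted metric.

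The main obstacle is the upper bound, specifically describing \(\Aut(H)\) correctly and controlling orbit lengths in terms of \(d\). We expect the lower bound \(|g|\ge d\) (via the projection to \(\Z^2\)) and the upper bound \(|x^dz^e|=O(d)\) for \(e<d\) to be routine consequences of the standard estimate \(|z^c|\asymp\sqrt{|c|}\).
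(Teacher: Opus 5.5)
Your proposal is correct and follows essentially the same route as the paper. Both arguments lift $\GL_2(\Z)$ to $\Aut(H)$ and reduce a non-central element of length at most $n$ to a representative $x^{d}z^{e}$ with $0\le e<d\le n$, using the gcd argument and conjugation by a power of $y$; both then use that the characteristic subgroup $\langle z\rangle=Z(H)=[H,H]$ satisfies $|z^{c}|\asymp\sqrt{|c|}$ to get the quadratic upper bound on central orbits and the quadratic lower bound. The only difference is that the paper explicitly checks that lifts of a pair of generators of $\GL_2(\Z)$ are automorphisms, including bijectivity, whereas you assert it (it holds: $H$ is free nilpotent of class $2$, and the composite of the lifts of $M$ and $M^{-1}$ is identity modulo the centre, hence bijective).
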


    \begin{proof}
        Recall that the Heisenberg  group \(\actweird{\Z}{H}\) is defined by the presentation
        \[\langle a, b, c \mid c = [a, b], c \text{ is central} \rangle.\] 
        All lengths of elements in this proof will be with respect to the
        generating set \(\{a, b\}\).
        We will show that the automorphic growth is quadratic. 
        To do this, we will show that:
        \begin{enumerate}
            \item for all \(g\in \actweird{\Z}{H}\setminus [\actweird{\Z}{H}, \actweird{\Z}{H}]\) with \(|g|\leq
            n\), there are \(i,k\leq n\) such that \(g\) is in the same
            automorphic orbit as \(a^ic^k\);
            \item  none of the elements of \(\makeset{c^k}{\(k\in \N\)}\) belong to
            the same automorphic orbit and the relative standard growth of \([\actweird{\Z}{H},
            \actweird{\Z}{H}]\) in \(\actweird{\Z}{H}\) is quadratic. 
        \end{enumerate}
        From (1), it will follow that every automorphic orbit outside the commutator subgroup of length at most \(n\) is in
        the orbit of one of the elements \(a^ic^k\) and hence there are at most \(n^2\)
        such orbits. From (2) it will follow that the relative automorphic growth in \([\actweird{\Z}{H},\actweird{\Z}{H}]\) is quadratic and hence the automorphic growth of \(\actweird{\Z}{H}\) is at least quadratic. Together these show that the automorphic growth is exactly quadratic as required.
        
        We start with (2). These elements are not in the same automorphic orbit
        as they belong to the commutator subgroup, which is characteristic and
        isomorphic to \(\Z\).
        The exact length of \(c^k\) is  \(2 \lceil
        2\sqrt{|k|} \rceil\) by \cite{Blachere}*{Theorem 2.2}. This implies
        (2).
                
        It remains to show (1).
        Consider the matrices
        \[A=\begin{bmatrix}
            -1 &0\\
            0 & 1
        \end{bmatrix} \quad \text{ and }\quad B=\begin{bmatrix}
            1 & \ifthenelse{\boolean{rightactions}}{0}{1}\\
            \ifthenelse{\boolean{rightactions}}{1}{0} & 1
        \end{bmatrix}.\]
        We show that for each \(M =[m_{i,j}]\in \{A,B\}\) the map \(\phi_M\)
        defined by \(\act{a}{\phi_M} = a^{m_{1,1}}b^{m_{\comp{1}{,}{2}}}\), \(\act{b}{\phi_M} =
        a^{m_{\comp{2}{,}{1}}}b^{m_{2,2}}\), and \(\act{c}{\phi} = c^{\actweird{M}{\operatorname{det}}}\) is
        a well-defined automorphism. We show that \(\phi_A\) and \(\phi_B\)
        preserve the relations of the group in each case. It is well known, (see,
        for example \cite{MR4982576}*{Lemma 2.3}) that for all \(g,h\in \actweird{\Z}{H}\)
        we have \([g^{-1},h]=[g,h^{-1}]=[g,h]^{-1}\). In particular, for
        \(\phi_A\) we see that \([a^{-1},b]=c^{-1}=c^{\actweird{A}{\det}}\) and for
        \(\phi_B\) we see that
        \([a,ab]=a^{-1}b^{-1}a^{-1}aab=a^{-1}b^{-1}ab=[a,b]=c\).
        We also need to show that \(\phi_A\) and \(\phi_B\) are bijections.
        Note that \(\act{a}{\phi_A\phi_A}=\act{a^{-1}}{\phi_A}=a\),  \(\act{b}{\phi_A\phi_A}=\act{b}{\phi_A}=b\), and \(\act{c}{\phi_A\phi_A}=\act{c^{-1}}{\phi_A}=c\).
        Thus \(\phi_A^2\) is an endomorphism fixing all the generators of \(\actweird{\Z}{H}\).
        In particular \(\phi_A^2\) is the identity and \(\phi_A\) is a self-inverse bijection.
        For \(\phi_B\), consider \(\phi_{B^{-1}}\), defined using the
        matrix \(B^{-1} = \begin{bmatrix}
            1 & \ifthenelse{\boolean{rightactions}}{0}{-1} \\ \ifthenelse{\boolean{rightactions}}{-1}{0} & 1
        \end{bmatrix}\) in the same manner as \(\phi_A\) and \(\phi_B\).
        Then
        \begin{align*}
            \act{\act{a}{\phi_B}}{\phi_{B^{-1}}} & = \act{a}{\phi_{B^{-1}}} = a, \quad
            \act{\act{a}{\phi_{B^{-1}}}}{\phi_B} = \act{a}{\phi_B} = a \\
            \act{\act{b}{\phi_B}}{\phi_{B^{-1}}} &= \act{ab}{\phi_{B^{-1}}} = aa^{-1} b = b\\
            \act{\act{b}{\phi_{B^{-1}}}}{\phi_B} & = \act{a^{-1}b}{\phi_B} = a^{-1} ab = b \\
            \act{\act{c}{\phi_B}}{\phi_{B^{-1}}} & = c = \act{\act{c}{\phi_{B^{-1}}}}{\phi_B}.
        \end{align*}
        Thus \(\phi_{B^{-1}}\) is the inverse of \(\phi_B\), which is thus a
        bijection.  
        Note that the
        induced actions of \(\phi_A\) and \(\phi_B\) on the group
        \(\actweird{\Z}{H}/[\actweird{\Z}{H},\actweird{\Z}{H}]\cong \Z^2\) are the usual actions of the matrices
        \(A\) and \(B\) acting on the right. 
        As \(A\) and \(B\) generate the group \(\actweird{\Z}{\GL_{2}}\),
        it follows that for all \(M\in \actweird{\Z}{\GL_2}\) there is an automorphism
        \(\phi_M\) of \(\actweird{\Z}{H}\) which acts on the abelianisation as \(M\) does.

        We know from the proof of \cref{thm:freeabelian}, that for all \((i, j)
        \in \Z^2 \setminus \{(0, 0)\}\), there is a matrix \(M\), such that
        \(\act{i, j}{M} = (\actweird{i, j}{\gcd}, 0)\). 
        We have thus shown that for all \(a^i b^j
        c^k \in \actweird{\Z}{H}\setminus [\actweird{\Z}{H},\actweird{\Z}{H}]\) there exists \(\phi \in
        \act{\actweird{\Z}{H}}{\Aut}\) with \(\act{a^i b^j c^k}{\phi} = a^{\actweird{i, j}{\gcd}} c^{k'}\), for
        some \(k' \in \Z\). We can then apply the inner automorphism defined by
        \(x \mapsto \conjugate{x}{b}\) to map \(a^{\actweird{i, j}{\gcd}} c^{k'}\) to \(a^{\actweird{i,
        j}{\gcd}} c^{k' + \actweird{i,
        j}{\gcd}}\). If we repeatedly apply this inner
        automorphism or its inverse, we can reduce \(k'\) modulo \(|\actweird{i,
        j}{\gcd}|\), which in particular shows that there is an automorphism taking
        \(a^i b^j c^k\) to \(a^{\actweird{i,
        j}{\gcd}} c^{k''}\), for some \(k'' \in \Z\)
        satisfying \(0 \leq k'' < \actweird{i,
        j}{\gcd}\). 
        
        Let \(g\in \actweird{\Z}{H}\setminus [\actweird{\Z}{H},\actweird{\Z}{H}]\) have
        length at most \(n\). Thus \(g=g_1g_2\ldots g_n\) where \(g_i\in
        \{a,b,a^{-1},b^{-1}\}\) for all \(i\leq n\). As \([a,b]=c\),
        \([a^{-1},b]=c^{-1}\), \([a,b^{-1}]=c^{-1}\), \([a^{-1},b^{-1}]=c\), it
        follows that there are \(i_g,j_g,k_g \in \Z\) with
        \(g=a^{i_g}b^{j_g}c^{k_g}\) where \(|i_g|,|j_g|\leq n\). As \(g\) does
        not belong to the commutator subgroup, either \(i_g\neq 0\) or \(j_g\neq
        0\). In particular, \(\actweird{i_g,j_g}{\gcd}\) is well-defined and at most
        \(\actweird{|i_g|,|j_g|}{\max}\). Thus using the earlier discussion, there exists
        \(g'\) in the automorphic orbit of \(g\) such that 
        \(g'=a^ic^k\) where \(0\leq i\leq n\) and \(k\leq i\), as required.
    \end{proof}

\section{Free groups}

In this section we see our first groups with exponential automorphic growth. Two
words in a free group are conjugate precisely if they have the same `cyclic
reduction', for example \(a^{-1}bcabcabcabcaa\sim abcabcabcabc\). As such, it is easy to
see the conjugacy growths of non-abelian free groups are exponential. If words
have the same cyclic reduction then they must be automorphic so we will often
think of words in this section only up to cyclic reduction. The automorphism
groups of finite rank free groups have been highly studied and are famously
complex \cite{autfreehard}. It is known, however that these automorphism groups
are finitely presented \cite{finitepresautfree}. Moreover, Whitehead provides us
with an algorithm for breaking down an element of a free group to a minimal
length word \cite{whitehead_algm}. The existence of this algorithm is central to
the arguments in this section.

\label{sec:free}
  \begin{dfn}
    A \textit{Whitehead automorphism} of a free group on a set \(\Sigma\), is an
    automorphism defined by a permutation of \(\Sigma \cup \Sigma^{-1}\), or an
    automorphism that for some fixed \(a \in \Sigma \cup \Sigma^{-1}\) takes
    each \(x \in \Sigma\) to (a possibly distinct) one of \(x\), \(xa\),
    \(a^{-1} x\) or \(a^{-1} x a\). In any case, the Whitehead automorphism
    fixes \(a\).
  \end{dfn}

These Whitehead automorphisms can be used to efficiently reduce any word (see \cite{CGT}*{Proposition
      4.17}).
As such,
understanding the effects of these automorphisms on large enough families of words will be sufficient for our purposes. 
We first find a large family of automorphically minimal words.

  \begin{lem}
    \label{free-gp-automorphically-minimal}
      Let \(F\) be a finitely generated non-abelian free group, where \(a\) and
      \(b\) are distinct letters and lie in a free basis \(\Sigma\). Then
      cyclically reduced elements of the form \(a^{i_1} b^{j_1} \cdots a^{i_k}
      b^{j_k}\), where \(i_1, j_1 , \ldots, i_k, j_k \in \Z\), \(|i_\ell| \geq
      2\) and \(|j_\ell| \geq 2\) for all \(\ell\), are automorphically minimal
      with respect to \(\Sigma\).
  \end{lem}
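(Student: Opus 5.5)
We will use Whitehead's theorem in the ``peak reduction'' form (see \cite{CGT}*{Proposition 4.17}): if a cyclically reduced word \(w\) is not of minimal length in its automorphic orbit, then some single Whitehead automorphism \(\tau\) strictly decreases its cyclic length. Since a cyclically reduced word has length equal to the minimal length in its conjugacy class, and conjugation is an automorphism, this reduces the claim to the following. Let \(w = a^{i_1} b^{j_1} \cdots a^{i_k} b^{j_k}\) with all \(|i_\ell|, |j_\ell| \geq 2\). Then for every Whitehead automorphism \(\tau\), the cyclic reduction of \((w)\tau\) has length at least \(|w|\). Permutation-type Whitehead automorphisms preserve length, so only the second type needs checking, with multiplier \(x \in \Sigma \cup \Sigma^{-1}\).

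For the second type we would use the standard Whitehead graph count. For a cyclic word \(w\), write \(n(y,z)\) for the number of occurrences of the cyclic subword \(y^{-1}z\) (equivalently, the number of edges between \(y\) and \(z\) in the Whitehead graph). Let \(\tau = (S, x)\), where \(S\) is the set of letters ``shifted'' by \(x\), with \(x \in S\) and \(x^{-1} \notin S\). The standard formula gives
\[|(w)\tau|_{\mathrm{cyc}} - |w| = e(S, S^c) - e(x, \Sigma^{\pm}),\]
where \(e(S,S^c)\) is the number of Whitehead-graph edges crossing the cut and \(e(x,\Sigma^{\pm})\) is the degree of the vertex \(x\), which equals the number of occurrences of \(x^{\pm1}\) in \(w\). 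So it suffices to show that every cut separating \(x\) from \(x^{-1}\) has at least \(\deg(x)\) crossing edges.

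Next we compute the Whitehead graph of \(w\) exactly. Each syllable \(a^{i}\) with \(|i| \geq 2\) contributes \(|i|-1\) edges between \(a\) and \(a^{-1}\). Each transition between syllables contributes one edge joining \(a^{\pm}\) and \(b^{\pm}\). No other letters of \(\Sigma\) appear, so their vertices are isolated and their placement in the cut is irrelevant. If \(x \notin \{a^{\pm1}, b^{\pm1}\}\), then \(\deg(x)=0\) and nothing needs proving. Otherwise, by symmetry we may take \(x = a^{\epsilon}\), so \(\deg(x) = \sum_\ell |i_\ell|\).

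The main obstacle is the cut count, which requires a case analysis on the sides of \(b\) and \(b^{-1}\). The \(a\)--\(a^{-1}\) edges always cross the cut, giving \(\sum_\ell(|i_\ell|-1)\) crossings. We then need at least \(k\) further crossings among the \(2k\) syllable-transition edges. If \(b\) and \(b^{-1}\) lie on opposite sides, the \(b\)--\(b^{-1}\) edges also cross, giving \(\sum_\ell(|j_\ell|-1) \geq k\) additional crossings. If \(b\) and \(b^{-1}\) lie on the same side, then each \(b\)-syllable is entered from some \(a^{\pm}\) and exited to some \(a^{\pm}\). Of the two transition edges at the two ends of that syllable, we would check that at least one joins \(b^{\pm}\) to the \(a\)-vertex on the opposite side. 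This uses the fact that the entry edge comes from the last letter of the preceding \(a\)-syllable and the exit edge goes to the first letter of the following one. Those two letters have sign patterns giving \(a\)-vertices \(a^{\mp\,\mathrm{sgn}}\) and \(a^{\pm\,\mathrm{sgn}}\), which are complementary. This gives at least \(k\) crossings, and hence \(|(w)\tau| \geq |w|\). Handling these sign bookkeeping details carefully is the step we expect to need the most attention.
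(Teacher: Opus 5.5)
Your reduction to the Whitehead cut inequality is sound. It does not matter which endpoint of an edge carries the inverse, since the condition ranges over all \(x\) and all cuts separating \(x\) from \(x^{-1}\) and is therefore symmetric. Your case where \(b\) and \(b^{-1}\) lie on opposite sides is also correct.

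The gap is in the case where \(b\) and \(b^{-1}\) lie on the same side. There you claim that the two transition edges at the ends of a \(b\)-syllable land on complementary \(a\)-vertices, and this is false. In your convention a two-letter subword \(uv\) gives an edge between \(u^{-1}\) and \(v\). So the edge entering \(b^{j_\ell}\) comes from \(a^{\sgn(i_\ell)}b^{\pm1}\) and is attached to \(a^{-\sgn(i_\ell)}\). The edge leaving it comes from \(b^{\pm1}a^{\sgn(i_{\ell+1})}\) (indices mod \(k\)) and is attached to \(a^{\sgn(i_{\ell+1})}\). These vertices are complementary only when \(i_\ell\) and \(i_{\ell+1}\) have the same sign. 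Otherwise both edges meet the same \(a\)-vertex, so either both cross or neither does.

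For example, take \(w=a^2b^2a^{-2}b^2\), \(x=a\) and \(S=\{a\}\). The syllable \(b^{j_1}\) contributes the edges \(\{a^{-1},b\}\) and \(\{b^{-1},a^{-1}\}\), and neither crosses the cut. So ``at least one crossing per \(b\)-syllable'' fails, and your lower bound of \(k\) is not justified as written.

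The repair is to group the transition edges by \(a\)-syllables instead. Let \(s=\sgn(i_\ell)\). The edge entering \(a^{i_\ell}\) comes from \(b^{\pm1}a^{s}\) and is attached to \(a^{s}\). The edge leaving it comes from \(a^{s}b^{\pm1}\) and is attached to \(a^{-s}\), so these two really are complementary. Hence each of \(a\) and \(a^{-1}\) carries exactly \(k\) transition edges, all going to \(b^{\pm1}\). If \(b\) and \(b^{-1}\) lie on one side, exactly the \(k\) transition edges at the \(a\)-vertex on the other side cross. This gives \(\sum_\ell(|i_\ell|-1)+k=\deg(x)\) crossings, as needed.

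With this fix your argument is a valid alternative to the paper's. The paper takes multiplier \(a\), runs through \((b)\phi\in\{b,ba,a^{-1}b,a^{-1}ba\}\), and bounds the cyclic cancellations directly: at most one per \(b\)-syllable in the \(ba\) case. Your route handles all type II Whitehead automorphisms uniformly, including the trivial case of multipliers outside \(\{a^{\pm1},b^{\pm1}\}\). The cost is importing the Whitehead-graph length formula.
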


  \begin{proof}
      Let \(g \coloneqq a^{i_1} b^{j_1} \cdots a^{i_k} b^{j_k}\) be cyclically
      reduced. Suppose for a contradiction that \(g\) is not automorphically
      minimal. Then Whitehead's Algorithm (see for example \cite{CGT}*{Proposition
      4.17}), tells us that there is a
      Whitehead automorphism \(\phi\) such that the length of \(\act{g}{\phi}\) after
      being cyclically reduced is strictly less than that of \(g\). If \(\phi\)
      is defined by a permutation of the basis of \(F\), then \(|\act{g}{\phi}| =
      |g|\) and \(\act{g}{\phi}\) is cyclically reduced, a contradiction. Thus we can
      assume \(\phi\) is one of the other sort of Whitehead automorphism; that
      is defined using a fixed generator (or inverse) of \(F\). 
      
      Note that if the generator \(c\) of \(F\) used to define \(\phi\) 
      is not \(a\) or \(b\), or an inverse of either, then 
      the length of the cyclically reduced form of \(\act{g}{\phi}\)
      is greater than or equal to \(|g|\), a contradiction. Thus
      \(c \in \{a, b, a^{-1}, b^{-1}\}\), and without loss of
      generality, we assume \(c = a\).
      
      Note that \(\act{a}{\phi} = a\). Thus the four remaining cases can be
      expressed by 
      \[\act{b}{\phi} \in \{b, ba, a^{-1}b, a^{-1} ba\}.\] If \(\act{b}{\phi} =
      b\), then \(\phi\) fixes all cyclic conjugates of \(g\), a contradiction.
      If \(\act{b}{\phi}= a^{-1} ba\), then \(\act{g}{\phi} = a^{i_1 -1} b^{j_1} a^{i_2}
      b^{j_2} \cdots a^{i_k} b^{j_k} a\). After cyclically reducing \(\act{g}{\phi}\),
      the resulting word will have the same length as \(g\), a contradiction. Now
      suppose \(\act{b}{\phi} = ba\). Then \(\act{g}{\phi} = a^{i_1} (ba)^{j_1} \cdots
      a^{i_k} (ba)^{j_k}\). The only cyclic cancellation that can occur in this
      expression is one of the following:
      \begin{itemize}
          \item For any \(z \in \{1, \ldots, k\}\), if \(j_z > 0\) and \(i_{z +
          1} < 0\), then between the last \(a\) in \((ba)^{j_z}\) and first
          \(a^{-1}\) in \(a^{i_{z + 1}}\), where \(z + 1\) is taken to be \(1\)
          if \(z = k\);
          \item For any \(z \in \{1, \ldots, k\}\), if \(j_z < 0\) and \(i_{z} > 0\), then between the first \(a^{-1}\) in \((ba)^{j_z}\) and the
          last \(a\) in \(a^{i_{z}}\).
      \end{itemize}
      In both cases, at most one \(a\) can be cancelled within each \((ba)\).
      Since \(|i_z| \geq 2\) and \(|j_z| \geq 2\), that means that no
      \(a^{i_z}\) or \(b^{j_z}\) term can be fully cancelled, and so this
      cancellation can never lead to further cancellation. Thus there are at
      most \(k\) cyclic cancellations in this expression. Since this expression
      has length \(\sum_{z = 1}^k (|i_z| + 2|j_z|)\) and \(|j_z| \geq 2\), this
      means that
      \[
        |\act{g}{\phi}|_{cyc} \geq -2k + \sum_{z = 1}^k (|i_z| + 2|j_z|) \geq
        \sum_{z = 1}^k (|i_z| + |j_z|) = |g|,
      \]
      a contradiction. We thus have one remaining case: \(\act{b}{\phi} = a^{-1} b\),
      but this case is analogous to the \(\act{b}{\phi} = ba\) case. Since every
      case produces a contradiction, we can conclude that \(g\) is indeed
      automorphically minimal.
  \end{proof}

We now have our large family of minimal words. 
However, this is not necessarily sufficient as some of these words could be automorphically equivalent.
We use similar techniques again to show that this is not that case (at least for a large subfamily).

  \begin{lem}
        \label{free-gp-aut-norm-form}
      Let \(F\) be a finitely generated non-abelian free group, where \(a\) and
      \(b\) are distinct generators in some basis \(\Sigma\). 
      Let \(g \coloneqq
      a^{i_1} b^{j_1} \cdots a^{i_k} b^{j_k}, h \coloneqq  a^{p_1} b^{q_1}
      \cdots a^{p_l} b^{q_l}\), where \(k, l > 0\), \(i_1, j_1, p_1, q_1 \ldots,
      i_k\), \(j_k, p_l, q_l \in \Z\), \(|i_z| \geq 3\), \(|j_z| \geq 3\), \(|p_z|
      \geq 3\) and \(|q_z| \geq 3\) for all \(z\). Then \(g\) and \(h\) are
      automorphic if and only if one can be obtained from the other using only a
      cyclic permutation together with an automorphism permuting \(\{a, b,
      a^{-1}, b^{-1}\}\).
  \end{lem}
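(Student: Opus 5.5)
The ``if'' direction is immediate. A cyclic permutation of a word is realised by conjugation, hence by an inner automorphism. Any permutation of \(\{a, b, a^{-1}, b^{-1}\}\) that is induced by an automorphism (i.e.\ commutes with inversion) extends to a permutation of \(\Sigma \cup \Sigma^{-1}\) fixing every other letter, which is a Whitehead automorphism. For the ``only if'' direction I would use the second half of Whitehead's theorem, peak reduction (see \cite{CGT}*{Proposition 4.17} and the surrounding discussion). It says that if two cyclically reduced words \(u, v\) are automorphically minimal and lie in the same automorphic orbit, then there is a chain \(u = u_0, u_1, \ldots, u_m = v\) of cyclic words. Each \(u_{s+1}\) is the cyclic reduction of the image of \(u_s\) under a single Whitehead automorphism, and every \(u_s\) has the same (minimal) cyclic length. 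By \cref{free-gp-automorphically-minimal}, both \(g\) and \(h\) are automorphically minimal, since the exponents have absolute value at least \(3 \geq 2\). Hence it suffices to find a set \(S\) of cyclic words that contains \(g\) and is closed under length-preserving Whitehead moves.

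I would take \(S\) to be the set of cyclic words of the form \(x^{i_1} y^{j_1} \cdots x^{i_k} y^{j_k}\), up to cyclic permutation. Here \(x \neq y^{\pm 1}\) are letters of \(\Sigma \cup \Sigma^{-1}\) obtained from \(a, b\) by a permutation-type Whitehead automorphism, and \(i_\ell, j_\ell\) are (up to simultaneous sign changes and swapping the roles of \(x\) and \(y\)) the exponents of \(g\). Since the elements of \(S\) are exactly the images of \(g\) under permutation automorphisms and cyclic shifts, \(S\) is closed under permutation-type Whitehead automorphisms. The main step is then to redo the case analysis of \cref{free-gp-automorphically-minimal} for an element \(u \in S\) and a Whitehead automorphism \(\phi\) of the second type with multiplier \(c\). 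The goal is to show that either \(\phi\) fixes \(u\) up to conjugacy, or the cyclic length strictly increases.

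The cases I plan to check are as follows. If \(c \notin \{x, y\}^{\pm 1}\), each of \(x, y\) is sent to one of \(z, zc, c^{-1}z, c^{-1}zc\). The inserted letters \(c^{\pm 1}\) cannot cancel against \(x^{\pm 1}, y^{\pm 1}\), and they cancel among themselves only when both \(x\) and \(y\) are sent to \(c^{-1}zc\) (or both fixed), giving a conjugate of \(u\). In every mixed case some \(c^{\pm 1}\) survives at each boundary between an \(x\)-block and a \(y\)-block, so the length increases. If \(c = x\) (by symmetry this covers \(c \in \{x, y\}^{\pm 1}\)), then \(y \mapsto y\) or \(y \mapsto x^{-1} y x\) gives a cyclic conjugate. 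For \(y \mapsto yx\) the estimate in \cref{free-gp-automorphically-minimal} now becomes strict:
\[
|\act{u}{\phi}|_{cyc} \geq \sum_{z}(|i_z| + 2|j_z|) - 2k > \sum_z (|i_z| + |j_z|),
\]
because \(|j_z| \geq 3\). One point to verify is that at most one \(x\) cancels at each end of each \((yx)^{j_z}\), and no \(x^{i_z}\) block is used up, since \(|i_z| \geq 3\). The case \(y \mapsto x^{-1}y\) is symmetric. Hence every length-preserving Whitehead move maps \(S\) into \(S\). Therefore \(h \in S\), so \(h\) is obtained from \(g\) by a permutation automorphism of \(\Sigma \cup \Sigma^{-1}\) and a cyclic permutation. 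Since \(h\) involves only the letters \(a^{\pm 1}, b^{\pm 1}\), and both \(a\) and \(b\) occur in \(g\), this permutation restricts to a permutation of \(\{a, b, a^{-1}, b^{-1}\}\), as required.

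The main obstacle I expect is the bookkeeping in the mixed case \(c \notin \{x, y\}^{\pm 1}\), and making sure that peak reduction is quoted in the form ``a chain through words of equal minimal length''. The cited proposition might only give the reduction half of Whitehead's algorithm. In that case I would instead invoke the standard statement that automorphic minimal words are connected by length-preserving Whitehead moves, from the same source.
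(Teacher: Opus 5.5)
Your proposal is correct and follows the paper's proof. Both arguments take the equal-length chain from Whitehead's peak reduction and use the same case split on the multiplier $c$, including the strict $-2k$ estimate for $y\mapsto yx$ from $|j_z|\geq 3$, to show that length-preserving non-permutation moves only conjugate the word. Only a cyclic permutation and a permutation automorphism then remain, and your invariant set $S$ just makes explicit the paper's step of regrouping inner automorphisms past permutations.

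Two small corrections for the write-up. Each block $(yx)^{j_z}$ carries an $x$ at only one of its ends; that is what bounds the cancellations by $k$ and gives $-2k$ (one cancellation at each end would only give $-4k$). In the case $c\notin\{x,y\}^{\pm1}$, the surviving $c^{\pm1}$ can sit inside a block rather than at a boundary, for instance $x\mapsto xc$, $y\mapsto c^{-1}y$, though the length still strictly increases.
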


  \begin{proof}
      \((\Leftarrow)\): If \(h\) can be obtained from \(g\) by applying
      cyclic permutations and a permutation automorphism, 
      then \(g\) and \(h\) are automorphic.
  
      \((\Rightarrow)\): Suppose \(g\) and \(h\) are automorphic.
      \cref{free-gp-automorphically-minimal} tells us that \(g\) and \(h\) are
      automorphically minimal with respect to \(\Sigma\). Thus by Whitehead's
      Algorithm, (see for example \cite{CGT}*{Proposition 4.17}), there is a
      finite sequence of Whitehead automorphisms \(\phi_1, \ldots, \phi_r\) and
      inner automorphisms 
      \(\psi_0, \psi_1, \ldots, \psi_r\), such that \(\act{g}{\comp{\psi_0}{\phi_1}{\psi_1}{\cdots}{\phi_r}{\psi_r}} = h\) and \(\act{g}{\comp{\psi_0}{\phi_1}{\cdots}{\phi_s}{\psi_s}}\) is automorphically minimal for all \(s \leq r\). We now
      proceed in a similar fashion to \cref{free-gp-automorphically-minimal} by
      checking which Whitehead automorphisms preserve the length of the cyclic
      conjugates of \(g\). That is, we take as input a cyclic permutation of
      \(g\) and apply a non-permutation Whitehead automorphism \(\phi\) that
      preserves the length up to cyclic permutation, and then show that the
      obtained element is (up to cyclic permutation) still in the same form as
      \(g\) and \(h\), and in fact we have just performed a cyclic permutation.
      We start by excluding the case of \(\phi\) a permutation of the generators and
      their inverses, and consider this case later. 


      So suppose \(\phi\) is a Whitehead automorphism that is not a
      permutation that preserves the length of some cyclic conjugate of \(g\),
      up to cyclic permutation.
      Further suppose for a contradiction that \(\phi\) does not fix (setwise) 
      the set of cyclic conjugates of \(g\).
      Then \(\phi\) is defined using some fixed generator \(c\) of \(F\).
      If \(c \notin \{a, b, a^{-1}, b^{-1}\}\), then \(\phi\) can only
      map \(g\) (or any cyclic conjugate  of it) to one of \(g\), \(c^{-1} gc\),
      or some element that can be obtained by mapping some \(x \in \{a, b\}\) to
      \(x c\) or \(c^{-1} x\). In such a case, the only way the obtained
      element can have the same length as \(g\) is if every \(c\) is cancelled,
      in which case it is precisely the element we started with. Thus
      \(c \in \{a, b, a^{-1}, b^{-1}\}\), and without loss of generality,
      suppose \(c = a\).

      Note that \(\act{a}{\phi} = a\). Thus we have that \(\act{b}{\phi} \in \{b, ba, a^{-1}b,
      a^{-1} ba\}\). 
      If \(\act{b}{\phi} \in \{b, a^{-1}ba\}\), as we saw
      in the proof of \cref{free-gp-automorphically-minimal}, every cyclic conjugate of
      \(g\) is either mapped to something strictly longer than \(g\) or a cyclic
      conjugate of \(g\), a contradiction. Thus \(\act{b}{\phi} \in \{ba, a^{-1} b\}\).
      We first consider \(\act{b}{\phi} = ba\). As we saw in the proof of
      \cref{free-gp-automorphically-minimal}, there are at most \(k\) cyclic
      cancellations after applying \(\phi\) to \(g\). Thus, using the fact that
      \(|i_z| \geq 3\) and \(|j_z| \geq 3\), we have
      \[
        |\act{g}{\phi}|_{cyc} \geq -2k + \sum_{z = 1}^k |i_z| + 2|j_z| \geq k +
        \sum_{z = 1}^k
        |i_z| + |j_z| = k + |g| > |g|,
      \]
      a contradiction. The case \(\act{b}{\phi} = a^{-1}b\) analogously produces a
      contradiction. We have thus shown that every Whitehead automorphism that
      maps a cyclic conjugate of \(g\) to something with length \(|g|\) is
      either a permutation of generators and inverses, or preserves the fact that the
      obtained element is of the same form as \(g\) and \(h\) and fixes the
      cyclic conjugates of \(g\) setwise.

      It now remains to show that we can assume all of the permutations involved
      fix \(\langle a, b \rangle\) setwise as currently it is possible that such
      a permutation exchanges \(a\) or \(b\) with a letter not in \(\{a, b,
      a^{-1}, b^{-1}\}\). Noting that \(\actweird{F}{\Inn}\) is normal in \(\actweird{F}{\Aut}\),
      we can rewrite an expression \(\comp{\psi_0}{\phi_1}{\cdots}{\phi_r}{\psi_r}\), where each \(\psi_s\) is inner and each \(\phi_s\) is a permutation of
      the generators of \(F\) and their inverses as \(
      \comp{\alpha_0}{\cdots}{\alpha_r}{\phi_1}{\cdots}{\phi_r} \), for some \(\alpha_0, \ldots, \alpha_r \in
      \Inn(F)\). As a product of permutations of generators \(\comp{\phi_1}{\cdots}{\phi_r}\) is also a permutation of the generators of \(F\) and their
      inverses, 
      we have that \(h\) can be obtained from \(g\) by applying an inner
      automorphism, and then a permutation of the generators. But since \(g\)
      and \(h\) have the same length, the inner automorphism acts by cyclic
      conjugation, and hence fixes \(\langle a, b \rangle\). This forces the
      permutation to also fix \(\langle a, b \rangle\), as required.
  \end{proof}

We now have a large-looking family of distinct minimal representatives.
In particular,  we can conclude the proof of exponential automorphic growth by verifying that our family grows exponentially.

  \begin{theorem}\label{main_free}
     Non-abelian free groups of finite rank have exponential automorphic growth.
  \end{theorem}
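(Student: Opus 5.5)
The plan is to exhibit exponentially many automorphic orbits of length at most a linear function of \(n\), using the family from \cref{free-gp-aut-norm-form}. The upper bound is free: automorphic growth is at most the standard growth of \(F\), which is exponential. So only a lower bound is needed.

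Fix a free basis \(\Sigma\) containing distinct letters \(a,b\). For each \(k\geq 1\), let \(S_k\) be the set of words
\[
a^{i_1}b^{j_1}\cdots a^{i_k}b^{j_k}
\]
with every exponent in \(\{3,4\}\). All exponents are positive and the word ends in \(b\) and starts with \(a\), so these words are reduced and cyclically reduced. Hence \(|S_k| = 2^{2k}\). Each element of \(S_k\) has length between \(6k\) and \(8k\), so all lie in the ball of radius \(8k\).

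By \cref{free-gp-aut-norm-form}, two elements of \(S_k\) are automorphic only if one is obtained from the other by a cyclic permutation composed with an automorphism permuting \(\{a,b,a^{-1},b^{-1}\}\). There are only \(8\) such permutation automorphisms. A word of length at most \(8k\) has at most \(8k\) cyclic permutations. So each automorphic orbit meets \(S_k\) in at most \(64k\) elements; the count can be crude, since we only need a subexponential bound. The number of orbits of length at most \(8k\) is therefore at least \(4^k/(64k)\), and this grows exponentially in \(k\).

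Passing from the subsequence \(n=8k\) to all \(n\) uses monotonicity of \(\alpha_{F,\Sigma}\) and the equivalence in \cref{growtheq}. This gives \(\alpha_F \succcurlyeq (n\mapsto c^n)\) for some \(c>1\), and together with the upper bound, exponential automorphic growth.

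The substantive work, namely that distinct normal forms are non-automorphic up to this finite ambiguity, is already done by \cref{free-gp-aut-norm-form}. The main remaining point is checking its hypotheses: the exponents must have absolute value at least \(3\), which is why they are taken from \(\{3,4\}\), and both \(g\) and \(h\) must be of the stated form with \(k,l>0\). Note that the lemma's conclusion allows a cyclic permutation followed by a permutation automorphism, and the resulting word need not lie in \(S_k\). This does no harm, because we only use the lemma to bound the size of each orbit's intersection with \(S_k\), and the bound of \(64k\) covers every such image.
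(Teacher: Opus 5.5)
Your proof is correct and is essentially the paper's argument: both count words \(a^{i_1}b^{j_1}\cdots a^{i_k}b^{j_k}\) with all exponents at least \(3\), and use \cref{free-gp-aut-norm-form} to show that each automorphic orbit contains only polynomially many of them (cyclic permutations composed with permutation automorphisms). Restricting the exponents to \(\{3,4\}\) gives a cleaner count than the paper's stars-and-bars estimate, and your explicit factor of \(8\) for the permutation automorphisms is more careful than the paper, which asserts these do not map its family \(L\) to itself even though swapping \(a\) and \(b\) does so up to cyclic permutation (this only costs a constant factor).
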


  \begin{proof}
    Fix \(m \in \N\). In order to compute growth, it is sufficient to consider a subsequence, and so we can
    assume \(m \in 12\N\). In light of
    \cref{free-gp-automorphically-minimal} and \cref{free-gp-aut-norm-form}, it
    suffices to show that there are exponentially many elements of \(\langle a, b \rangle
    \leq F\) of this form of length \(m\), up to cyclic conjugacy, and permuting
    \(\{a, b, a^{-1}, b^{-1}\}\). 

    Let $L=\{a^{i_1}b^{j_1}\cdots a^{i_k} b^{j_k}:k>1,i_n,j_n\geq 3\}$. Note
    that any automorphisms of \(F\) that are permutations of the generating set
    do not map \(L\) to itself, as words in \(L\) always begin with a positive
    power of \(a\). The number of elements of $L$ of length $m\geq6k$, for a
    fixed $k$, is the number of ways to put $m$ balls in $2k$ buckets, so that
    each bucket has at least 3 balls. 
    This is the same as the number of ways to
    put $m-3\cdot 2k$ balls in $2k$ buckets (after putting $3$ balls in each bucket).
    The standard `stars and bars' argument, gives that \(L\) has 
    $\binom{m-6k+(2k-1)}{2k-1}=\binom{m-4k-1}{2k-1}$ elements, i.e. from a total of
    $m-4k-1$ positions, we choose $2k-1$ as `bars' separating $m-6k$ `stars'
    into $2k-1$ boxes.
    
    To reduce by cyclic permutation, we should divide by $k$, since to keep the
    same form we need the subwords $a^{i_n}b^{j_n}$ to stick together. We should also consider when the word is a power, so there would be fewer than $k$
    distinct cyclic permutations. However, we still have a lower bound for the
    number of automorphism orbits of length $m$
     \[\sum_{k=1}^{\frac{m}{6}}\frac{1}{k}\binom{m-4k-1}{2k-1}\geq
     \frac{1}{n}\binom{12n-4n-1}{2n-1},\]
   where \(m=12n\).
   It thus suffices to show that
   \[\binom{12n-4n-1}{2n-1}\]
   grows exponentially in \(n\).
\[\binom{12n-4n-1}{2n-1}=\frac{(8n-1)!}{(6n)!(2n-1)!}\geq \frac{6n+1}{2n-1}\cdot\frac{6n+1}{2n-1}\ldots \frac{6n+1}{2n-1}> 3^{n}, \]
which is exponential.
\end{proof}

\section{Thompson's groups \(T\) and \(V\)}

Thompson's groups \(T\leq V\) were the first known examples of finitely
presented simple groups. These groups were introduced in the 1960s and have received
considerable attention since then. We show that both of these groups have exponential
automorphic growth (\cref{Tautgrowth} and \cref{thm:V}). The group \(V\) has many equivalent definitions
including the full group of the full one-sided shift on a two letter alphabet
\cite{Visafullgroup}, the automorphism group of the free J\'onsson-Tarski
algebra \cite{higman1974finitely}, a certain group of almost automorphisms the
infinite binary tree  \cite{skipper2021almost}, a group of strand diagrams
\cite{belk2014conjugacy}, a group of tree pairs \cite{ConnonFloydParry}, and
the group of bisynchronising minimal transducers with trivial cores
\cite{collinaut}. In this document we view \(V\) as the group of homeomorphisms
of the Cantor space (\cref{cantorspace}) which can be defined using prefix
replacements (\cref{cantorspace}).
We then view \(T\) as the subgroup of \(V\) consisting of the homeomorphisms which preserve the lexicographic cyclic order on the Cantor space.
As \(T\) has a finite outer automorphism group \cite{autft}, analysing the automorphic growth of \(V\) will be far more involved than \(T\).

Among the many investigations into \(V\), there have been multiple solutions to the conjugacy problem \cites{SalazarDaz2010ThompsonsGV,belk2014conjugacy,barker2016power} and the automorphism group has received attention \cites{collinaut,elliott2023description}.
In analysing the automorphic growth of \(V\) we make use of the description of \(\actweird{V}{\Aut}\) given in \cite{collinaut} (see \cref{autV}).
The intuition for our approach is also heavily inspired by the solution to the conjugacy problem presented in \cite{belk2014conjugacy} (see \cref{closedstranddiagram}), although this solution is not used explicitly in proofs.
The dynamics of elements of \(V\) acting on the Cantor space are often described using so-called `revealing pairs'.
These pairs were introduced in \cite{brin2004higher} and have appeared again multiple times \cites{salazar2010thompson, bleak2013centralizers}. Revealing pairs are also important to our analysis of automorphic growth.
As such, before moving to the proof of exponential automorphic growth,
we start this section with a subsection which reintroduces the background concepts from the literature which we require.

During the writing of this paper a new result about \(V\) was proven in \cite{bishop2026periodgrowthcocontextfreegroups}. The proof of Theorem 6.2 of \cite{bishop2026periodgrowthcocontextfreegroups} implies that the set of orders of torsion elements of \(V\) grows exponentially.
This implies that the automorphic growth of the torsion elements of \(V\) is exponential.
Our proof however, focuses on the non-torsion elements of \(V\).

\label{sec:V} 
\subsection{Background}
For our purposes, the group \(V\) is a subgroup of the group \(\actweird{\mathfrak{C}}{\operatorname{Homeo}}\) of self-homeomorphisms of \(\mathfrak{C}\).
We think of elements of \(\mathfrak{C}\) as \emph{infinite words} extending the finite words in the free monoid \(\{0,1\}^\ast\).
\begin{defn}\label{cantorspace}
    The Cantor space, which we denote by 
    \(\mathfrak{C}\), is the set 
    \(\{0,1\}^\omega\) of infinite sequences in 0s and 1s equipped with the product topology.
        If \(w\in \{0,1\}^\ast\), then we denote the set of all elements of \(\mathfrak{C}\) with \(w\) as a prefix by \(w\mathfrak{C}\). The sets of the form \(w\mathfrak{C}\) are called \emph{cones}. 
\end{defn}
If \(v,w\in \{0,1\}^\ast\cup \{0,1\}^\omega\), then we write \(v
\leq w\) to mean that \(v\) is a prefix of \(w\).
Note that, in particular, for \(v,w\in \{0,1\}^\ast\), we have
\[v\leq w \iff v\mathfrak{C} \supseteq w\mathfrak{C}.\]
In particular, any two cones are either comparable or have empty intersection.
The set of all cones form a basis for the topology on \(\mathfrak{C}\).
While the elements of \(V\) are homeomorphisms of \(\mathfrak{C}\), each element will also give us a way of mapping almost any finite word to another in the following sense.
\begin{defn}
For all \(w_1, w_2\in \{0,1\}^\ast\), there is a canonical homeomorphism from \(w_1\mathfrak{C}\) to \(w_2\mathfrak{C}\) given by \(w_1y\mapsto w_2y\).
We say that a self-homeomorphism of \(\mathfrak{C}\) \emph{rigidly} maps \(w_1\mathfrak{C}\) to \(w_2\mathfrak{C}\) (or sometimes simply \(w_1\) to \(w_2\)) if it restricts to this canonical bijection from \(w_1\mathfrak{C}\) to \(w_2\mathfrak{C}\).
\end{defn}
The group \(V\) consists precisely of the homeomorphisms which map all elements of \(\mathfrak{C}\) in the above fashion.
\begin{defn}\label{def:V}
Thompson's group \(V\) is the group of \(v\in \operatorname{Homeo}(\mathfrak{C})\) such that for all \(x\in \mathfrak{C}\) there is a cone containing \(x\) which \(v\) rigidly maps to another cone.
If \(v\in V\) rigidly maps a word \(w_1\in \{0,1\}^\ast\) to \(w_2\in \{0,1\}^\ast\), then we write \(\act{w_1}{v}\) to denote this word \(w_2\).
\end{defn}

By a result of Rubin \cites{rubin,rubinshort}, every automorphism of a `nice enough' group of homeomorphisms of a topological space \(X\) is induced by the conjugation action of some homeomorphism of \(X\).
Thompson's group \(V\) is such a group and in \cite{collinaut} a complete description of these homeomorphisms of \(\mathfrak{C}\) is provided.
Grigorchuk, Nekrashevich and Sushchanskii introduce a group of homeomorphisms of the Cantor space known as the \emph{rational} homeomorphisms \cite{GNS}. 
These are, in some sense, the homeomorphisms definable with a `simple-enough' machine.
The description of automorphisms of \(V\) given in \cite{collinaut} reduces the automorphism group of \(V\) to a certain subclass of these machines.
We first introduces how these machines function in general.

\begin{defn}
    An \emph{initial transducer} \(T\) is a \(4\)-tuple \((Q,\pi,\lambda,q_0)\) where 
    \begin{itemize}
        \item \(Q\) is a finite set (we refer to elements of \(Q\) as \emph{states});
        \item \(\pi \colon Q\times \{0,1\}\to Q\) is a function (we call \(\pi\) the \emph{transition function});
        \item \(\lambda \colon Q\times \{0,1\}\to \{0,1\}^\ast\) is a function (we call \(\lambda\) the \emph{output function});
        \item \(q_0\in Q\) (we call \(q_0\) the \emph{initial state} of \(T\)).
    \end{itemize}
    We extend the domains of \(\pi\) and \(\lambda\) to \(Q\times \{0,1\}^\ast\) inductively as follows:
    \begin{itemize}
        \item for all \(q\in Q\), \(\actweird{q,\varepsilon}{\pi}=q
    \) and \(\actweird{q,\varepsilon}{\lambda}=\varepsilon
    \);
    \item for \(w\in \{0,1\}^\ast\) and \(l\in \{0,1\}\), we define:
    \[\actweird{q,wl}{\pi}=\actweird{\actweird{q,w}{\pi},l}{\pi}\text{ and }\actweird{q, wl}{\lambda}=\actweird{q,w}{\lambda}\actweird{\actweird{q, w}{\pi},l}{\lambda}.\]
    \end{itemize}
\end{defn}

Each of these machines naturally acts on the set \(\{0,1\}^\ast\) of all finite words using \(\actweird{q_0,\cdot}{\lambda}\).
However this action doesn't always extend to the Cantor space and it is possible for two machines to define the same function.
Grigorchuk, Nekrashevich and Sushchanskii establish some conditions to avoid these problems \cite{GNS}. 

\begin{defn}
Suppose that \(T=(Q,\pi,\lambda,q_0)\) is an initial transducer.
\begin{itemize}
    \item We say that \(T\) is \emph{non-degenerate} if for all \(q\in Q\) and \(n\in \N\) there is \(N_n\in \N\) such that for all \(w\in \{0,1\}^{N_n}\) we have \(|\actweird{q,w}{\lambda}|\geq n\);
    \item We say that \(T\) has \emph{complete response} if for all \(q\in Q\) and \(l\in \{0,1\}\), the word \(\actweird{q,l}{\lambda}\) is the longest common prefix of all words in the set \(\makeset{\actweird{q,lw}{\lambda}}{\(w\in \{0,1\}^{N_1}\)}\);
    \item We say that \(T\) has \emph{no duplicate states} if whenever \(q,q'\in Q\) satisfy \(\actweird{q,w}{\lambda}=\actweird{q',w}{\lambda}\) for all \(w\in \{0,1\}^\ast\), we have \(q=q'\);
    \item We say that \(T\) is \emph{accessible} if for all \(q\in Q\), there is \(w\in \{0,1\}^\ast\) with \(\actweird{q_0,w}{\pi}=q\);
    \item We say that \(T\) is \emph{minimal} if \(T\) satisfies all of the above conditions.
\end{itemize}
\end{defn}
Given these conditions, we have a one-to-one correspondence between the (isomorphism types of) minimal transducers and the so-called rational continuous maps on the Cantor space.
\begin{defn}
     Let \(T=(Q,\pi,\lambda,q_0)\) be a non-degenerate initial transducer.
     For all \(q\in Q\) and \(x=x_0x_1\ldots\in \mathfrak{C}\), define 
    \[\actweird{q,x}{\lambda} \coloneqq \actweird{q,x_0}{\lambda}\actweird{\actweird{q,x_0}{\pi},x_1}{\lambda}\actweird{\actweird{q,x_0x_1}{\pi},x_2}{\lambda}\ldots\]
    We will see in \cref{rationalmin} that \(\actweird{q, x}{\lambda}\) is a well-defined
    element of \(\mathfrak{C}\). The map \(x\mapsto \actweird{q, x}{\lambda}\) is called the
    \emph{rational map of }\(q\). The \emph{rational map of }\(T\) is defined to
    be the \emph{rational map of }\(q_0\).
\end{defn}
The following proposition is proved in \cite{GNS}.
\begin{proposition}\label{rationalmin}
    Rational maps are always well-defined continuous maps and moreover each rational map is the rational map of a unique minimal transducer (up to relabelling the states).
\end{proposition}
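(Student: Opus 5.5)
The plan has three parts: well-definedness and continuity, existence of a minimal transducer, and uniqueness of that transducer.

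\emph{Well-definedness and continuity.} Fix a non-degenerate initial transducer \(T=(Q,\pi,\lambda,q_0)\) and a state \(q\). For \(x\in\mathfrak{C}\), the finite words \(\actweird{q,x_0\cdots x_{k-1}}{\lambda}\) form a chain under the prefix order \(\leq\), because of the inductive definition of \(\lambda\). By non-degeneracy, for each \(n\) there is \(N_n\) with \(|\actweird{q,w}{\lambda}|\geq n\) whenever \(|w|= N_n\). Hence the lengths are unbounded, and the chain determines a unique infinite word \(\actweird{q,x}{\lambda}\in\mathfrak{C}\). For continuity, note that the first \(n\) letters of \(\actweird{q,x}{\lambda}\) depend only on \(x_0\cdots x_{N_n-1}\). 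So the preimage of a cone \(u\mathfrak{C}\) with \(|u|=n\) is a union of cones of depth \(N_n\), which is open. Since \(Q\) is finite, we may take a single \(N_n\) that works for every state.

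\emph{Existence of a minimal transducer.} Start from any non-degenerate transducer representing the rational map and apply four normalisations in turn.
\begin{enumerate}
    \item Discard inaccessible states. This does not change the map of \(q_0\).
    \item Achieve complete response. For each state \(q\), let \(\mu_q\) be the longest common prefix of \(\actweird{q,x}{\lambda}\) over \(x\in\mathfrak{C}\); by compactness and continuity this is a finite word unless the map of \(q\) is constant, and the constant case must be handled separately. Replace \(\actweird{q,l}{\lambda}\) by \(\mu_q^{-1}\actweird{q,l}{\lambda}\mu_{\actweird{q,l}{\pi}}\), and prepend \(\mu_{q_0}\) to the output at the start. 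This is the standard ``pushing output forward'' step.
    \item Merge duplicate states. Two states are duplicates if they have equal output functions on all of \(\{0,1\}^\ast\); this is an equivalence relation compatible with \(\pi\) and \(\lambda\), so the quotient is well defined.
    \item Check that the first three steps preserve one another's properties.
\end{enumerate}

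\emph{Uniqueness.} Given two minimal transducers with the same rational map, I would define a correspondence on states. A state \(q\) reached by a word \(w\) is paired with a state \(q'\) reached by the same word \(w\) in the other transducer. Complete response forces the outputs to agree exactly on finite words, not merely up to bounded lag. Absence of duplicate states then makes the pairing well defined and injective, and accessibility makes it a bijection that respects \(\pi\) and \(\lambda\).

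\emph{Main obstacle.} I expect the hardest step to be the complete-response normalisation together with the uniqueness argument. The difficulty is showing that the finite-word outputs, and not only the infinite-word maps, are determined by the rational map once complete response holds. I would first try proving by induction on \(|w|\) that \(\actweird{q_0,w}{\lambda}\) equals the longest common prefix of the images of \(w\mathfrak{C}\), using complete response at each step. This identifies the finite-word behaviour intrinsically from the map, from which uniqueness follows.
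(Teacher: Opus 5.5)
The paper gives no proof of this statement; it only cites \cite{GNS}. So your outline has to stand on its own.

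Your well-definedness and continuity paragraph is fine. Your uniqueness argument is also fine, provided you state the key lemma only for \(w\neq\varepsilon\): \(\lambda(q_0,w)\) is the longest common prefix of \(f(w\mathfrak{C})\), where \(f\) is the rational map. For \(w=\varepsilon\) the left side is \(\varepsilon\) but the right side need not be.

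The genuine gap is in existence, where you say the constant case ``must be handled separately''. It cannot be handled. More generally, no map \(f\) that is constant on some cone \(w\mathfrak{C}\) with \(w\neq\varepsilon\) can be handled; for example, take \(f\) equal to the identity on \(0\mathfrak{C}\) and constant on \(1\mathfrak{C}\). To see this, write \(w=vl\) with \(l\in\{0,1\}\), and in any transducer for \(f\) put \(q=\pi(q_0,v)\).
\begin{itemize}
\item The state \(\pi(q,l)\) has a constant map.
\item So the words \(\lambda(q,lu)\) with \(|u|=N_1\) are all prefixes of one infinite word, and each is strictly longer than \(\lambda(q,l)\).
\item Their longest common prefix therefore strictly extends \(\lambda(q,l)\), so complete response fails at \((q,l)\).
\end{itemize}
Hence such \(f\) have no minimal transducer at all. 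Read literally with the paper's definitions, the statement is false for them.

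You need to add the hypothesis that \(f\) is not constant on any cone. This holds for every injective \(f\), and in particular for all elements of \(V\) and \(B_{2,1}\), which is the only way the paper uses the result. Under this hypothesis every \(\mu_q\) with \(q\) accessible is finite, so your step (2) makes sense.

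Step (2) needs one more repair. The transducers in this paper have no initial output word, so you cannot ``prepend \(\mu_{q_0}\) at the start''. Do the following instead.
\begin{itemize}
\item If \(q_0\) has incoming transitions, first add a fresh initial state with the same outgoing transitions as \(q_0\) and no incoming ones.
\item Give the initial state the outputs \(\lambda(q_0,l)\mu_{\pi(q_0,l)}\), without stripping \(\mu_{q_0}\).
\item Normalise every other state as you describe.
\end{itemize}
This matches the paper's definition. Complete response at \((q,l)\) only constrains the target state \(\pi(q,l)\), so an initial state with no incoming transitions may keep a non-empty common prefix. Finally, merge duplicates; this preserves non-degeneracy, complete response and accessibility. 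With that, your existence argument is complete.
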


We can now think of rational maps as machines. 
We wish to restrict our attention to the machines which describe the rational maps which induce automorphisms of \(V\). 
These are the so-called bisynchronising homeomorphisms of \cite{collinaut}.
\begin{defn}
    We say that \(T\) is \emph{strongly synchronising of length \(k\in \N\)} if
    for all \(p,q\in Q\) and \(w\in \{0,1\}^k\) we have \(\actweird{p,w}{\pi}=\actweird{q,w}{\pi}\).
    In this case we say that \(w\) synchronises \(T\) to the state \(\actweird{p,w}{\pi}\).
    We say that \(T\) is \emph{strongly synchronising} if it has a finite
    synchronising length. We say that a continuous map \(c \colon \mathfrak{C}\to
    \mathfrak{C}\) is \emph{synchronising} if it is rational and its minimal
    transducer is strongly synchronising. We say that a homeomorphism  \(c
    \colon \mathfrak{C}\to \mathfrak{C}\) is \emph{bisynchronising} if \(c\) and
    \(c^{-1}\) are both synchronising. 
    We denote the set of bisynchronising
    homeomorphisms of \(\mathfrak{C}\) by \(B_{2,1}\). 
\end{defn}

These strongly synchronising machines have appeared many times since their introduction.
For more background on strongly synchronising transducers see for example \cites{collinaut,arXiv:2004.00516, elliott2023description, zbMATH07720878,arXiv:2407.18720}.
The following result is {\cite{collinaut}*{Theorem 1.1}} and another proof is provided in {\cite{elliott2023description}*{Theorem 4.14}}.

\begin{proposition}\label{autV}
The set \(B_{2,1}\) is a group and is isomorphic to \(\actweird{V}{\operatorname{Aut}}\).
This isomorphism is given by mapping \(f\in B_{2,1}\) to the automorphism \(v\mapsto \comp{f^{-1}}{\circ}{v}{\circ}{f}\). 
We will often write \(v^f\) to denote \(\comp{f^{-1}}{\circ}{v}{\circ}{f}\).
\end{proposition}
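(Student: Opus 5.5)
The plan is to pass through Rubin's theorem and identify \(\actweird{V}{\Aut}\) with the normaliser of \(V\) in \(\actweird{\mathfrak{C}}{\operatorname{Homeo}}\). Then I would show that this normaliser is exactly \(B_{2,1}\). Being a normaliser, it is automatically a group, which gives the first claim.

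\textbf{Step 1 (Rubin).} I would check that \(V\) satisfies Rubin's hypotheses: \(\mathfrak{C}\) is locally compact Hausdorff without isolated points, and \(V\) acts locally densely. Local density holds because, for any cone \(w\mathfrak{C}\), the elements of \(V\) supported in \(w\mathfrak{C}\) move points of \(w\mathfrak{C}\) onto dense subsets of it. Rubin then says every automorphism of \(V\) has the form \(v\mapsto v^f\) for a homeomorphism \(f\) normalising \(V\). The centraliser of \(V\) in \(\actweird{\mathfrak{C}}{\operatorname{Homeo}}\) is trivial: if \(f\neq\id\) moves \(x\), choose a small cone \(U\) around \(x\) with \(U\cap \act{U}{f}=\varnothing\) and \(v\in V\) supported in \(U\); then \(f\) cannot commute with \(v\). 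Hence \(f\mapsto(v\mapsto v^f)\) is an isomorphism from \(N_{\operatorname{Homeo}}(V)\) onto \(\actweird{V}{\Aut}\). It remains to prove \(N_{\operatorname{Homeo}}(V)=B_{2,1}\).

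\textbf{Step 2 (\(B_{2,1}\subseteq N(V)\)).} Let \(f\) be bisynchronising and \(v\in V\). Fix \(x\in\mathfrak{C}\).
\begin{enumerate}
\item Choose a long prefix \(u\) of \(x\) such that the transducer for \(f^{-1}\), read along \(u\), has passed its synchronising length. Then on \(u\mathfrak{C}\) the map \(f^{-1}\) is "output word followed by a fixed state \(q\)".
\item Lengthen \(u\) further so that \(v\) acts rigidly on the resulting image cone.
\item The image under \(v\) of a long enough word again synchronises the transducer for \(f\), and which state it lands in depends only on a suffix of bounded length.
\item Rigidity of \(v\) preserves that suffix, so the state reached is the inverse state of \(q\) and the suffixes cancel.
\end{enumerate}
This should show that \(v^f\) rigidly maps a cone around \(x\), so \(v^f\in V\). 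Applying the same argument with \(f^{-1}\) in place of \(f\) gives \(f^{-1}\in N(V)\) as well.

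\textbf{Step 3 (\(N(V)\subseteq B_{2,1}\)), the main obstacle.} Let \(f\) normalise \(V\). For each finite word \(w\), define the local map \(f_w\) by \(\act{wy}{f}=\act{w}{f}\,\act{y}{f_w}\), where \(\act{w}{f}\) is the greatest common prefix of \(\act{w\mathfrak{C}}{f}\). The goal is to show that only finitely many \(f_w\) occur, which gives rationality by Proposition \ref{rationalmin}, and that \(f_w\) depends only on a bounded suffix of \(w\), which gives strong synchronisation.

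The first thing I would try is to use elements of \(V\) that rigidly swap two disjoint cones \(w\mathfrak{C}\) and \(w'\mathfrak{C}\) with \(w,w'\) sharing a long common suffix. Their conjugates lie in \(V\), and a compactness argument shows that these finitely many conjugates have uniformly bounded rigid-map data. This forces \(f_w\) and \(f_{w'}\) to agree up to the finitely many prefix replacements appearing in that data, so the state depends only on a bounded suffix. Uniform continuity of \(f\) and \(f^{-1}\) is what should bound the length of that suffix. Running the same argument for \(f^{-1}\) gives bisynchronisation.

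I expect the bookkeeping in this step, turning "conjugates of a finitely generated subgroup lie in \(V\)" into a uniform synchronising length, to be the hardest part. If it becomes unwieldy, I would fall back on the detailed argument of \cite{collinaut}*{Theorem 1.1}, or the alternative in \cite{elliott2023description}*{Theorem 4.14}, for this inclusion.
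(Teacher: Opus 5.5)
\paragraph{Comparison with the paper.} The paper does not prove this proposition. It quotes it from Theorem 1.1 of \cite{collinaut}, with Theorem 4.14 of \cite{elliott2023description} as an alternative, so your fallback for the hard inclusion is exactly what the paper does. Your Steps 1 and 2 are sound and follow the structure of those sources. In Step 2 the mechanism is this: \(f^{-1}f=\id\) forces the state \(r\) that \(f\) reaches after reading the output of \(f^{-1}\) on \(u\) to undo \(q\) up to a fixed prefix, and \(r\) is unchanged by \(v\) because rigid maps preserve long suffixes. Note that this step only uses that \(f\) is synchronising.

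\paragraph{The gap in Step 3.} Your own sketch of Step 3, as written, does not work.
\begin{itemize}
\item The swaps of \(w\mathfrak{C}\) and \(w'\mathfrak{C}\), over all pairs \(w,w'\) with a long common suffix, form an infinite family. Nothing a priori bounds the tree pairs of their conjugates by \(f\) uniformly; that bound is essentially the conclusion. So no compactness argument supplies it.
\item Uniform continuity cannot bound the suffix length either. Every homeomorphism of \(\mathfrak{C}\) is uniformly continuous, but most are not even rational.
\item A single \(g\in V\) rigidly mapping \(w\mathfrak{C}\) onto \(w'\mathfrak{C}\) does not make \(f_w\) and \(f_{w'}\) agree. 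Since \(g^f\) sends \((w)f\cdot(z)f_w\) to \((w')f\cdot(z)f_{w'}\), it only gives \(f_{wc}=f_{w'c}\) for \(|c|\) beyond a bound that depends on the tree pair of \(g^f\), and hence on the pair \((w,w')\).
\end{itemize}

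\paragraph{How to fix it.} The missing idea is to fix finitely many elements once and for all, and to let \(c\) play the role of the common suffix.
\begin{enumerate}
\item Take \(g_1,g_2,g_3\in V\) rigidly mapping \(0\mathfrak{C}\) onto \(00\mathfrak{C}\), \(01\mathfrak{C}\) and \(1\mathfrak{C}\) respectively.
\item Choose \(K\) so that \((0c)f\) is longer than every leaf of the tree pairs of the three conjugates \(g_i^f\) whenever \(|c|\geq K\). This is the only place uniform continuity of \(f\) is needed.
\item Each \(g_i^f\) then acts rigidly on the cone of \((0c)f\). Comparing greatest common prefixes gives \(f_{0c}=f_{00c}=f_{01c}=f_{1c}\) for all \(|c|\geq K\).
\item For letters \(x,y\) this gives \(f_{xya'c}=f_{0ya'c}=f_{0a'c}\). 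By induction, \(f_{ac}=f_{0c}\) for every nonempty word \(a\).
\item Hence there are only finitely many local maps. Build the transducer whose states are the \(f_w\), with transitions \(f_w\mapsto f_{wx}\) and output the greatest common prefix of \((x\mathfrak{C})f_w\). It defines \(f\), it is non-degenerate because \(f\) is a uniformly continuous injection, and it is strongly synchronising of length \(K+1\).
\end{enumerate}
You need this construction explicitly: Proposition~\ref{rationalmin} alone does not turn finiteness of local maps into rationality. Running the same argument for \(f^{-1}\), which also normalises \(V\), gives \(f\in B_{2,1}\).
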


We now have a full description of the automorphism group of \(V\) using
transducers. In order to compute the automorphic growth of \(V\), we will also
require knowledge of the dynamics of specific elements so that we can analyse
how much this changes when we apply our bisynchronising maps. For this we need
the notion of a revealing tree pair.

\begin{defn}
    A \emph{finite rooted binary tree} is a non-empty finite prefix closed subset \(A\subseteq \{0,1\}^\ast\) such that for all \(w\in A\) we either have \(\{w,w0,w1\}\subseteq A\) (we call \(\{w,w0,w1\}\) a \emph{caret}) or \(\{w0,w1\}\cap A=\varnothing\).
    A \emph{leaf} of \(A\) is a maximal element of \(A\) with respect to the prefix partial order.
    A \emph{tree pair} is a triple \((A,B,\sigma)\) where \(A\) and \(B\) are finite rooted binary trees and \(\sigma\) is a bijection from the leaves of \(A\) to the leaves of \(B\).
    A subset of \(\{0,1\}^\ast\) is called a \emph{complete prefix code} if it is the set of leaves of a finite rooted binary tree. 
    Each tree pair \((A,B,\sigma)\) induces an element of \(v\) which rigidly maps \(a\) to \(\act{a}{\sigma}\) for all leaves \(a\) of \(A\).
\end{defn}
From the compactness of the Cantor space, it is routine to show that every element of \(V\) is defined by some tree pair. 
An example tree pair can be found in \cref{fig:placeholder}. 
To define our revealing tree pairs, we must sort the leaves into different types.

\begin{defn}
Suppose that \(T=(A,B,\sigma)\) is a tree pair. We say that a word is a
\emph{leaf} of \(T\) if it is a leaf of \(A\) or \(B\). In order to define
\(\sigma^n\), we need to take care as the domain and range may not coincide. In such a case, we compose and invert the bijection \(\sigma\) as if it is a
partial function. We say that a leaf \(l\) of \(T\)
is 
\begin{enumerate}
   \item \emph{neutral} if it is a leaf of both \(A\) and \(B\);
   \item a \emph{range of repulsion} if it is not neutral and there is \(n\in \N\setminus \{0\}\) such that \(l\) is a strict prefix of \(\act{l}{\sigma^{-n}}\), in this case \(\act{l}{\sigma^{-n}}\) is called a \emph{repeller};
   \item  a \emph{domain of attraction} if it is not neutral and there is \(n\in \N\setminus\{0\}\) such that \(l\) is a strict prefix of \(\act{l}{\sigma^{n}}\), in this case \(\act{l}{\sigma^{n}}\) is called an \emph{attractor};
   \item a \emph{source} if it is a leaf of \(A\) which is not a repeller, neutral, or a domain of attraction;
   \item a \emph{sink} if it is a leaf of \(B\) which is not an attractor, neutral, or a range of repulsion.
\end{enumerate}
\end{defn}
Note that in (2), \((l)\sigma^{-n}\) being well-defined implies that \((l)\sigma^{-n}\) is a leaf of \(A\) and has a leaf of \(B\) as a strict prefix.
In particular, all the repellers are leaves of the left tree and not the right tree.
Similarly, all the attractors are leaves of the right tree and not the left tree.
Thus these five classes partition the leaves of any tree pair.
In \cref{fig:placeholder} we have \(0\) is neutral, \(10\) is a range of repulsion, \(100\) is a repeller, \(11\) is a domain of attraction, \(111\) is an attractor, \(1010,1011\) are sources, and \(1100,1101\) are sinks.
Revealing pairs are defined in terms of these repellers and attractors.
In particular we need one for each \emph{component}.

\begin{defn}
   Note that if \((A,B,\sigma)\) is a tree pair, then \(C \coloneqq A\cap B\) is
   also a finite rooted binary tree. Each leaf of \(C\) is either a leaf of
   \(A\) or a leaf of \(B\). For each leaf \(l\) of \(C\) which is not a leaf of
   \(A\), we define \emph{the component of \(A\) under \(l\)} to be the set of
   elements of \(A\) with \(l\) as a prefix. 
   We call such a set a 
   \emph{component of \(A\setminus B\)}. The \emph{components of \(B\setminus
   A\)} are defined similarly.
\end{defn}
If a component of \(A\setminus B\) contains a repeller leaf, then \(\sigma\) will eventually map this leaf to the unique leaf of \(B\) with the repeller as a prefix.
As \(\sigma\) is injective, it follows that every component of \(A\setminus B\) contains at most one repeller leaf. Similarly each component of \(B\setminus A\), contains at most one attractor leaf.
Hence the following definition.
\begin{defn}
We say that a tree pair \((A,B,\sigma)\) is \emph{revealing} if each component of \(A\setminus B\) contains a repeller leaf and each component of \(B\setminus A\) contains an attractor leaf.
\end{defn}

In particular, the tree pair in \cref{fig:placeholder} is revealing.
In \cite{brin2004higher}, it is shown that every element of \(V\) can be defined using such a tree pair.
\begin{proposition}
   For all \(v \in V\), there is a revealing pair representing \(v\).
\end{proposition}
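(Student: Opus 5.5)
The plan is to start from an arbitrary tree pair \((A,B,\sigma)\) representing \(v\) and transform it into a revealing pair by a finite sequence of expansions, that is, by adding matching carets to both trees. For a tree pair and a leaf \(a\) of \(A\), expanding at \(a\) means replacing \(a\) by \(a0,a1\) in \(A\) and \(\act{a}{\sigma}\) by \(\act{a}{\sigma}0,\act{a}{\sigma}1\) in \(B\), with \(a i\mapsto \act{a}{\sigma}i\). This is clearly still a tree pair for \(v\), because \(v\) maps \(a\) rigidly. The strategy is to define a measure of ``badness'' and show that a suitable expansion strictly decreases it.

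Fix a component of \(A\setminus B\), lying under a leaf \(l\) of \(C=A\cap B\), that contains no repeller. Follow the leaves of \(B\) inside this component backwards under \(\sigma^{-1}\). Each leaf \(b\) of \(B\) with \(b\geq l\) is the image of a leaf \(\act{b}{\sigma^{-1}}\) of \(A\). Such a leaf is either
\begin{itemize}
\item again below a leaf of \(B\), so that the iteration continues, or
\item a neutral leaf or a leaf of \(C\) outside the component, so that the chain stops.
\end{itemize}
Since the number of leaves is finite, each backward chain either cycles or terminates. A cycle through a proper prefix relation produces a repeller. So if there is no repeller, every chain starting in this component terminates at a leaf of \(A\) that does not lie strictly below a leaf of \(B\).

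In that case we expand the tree pair at the appropriate leaf of \(A\) along the terminating chain. The aim is to ``push'' the component's top caret into \(C\), that is, to make \(l\) an interior node of both trees. This strictly increases \(|C|\) while keeping the total number of leaves of \(A\) bounded in terms of the original pair. The symmetric argument, with \(\sigma\) in place of \(\sigma^{-1}\) and attractors in place of repellers, handles components of \(B\setminus A\).

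The main obstacle is termination. Expansions can create new components elsewhere, so a naive count need not decrease. To deal with this, I would follow Brin's argument in \cite{brin2004higher}. The key fact is that, in the Cantor dynamics of \(v\), every point either is eventually periodic through a neutral cone, or converges to an attracting or repelling periodic orbit. Since \(v\) has only finitely many such periodic points (by the finiteness of the tree pair), one expands until every attracting and repelling periodic point lies in a cone that is mapped rigidly into itself.

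Concretely, first expand so that every periodic point of \(v\) which is not in the interior of the fixed set of some power of \(v\) is isolated in its own leaf. Next, choose a common expansion making each such leaf \(a\) satisfy either \(\act{a}{\sigma^{n}}>a\) or \(\act{a}{\sigma^{n}}<a\). Finally, expand all remaining leaves finitely many times so that their orbits reach these leaves.

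Each component of \(A\setminus B\) then contains an orbit segment that ends in a repelling cone, so it contains a repeller leaf. Likewise, each component of \(B\setminus A\) contains an attractor leaf. Hence the resulting tree pair is revealing.
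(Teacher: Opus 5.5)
The paper does not prove this proposition; it quotes it from Brin. Your proposal does not supply a complete substitute, and it has a genuine gap at exactly the point where the work lies.

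\textbf{The combinatorial part.} Your chain analysis is reversed. If the \(\sigma^{-1}\)-preimage of a leaf of \(B\) is a neutral leaf, the chain \emph{continues}, because that preimage is again a leaf of \(B\). The chain stops precisely at a leaf of \(A\) that is not a leaf of \(B\).

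Having no repeller does not force this terminal leaf to be the root of a component of \(B\setminus A\). It can lie strictly inside a \emph{different} component \(D'\) of \(A\setminus B\). In that case your chain of expansions just turns the root caret of \(D\) into a common caret and adds a new caret to \(D'\). The number of carets of \(A\setminus B\) is unchanged and \(|A|\) grows. Nothing you state prevents this from repeating indefinitely, for example around a cycle of components of \(A\setminus B\) in which each root chain ends in the next component. So ``increases \(|C|\) while keeping the number of leaves bounded'' is unproved. This is the termination problem you flag, and it is never resolved.

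\textbf{The dynamical substitute.} This does not close the gap either. First, finiteness of the non-interior periodic points does not follow from ``finiteness of the tree pair''. A tree pair bounds the isolated fixed points of each single power \(v^n\), but not the possible periods. In the literature this finiteness is usually derived \emph{from} revealing pairs, so it cannot simply be invoked here.

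Second, and more seriously, the final inference (``contains an orbit segment ending in a repelling cone, so it contains a repeller leaf'') is false. Take \(v\colon 00\mapsto 0,\ 01\mapsto 10,\ 1\mapsto 11\). Expand its revealing pair at \(01\) and then at \(00\). The resulting pair sends the leaves \(000,001,010,011,1\) to \(00,01,100,101,11\) respectively. Check your three conditions:
\begin{enumerate}
\item The only periodic points, \(0^\infty\) and \(1^\infty\), each sit in their own leaf.
\item Those leaves satisfy \(000\mapsto 00\) and \(1\mapsto 11\).
\item Every other point enters these leaves in forward and backward time.
\end{enumerate}
Yet the component of \(A\setminus B\) under \(01\) contains no repeller: the \(\sigma^{-1}\)-chain from \(01\) stops at \(001\), which lies in the component under \(00\). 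This happens even though the backward orbit of \(010\) (namely \(010\mapsto 0010\mapsto 00010\)) does enter the repelling cone \(000\).

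\textbf{What is missing.} You need a precise expansion rule together with an invariant that genuinely decreases. In particular, it must handle components whose backward chain lands inside another component of \(A\setminus B\), and it must break cycles of such components. That is the actual content of Brin's argument, and your proposal only gestures at it.
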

As such, we will be working exclusively with tree pairs of this type in our analysis of the automorphic growth of \(V\).

\begin{figure}

\usetikzlibrary{decorations.markings}

\begin{tikzpicture}[
middlearrow/.style={
        postaction={decorate, decoration={
            markings,
            mark=at position 0.5 with {\arrow{#1}}
        }}
}
]

\begin{scope}[xshift=0cm]
\draw[teal] (-1.5,3) -- (-1,2);
\draw[teal] (-1.5,3) -- (-2,2) node[left] {$a$};

\draw[blue] (-1,2) -- (-1.8,1);
\draw[magenta] (-1,2) -- (-0.2,1);
\draw[magenta] (-0.2,1) -- (-0.5,0) node[right] {$f$};
\draw[blue] (-0.2,1) -- (0.3,0) node[right] {$g$};
\draw[blue] (-1.8,1) -- (-1.2,0);
\draw[blue] (-1.8,1) -- (-2.4,0) node[left] {$b$};

\draw[blue] (-1.2,0) -- (-1.8,-1);
\draw[blue] (-1.2,0) -- (-0.5,-1) node[right] {$e$};


\draw[blue] (-1.8,-1) -- (-2.2,-2) node[left] {$c$};
\draw[blue] (-1.8,-1) -- (-1.2,-2) node[right] {$d$};

\end{scope}

\begin{scope}[xshift=7cm]
\draw[teal] (-0.5,3) -- (-1,2) node[right]{};
\draw[teal] (-0.5,3) -- (0,2) node[right] {$f$};

\draw[blue] (-1,2) -- (-1.8,1) node[left]{$b$};
\draw[magenta] (-1,2) -- (-0.5,1);
\draw[magenta] (-0.5,1) -- (-1.2,0) node[right] {$a$};

\draw[blue] (-0.5,1) -- (0.1,0) node[left] {};
\draw[blue] (0.1,0) -- (-0.4,-1) node[left] {};
\draw[blue] (0.1,0) -- (0.6,-1) node[right] {$d$};
\draw[blue]  (-0.4,-1)-- (-0.9,-2)node[left] {$g$};
\draw[blue]  (-0.4,-1)-- (0.1,-2)node[left] {};
\draw[blue]   (0.1,-2) --(-0.4,-3) node[left] {$c$};
\draw[blue]   (0.1,-2) --(0.6,-3) node[right] {$e$};





\end{scope}

\end{tikzpicture}
\caption{An element of \(V\) represented by a revealing tree pair. The common tree can be seen in green. The attractor and repeller can be seen in purple and the sources and sinks can be seen in blue.}\label{mainveltfig}
\end{figure}
\begin{figure}

\usetikzlibrary{decorations.markings, arrows.meta}

\begin{tikzpicture}[
middlearrow/.style={
        postaction={decorate, decoration={
            markings,
            mark=at position 0.5 with {\arrow{#1}}
        }}
}
]

\begin{scope}[yshift=3cm]

\draw[magenta,middlearrow={>}] (1.2,2) to[out=90, in=90] (-1.2,2);
\draw[magenta,middlearrow={>}] (-1.2,2) to[out=270, in=270] (1.2,2);

\draw[blue,middlearrow={>}] (1.2,2) -- (2,1);
\node at (2.2,0.8) {};
\node at (1.6,1.8) {$1$};
\node at (1.3,2.3) {$0$};

\draw[blue,middlearrow={>}] (-1.2,2) -- (-1.8,1);
\draw[blue,middlearrow={>}] (-1.8,1) -- (-1.2,0);
\draw[blue,middlearrow={>}] (-1.8,1) -- (-2.4,0) node[left] {};
\node at (-0.95,1.8) {$1$};
\node at (-1.5,1.8) {$0$};

\draw[blue,middlearrow={>}] (-1.2,0) -- (-1.8,-1);
\draw[blue,middlearrow={>}] (-1.2,0) -- (-0.5,-1) node[right] {};
\node at (-1.5,0.8) {$1$};
\node at (-2.1,0.8) {$0$};

\node at (-0.9,-0.2) {$1$};
\node at (-1.5,-0.2) {$0$};

\draw[blue,middlearrow={>}] (-1.8,-1) -- (-2.2,-2) node[left] {};
\draw[blue,middlearrow={>}] (-1.8,-1) -- (-1.2,-2) node[right] {};
\node at (-1.5,-1.2) {$1$};
\node at (-2.1,-1.2) {$0$};

\end{scope}

\begin{scope}[yshift=-5cm, yscale=-1]

\draw[magenta,middlearrow={>}] (1.2,2) to[out=90, in=90] (-1.2,2);
\draw[magenta,middlearrow={>}] (-1.2,2) to[out=270, in=270] (1.2,2);

\draw[blue,middlearrow={>}] (-1.8,1) -- (-1.2,2);
\node at (-1.8,0.8) {};
\node at (-1.5,1.9) {$0$};
\node at (-1.35,2.25) {$1$};

\draw[blue,middlearrow={>}] (2,1) -- (1.2,2);
\node at (0.9,1.8) {$0$};
\node at (1.6,1.8) {$1$};

\draw[blue,middlearrow={>}] (1.2,0) -- (2,1);
\draw[blue,middlearrow={>}] (2.8,0) -- (2,1);
\node at (2.8,-0.2) {};
\node at (1.6,0.8) {$0$};
\node at (2.4,0.8) {$1$};

\draw[blue,middlearrow={>}] (0.8,-1) -- (1.2,0);
\draw[blue,middlearrow={>}] (1.8,-1) -- (1.2,0);
\node at (0.8,-1.2) {};
\node at (0.9,-0.2) {$0$};
\node at (1.5,-0.2) {$1$};

\draw[blue, middlearrow={>}] (2.2,-2) -- (1.8,-1);
\node at (2.2,-2.2) {};
\draw[blue, middlearrow={>}] (1.4,-2) -- (1.8,-1);
\node at (1.4,-2.2) {};
\node at (1.5,-1.2) {$0$};
\node at (2.1,-1.2) {$1$};

\end{scope}

\draw[-, thick, red] 
  (-2.2,1) to[out=240, in=120] (1.4,-3); 

\draw[-, thick, red] 
  (-2.4,3) to[out=240, in=120] (-1.8,-6);
  
\draw[-, thick, red] 
  (-1.2,1) to[out=300, in=45] (2.8,-5);

\draw[-, thick, red] 
  (-0.5,2) to[out=300, in=60] (2.2,-3);

\draw[-, thick, red] 
  (2,4) to[out=300, in=120] (0.8,-4);

\end{tikzpicture}
\caption{By connecting the leaves of \cref{mainveltfig} with the same labels, connecting the roots of the two trees, and performing reductions as described in \cite{zbMATH06309498}, we obtain  an abstract closed strand diagram.
One of the main results of \cite{zbMATH06309498}, is that two elements of \(V\) are conjugate if and only if they have the same abstract closed strand diagram (together with a cohomology class which we omit for simplicity).
In particular if two elements of \(v\) have the same closed stand diagram (and cohomology), then they are in the same automorphic orbit. Moreover, the automorphisms of \(V\) have an action on the set of all such diagrams via its action on the set of conjugacy classes.
The letters \(0\) and \(1\) track the `rotation system' required in \cite{zbMATH06309498} by providing a circular order for the edges at each vertex. 
The red parts of the edges separate the vertices with in-degree 2 from the vertices with in-degree 1.
 The diagram is drawn to resemble the strand diagram of an element of \(V\). 
 In fact, if we view each purple circuit as a single node, the above diagram exactly represents the strand diagram of an element of \(V\).
 In \cref{separatedstranddiagrams} we see that by removing the red parts of the diagram we have a `tree pair' for this element.
}\label{closedstranddiagram}
\end{figure}

\begin{figure}

\usetikzlibrary{decorations.markings}

\begin{tikzpicture}[
middlearrow/.style={
        postaction={decorate, decoration={
            markings,
            mark=at position 0.5 with {\arrow{#1}}
        }}
}
]

\begin{scope}[xshift=0cm]

\draw[magenta,middlearrow={>}] (1.2,2) to[out=90, in=90] (-1.2,2);
\draw[magenta,middlearrow={>}] (-1.2,2) to[out=270, in=270] (1.2,2);

\draw[blue,middlearrow={>}] (1.2,2) -- (2,1);
\node[blue] at (2.2,0.8) {$e$};
\node at (1.6,1.8) {$1$};
\node at (1.3,2.3) {$0$};

\draw[blue,middlearrow={>}] (-1.2,2) -- (-1.8,1);
\draw[blue,middlearrow={>}] (-1.8,1) -- (-1.2,0);
\draw[blue,middlearrow={>}] (-1.8,1) -- (-2.4,0) node[left] {$a$};
\node at (-0.95,1.8) {$1$};
\node at (-1.5,1.8) {$0$};

\draw[blue,middlearrow={>}] (-1.2,0) -- (-1.8,-1);
\draw[blue,middlearrow={>}] (-1.2,0) -- (-0.5,-1) node[right] {$d$};
\node at (-1.5,0.8) {$1$};
\node at (-2.1,0.8) {$0$};

\node at (-0.9,-0.2) {$1$};
\node at (-1.5,-0.2) {$0$};

\draw[blue,middlearrow={>}] (-1.8,-1) -- (-2.2,-2) node[left] {$b$};
\draw[blue,middlearrow={>}] (-1.8,-1) -- (-1.2,-2) node[right] {$c$};
\node at (-1.5,-1.2) {$1$};
\node at (-2.1,-1.2) {$0$};

\end{scope}

\begin{scope}[xshift=7cm]

\draw[magenta,middlearrow={>}] (1.2,2) to[out=90, in=90] (-1.2,2);
\draw[magenta,middlearrow={>}] (-1.2,2) to[out=270, in=270] (1.2,2);

\draw[blue,middlearrow={>}] (-1.8,1) -- (-1.2,2);
\node[blue] at (-1.8,0.8) {$a$};
\node at (-1.5,1.9) {$0$};
\node at (-1.35,2.25) {$1$};

\draw[blue,middlearrow={>}] (2,1) -- (1.2,2);
\node at (0.9,1.8) {$0$};
\node at (1.6,1.8) {$1$};

\draw[blue,middlearrow={>}] (1.2,0) -- (2,1);
\draw[blue,middlearrow={>}] (2.8,0) -- (2,1);
\node[blue] at (2.8,-0.2) {$c$};
\node at (1.6,0.8) {$0$};
\node at (2.4,0.8) {$1$};

\draw[blue,middlearrow={>}] (0.8,-1) -- (1.2,0);
\draw[blue,middlearrow={>}] (1.8,-1) -- (1.2,0);
\node[blue] at (0.8,-1.2) {$e$};
\node at (0.9,-0.2) {$0$};
\node at (1.5,-0.2) {$1$};

\draw[blue,middlearrow={>}] (2.2,-2) -- (1.8,-1);
\node[blue] at (2.2,-2.2) {$d$};
\draw[blue,middlearrow={>}] (1.4,-2) -- (1.8,-1);
\node[blue] at (1.4,-2.2) {$b$};
\node at (1.5,-1.2) {$0$};
\node at (2.1,-1.2) {$1$};

\end{scope}

\end{tikzpicture}
\caption{The `prefix replacement map' inside the closed strand diagram of
\cref{closedstranddiagram}. It is this data that we use to show automorphic
growth in \(V\) is exponential. In particular this is the `decoration' map we
get from \(v\) in \cref{decmaps}. The word `decoration' comes with the idea
that we are decorating the circuits with strands sticking out of them and
potentially attaching trees to each decoration in order to define prefix
replacement. Note that the \(01\) circuits in the diagram correspond to the
periodic points of the action on the element from \cref{mainveltfig}; this is why periodic points come
up in \cref{decorationpointreps}.
}\label{separatedstranddiagrams}
\end{figure}

\subsection{Automorphic growth of V}
This section includes many definitions which the reader may find unintuitive.
In particular, given an element of \(V\) we associate to it some data which we later show is automorphism invariant.
In \cref{closedstranddiagram} and \cref{separatedstranddiagrams}, we apply these constructions to the element of \(V\) described in \cref{mainveltfig}.
The idea behind and motivation for these definitions comes from diagrams of this type.
The exponential automorphic growth argument essentially amounts to formally describing the data in these diagrams in such a way that we can act on them with the transducers from \cref{autV}.

Each point in the diagram \cref{closedstranddiagram} where the purple circuit has a blue decoration coming off of it is thought of as a \emph{decoration point}.
Formally, these decoration points correspond to a certain equivalence class of finite words.
We call these finite words \emph{decoration point representatives} and are only important up to the equivalence we will introduce in \cref{equivdecpt}.
\begin{defn}\label{decorationpointreps}
For \(v\in V\) we refer to elements of \(\mathfrak{C}\) which belong to a finite orbit of the group \(\langle v\rangle\leq \actweird{\mathfrak{C}}{\operatorname{Homeo}}\) as \emph{periodic points} of \(v\).
We will often refer to orbits of \(\langle v \rangle\) as orbits of \(v\).
A word \(p\in \{0,1\}^\ast\) is called a \emph{source decoration point representative} of \(v\in V\) if there is a sequence of finite words
\[p_0\mapsto p_1\mapsto \ldots \mapsto p_n\]
each mapped rigidly to the next by \(v\) and such that \(p=p_n\) is a proper prefix of \(p_0\).
If instead \(p=p_0\) is a proper prefix of \(p_n\), then \(p\) is called a \emph{sink decoration point representative} of \(v\in V\).
\end{defn}
Each of these representatives naturally corresponds to a periodic point in our action on the Cantor space.
These periodic points will be helpful in understanding the dynamics of how the automorphisms of \(V\) affect the data which we attach to each element.
\begin{defn}
    Suppose that \(p\) is a source decoration point representative of \(v\in V\), \(w\in \{0,1\}^\ast\setminus \{\varepsilon\}\) and \(n>0\)  are such that \(v^n\) rigidly maps \(pw\) to \(p\).
    It follows that \(pwww\ldots\in \mathfrak{C}\) is fixed by \(v^n\).
We refer to \(pww\ldots\) as \emph{the periodic point corresponding to \(p\)}.
Periodic points corresponding to source decoration point representatives are called \emph{source periodic points}.
We define \emph{sink periodic points} similarly.
\end{defn}
Note that if \(p\) is a source periodic point of \(v\) with period \(n\), then whenever \(p'\) is close enough to \(p\), the sequence \((p')v^{-n}, (p')v^{-2n},\ldots\) converges to \(p\) and the sequence \((p')v^n, (p')v^{2n},\ldots\) does not converge to \(p\).
In particular, a periodic point cannot be both a source and a sink.
Intuitively, the `decoration points' are the points on the circuits (see \cref{separatedstranddiagrams}) where the blue parts are attached, and the decorations are the first blue edges coming out of the circuits. Sources have edges coming out, and sinks have edges going in.

\begin{defn}\label{equivdecpt}
Let \(\operatorname{SourceDecPt}_v\) be the set of equivalence classes of source decoration point representatives of \(v\) with respect to the relation:
\begin{itemize}
    \item \(p\sim p'\) if there is \(n\in \Z\) such that \(v^n\) rigidly maps \(p\) to \(p'\).
\end{itemize}
We refer to an element of  \(\operatorname{SourceDecPt}_v\) as a \emph{source decoration point}.
We define 
\[\operatorname{SourceDec}_v:=\makeset{(S,a)\in \operatorname{SourceDecPt}_v\times \{0,1\}}{for all \(s\in S\), \(sa\) is not a prefix of the periodic\\ point corresponding to \(s\)}\]
and we call the elements of \(\operatorname{SourceDec}_v\) the \emph{source decorations} of \(v\).
We will usually write \(Sa\) instead of \((S,a)\).
We define the set \(\operatorname{SinkDecPt}_v\) of \emph{sink decoration points} and the set \(\operatorname{SinkDec}_v\) of sink decorations similarly.
\end{defn}

We now have our decoration points. 
We use these to define the decoration maps using the trees coming out of these points as shown in \cref{separatedstranddiagrams}.
This is the primary data we use to distinguish different automorphic orbits in \cref{notautoV}.

\begin{defn}\label{decmaps}
    If \(X\) and \(Y\)
    are
    sets, then we write \(X\mathfrak{C}\) and \(Y\mathfrak{C}\) to mean, respectively 
    \(X\times \mathfrak{C}\) and \(Y\times \mathfrak{C}\)
    with the product topology,
    where \(X\) and \(Y\) are discrete.
    We define \(\actweird{X,Y}{V}\) to be the set of homeomorphisms from \(X\mathfrak{C}\) to \(Y\mathfrak{C}\) which are defined by prefix replacement. 
    That is to say if \(f\in \actweird{X,Y}{V}\) then for all \(x\in X\) and \(z\in \mathfrak{C}\), there is a prefix \(p\) of \(z\), \(y\in Y\) and a finite word \(p'\in \{0,1\}^\ast\) such that for all \(z'\in \mathfrak{C}\) we have \(\act{xpz'}{f}=yp'z'\). In this case, we say that \(f\) rigidly maps \(xp\) to \(yp'\).
Given \(v\in V\), we define its \emph{decoration map} 
\[d_v\in V
(\operatorname{SourceDec}_v,\operatorname{SinkDec}_v)\]
to be the map obtained by mapping
\(S_1l_1w_1x\) to \(S_2l_2w_2x\) (where \(S_1l_1\in \operatorname{SourceDec}_v\), \(S_2l_2\in \operatorname{SinkDec}_v\), \(w_1,w_2\in \{0,1\}^\ast\), \(x\in \mathfrak{C}\)) if for all \(s_1\in S_1\) and \(s_2\in S_2\) there is a positive power of \(v\) which rigidly maps \(s_1l_1w_1\) to \(s_2l_2w_2\).
We verify in \cref{well-defined decoration map} that \(d_v\) is well-defined (including that
every point in the domain of \(d_v\) is indeed mapped somewhere).
\end{defn}
We provide an example of these definitions using a concrete element of \(V\).
\begin{ex}\label{vexample}
    Let \(v\in V\) be the element which rigidly maps \(0\to 0\), \(100\to 10\), \(1010\to 1101\), \(1011\to 1100\), and \(11\to 111\). This element is shown in \cref{fig:placeholder}.

\begin{figure}

\usetikzlibrary{decorations.markings}

\begin{tikzpicture}[
middlearrow/.style={
        postaction={decorate, decoration={
            markings,
            mark=at position 0.5 with {\arrow{#1}}
        }}
}
]

\begin{scope}[xshift=0cm]

\draw[teal] (-1.8,1) -- (-1.2,0);
\draw[teal] (-1.8,1) -- (-2.4,0) node[left] {$a$};

\draw[teal] (-1.2,0) -- (-1.8,-1);
\draw[teal](-1.2,0) -- (-0.5,-1) node[right] {$e$};


\draw[magenta] (-1.8,-1) -- (-2.2,-2) node[left] {$b$};
\draw[blue](-1.8,-1) -- (-1.2,-2) node[right] {};
\draw[blue] (-1.2,-2) -- (-1.6,-3) node[right] {$c$};
\draw[blue](-1.2,-2) -- (-0.8,-3) node[right] {$d$};


\end{scope}

\begin{scope}[xshift=4cm]

\draw[teal] (-1.8,1) -- (-1.2,0);
\draw[teal] (-1.8,1) -- (-2.4,0) node[left] {$a$};

\draw[teal] (-1.2,0) -- (-1.8,-1) node[left] {$b$};
\draw[teal] (-1.2,0) -- (-0.5,-1) node[right] {};

\draw[blue] (-0.5,-1) -- (-0.9,-2) node[left] {};
\draw[magenta] (-0.5,-1) -- (0.1,-2) node[right] {$e$};
\draw[blue ](-0.9,-2) -- (-1.3,-3) node[right] {$d$};
\draw[blue] (-0.9,-2) -- (-0.5,-3) node[right] {$c$};

\end{scope}

\end{tikzpicture}\centering
       \caption{The element of \(V\) from \cref{vexample}. This is a revealing pair for this element. The common tree (including neutral leaves) can be seen in green. The attractor and repeller can be seen in purple and the sources and sinks can be seen in blue.} 
        \label{fig:placeholder}
\end{figure}
    This element has uncountably many periodic points as all points in
    \(0\mathfrak{C}\) are fixed. Outside of this cone there are two remaining
    periodic points, namely \(10000\cdots\) and \(1111\cdots\), which are both
    fixed. The point \(10000\cdots\) is a source and the point \(1111\cdots\) is
    a sink, since every long enough prefix of \(1000\cdots\) is mapped rigidly
    to a shorter word (namely \(10^n\) is mapped to \(10^{n - 1}\) for all \(n
    \geq 2\)) and every long enough prefix of \(1111\cdots\) is mapped rigidly
    to a longer word (namely \(1^n\) is mapped to \(1^{n + 1}\) for all \(n \geq
    2\)). Thus, the source decoration point representatives are
    \(10,100,1000,\ldots\) and the sink decoration point representatives
    are \(11, 111, 111, \ldots\). 
    Note that the decoration points \(10\) and \(11\) are where the purple and blue diverge in \cref{fig:placeholder}. Moreover, the repeating part of the fixed points can be seen as the purple in \cref{fig:placeholder}. Another example of this can be seen in \cref{mainveltfig} earlier.
    
    The source decoration point representatives of \(v\) are all rigidly mapped to each other by powers of \(v\) (\(\act{10^n}{v^k} = 10^{k + n}\)), and so we have a unique source decoration point
    \(\{100^n : n \in \N\}\) with a unique source decoration \(\{100^n : n \in \N\}1\).
    We think of the source decoration point as \(1000\cdots 000\) and the decoration as \(1000\cdots 0001\).
    Similarly there is a unique sink decoration which one can think of as \(111\cdots 1110\).
    If we repeatedly apply the rigid action of \(v\) to a word of the form \(1000\cdots 00010w\), we eventually perform the replacement \(1010w\to 1101w\) (after having used \(v\) to remove all but one of the \(0\)s before \(10w\)). 
    If we continue to apply \(v\) we then obtain the words \(11\cdots 1101w\).
    Thus \(d_v\) rigidly maps \((1000\cdots 0001)0\) to \((111\cdots 1110)1\). Similarly, \(d_v\) rigidly maps \((1000\cdots 0001)1\) to \((111\cdots 1110)0\). Note that \(d_v\) comes from the sources and sinks of the revealing pair in \cref{fig:placeholder}.
\end{ex}

Roughly speaking, the following lemma says that any long enough word starting with a source decoration eventually moves to a word starting with a sink decoration when mapped by the
corresponding element of \(V\). 
We will need this for proving that \cref{decmaps} is well-defined in \cref{well-defined decoration map}.
\begin{lem}\label{sourcetosinkdec}
    Suppose that \(v\in V\) and \(Sl\) is a source decoration of \(v\). Then
    there exists \(n\in \N\) such that for all \(w\in \{0,1\}^n\) and \(s\in
    S\), there exists \(m\in \Z\), a sink decoration \(S'l'\), \(s'\in S'\) and
    \(w'\in \{0,1\}^\ast\) such that
    \[(slw)v^m=s'l'w'.\]
\end{lem}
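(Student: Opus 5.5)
The plan is to work with a fixed revealing tree pair \((A,B,\sigma)\) for \(v\) and track the rigid forward orbit of the word \(slw\) leaf by leaf. First I reduce to a single representative \(s\in S\). If \(s'\in S\), then some power \(v^j\) rigidly maps \(s\) to \(s'\), and hence maps \(slw\) to \(s'lw\). So if the conclusion holds for \(s\) with exponent \(m\), it holds for \(s'\) with exponent \(m-j\), and the same \(n\) works for all of \(S\). I may also replace \(s\) by a long representative (deep along the source periodic point \(p=suuu\cdots\)). Then \(s\) lies strictly below a range of repulsion or a repeller leaf of the revealing pair, and \(sl\) is the point where the cone leaves the periodic orbit of \(p\).

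Next I follow the forward orbit of \(slw\). Let \(N\) be the maximal length of a leaf of \(A\) or \(B\). Any word of length at least \(N\) lies below a unique leaf of \(A\), so \(v\) acts on it rigidly, and the length changes by a bounded amount. The five leaf types then give a dichotomy for where the image can go. Because \(sl\) diverges from \(p\), after finitely many steps (bounded in terms of the number of leaves, independent of \(w\)) the word \(sl\) and its images stop sitting in the repelling component of \(A\setminus B\) and pass into a source leaf. A source leaf is mapped by \(\sigma\) to a leaf of \(B\). That leaf is either a sink, hence below some component of \(B\setminus A\) containing an attractor, or it is neutral or lies under a domain of attraction. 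I rule out the possibility that the orbit is trapped forever among neutral leaves. A cycle of neutral leaves gives a clopen \(v\)-invariant set \(U\). If \(slw\mathfrak{C}\subseteq U\), then the backward iterates \((slwx)v^{-k}\) stay in \(U\); but they converge to the orbit of \(p\), so \(p\in U\). Since \(U\) is a union of cones permuted rigidly, this would force \(p\) to be a neutral periodic point and not a source, which is a contradiction. Hence, after a number of steps bounded by the number of leaves, the image enters a component of \(B\setminus A\). From then on, repeated application of \(\sigma\) prepends longer and longer prefixes of the attracting periodic point \(q=s'u'u'\cdots\). At some moment the image has the form \(s'l'w'\), where \(s'\) is a sink decoration point representative and \(l'\) is the first letter where the word departs from \(q\). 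Because \(v\) is a homeomorphism and the image is not \(q\) itself, \(S'l'\) is a genuine sink decoration.

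The choice of \(n\) is the main obstacle. Each rigid step consumes or produces boundedly many letters. The number of steps before reaching a sink decoration form is bounded uniformly in \(w\), since there are finitely many leaves and the argument above only uses which leaf the current word lies under. The only risk is that some intermediate word becomes shorter than \(N\), so that \(v\) no longer acts on it rigidly. Therefore I choose \(n\) larger than \(N\) plus (number of leaves) \(\times\) (maximal length loss in one step). With this choice all intermediate words remain long enough to be acted on rigidly, and the word \(s'l'w'\) is obtained with \(m\ge 0\) (for the chosen representative; negative \(m\) may appear only after the shift to other \(s\in S\) in the first step). The part I expect to need the most care is making the "bounded number of steps" rigorous: each leaf can be visited at most once before the orbit enters an attracting component. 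Otherwise the orbit of the cone would be periodic, and this is excluded by the invariant-set argument above.
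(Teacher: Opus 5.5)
Your route is the paper's: take a revealing pair, locate \(s=a'b^kb'\) below the repeller \(a=a'b\) (with \(a'\) the range of repulsion and \(v^t\) rigidly mapping \(a'b\) to \(a'\)), push \(sl\) off the repelling periodic point into a source leaf, and follow \(\sigma\) through neutral leaves to a sink. However, the step the paper's proof is mostly devoted to is only asserted in your proposal, namely that \(s\) lies below the range of repulsion.

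This needs a case analysis on the leaf of \(A\) above \(s\). Below a domain of attraction the only periodic point is a sink periodic point, and below a source there is no periodic point at all. Moreover, taking \(s\) long does not exclude a neutral leaf. If the leaves \(a_i=(a)\sigma^i\) with \(0<i<t\) are neutral, then the words \(a_ib^j\) are arbitrarily long elements of \(S\) lying below neutral leaves. So you have to use your first reduction to move \(s\) along this chain, not merely deepen it.

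Likewise, after the source leaf your list of possible images omits ranges of repulsion and attractors. Your clopen-set argument only excludes cycles of neutral leaves, not a return into a repelling component. Injectivity of \(\sigma\) settles all of this. A source \(\ell\) is not a leaf of \(B\), so its forward chain \((\ell)\sigma,(\ell)\sigma^2,\dots\) cannot cycle. It also cannot meet the backward chain of a range of repulsion (which ends at a repeller) or of an attractor (which ends at a domain of attraction). Hence it reaches a sink, which is incomparable with the attractor, so the word is not a prefix of \(q\). This makes the invariant-set detour unnecessary, although it is valid.

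The justification of the uniform bound, which you identify as the crux, is also incorrect. It is not true that each leaf is visited at most once before an attracting component is reached. The word \(slw=a'b^kb'lw\) passes below the repeller \(k\) times, and returning to the repeller shortens the word rather than making the cone periodic. For instance, in \cref{vexample} with \(s=10^5\) and \(l=1\), the words \(10^51w,10^41w,10^31w,10^21w\) all lie below the repeller \(100\) before the orbit reaches \(101w\). So the number of steps grows with \(|s|\) and is not bounded by the number of leaves.

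Your \(n\) does still work, but for a different reason. The repelling phase only rewrites the prefix \(a'b^k\), so \(w\) is untouched and the word leaves the repeller as \(a'b'lw\). Once \(a'b'lw=\ell r\) with \(\ell\) a source leaf, each later step only replaces the leaf prefix, so \(r\) is never consumed. Hence it suffices that every \(a'b'lw\) with \(|w|=n\) has a leaf of \(A\) as a prefix. This is exactly how the paper chooses its length: it applies \(v^{tk}\) to \(sl\) alone and only then appends \(w\). Alternatively, use your reduction to take \(s=a'b'\in S\), which removes the repelling phase altogether.
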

\begin{proof}
    Let \(T=(A,B,\sigma)\) be a revealing pair for \(v\). Let \(s\in S\) be longer than all words in \(A\) (this also covers the shorter words in \(S\) as they are rigidly mapped to longer words by a power of \(v\)).
    Let \(a\) be a leaf of \(A\) with \(a<s\).
    We have one of the following
    \begin{itemize}
        \item \(a\) is a repeller,
        \item \(a\) is a domain of attraction,
        \item \(a\) is a source,
        \item \(a\) is neutral.
    \end{itemize}

Suppose that \(a\) is a domain of attraction. Then there exists \(b\in
\{0,1\}^\ast\) and \(n>0\) such that \(\act{a}{\sigma^n} =ab\). The point
\(abb\ldots\) is the only periodic point with \(a\) as a prefix (since applying a positive power of \(v\) to a word beginning with \(a\) can only yield an incomparable word or insert copies of \(b\) after \(a\)) and this
periodic point is  a sink periodic point. This is a contradiction as the
periodic point with \(s\) as a prefix is a source periodic point. Thus \(a\) is
neutral, a source or a repeller.

If \(a\) is neutral, then a word of the form \(\act{s}{v^{n}}\) for \(n\in \Z\) can only be comparable with \(s\) if \(\act{s}{v^{n}}=s\) (since \(a\) is the only leaf of \(A\) or \(B\) comparable with \(a\)). This contradicts the fact that \(s\) is a source decoration point representative, so either \(a\) is a source or \(a\) is a repeller. 

If \(a\) is a source, then the sequence \(a,\act{a}{\sigma},\act{a}{\sigma^2},\ldots\) must eventually reach a sink after potentially travelling though some neutral leaves. Each sink has a domain of attraction as a prefix, thus in this case there are no periodic points with \(a\) as a prefix.
This contradicts the fact that \(a < s\). 

Thus \(a\) is a repeller. Let \(n\in \N\) be such that \(\act{a}{\sigma^n}\) is a
strict prefix of \(a\). Suppose that \(a',b\in \{0,1\}^\ast\) are such that
\(a'=\act{a}{\sigma^n}\) and \(a'b=a\). In particular, \(a'bbb\ldots=abbb\ldots\) is a
source periodic point of \(v\) with orbit size \(n\). This is the only
periodic point of \(v\) with \(a\) as a prefix, and so \(s\) is a prefix of
\(a'bbb\ldots\).
Suppose that \(s=a'b^kb'\) where \(k\in \N\) and \(b'\) is a strict prefix of \(b\).
By the definition of a source decoration, \(sl\) is not a prefix of \(a'bbb\ldots\). Thus \(b'l\) is not a prefix of \(b\), and so
\(\act{sl}{v^{nk}}=a'b'l\) has \(a'\) as a prefix but does not have \(a\) as a
prefix. It follows that there is \(r\in \N\) such that for all \(w\in \{0,1\}^{r}\),
\(\act{sl}{v^{nk}}w=\act{slw}{v^{nk}}\) has a source as a prefix (since these words have a
range of repulsion \(a'\) as a prefix but not the repeller \(a\) as a prefix).
Thus there exists \(m\in \N\) such that \(\act{slw}{v^m}\) has a sink as a prefix. In
particular, \(\act{slw}{v^m}\) has a prefix which is a domain of attraction but
\(\act{slw}{v^m}\) is not a prefix of the sink periodic point corresponding to this
domain of attraction. The result follows.
\end{proof}
We can now finally establish that our last definition works as intended.
\begin{proposition}\label{well-defined decoration map}
    The decoration maps from \cref{decmaps} are well-defined. Moreover, if \(v\in V\), then \(d_{v^{-1}}=d_v^{-1}\).
\end{proposition}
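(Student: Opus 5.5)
The plan has three parts: show that every point of \(\operatorname{SourceDec}_v\mathfrak{C}\) is sent somewhere by the defining rule of \(d_v\); show that the image does not depend on the choices made; and then obtain the inverse statement from the symmetry between sources and sinks under \(v\mapsto v^{-1}\).

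\textbf{Existence of an image.} Fix a source decoration \(Sl\) and a point \(Slz\) with \(z\in\mathfrak{C}\). Apply \cref{sourcetosinkdec} to a chosen representative \(s\in S\). It gives \(n\) such that, writing \(w\) for the length-\(n\) prefix of \(z\), some power \(v^m\) rigidly maps \(slw\) to \(s'l'w'\), where \(S'l'\) is a sink decoration and \(s'\in S'\). From the proof of \cref{sourcetosinkdec} we may take \(m>0\), since the orbit moves forward from the repeller region to a sink. Rigidity then gives a clopen neighbourhood \(Slw\mathfrak{C}\) on which the rule defines a prefix replacement into \(S'l'\mathfrak{C}\).

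\textbf{Independence of choices.} There are two things to check.
\begin{enumerate}
\item \emph{Representatives.} If \(s_1\sim s\) via a rigid map \(v^k\), then \(v^k\) rigidly maps \(slw\) to \(s_1lw\) as well, so powers of \(v\) connecting a different pair of representatives are obtained by adjusting the exponent by \(k\). The condition ``for all \(s_1\in S_1\), \(s_2\in S_2\)'' in \cref{decmaps} is therefore satisfied as soon as it holds for one pair. One must still ensure the adjusted exponent stays positive; this follows by choosing the representatives long enough (as in the proof of \cref{sourcetosinkdec}), together with the fact that sink representatives are pushed further by positive powers.
\item \emph{Uniqueness of the target.} Suppose \(slw\) is mapped rigidly by \(v^{m_1}\) to \(s_2l_2w_2\) and by \(v^{m_2}\) to \(s_3l_3w_3\), with \(m_1\le m_2\). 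Then \(v^{m_2-m_1}\) maps the first image rigidly to the second. Since \(s_2l_2\) is not a prefix of its sink periodic point, the forward orbit has already left the sink periodic region. The intended claim is that rigid images of words beginning with a sink decoration remain of this form and identify sink decorations compatibly, which means \(S_2l_2=S_3l_3\) and \(w_2=w_3\) up to the identifications in \(\operatorname{SinkDec}_v\).
\end{enumerate}
I expect item 2 to be the main obstacle: showing that a word beginning with a sink decoration, once pushed forward, can only be rigidly carried to equivalent data. The approach is to argue with a revealing pair, as in \cref{sourcetosinkdec}. Words past a sink decoration lie under a domain of attraction but off its attractor branch. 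Forward iteration there only prepends attractor repetitions \(b\) after \(a\), which is exactly the equivalence relation on sink representatives. Given well-definedness on each cone, continuity and bijectivity follow by applying the same argument to \(v^{-1}\).

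\textbf{The inverse.} Source decoration point representatives of \(v\) are exactly the sink ones of \(v^{-1}\), and vice versa, because reversing the sequence \(p_0\mapsto\cdots\mapsto p_n\) swaps the two conditions. The periodic points, the equivalence classes and the decorations match in the same way. A positive power of \(v\) rigidly mapping \(s_1l_1w_1\) to \(s_2l_2w_2\) is the same as a positive power of \(v^{-1}\) rigidly mapping \(s_2l_2w_2\) back to \(s_1l_1w_1\). Hence \(d_{v^{-1}}\) is the inverse relation of \(d_v\), and both are everywhere-defined functions by the above. Consequently \(d_v\) is a bijection with inverse \(d_{v^{-1}}\), and so \(d_v\) lies in \(V(\operatorname{SourceDec}_v,\operatorname{SinkDec}_v)\).
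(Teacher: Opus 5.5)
Your outline is the paper's proof: existence from \cref{sourcetosinkdec}, independence of the representative and of the power (resting on the observation that once a sink representative prefix appears, further powers only alter that prefix), and the inverse from the source/sink swap under \(v\mapsto v^{-1}\). These are the paper's conditions (1)--(4). The paper is equally brief at the two points you leave soft, and both close directly. For item 2, a sink representative \(s=p_0\) is rigidly sent by \(v\) to \(p_1\in S\), so \(v\) sends \(slw\) to \(p_1lw\); the splitting \(slw\) is unique because each representative's cone contains exactly one periodic point. For positivity over every pair \((s_1,s_2)\), choosing representatives long enough does not work, since the definition quantifies over all of them. Instead, positivity follows because \(v^{-1}\) carries words \(slw\) with \(Sl\) a source decoration to words of the same kind, \(v\) carries words \(slw\) with \(Sl\) a sink decoration to words of the same kind, and no word is of both kinds, since no periodic point is both a source and a sink.
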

\begin{proof}
For the first claim we need to show that given \(S_1l_1\in
\operatorname{SourceDec}_v\) and for sufficiently long \(w_1\in \{0,1\}^\ast\)
we have:
\begin{enumerate}
\item the required element \(S_2l_2\in \operatorname{SinkDec}_v\) and word \(w_2\) always exist,
 \item \(S_2\) and \(w_2\) are independent of the choice of \(s_1\in S_1\),
    \item if a power of \(v\) rigidly maps \(S_1l_1w_1\) to \(S_2l_2w_2\) and \(S_1l_1w_1'\) to \(S_2l_2w_2'\), and \(S_1l_1w_1x=S_1l_1w_1'x'\), then \(S_2l_2w_2x=S_2l_2w_2'x'\),
    \item the resulting prefix exchange map \(d_v\) is a bijection.
\end{enumerate}

Condition (1): Let \(s_1\in S_1\).
Note that if a word is not a sink decoration point representative but has a sink decoration point representative as a prefix, then it is rigidly mapped by \(v\) to another word which is not a sink decoration point representative but with a sink decoration point representative as a prefix.
 Thus, from \cref{sourcetosinkdec} we can see that all but finitely many positive powers of \(v\)  rigidly map \(s_1l_1w_1\) to a word which is not a sink decoration representative but has a sink decoration point representative as a prefix. 
 Suppose that \(k\in \N\) and \(\act{s_1l_1w_1}{v^k}\) has the sink decoration point representative \(s_2\) as a prefix (and no longer sink decoration point representative prefix).
 It follows that if \(l_2\in \{0,1\}\) and \(w_2\in \{0,1\}^\ast\) are such that \(\act{s_1l_1w_1}{v^k}=s_2l_2w_2\), then these \(l_2\) and \(w_2\) are as required where \(S_2\) is the sink decoration point containing \(s_2\). 

Condition (2): If \(s_1,s_1'\in S_1\), then there is a power of \(v\) which rigidly maps \(s_1\) to \(s_1'\), and so there are \(k,k'\in \N\) such that \(\act{s_1}{v^k}=\act{s_1'}{v^{k'}}\). 
When we take a large enough power of \(v\) we reach words which have a sink decoration point representative as a prefix and all of these are rigidly mapped to each other by powers of \(v\) by construction. 
It follows that \(S_2\) does not depend on the choice of \(s_1\). 
Moreover, as the prefix replacements only affect the sink decoration point representative prefix (once we have applied \(v\) enough times for it to appear), the suffix \(w_2\) is the same for distinct powers of \(v\).
 
Condition (3): This follows from the observation that if a power of \(v\) rigidly maps \(s_1l_1w_1\) to \(s_2l_2w_2\) then for all \(a\in \{0,1\}^\ast\), the same power rigidly maps  \(s_1l_1w_1a\) to \(s_2l_2w_2a\).

Condition (4): As we have shown conditions (1), (2), and (3), we now know that
\( d_{v} \colon \operatorname{SourceDec}_v\to \operatorname{SinkDec}_v\) is a
well-defined continuous map. As \(v\) was arbitrary,  \(
d_{v^{-1}} \colon \operatorname{SourceDec}_{v^{-1}}\to
\operatorname{SinkDec}_{v^{-1}}\) is also a well-defined continuous map. By
definition we have
\(\operatorname{SinkDec}_{v^{-1}}=\operatorname{SourceDec}_{v}\) and
\(\operatorname{SourceDec}_{v^{-1}}=\operatorname{SinkDec}_{v}\). As repeated
rigid application of \(v\) is the inverse of repeated rigid application of
\(v^{-1}\), it follows that \(d_{v^{-1}}\) is a two-sided inverse of \(d_{v}\).
In particular, both maps are bijections.
We have shown that \(d_{v^{-1}}=d_v^{-1}\); that is, we have shown (4).
\end{proof}

We have constructed our data which we will use to establish large automorphic growth.
It remains to explain what happens to this data when we apply automorphisms.
The following lemma is our main tool for understanding the action of \(B_{2,1}\) on \(V\).
\begin{lem}
    \label{trans-action}
    Suppose that \(v\in V\) and \(c\in B_{2,1}\).
    Let \(T = (Q,\pi,\lambda,q_0)\) be a minimal transducer for \(c\).
    All but finitely many \(w\in \{0,1\}^\ast\) satisfy:
    \begin{itemize}
        \item \(w\) is mapped rigidly by \(v\);
        \item \(v^c\) rigidly maps
    \(\actweird{q_0,w}{\lambda}\) to \(\actweird{q_0,\act{w}{v}}{\lambda}\).
    \end{itemize}  
\end{lem}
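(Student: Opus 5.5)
We would prove this by choosing the set of exceptional words explicitly, namely the words that are too short. Fix a tree pair \((A,B,\sigma)\) for \(v\). Let \(N_0\) be the maximal length of a leaf of \(A\). Let \(k\) be a synchronising length for \(T\), and \(k'\) a synchronising length for the minimal transducer of \(c^{-1}\). Every word \(w\) with \(|w|\geq N_0\) has a unique leaf \(a\) of \(A\) as a prefix. Writing \(w=ax\), the element \(v\) rigidly maps \(w\) to \(\act{a}{\sigma}x\), which gives the first bullet for all but finitely many \(w\). We then enlarge the bound so that also \(|x|\geq k\) for every such decomposition; taking \(|w|\geq N_0+k\) suffices. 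The finitely many words of length less than \(N_0+k\) (possibly enlarged further below) are the exceptions.

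For such \(w=ax\), put \(w'=\act{w}{v}=\act{a}{\sigma}x\). Since \(w\) and \(w'\) share the suffix \(x\) of length at least \(k\), strong synchronisation gives \(\actweird{q_0,w}{\pi}=\actweird{q_0,w'}{\pi}=:q\). Hence for every \(y\in\mathfrak{C}\) we have \(\act{wy}{c}=\actweird{q_0,w}{\lambda}\,\actweird{q,y}{\lambda}\) and \(\act{w'y}{c}=\actweird{q_0,w'}{\lambda}\,\actweird{q,y}{\lambda}\). Since \(v^c=c^{-1}\circ v\circ c\) and \(v\) sends \(wy\) to \(w'y\), it follows that \(v^c\) sends \(pu\) to \(p'u\) for every \(u\in U:=\actweird{q,\mathfrak{C}}{\lambda}\), where \(p=\actweird{q_0,w}{\lambda}\) and \(p'=\actweird{q_0,w'}{\lambda}\). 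Equivalently, \(v^c\) agrees with the prefix replacement \(p\mapsto p'\) on \(\act{w\mathfrak{C}}{c}=pU\).

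Rigidity on the whole cone \(p\mathfrak{C}\) therefore reduces to the claim \(U=\mathfrak{C}\), i.e.\ \(\act{w\mathfrak{C}}{c}=p\mathfrak{C}\). This is the step I expect to be the main obstacle. The first thing I would try is to use that \(c^{-1}\) is also synchronising. By non-degeneracy, \(|p|\to\infty\) as \(|w|\to\infty\); enlarging the exceptional finite set, we may assume \(|p|\geq k'\) for every non-exceptional \(w\), so reading \(p\) synchronises the transducer of \(c^{-1}\) to a state \(r\) determined by the last \(k'\) letters of \(p\). Then \(c^{-1}(p\mathfrak{C})=\actweird{r_0,p}{\lambda}\,\actweird{r,\mathfrak{C}}{\lambda}\), where \(r_0\) is the initial state of the minimal transducer of \(c^{-1}\). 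Complete response for \(c\) says \(p\) is the longest common prefix of \(\act{w\mathfrak{C}}{c}\); applying the same reasoning to \(c^{-1}\) shows \(\actweird{r_0,p}{\lambda}\) is the longest common prefix of \(c^{-1}(p\mathfrak{C})\). Since \(c^{-1}(p\mathfrak{C})\supseteq w\mathfrak{C}\), and all its elements share that longest common prefix, \(\actweird{r_0,p}{\lambda}\) is a prefix of \(w\).

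Composing, we have \(c\circ c^{-1}=\id\) on \(p\mathfrak{C}\). The aim is to conclude that the pair of synchronised states \((q,r)\) gives mutually inverse rational maps up to the bounded prefix discrepancy between \(w\) and \(\actweird{r_0,p}{\lambda}\). I would verify this using the standard fact from \cite{collinaut}, which is also visible in the proof of \cref{autV} there: in a minimal bisynchronising transducer, the states in the synchronising core induce homeomorphisms of \(\mathfrak{C}\). In particular \(\actweird{q,\mathfrak{C}}{\lambda}=\mathfrak{C}\) for every state \(q\) reached after at least \(k\) letters. If citing this fact is not acceptable, the fallback is to prove surjectivity of the core states directly from \(c\circ c^{-1}=\id\) and minimality (no duplicate states). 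With \(U=\mathfrak{C}\) established, \(v^c\) rigidly maps \(p\) to \(p'\) for all non-exceptional \(w\), which is the second bullet.
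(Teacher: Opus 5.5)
Your argument is correct up to the point where you show that \(v^c\) sends \(pu\) to \(p'u\) for every \(u\in U=\lambda(q,\mathfrak{C})\). Using strong synchronisation to get a common state \(q\) for \(w\) and \((w)v\) is a neat replacement for the product transducers in the paper.

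The gap is the final step. The fact you want to cite is false. Core states of a minimal bisynchronising transducer are injective with clopen image, but they need not be surjective. So \((w\mathfrak{C})c\) need not equal \(p\mathfrak{C}\), however long \(w\) is, and no fallback argument can establish it.

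Here is a counterexample. Let \(\theta\) be the map of \(\{0,1\}^{\mathbb{Z}}\) that flips \(x_i\) exactly when \(x_{i-1}x_{i+1}x_{i+2}=101\). Flipping these positions does not change which positions satisfy the condition, so \(\theta^2=\mathrm{id}\). Let \((x)c\) be the non-negative part of \(\theta\) applied to \(\cdots 000x\); the result is again zero on negative coordinates.
\begin{itemize}
\item Since \(\theta^2=\mathrm{id}\) and zero padding is preserved, \(c=c^{-1}\).
\item For \(|u|\geq 3\), the state reached after reading \(u\) depends only on the last three letters of \(u\), so \(c\in B_{2,1}\).
\item For every \(u\) ending in \(110\), \((u\mathfrak{C})c\) is a fixed prefix followed by \(001\mathfrak{C}\cup 1000\mathfrak{C}\cup 1101\mathfrak{C}\).
\end{itemize}
So the core state reached by reading \(110\) has image of measure \(1/4\), not \(\mathfrak{C}\).

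The missing idea is that you never need \(U=\mathfrak{C}\), because rigidity on \(p\mathfrak{C}\) comes for free.
\begin{itemize}
\item \(v^c\in V\) is given by a finite tree pair, so it rigidly maps every sufficiently long word.
\item By non-degeneracy, \(|\lambda(q_0,w)|\) is that long for all but finitely many \(w\). Hence \(v^c\) rigidly maps \(p\) to some word \(p''\).
\item Your computation then gives \(p''u=p'u\) for all \(u\in U\).
\item Since \(q\) is reached after reading at least one letter, complete response implies that \(U\) contains sequences beginning with \(0\) and sequences beginning with \(1\).
\item If \(|p''|<|p'|\), every element of \(U\) would have to begin with the \((|p''|+1)\)-th letter of \(p'\), which is impossible; the case \(|p''|>|p'|\) is symmetric. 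So \(|p''|=|p'|\), and therefore \(p''=p'\).
\end{itemize}

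This is exactly how the paper concludes. It chooses \(k\) so that \(\lambda(q_0,w)\) is rigidly mapped by \(v^c\), then uses complete response to compare \(p_1\lambda(q_{w,1},x)=p_2\lambda(q_{w,2},x)\). With this replacement your proof is complete, and the paragraph involving \(c^{-1}\) becomes unnecessary.
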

\begin{proof}
The first point is immediate as each element of \(V\) is defined by a finite tree pair.
Recall from  \cite[Section 2.3]{GNS} that for initial transducers \(T_1=(Q_1,\pi_1,\lambda_1,q_{0,1})\) and \(T_2=(Q_2,\pi_2,\lambda_2,q_{0,2})\), their product is the transducer
\[\comp{T_1}{T_2}=(Q_1\times Q_2, \pi_{1,2},\lambda_{1,2}, (q_{0,1},q_{0,2}))\]
where 
\[\actweird{(q_1,q_2),w}{\pi_{1,2}}=(\actweird{q_1,w}{\pi_1},\actweird{q_2,\actweird{q_1,w}{\lambda_1}}{\pi_2})\]
\[\actweird{(q_1,q_2),w}{\lambda_{1,2}}=\actweird{q_2,\actweird{q_1,w}{\lambda_1}}{\lambda_2}.\]
Moreover, if \(T_1\) and \(T_2\) are non-degenerate then the continuous map defined by \(\comp{T_1}{T_2}\) is the composite of the continuous maps defined by \(T_1\) and \(T_2\). Additionally, recall from \cite[Theorem 7.16]{collinaut} that \(V\) consists of the bisynchronising homeomorphisms whose minimal transducer has a state which defines the identity map.

    Note that, as \(v^c=\conjugate{v}{c}\), we have \(\comp{v}{c}=\comp{c}{v^c}\). Let \(U\) be the minimal transducer for \(v\) and \(W\) be the minimal transducer for \(v^c\). As \(v,v^c\in V\), both these transducers have an identity state which is reached upon reading any long enough word.
    Note that the transducers
    \(\comp{U}{T}\) and \(\comp{T}{W}\) both define the homeomorphism \(\comp{v}{c}=\comp{c}{v^c}\) of \(\mathfrak{C}\). 
    Let \(k\in \N\) be such that \(v\) maps all words of length \(k\) rigidly and such that reading any word of length \(k\) from \(q_0\) writes a word mapped rigidly by \(v^c\).

    Let \(w\in \{0,1\}^\ast\) have length at least \(k\) (all but finitely many \(w\in \{0,1\}^\ast\) have this property). 
    As \(v\) rigidly maps \(w\), reading \(w\) from the initial state of \(U\) brings us to the identity state. Thus reading \(w\) from the initial state of \(\comp{U}{T}\) results in a state which is the identity state of \(U\) paired with a state of \(T\). Let \(q_{w,1}\) be this state of \(T\). 
    Similarly, 
    reading \(w\) from the initial state of \(\comp{T}{W}\) results in a state of \(T\) paired with the identity state of \(W\). Let \(q_{w,2}\) be this state of \(T\).
    As \(\comp{v}{c}=\comp{c}{v^c}\), it follows that for all \(x\in \mathfrak{C}\), we have 
    \[p_1\actweird{q_{w,1},x}{\lambda}=\act{wx}{\comp{v}{c}}=\act{wx}{\comp{c}{v^c}}=p_2\actweird{q_{w,2},x}{\lambda},\]
    for some words \(p_1\) and \(p_2\) independent of \(x\), where the first equality comes from the transducer \(\comp{U}{T}\) for \(\comp{v}{c}\) and the third comes from the transducer \(\comp{T}{W}\) for \(\comp{c}{v^c}\).
    As \(T\) has complete response, it follows that for different choices of \(x\), the words \(\actweird{q_{w,1},x}{\lambda}\) and \(\actweird{q_{w,2},x}{\lambda}\) can begin with either a \(0\) or a \(1\). Thus we must have \(p_1=p_2\).
    We now examine these prefixes \(p_1\) and \(p_2\). 
    \[\actweird{q_{0},\act{w}{v}}{\lambda} = p_1=p_2=\act{\actweird{q_{0},w}{\lambda}}{v^c}.\]
    This equality says that \(v^c\) rigidly maps \(\actweird{q_0,w}{\lambda}\) to \(\actweird{q_0,\act{w}{v}}{\lambda}\), as required.
\end{proof}

We can now use our action on \(V\) to describe the induced action on decoration maps.
\begin{lem}\label{action_on_decoration_maps}
Suppose that \(v\in V\) and \(c\in B_{2,1}\) with transducer \(T = (Q,\pi,\lambda,q_0)\).
Suppose further that \(S_x\) and \(S_y\) are source and sink decoration points of \(v\) respectively, with \(x\) and \(y\) periodic points of \(v\) corresponding to \(S_x\) and \(S_y\) respectively.
Then the following hold:
\begin{enumerate}
    \item \(\act{x}{c}\) is a source periodic point of \(v^c\);
    \item there exists \(q_x\in Q\) such that all but finitely many elements of \(S_x\) synchronise \(T\) to the state \(q_x\).
    Moreover, there is a source decoration point \(S_{x,c}\) corresponding to the periodic point \(\act{x}{c}\) of \(v^c\) such that
    \[\makeset{\actweird{q_0,s_x}{\lambda}}{\(s_x\in S_x\)}\triangle S_{x,c}\]
    is finite (note that \(q_x\) depends not only on \(x\) but also \(S_x\));
    \item \(\act{y}{c}\) is a sink periodic point of \(v^c\);
    \item there exists \(q_y\in Q\) such that all but finitely many elements of \(S_y\) synchronise \(T\) to the state \(q_y\).
    Moreover, there is a sink decoration point \(S_{y,c}\) corresponding to the periodic point \(\act{y}{c}\) of \(v^c\) such that
    \[\makeset{\actweird{q_0,s_y}{\lambda}}{\(s_y\in S_y\)}\triangle S_{y,c}\]
    is finite;
    \item if \(l_1,l_2\in \{0,1\}\), 
    \(w_1,w_2\in \{0,1\}^\ast\) are sufficiently long, and \(d_v\) rigidly maps \(S_xl_1w_1\)
    to \(S_y l_2w_2\), then \(d_{v^c}\) rigidly maps
    \(S_{x,c}\actweird{q_x,l_1w_1}{\lambda}\) to \(S_{y,c}\actweird{q_y,l_2w_2}{\lambda}\).
\end{enumerate}

\end{lem}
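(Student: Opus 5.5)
The whole argument rests on \cref{trans-action}. It tells us that, apart from finitely many words, \(v^c\) rigidly maps \(\actweird{q_0,w}{\lambda}\) to \(\actweird{q_0,\act{w}{v}}{\lambda}\). Rigid maps by powers of \(v\) compose, so iterating gives the same conclusion for powers: if \(v^n\) rigidly maps \(w\) to \(w'\) through a chain \(w=w_0\mapsto w_1\mapsto\cdots\mapsto w_n=w'\) of long enough words, then \((v^c)^n\) rigidly maps \(\actweird{q_0,w}{\lambda}\) to \(\actweird{q_0,w'}{\lambda}\). I would prove items (1)--(4) together for sources and then get the sink items by symmetry, applying the source case to \(v^{-1}\) and using \(d_{v^{-1}}=d_v^{-1}\) from \cref{well-defined decoration map}.

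For (1) and (2), start with a source decoration point representative \(s\) of \(v\), chosen long enough that every word in the relevant chain has length beyond the exceptional finite set and beyond the synchronising length \(k\) of \(T\). There are \(w\neq\varepsilon\) and \(n>0\) with \(v^n\) rigidly mapping \(sw\) to \(s\), and \(x=swww\ldots\).
\begin{itemize}
\item \textbf{Synchronising state.} Each element of \(S_x\) longer than a bound is of the form \(a'b^kb'\), as in the proof of \cref{sourcetosinkdec}. Its last \(k\) letters therefore lie in the finite set of length-\(k\) factors of the periodic word, and these are cycled by the rigid maps. The natural choice of \(q_x\) is the state reached by reading such a tail.
\item \textbf{Honest step.} It is not obvious that all long elements of \(S_x\) end at the same state. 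Elements of \(S_x\) are related by rigid maps of powers of \(v\), and these maps can shift the phase within \(b\). Synchronisation then gives possibly several states, one per phase. To reduce to one state, I would use the image-point identity. Let \(u\) be the suffix of \(s\) of length \(k\), and let \(q=\actweird{q_0,s}{\pi}\). Because \(v^{-n}\) rigidly maps \(s\) to \(sw\), the preimage of \(\actweird{q_0,s}{\lambda}\) is \(\actweird{q_0,sw}{\lambda}=\actweird{q_0,s}{\lambda}\actweird{q,w}{\lambda}\). After passing to a multiple of \(n\) so that \(\actweird{q,w}{\pi}=q\), this shows that \(\actweird{q_0,s}{\lambda}\) is a source decoration point representative of \(v^c\). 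Its periodic point is \(\act{x}{c}=\actweird{q_0,s}{\lambda}\actweird{q,w}{\lambda}\actweird{q,w}{\lambda}\cdots\).
\item \textbf{Source, not sink.} The point \(\act{x}{c}\) is a source rather than a sink because \(\actweird{q,w}{\lambda}\neq\varepsilon\) (non-degeneracy) and the direction of the rigid map is preserved.
\item \textbf{Decoration point \(S_{x,c}\).} Let \(S_{x,c}\) be the class of \(\actweird{q_0,s}{\lambda}\). Images of elements of \(S_x\) are rigidly related by powers of \(v^c\), so all but finitely many of them lie in \(S_{x,c}\). Conversely, long members of \(S_{x,c}\) can be pulled back by the same argument applied to \(c^{-1}\in B_{2,1}\), which gives finiteness of the symmetric difference.
\end{itemize}

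For (5), take \(s\in S_x\) long and let \(m>0\) be the power with \(v^m\) rigidly mapping \(sl_1w_1\) to \(s'l_2w_2\), where \(s'\in S_y\). The intermediate words are long, so the iterated form of \(\cref{trans-action}\) shows that \((v^c)^m\) rigidly maps \(\actweird{q_0,sl_1w_1}{\lambda}=\actweird{q_0,s}{\lambda}\actweird{q_x,l_1w_1}{\lambda}\) to \(\actweird{q_0,s'}{\lambda}\actweird{q_y,l_2w_2}{\lambda}\). This is the claim, except that the prefixes \(\actweird{q_0,s}{\lambda}\) and \(\actweird{q_0,s'}{\lambda}\) must be representatives of \(S_{x,c}\) and \(S_{y,c}\) that are decorations, not merely points. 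Complete response is what lets the letters \(\actweird{q_x,l_1w_1}{\lambda}\) leave the periodic ray: for \(w_1\) long they branch away from \(\actweird{q,w}{\lambda}^\infty\).

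The main obstacle is the bookkeeping of phases. I need \(q_x\) to be a single state, and I need the output \(\actweird{q_x,l_1w_1}{\lambda}\) to be split correctly into a decoration letter plus a tail. I would handle both by passing to representatives whose suffix is aligned to the full period and whose length is a multiple of \(n\) more than a fixed one.
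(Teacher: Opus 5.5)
\textbf{The gap is the first assertion of (2).} You never show that all but finitely many elements of $S_x$ synchronise $T$ to a single state. Your ``image-point identity'' does not address this: it only shows that $\lambda(q_0,s)$ is a source decoration point representative of $v^c$. Your remedy of restricting to representatives aligned with the period proves the claim only for a subfamily such as $\{sw^j : j\in\N\}$. When the orbit of $x$ has length $n>1$, that subfamily misses the elements $(sw^j)v^i$, $0<i<n$, which also belong to $S_x$.

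More importantly, the obstruction you describe does not exist. $S_x$ is a single class under rigid maps by powers of $v$, and such maps cannot shift the phase. Suppose $v^m$ rigidly maps $s$ to $p\in S_x$. Then it maps $x=sww\cdots$ to $pww\cdots$, so $pww\cdots$ is one of the finitely many orbit points $(x)v^i=\gamma_iww\cdots$, with $|\gamma_i|$ bounded. Hence a long $p$ has the form $\gamma_iw^jw_1$, where $w_1$ is a proper prefix of $w$ with $w_1ww\cdots=ww\cdots$. This forces $w_1w=ww_1$, so $p$ ends in $w^k$.

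Prefixes of the same periodic ray with different phases lie in \emph{different} decoration points. For example, if $v$ rigidly maps $a01$ to $a$, then $a(01)^j$ and $a0(10)^j$ lie in two distinct source decoration points with the same fixed point $a0101\cdots$. This is exactly why the statement says $q_x$ depends on $S_x$ and not only on $x$.

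With this observation the paper's argument is one line. Take $w$ from one representative, note that cofinitely many elements of $S_x$ end in $w^k$, and let $q_x$ be the state reached after reading $w^k$. This also replaces your step of passing to a multiple of $n$ to get $\pi(q,w)=q$, which achieves nothing on its own. By synchronisation, $\pi(q,w^r)=q$ for some $r\ge 1$ already implies $\pi(q,w)=q$, and this holds as soon as $s$ ends in $w^k$, i.e.\ after replacing $s$ by $sw^k$.

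\textbf{The rest is close to the paper.}
\begin{itemize}
\item \emph{Item (1).} You derive (1) from \cref{trans-action} by exhibiting $\lambda(q_0,s)$ as a source decoration point representative of $v^c$ whose periodic point is $\lambda(q_0,s)\lambda(q,w)\lambda(q,w)\cdots=(x)c$. This is a valid and more explicit alternative to the paper's argument. The paper uses only that $c$ conjugates the dynamics of $\langle v\rangle$ to that of $\langle v^c\rangle$, and that sources are the periodic points with arbitrarily small neighbourhoods expanded by positive powers.
\item \emph{Items (2) and (5).} The paper applies \cref{trans-action} directly to the single elements $v^z$ and $v^n$, using $(v^n)^c=(v^c)^n$. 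This spares you from checking that every intermediate word of a chain avoids the exceptional finite set.
\item \emph{Reverse inclusion in (2).} Getting it ``by the same argument applied to $c^{-1}$'' is not immediate. Pushing a pulled-back finite word forward again generally returns only a prefix of it. It is simpler to argue directly: every long element of $S_{x,c}$ is the rigid image under some $(v^c)^i$, $0\le i<n$, of a word $\lambda(q_0,s)\lambda(q,w)^t$. \cref{trans-action} identifies that image with $\lambda(q_0,(sw^t)v^i)$, and $(sw^t)v^i\in S_x$.
\end{itemize}
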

\begin{proof}
\((1)\): We have \(\act{\comp{x}{\cdot}{\langle v\rangle}}{c} = \comp{\act{x}{c}}{\cdot}{\langle v^c\rangle}\).
In particular, \(\comp{x}{\cdot}{\langle v\rangle}\) is finite if and only if
\(\comp{\act{x}{c}}{\cdot}{\langle v^c\rangle}\) is finite. 
As \(x\) is a source, it has
arbitrarily small neighbourhoods which are mapped to strict supersets of
themselves by positive powers of \(v\). This is not true of other periodic
points, thus \(\act{x}{c}\) must be a source of \(v^c\).

\((2)\): Let \(k\) be the synchronising length of \(T\). By the definition of a
source, there is a largest \(z\in \mathbb{Z}\) such that \(z<0\) and \(\act{s_x}{v^{z}}\) has
\(s_x\) as a strict prefix. So let \(w \neq \varepsilon\) be such that
\(\act{s_x}{v^{z}}=s_xw\). Note that all but finitely many elements of \(S_x\) have
\(w^k\) as a suffix. Since any word with \(w^k\) as a suffix synchronises \(T\)
to some state \(q_x\), it follows that \(q_x\) with the required property exists
and is the state reached by reading the word \(w^k\) from any state.

We next need to show that \(\makeset{\actweird{q_0,s_x}{\lambda}}{\(s_x\in S_x\)}\) is `almost' a source decoration point of \(\act{x}{c}\) with respect to \(v^c\). By \cref{trans-action}, all but finitely many of the words \(s_x\in S_x\) satisfy:
\begin{itemize}
    \item \((v^z)^c\) rigidly maps \(\actweird{q_0,s_x}{\lambda}\) to \(\actweird{q_0,\act{s_x}{v^z}}{\lambda}\).
\end{itemize}
Thus all but finitely many of the words \(s_x\in S_x\) satisfy:
\begin{itemize}
    \item \((v^c)^z\) rigidly maps \(\actweird{q_0,s_x}{\lambda}\) to \(\actweird{q_0,s_xw}{\lambda}=\actweird{q_0,s_x}{\lambda}\actweird{q_{x},w}{\lambda}\).
\end{itemize}
It follows that for all but finitely many \(s_x\in S_x\), we have \(\actweird{q_0,s_x}{\lambda}\actweird{q_x,w}{\lambda}\actweird{q_x,w}{\lambda}\actweird{q_x,w}{\lambda}\ldots= \act{x}{c}\) is a fixed point of \((v^c)^z\) and \(\actweird{q_0,s_xw}{\lambda}=\actweird{q_0,s_x}{\lambda}\actweird{q_x,w}{\lambda}\) is a source decoration point representative of \(v^c\). 
The result follows as \(\{\actweird{q_0,s_x}{\lambda}, \actweird{q_0,s_x}{\lambda}\actweird{q_x,w}{\lambda},\actweird{q_0,s_x}{\lambda}\actweird{q_x,w}{\lambda}\actweird{q_x,w}{\lambda},\ldots\}\) is a cofinite subset of a source decoration point.

\((3)\) and \((4)\): Symmetric to \((1)\) and \((2)\).

\((5)\): Suppose that \(l_1,l_2\in \{0,1\}\) and \(w_1,w_2\in \{0,1\}^\ast\) are such that \(d_v\) rigidly maps \(S_xl_1w_1\) to \(S_yl_2 w_2\). From the definition of \(d_v\), for all \(s_x\in S_x\) and \(s_y\in S_y\), there is a positive power of \(v\) which rigidly maps \(s_xl_1w_1\) to \(s_yl_2w_2\).  From parts \((2)\) and \((4)\), let \(s_x\in S_x\), \(s_y\in S_y\) be such that
\(\actweird{q_0, s_x}{\lambda}\in S_{x, c}\) and \(\actweird{q_0, s_y}{\lambda}\in S_{y,c}\).
Let \(n>0\) be such that \(v^n\) rigidly maps \(s_xl_1w_1\) to \(s_yl_2w_2\). 
By \cref{trans-action}, for long enough \(w_1\) and \(w_2\), we have that \((v^c)^n\) rigidly maps \(\actweird{q_0, s_xl_1w_1}{\lambda}\) to \(\actweird{q_0, s_yl_2w_2}{\lambda}\).
It follows that \((v^c)^n\) rigidly maps \(\actweird{q_0, s_x}{\lambda}\actweird{q_x,l_1w_1}{\lambda}\) to \(\actweird{q_0, s_y}{\lambda}\actweird{q_y,l_2w_2}{\lambda}\), as required.
\end{proof}

We now understand the action of \(\actweird{V}{\Aut}\) on the data from the diagram in \cref{separatedstranddiagrams}.
To show exponential automorphic growth, we build a large family of elements and show that they are not in the same orbit. We must also track how long these elements are as words and show they have different data in the sense of \cref{separatedstranddiagrams}.
First recall that \(V\) is 2-generated, see \cite{CollinMartyn}.
\begin{lemma}\label{constructed_elements}
Let \(\{a,b\}\) be a generating set for \(V\). There exists a function \(v\mapsto v'\) from \(V\) to \(V\), \(m\in \N\), and \(x_1,x_2,x_3, y_5,y_7,y_{11}\in \mathfrak{C}\) such that:
\begin{enumerate}
\item all the points \(x_1,x_2,x_3, y_5,y_7,y_{11}\) end in an infinite sequence of \(1\)s;
    \item for all \(v\in V\), \(|v'|_{\{a,b\}}\leq m|v|_{\{a,b\}}+m\);
    \item for all \(v\in V\), \(v'\) has exactly \(29\) periodic points. This includes exactly \(6\) source periodic points in exactly \(3\) orbits 
 and exactly \(23\) sink periodic points in exactly \(3\) orbits.
    The points \(x_1, x_2, x_3\) are representatives for the \(3\) source periodic point orbits.
    The points \(y_5,y_7, y_{11}\) are representatives for the \(3\) sink periodic point orbits;
    \item for all \(v\in V\), \(|\comp{x_i}{\langle v'\rangle}|=i\) for all \(i\in \{1,2,3\}\) and \(|\comp{y_i}{\langle v'\rangle}|=i\) for all \(i\in \{5,7,11\}\);
    
    \item for all \(v_1,v_2\in V\), each periodic point orbit of \(v_1'\) has exactly one corresponding decoration point and this is also a decoration point of \(v_2'\). We denote these by \(S_{x_1}, S_{x_2}, S_{x_3}, S_{y_5}, S_{y_7}, S_{y_{11}}\);
    \item for all \(v\in V\), and for all \(z\in \mathfrak{C}\) we have
    \[\act{S_{x_1}0z}{d_{v'}}=S_{y_5}0\act{z}{a},\quad \act{S_{x_2}0z}{d_{v'}}=S_{y_7}0\act{z}{b},\quad \act{S_{x_3}0z}{d_{v'}}=S_{y_{11}}0\act{z}{v}.\]
     \item for all \(v_1,v_2\in V\), and \(z\in \{x_1,x_2,x_3,y_5,y_7,y_{11}\}\), we have \((\act{z}{v_1'^i})_{i\in \N}=(\act{z}{v_2'^i})_{i\in \N}\).
\end{enumerate}

\end{lemma}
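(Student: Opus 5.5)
We construct \(v'\) explicitly as a product \(v' = \comp{u_1}{u_2}{\tilde v}\), where \(u_1,u_2\in V\) are fixed and \(\tilde v\) is a ``conjugated copy'' of \(v\) supported on a small cone.

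First fix a skeleton element \(u\in V\) with a revealing pair, built to have the required periodic structure:
\begin{itemize}
\item a source periodic orbit of size \(i\) for each \(i\in\{1,2,3\}\);
\item a sink periodic orbit of size \(j\) for each \(j\in\{5,7,11\}\), so \(6\) source points and \(23\) sink points;
\item no other periodic points.
\end{itemize}
Concretely, take six disjoint cones. For the orbit of size \(i\), cycle \(i\) cones \(c_1\to\cdots\to c_i\to c_1\) so that the composite return map contracts \(c_1\) onto \(c_11\) (or expands it, for sinks). The periodic points then end in \(111\cdots\), giving (1) and (4).

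Each such orbit contributes exactly one decoration point and two decorations \(S0,S1\). Arrange the trees so that:
\begin{itemize}
\item all \(1\)-decorations and sinks route into one another in a fixed way independent of \(v\);
\item the \(0\)-decoration of the \(i\)-th source feeds rigidly into the \(0\)-decoration of the matching sink (\(x_1\to y_5\), \(x_2\to y_7\), \(x_3\to y_{11}\)).
\end{itemize}

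Next, insert the data \(a\), \(b\), \(v\). Let \(\iota_w\colon \mathfrak C\to w\mathfrak C\) be the canonical rigid map, and for \(g\in V\) let \(g_{[w]}\in V\) act as \(\iota_w^{-1}g\iota_w\) on \(w\mathfrak C\) and as the identity elsewhere. Choose words \(w_1,w_2,w_3\) lying on the transient path strictly between source decoration \(S_{x_i}0\) and sink decoration \(S_{y_j}0\). These words must lie outside the cones containing periodic points and outside the neighbourhoods where the attracting and repelling dynamics happen.

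Set
\[
v' = u\, a_{[w_1]}\, b_{[w_2]}\, v_{[w_3]}.
\]
The supports of the last three factors are disjoint transient cones, so the periodic points and the sequences \((z)v'^i\) for \(z\in\{x_1,\dots,y_{11}\}\) coincide with those of \(u\). This gives (3), (5) and (7) independent of \(v\).

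By the definition of \(d_{v'}\), a point \(S_{x_3}0z\) travels under positive powers of \(v'\) and passes exactly once through \(w_3\mathfrak C\), where it gets \(z\mapsto (z)v\) applied. It then lands rigidly in \(S_{y_{11}}0(z)v\), giving (6); the cases \(x_1\) and \(x_2\) are handled the same way.

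For (2), note that \(g\mapsto g_{[w]}\) is an injective homomorphism \(V\to V\) (it is conjugation onto the cone subgroup \(V_{[w]}\)). Writing \(a_{[w_3]}\) and \(b_{[w_3]}\) as fixed words of length at most \(m_0\) in \(a,b\), we get \(|v_{[w_3]}|\le m_0|v|\). Adding the fixed lengths of \(u\), \(a_{[w_1]}\) and \(b_{[w_2]}\) gives the bound \(m|v|+m\).

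The main obstacle is the bookkeeping behind (3) and (5): checking via a revealing pair for \(v'\) that no extra periodic points or decoration points appear, and that each orbit has exactly one decoration point. This relies on every point entering \(w_3\mathfrak C\) leaving it and never returning. I would arrange this by making the transient region a genuine ``source-to-sink'' strand of \(u\), using \cref{sourcetosinkdec}-style reasoning on the revealing pair. Verifying the rigid-routing claims then amounts to a finite case analysis of the leaves of that pair.
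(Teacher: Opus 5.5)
Your construction is essentially the paper's, and it is correct once two slips are fixed. The paper takes \(v'=1_V'v^\#\). Here \(1_V'\) is a skeleton on a 29-word prefix code \(\{p_{i,j}\}\), with \(a\) and \(b\) built directly into the transit maps \(p_{1,0}0w\mapsto p_{5,0}0\cdot(w)\phi_a\) and \(p_{2,1}0w\mapsto p_{7,0}0\cdot(w)\phi_b\). The element \(v^\#\) is the copy of \(v\) supported on \(p_{11,0}0\mathfrak{C}\). Your fixed element \(u\,a_{[w_1]}b_{[w_2]}\) plays the role of \(1_V'\), and your length bound via the homomorphism \(g\mapsto g_{[w]}\) is the paper's argument. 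Using separate transit cones \(w_1,w_2,w_3\) is harmless (it costs three extra code words) but unnecessary. The paper's support \(p_{11,0}0\mathfrak{C}\) sits inside a cone containing \(y_{11}\), yet every orbit leaves it after one step and never re-enters.

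The first slip is that your orientation is reversed. A source representative is the end \(p_n\) of a rigid chain that starts at a longer word \(p_0\) extending it. So the source return maps must expand, \(c_11\mapsto c_1\), as with \(p_{1,0}1\mapsto p_{1,0}\). The sink return maps must contract, \(c_1\mapsto c_11\), as with \(p_{5,0}\mapsto\cdots\mapsto p_{5,0}1\). As you wrote it, you would get sinks of periods \(1,2,3\) and sources of periods \(5,7,11\).

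The second slip is that each decoration point has only one decoration, not two. Every representative \(s\) has periodic point \(s1^\infty\), so \(s1\) is a prefix of it and \(S1\) is excluded by \cref{equivdecpt}. Your bullet about routing \(1\)-decorations is therefore vacuous and should be dropped. The fact that \(S_{x_1}0\) is the \emph{only} decoration is exactly what \cref{notautoV} later uses to conclude \(p=0p'\).

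Finally, the bookkeeping for (3)--(5) is lighter than you expect and needs no revealing pair. For \(x\in\mathfrak{C}\), take the code word \(x_p\le x\). If \(x_p0\le x\), the orbit of \(x\) drains into a sink cone, so \(x\) is not periodic. If \(x_p1\le x\), the rigid return \(x_p1\mapsto x_p\) (resp.\ \(x_p\mapsto x_p1\)) under \(v'^i\) makes \(x_p1^\infty\) the unique periodic point in \(x_p\mathfrak{C}\). It also shows every representative of that orbit is equivalent to \(p_{i,0}\), which gives (5).
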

\begin{proof}
    Let \(P\) be a fixed complete prefix code of size \(1+2+3+5+7+11=29\). Let \(P=\{p_{i,j}:i\in\{1,2,3,5,7,11\}\), \(0
    \leq j<i \}\).
    For each \(v\in V\), let \(\phi_v\) be a bijection between complete prefix codes defining \(v\). Additionally, for all \(v \in V\), 
    let \(v'\) be the element of \(V\) defined by:
    \begin{itemize}
        \item \(p_{1,0}0w\mapsto p_{5,0}0 \cdot \act{w}{\phi_a}\) for \(w\in \operatorname{dom}(\phi_a)\),
        \item  \(p_{2,1}0w\mapsto p_{7,0}0 \cdot \act{w}{\phi_b
        }\) for \(w\in \operatorname{dom}(\phi_b)\),
        \item  \(p_{3,2}0w\mapsto p_{11,0}0 \cdot \act{w}{\phi_v}\) for \(w\in \operatorname{dom}(\phi_v)\),
        \item \(p_{1,0}1\mapsto p_{1,0}\), \(p_{2,1}1\mapsto p_{2,0}\), and \(p_{3,2}1\mapsto p_{3,0}\),
        \item for  \(i\in \{1,2,3,5,7,11\}\) and \(0\leq j<i-1\) we rigidly map \(p_{i,j}\to p_{i,j+1}\),
        \item \(p_{5,4}\mapsto p_{5,0}1\),\(p_{7,6}\mapsto p_{7,0}1\), and \(p_{11,10}\mapsto p_{11,0}1\).
    \end{itemize}
    For each \(i\in \{1,2,3\}\), let \(x_i=p_{i,0}11111\ldots\), and for each \(i\in \{5,7,11\}\), let \(y_i=p_{i,0}11111\ldots\).
 For each \(v\in V\), let \(v^\#\) be the element of \(V\) which acts on \(p_{11,0}0\mathfrak{C}\) as \(v\) acts on \(\mathfrak{C}\) and fixes all points not in \(p_{11,0}0\mathfrak{C}\). 
    Let \(m\in \N\) be greater than \(|a^\#|_{\{a,b\}},|b^\#|_{\{a,b\}}\) and \(|\operatorname{1_V}'|_{\{a,b\}}\).
    We show that these choices of \(v\mapsto v'\), \(x_1,x_2,x_3,y_5,y_7,y_{11}\), and \(n\) satisfy all the required conditions. \((1)\) is immediate from the definition
    of each \(x_i\) and \(y_i\).
    
    \((2)\): All \(v\in V\) can be expressed as a product of elements of \(\{a, a^{-1}, b,
    b^{-1}\}\).
    Each expression of \(v\) as such a word defines an expression for \(v^\#\)
    as a product in \(a^\#\), \(b^\#\) and their inverses.
    Thus for all \(v\in V\), we have \[|v^\#|_{\{a,b\}}\leq |v|_{\{a,b\}}\cdot \max(|a^\#|_{\{a,b\}},|b^\#|_{\{a,b\}}).\]
    Moreover, for all \(v\in V\), we have \(v'=1_V'v^\#\). In particular,
    \[|v'|_{\{a,b\}}= |1_V'v^\#|_{\{a,b\}}\leq |1_V'|_{\{a,b\}}+|v|_{\{a,b\}}\cdot \max(|a^\#|_{\{a,b\}},|b^\#|_{\{a,b\}})\leq m|v|_{\{a,b\}}+m.\]

\((3)\) and \((4)\): Suppose that \(v\in V\) and \(x\in \mathfrak{C}\). As \(P\) is a complete prefix code, there is \(x_p\in P\) with \(x_p\leq x\). If \(x_p0\leq x\), then the construction of \(v'\) shows that there is \(x_p'\in P\) such that we can obtain points with prefixes \(x_p'10,x_p'110,x_p'1110,\ldots\) by applying positive powers of \(v'\).
Thus, if \(x_p0\leq x\), then \(x\) is not a periodic point of \(v'\).
Now suppose that \(x_p1\leq x\) and that \(x_p=p_{i,j}\) for some \(i\in \{1,2,3\}\). Directly applying \(v'\) now shows that \((v')^i\) rigidly maps \(x_p1\) to \(x_p\).
Thus \(x_p1111\ldots\) is the only periodic point with prefix \(x_p\) and this is a source periodic point with period \(i\).
Finally, suppose that \(x_p1\leq x\) and that \(x_p=p_{i,j}\) for some \(i\in \{5,7,11\}\). Directly applying \(v'\) now shows that \((v')^i\) rigidly maps \(x_p1\) to \(x_p11\).
Thus \(x_p1111\ldots\) is the only periodic point with prefix \(x_p\) and this is a sink periodic point with period \(i\).
We have now found all periodic points and the periodic points were in bijective correspondence with the prefixes in \(P\), where the first subscript gives the period and the associated point is a source precisely when the first subscript is at most \(3\). Moreover the elements \(x_1,x_2,x_3,y_5,y_7,y_{11}\) we defined earlier are representatives for the orbits. 
In particular, there are \(1+2+3=6\) source periodic points and \(5+7+11=23\) sink periodic points. 

\((5)\): Suppose that \(v\in V\) and consider the element \(v'\). 
If \(p\) is a source decoration point representative for \(x_i\in \{x_1,x_2,x_3\}\), then we must have that \((p)(v')^{-i}=p1\).
In particular, since we can add/remove one letter at a time, every source decoration point representative for \(x_i\) is mapped rigidly to \(p_{i,0}\) by some power of \(v'\).
It follows that each \(x\in \{x_1,x_2,x_3\}\) has a unique source decoration point.
Similarly, each \(y\in \{y_5,y_7,y_{11}\}\) has a unique sink decoration point.
Each of these decoration points has a single decoration, so the result follows.

\((6)\): From \((5)\), each source decoration point representative can be thought of as \(p_{i,j}11\ldots 11\) for \(i\in \{1,2,3\}\) and 
each sink decoration point representative can be thought of as \(p_{i,j}11\ldots 11\) for \(i\in \{5,7,11\}\).
In particular, \(S_{x_1}=[p_{1,0}],S_{x_2}=[p_{2,0}],S_{x_3}=[p_{3,0}],S_{y_5}=[p_{5,0}],S_{y_7}=[p_{7,0}]\), and \(S_{y_{11}}=[p_{11,0}]\).
If \(v\in V\), then by the definition of \(v'\), repeatedly applying \(v'\) eventually rigidly maps 
\[p_{1,0}11\ldots 110 z \to p_{5,0}11\ldots 110 (z)a\]
\[p_{2,0}11\ldots 110 z \to p_{7,0}11\ldots 110 (z)b\]
\[p_{3,0}11\ldots 110 z \to p_{11,0}11\ldots 110 (z)v\]
as required.
    
\((7)\): This can be seen by directly applying \(v'\) for any \(v\in V\) to these points.
\end{proof}
We can now show that these elements are indeed automorphically distinct.
\begin{proposition}\label{notautoV}
    For all \(v,w\in V\), if \(v'\) is in the same automorphic orbit as \(w'\) then \(v=w\) (using the map \(v\mapsto v'\) from \cref{constructed_elements}).
\end{proposition}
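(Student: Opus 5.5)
The plan is to take an automorphism of \(V\) carrying \(v'\) to \(w'\), realise it as conjugation by some \(c\in B_{2,1}\) via \cref{autV}, and track what \(c\) does to the decoration maps using \cref{action_on_decoration_maps}. The outcome should be that \(c\) forces \(d_{w'}\) to agree with \(d_{v'}\) on the third decoration, which gives \(v=w\).

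So suppose \(w'=(v')^c\) for some \(c\in B_{2,1}\), and let \(T=(Q,\pi,\lambda,q_0)\) be its minimal transducer. By \cref{action_on_decoration_maps}(1),(3), \(c\) maps source periodic points of \(v'\) to source periodic points of \(w'\), and sink periodic points to sink periodic points. Since \(c\) conjugates \(\langle v'\rangle\) to \(\langle w'\rangle\), it also preserves orbit sizes. The orbit sizes \(1,2,3\) (sources) and \(5,7,11\) (sinks) are pairwise distinct, so by \cref{constructed_elements}(3),(4) the map \(c\) sends the orbit of \(x_i\) to the orbit of \(x_i\), and the orbit of \(y_j\) to the orbit of \(y_j\).

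By \cref{constructed_elements}(5), each orbit has a unique decoration point, so \(S_{x_i,c}=S_{x_i}\) and \(S_{y_j,c}=S_{y_j}\) in the notation of \cref{action_on_decoration_maps}. Each such point also has a unique decoration, namely the one with letter \(0\). Let \(q_{x_i}\) and \(q_{y_j}\) be the states given by \cref{action_on_decoration_maps}(2),(4). By \cref{constructed_elements}(7) and (1), the representatives of these decoration points are the same words \(p_{i,\cdot}1\cdots1\) for \(v'\) and \(w'\). Hence \(q_{x_i}\) and \(q_{y_j}\) are all equal to the single state \(q_1\) reached by reading \(1^k\), where \(k\) is the synchronising length.

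Now apply \cref{action_on_decoration_maps}(5) together with \cref{constructed_elements}(6). For long enough \(z\), the map \(d_{w'}\) rigidly sends \(S_{x_1}\lambda(q_1,0z)\) to \(S_{y_5}\lambda(q_1,0(z)a)\), and similarly with \(b\) and with \(v\). On the other hand, \(d_{w'}\) sends \(S_{x_1}0u\) to \(S_{y_5}0(u)a\), and likewise for the other two decorations, with \(w\) in place of \(v\). Define \(h\colon\mathfrak C\to\mathfrak C\) by \(0(z)h=\lambda(q_1,0z)\). This is legitimate because, by complete response together with the fact that the image must land in the unique decoration, \(\lambda(q_1,0z)\) begins with \(0\). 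The rational map of \(q_1\) is a homeomorphism of the cone \(0\mathfrak C\) onto \(0\mathfrak C\); this follows because \(c\) restricts to a bijection between the relevant decoration cones. Comparing the two descriptions of \(d_{w'}\) gives
\[
  h^{-1}ah=a,\qquad h^{-1}bh=b,\qquad h^{-1}vh=w .
\]

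The first two equations say that \(h\) commutes with the generators \(a\) and \(b\), so \(h\) centralises \(V\) in \(\operatorname{Homeo}(\mathfrak C)\). That centraliser is trivial; this follows from Rubin's theorem, or directly because \(V\) acts with small-support elements moving every point. Hence \(h=\id\), and the third equation gives \(w=v\).

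I expect the main obstacle to be justifying rigorously that the state \(q_1\) is the same at all six decoration points and that its rational map restricted to \(0\mathfrak C\) is a genuine homeomorphism \(h\) of \(\mathfrak C\). The "sufficiently long" clauses also need to be upgraded to an identity of maps on all of \(\mathfrak C\); I would handle this by density and continuity.
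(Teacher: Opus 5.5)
Your proposal is correct and is essentially the paper's argument: your \(h\) is exactly the paper's \(c't\) (with \(\lambda(q_1,0)=0p'\)). The only structural difference is that the paper first replaces \(c\) by \(c\cdot(w')^k\), via the Chinese remainder theorem, so that it fixes the six periodic points. Your appeal to the uniqueness of the decoration point on each orbit (\cref{constructed_elements}(5)) gives \(S_{x_i,c}=S_{x_i}\) and \(S_{y_j,c}=S_{y_j}\) directly, without that step.

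The obstacle you anticipate is unnecessary. What you actually derive is \(((z)h)a=((z)a)h\), \(((z)h)b=((z)b)h\) and \(((z)h)w=((z)v)h\) for all \(z\in\mathfrak{C}\). Any self-map of \(\mathfrak{C}\) commuting with \(V\) is the identity, and your small-support argument uses neither continuity nor invertibility of \(h\). So \(h\) is the identity and \(w=v\) follows, without ever showing that \(h\) is a homeomorphism or writing the relations as conjugations.
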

\begin{proof}
    By \cref{autV} if \(v'\) and \(w'\) are in the same automorphic orbit, then
    there is \(c\in B_{2,1}\) with \({(v')}^c={w'}\). 
    By
    \cref{action_on_decoration_maps}(1), \(\act{x_1}{c}\) is a source periodic point of
    \(w'\).
    By \cref{constructed_elements}(3), \(x_1\) is fixed by \(v'\) and so \((x_1)c\) is fixed by \({(v')}^c={w'}\).
    By \cref{constructed_elements}(3) and (4), \(x_1\) is the only fixed point
    of \(w'\), and so \(\act{x_1}{c}=x_1\). Similarly,
    \(\act{x_2}{c}\in \comp{x_2}{\langle w' \rangle}\), \(\act{x_3}{c}\in \comp{x_3}{\langle w' \rangle}\),
    \(\act{y_5}{c}\in \comp{y_5}{\langle w' \rangle}\), \(\act{y_7}{c}\in \comp{y_7}{\langle w' \rangle}\) and
    \(\act{y_{11}}{c}\in \comp{y_{11}}{\langle w' \rangle}\). 
    By Sunzi's remainder theorem,
    we can choose \(k\in \N\) such that \(\comp{c}{\cdot}{(w')^k}\) fixes each of
    \(x_1,x_2,x_3,y_5,y_7,y_{11}\). Note that \((v')^{\comp{c}{\cdot}{(w')^k}}=w'\). We redefine
    \(c\) to be \(\comp{c}{\cdot}{(w')^k}\).

    We now have \({(v')}^c=w'\) and \(c\) fixes each element of the set \(\{x_1,x_2,x_3,y_5,y_7,y_{11}\}\).
    Let \(T=(Q,\pi,\lambda,q_0)\) be the minimal transducer for \(c\) and let \(q_1\) be the unique state of \(T\) reached by reading arbitrarily long strings of \(1\)s.
    By \cref{action_on_decoration_maps}(2) there is a source decoration point \(S_{x_1,c}\) corresponding to \(\act{x_1}{c}=x_1\) such that  \[\makeset{\actweird{q_0,s}{\lambda}}{\(s\in S_{x_1}\)}\triangle S_{x_1,c}\]
    is finite. As \(\actweird{q_0,x_1}{\lambda}=x_1\), it follows from \cref{constructed_elements}(5) that \(S_{x_1,c}=S_{x_1}\). 
    Similarly, using the notation of \cref{action_on_decoration_maps}, we have \(S_{x_2,c}=S_{x_2}, S_{x_3,c}=S_{x_3}, S_{y_5,c}=S_{y_5},S_{y_7,c}=S_{y_7},\) and \(S_{y_{11},c}=S_{y_{11}}\).
    By \cref{action_on_decoration_maps}(5), for all \(i,j\in \{1,2,3,5,7,11\}\) and long enough \(w_1,w_2\in \{0,1\}^\ast\) such that \(d_{v'}\) rigidly maps \(S_{x_i}0w_1\) to \(S_{y_j}0w_2\), we have \(d_{w'}\) rigidly maps  \(S_{x_i}\lambda(q_1,0w_1)\) to \(S_{y_j}\lambda(q_1,0w_2)\). 
    By \cref{constructed_elements}(6), if \(z\in \mathfrak{C}\), then \(\act{S_{x_1} 0z}{d_{v'}}=S_{y_5}0\act{z}{a}\) and \(\act{S_{x_2} 0z}{d_{v'}}=S_{y_7}0\act{z}{b}\). 
    Together, these imply that for all \(z\in \mathfrak{C}\), we have
   \[(S_{x_1}\lambda(q_1,0z))d_{w'}=S_{y_5}\lambda(q_1,0(z)a) \text{ and } (S_{x_2}\lambda(q_1,0z))d_{w'}=S_{y_7}\lambda(q_1,0(z)b).\]
   Let \(p\coloneqq \lambda(q_1,0)\) and let \(c'\) be the rational map of the state \(\pi(q_1,0)\) of \(T\).  In particular, the above equalities can now be expressed as
   \[(S_{x_1}p\cdot (z)c')d_{w'}=S_{y_5}p\cdot ((z)a)c' \text{ and } (S_{x_2}p\cdot (z)c')d_{w'}=S_{y_7}p\cdot ((z)b)c'\]
   for all \(z\in \mathfrak{C}\).
   As the only decoration of the decoration point \(S_{x_1}\) is \(S_{x_1}0\), and \(S_{x_1}p\) is a prefix of an element of the domain of \(d_{w'}\) (which is precisely the cones corresponding to decorations), we must have \(p=0p'\) for some \(p'\in \{0,1\}^\ast\).
    As such, we can apply \cref{constructed_elements}(6) to \(w'\) in order to rewrite the above equalities as
    \[S_{y_5}0(p'\cdot (z)c')a=S_{y_5}0p'\cdot ((z)a)c' \text{ and } S_{y_7}0(p'\cdot (z)c')b=S_{y_7}0p'\cdot ((z)b)c'\]
     for all \(z\in \mathfrak{C}\). Let \(t\colon \mathfrak{C}\to \mathfrak{C}\) be the continuous map which prepends the prefix \(p'\) to every infinite word.
     The displayed equalities imply that \(c'ta=ac't\) and \(c'tb=bc't\). These imply that the continuous map \(c't\) commutes with both \(a\) and \(b\). As \(\langle a,b\rangle =V\), it follows that \(c't\) is the identity map. In particular, \(t\) is surjective, implying that \(p'=\varepsilon\) and \(t\) is the identity map.
     Thus \(c'\) is also the identity map. 
     As \(\lambda(q_1,0)=p=0p'=0\), it follows that 
     \[\lambda(q_1,0z)=0z\] 
     for all \(z\in \mathfrak{C}\).
     Recall that for all sufficiently long \(w_1,w_2\in \{0,1\}^\ast\) such that \(d_{v'}\) rigidly maps \(S_{x_3}0w_1\) to \(S_{y_{11}}0w_2\), we have \(d_{w'}\) rigidly maps \(S_{x_3}\actweird{q_1,0w_1}{\lambda}\) to \(S_{y_{11}}\actweird{q_1,0w_2}{\lambda}\).
    Thus for all sufficiently long \(w_1,w_2\in \{0,1\}^\ast\) such that \(d_{v'}\) rigidly maps \(S_{x_3}0w_1\) to \(S_{y_{11}}0w_2\), we have  \(d_{w'}\) rigidly maps \(S_{x_3}0w_1\) to \(S_{y_{11}}0w_2\).
    Hence \(d_{v'}|_{S_{x_3}0\mathfrak{C}}=d_{w'}|_{S_{x_3}0\mathfrak{C}}\). It follows from the third equality in \cref{constructed_elements}(6), that \(v=w\) as required.
\end{proof}
We can now bring everything together and show our main result.
\begin{theorem}
    \label{thm:V}
    The automorphic growth of Thompson's group \(V\) is exponential.
\end{theorem}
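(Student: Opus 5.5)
The plan is to combine \cref{constructed_elements} and \cref{notautoV} with the fact that \(V\) has exponential standard growth. Fix the generating set \(\{a,b\}\) of \(V\) and the map \(v\mapsto v'\) and constant \(m\) from \cref{constructed_elements}. By \cref{notautoV}, the map \(v\mapsto v'\) sends distinct elements of \(V\) to elements in distinct automorphic orbits. By \cref{constructed_elements}(2), if \(|v|_{\{a,b\}}\leq n\) then \(|v'|_{\{a,b\}}\leq mn+m\). Hence the automorphic orbits of \(v'\), for \(v\) ranging over the ball of radius \(n\), are pairwise distinct orbits of length at most \(mn+m\). This gives
\[
\beta_{V}(n)\leq \alpha_{V,\{a,b\}}(mn+m),
\]
where \(\beta_V\) is the standard growth function. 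So \(\beta_V\preccurlyeq\alpha_V\) in the sense of \cref{growtheq}.

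Next I need \(\beta_V\) to be exponential. The simplest route is that \(V\) contains a non-abelian free subgroup; this is standard, for example because \(V\) contains Thompson's group \(F\), which contains \(\Z\wr\Z\). More directly, one can take two elements of \(V\) with disjoint supports playing ping-pong. A cleaner option is that \(V\) contains \(F\), and \(F\) has exponential growth since it contains a free submonoid of rank \(2\). Any of these gives exponential standard growth of \(V\), since growth of a finitely generated subgroup is bounded above by that of the ambient group up to \(\sim\). For the upper bound, automorphic growth is at most standard growth, which is at most exponential for any finitely generated group. Together these give \(\alpha_V\sim (n\mapsto 2^n)\).

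The only step needing care is citing exponential growth of \(V\). I would cite the existence of free subsemigroups in \(F\leq V\), for instance \(x_0\) and \(x_1\) generating a free submonoid. I expect no real obstacle beyond this, since the heavy lifting is already done in \cref{notautoV}. Finally, I would note the consequence that \(V\) has exponential conjugacy growth, since by \cref{growth-comparison-1} automorphic growth bounds conjugacy growth from below.
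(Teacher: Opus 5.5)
Your argument is the paper's proof: \cref{notautoV} makes \(v\mapsto v'\) injective on automorphic orbits, and \cref{constructed_elements}(2) gives \(|v'|\le m|v|+m\), so \(\beta_V(n)\le\alpha(mn+m)\); exponential standard growth of \(V\) then finishes it. One small slip: \(F\) containing \(\Z\wr\Z\) does not give a non-abelian free subgroup, since \(\Z\wr\Z\) is solvable and \(F\) has no such subgroups — but \(\Z\wr\Z\) itself, the free submonoid generated by \(x_0,x_1\) in \(F\), or genuine free subgroups of \(V\) (e.g.\ in \(T\le V\)) each supply the exponential growth you need.
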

\begin{proof}
    Recall that the standard growth of \(V\) is exponential (note that \(V\) contains free groups for example). 
    Let \(\actweird{n}{\beta}\) denote the number of elements in \(V\) of length at most \(n\).
     Let \(\actweird{n}{\alpha}\) denote the number of automorphic orbits of elements of \(V\) of length at most \(n\).
    From \cref{constructed_elements}(2) and \cref{notautoV}, it follows that there is a constant \(m\in \N\setminus \{0\}\) such that
    \[\actweird{mn + m}{\alpha}\geq \actweird{n}{\beta}\]
    for all \(n \in \N\). 
    Thus \(\beta \preccurlyeq \alpha\). As the growth of \(V\) is exponential, the result follows.
\end{proof}

As the conjugacy is bounded below by the automorphic growth we obtain the following immediately.

\begin{cor}
    The conjugacy growth of Thompson's group \(V\) is exponential.
\end{cor}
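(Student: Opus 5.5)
The corollary follows from comparing the automorphic growth of \(V\) with its conjugacy growth and its standard growth. First, by \cref{growth-comparison-1} applied with \(\Omega_1 = \actweird{V}{\Inn}\) and \(\Omega_2 = \actweird{V}{\Aut}\), every conjugacy class of \(V\) lies inside a single automorphic orbit. Hence the number of conjugacy classes meeting the \(n\)-ball is at least the number of automorphic orbits meeting it, that is, \(\alpha_V \preccurlyeq \alpha_{V^{\Inn(V)}}\). By \cref{thm:V}, \(\alpha_V\) is exponential. So the conjugacy growth of \(V\) is at least exponential.

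For the upper bound, I would observe that the conjugacy growth function is bounded above by the standard growth function. Indeed, each conjugacy class meeting the \(n\)-ball contains at least one element of the ball, so the number of such classes is at most the cardinality of the \(n\)-ball. The standard growth of a finitely generated group is at most exponential, with base the number of generators and their inverses. Hence the conjugacy growth is at most exponential.

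Combining the two bounds, the conjugacy growth of \(V\) is \(\sim\)-equivalent to \(n \mapsto 2^n\), i.e.\ it is exponential. There is no real obstacle here: all the work is contained in \cref{thm:V}, and the only point needing care is the direction of the inequality in \cref{growth-comparison-1}. A smaller group of automorphisms gives more, finer orbits, hence larger growth, which is exactly the direction required.
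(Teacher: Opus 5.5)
Your proposal is correct and matches the paper's argument: the paper deduces the corollary in one line from \cref{thm:V}, together with the observation in \cref{growth-comparison-1} that automorphic growth is a lower bound for conjugacy growth. Your upper bound via the standard growth of \(V\) is the routine fact that the paper leaves implicit.
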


Unlike Thompson's group \(V\), computing the automorphic growth of Thompson's group \(T\)
is fairly straightforward, in part, due to the finite outer automorphism group.
\begin{theorem}\label{Texpconj}
  Thompson's group \(T\) has exponential conjugacy growth.
\end{theorem}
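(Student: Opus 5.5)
The plan is to show that \(T\) has exponential \emph{automorphic} growth and then invoke \cref{growth-comparison-1}: conjugacy growth is bounded below by automorphic growth. The route mirrors the treatment of \(V\), but uses the finiteness of \(\Out(T)\) from \cite{autft} in place of the transducer machinery. Concretely, I will construct exponentially many elements of \(T\) of length \(O(n)\) that are pairwise non-conjugate in \(T\). Since \(\Inn(T)\) has finite index in \(\Aut(T)\), \cref{finite-index-aut-actions} (equivalently \cref{finiteOut}) shows that each automorphic orbit is a union of at most \(|\Out(T)|\) conjugacy classes. Hence exponentially many conjugacy classes of length \(O(n)\) give exponentially many automorphic orbits, and therefore exponential conjugacy growth.

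To produce the non-conjugate family, I will adapt the construction of \cref{constructed_elements} so that it lives in \(T\) rather than \(V\). Let \(F\le T\) denote Thompson's group \(F\), or any 2-generated subgroup of \(T\) supported in a cone with exponential growth; note that \(F\) itself is undistorted-friendly and has exponential growth. For each \(v\) in this subgroup, I will build an element \(v'\in T\) using a fixed cyclically ordered complete prefix code, as follows. It has a fixed set of periodic orbits of prescribed distinct sizes, chosen so that they are arranged in a way compatible with the cyclic order. Its "decoration maps" between a source orbit and a sink orbit are given by the generators and by \(v\), exactly as in \cref{constructed_elements}(6). As in \cref{constructed_elements}(2), the assignment \(v\mapsto v'\) is a bounded-multiplicative-distortion map: \(v'=1'\cdot v^\#\), so \(|v'|\le m|v|+m\).

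Next, I will show that \(v'\) and \(w'\) conjugate in \(T\) forces \(v=w\). I expect to follow \cref{notautoV} but with conjugators \(c\in T\), which are themselves prefix replacement maps. This is strictly easier than the \(B_{2,1}\) case, since \cref{trans-action} holds trivially with an identity state. After composing \(c\) with a suitable power of \(w'\) (using the coprime periods and the Chinese remainder theorem), \(c\) fixes all the chosen periodic points. The decoration-map equations then say that the germ of \(c\) near the relevant decoration conjugates \(a\) and \(b\) to themselves. So that germ commutes with a generating set, is the identity, and the third decoration equation yields \(v=w\).

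The main obstacle is ensuring that the construction stays inside \(T\), that is, that it preserves the cyclic order. The permutation of prefix code leaves used in \cref{constructed_elements} is far from cyclic. I would first try arranging all the orbits with a single rotation-compatible combinatorics. If that fails, I would fall back on a cruder argument: take a rotation-free hyperbolic-type element \(h\in T\) with one attracting and one repelling fixed point, together with elements \(v^\#\) supported in a small interval. I would then distinguish the conjugacy classes of the elements \(h v^\#\) via the germ/"decoration" invariant at the attractor–repeller pair. That invariant is defined up to conjugation by the centraliser, which is virtually cyclic, so it separates exponentially many classes after only a bounded loss.
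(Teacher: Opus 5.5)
\textbf{Your main route fails at a specific step.} Adapting \cref{constructed_elements} to \(T\) needs an element with periodic orbits of several distinct, pairwise coprime sizes. The Chinese remainder step of \cref{notautoV} is precisely what forces the conjugator to fix the chosen points. But every element of \(T\) acts on the circle as an orientation-preserving homeomorphism, and on \(\mathfrak{C}\) preserving the cyclic order, with the same periods. By Poincar\'e's rotation number theory, all periodic orbits of such a map have the same length, namely the denominator of the rotation number. So no ``rotation-compatible combinatorics'' can exist, and the device that pins down the conjugator is gone. You would need some other way to rigidify the conjugator, for instance fixed points distinguished by their germs, and the proposal does not supply one.

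Separately, your opening reduction is unnecessary. Exhibiting exponentially many pairwise non-conjugate elements of length \(O(n)\) already \emph{is} the theorem. The detour through \(\Aut(T)\), finiteness of \(\Out(T)\) and \cref{growth-comparison-1} adds nothing. In the paper that implication runs the other way, and is used to obtain \cref{Tautgrowth}.

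\textbf{The fallback is closer to something that works}, namely a one-bump transition-map invariant in the spirit of the conjugacy classification in \(F\). However, everything that would make it a proof is asserted rather than shown:
\begin{enumerate}
\item You need \(\operatorname{supp}(v^\#)\) to lie inside a fundamental domain of \(h\), so that \(hv^\#\) acquires no new fixed points.
\item You need that a conjugator must fix both fixed points.
\item You need that the conjugator's germs there commute with the germs of \(h\). They then lie in a group of linear germs, which acts on the quotient circles through finite rotation groups. It is this germ centraliser, not a centraliser in \(T\), that bounds the ambiguity.
\item You need that the transition map recovers \(v^\#\).
\end{enumerate}

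\textbf{The idea that makes the paper's proof short} is that conjugators in \(T\le V\) act by rigid prefix replacement near every point. Hence the exact germ \(p\mapsto p\tilde w\) at an expanding fixed point is itself a conjugacy invariant. Equivalently, the period word \(w\) of \(0www\ldots\), taken up to cyclic permutation, is invariant. The paper uses \(y_0,y_1\in T\), which rigidly map \(0\) to \(00\) and to \(01\) respectively. The products \(y_w\) satisfy \(|y_w|\le|w|\) and rigidly map \(0\) to \(0w\). This gives at least \(2^n/n\) conjugacy classes of length at most \(n\), with no decoration maps, no generating-set argument and no appeal to \(\Out(T)\).
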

\begin{proof}
Recall that \(T\leq V\) is the subgroup preserving the cyclic order on the Cantor space induced the lexicographic order on infinite words. 
   Consider the elements \(y_0,y_1\in V\) defined by
   \begin{enumerate}
       \item \(y_0\): \(0\to 00\), \(10\mapsto 01\), \(11\mapsto 1\),
       \item \(y_1\): \(0\to 01\), \(10\mapsto 1\), \(11\mapsto 00\).
   \end{enumerate}
Note that \(y_0,y_1\in T\). We consider word lengths in \(T\) with respect to a fixed finite generating set for \(T\) which contains \(y_0\) and \(y_1\).

   For a word \(w\in \{0,1\}^\ast\), let \(y_w\) be the corresponding product in the elements \(y_{0}\) and \(y_1\) in \ifthenelse{\boolean{rightactions}}{reverse }{}order. 
   For example \(y_{011}:=\comp{y_1}{y_1}{y_0}\). 
   Note that \(|y_w|\leq |w|\) and \(y_w\) rigidly maps \(0\) to \(0w\). 
   For each \(w\in \{0,1\}^*\setminus \{\varepsilon\}\), define \(x_w:=0www\ldots\).
   For all \(w\in \{0,1\}^*\setminus \{\varepsilon\}\), any long enough prefix \(p\) of \(x_w\) is rigidly mapped by \(y_w\) to \(p\tilde{w}\) where \(\tilde{w}\) is a cyclic permutation of \(w\). As \(y_0\) and \(y_1\) do not increase the length of any word beginning with a \(1\), it follows that \(w\) is the unique longest word (up to cyclic permutation)  satisfying the following condition:
   \begin{itemize}
       \item there is an infinite word \(x\) such that \(y_w\) rigidly maps any long enough prefix \(p\leq x\) to the word \(p\tilde{w}\), where \(\tilde{w}\) is a cyclic permutation of \(w\).
   \end{itemize}
   Now let \(t\in T\) and  \(w\in \{0,1\}^*\setminus \{\varepsilon\}\) be arbitrary.
   If \(x\) is as above and \(p\) is any long enough prefix of \(\act{x}{t}\), then 
\[(p)t^{-1}y_wt=(((p)t^{-1})y_w)t=(((p)t^{-1})\tilde{w})t=p\tilde{w}\]
for some cyclic permutation \(\tilde{w}\) of \(w\).
Thus \(w\) is the unique maximum length word (up to cyclic permutation) such that there is an infinite word \(x\) such that \(t^{-1}y_wt\) rigidly maps any long enough prefix \(p\leq x\) to the word \(p\tilde{w}\), where \(\tilde{w}\) is a cyclic permutation of \(w\).

In particular,  the elements \(y_w\) and \(y_{w'}\) are never conjugate when \(w\) and \(w'\) do not differ by a cyclic permutation.
 It follows that the growth of the set \(\makeset{y_w}{\(w\in \{0,1\}^\ast\)}\) up to conjugacy is at least \(\frac{2^n}{n}\), which is exponential.
\end{proof}

We can now leverage the finite outer automorphism group of \(T\) to conclude that the
automorphic growth must also be exponential.

\begin{cor}
    \label{Tautgrowth}
    The automorphic growth of \(T\) is exponential. 
\end{cor}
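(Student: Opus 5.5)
The corollary follows by combining \cref{Texpconj} with \cref{finiteOut}. The plan has two steps: first establish that \(\Out(T)\) is finite, then deduce that automorphic growth and conjugacy growth of \(T\) coincide up to \(\sim\).

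For the first step, I would use the result cited in the introduction, \cite{autft}. It says that \(\Out(T)\) is finite; in fact it is cyclic of order \(2\), generated by the class of the orientation-reversing homeomorphism of the circle that conjugates \(T\) to itself. I would simply quote this.

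For the second step, finiteness of \(\Out(T)\) means \(\Inn(T)\) has finite index in \(\Aut(T)\). I can then apply \cref{finite-index-aut-actions} with \(\Omega=\Inn(T)\) and \(\Lambda=\Aut(T)\), packaged as \cref{finiteOut}. This gives \(\alpha_T \sim \alpha_{T^{\Inn(T)}}\), and the right-hand side is by definition the conjugacy growth of \(T\). Concretely, each automorphic orbit is a union of at most \(|\Out(T)|\) conjugacy classes. So the number of automorphic orbits meeting the \(n\)-ball is at least \(1/|\Out(T)|\) times the number of conjugacy classes meeting it.

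By \cref{Texpconj}, the conjugacy growth of \(T\) is exponential. It follows that \(\alpha_T\) is bounded below by an exponential function up to \(\sim\). It is also bounded above by the standard growth of \(T\), which is at most exponential because \(T\) is finitely generated. Hence \(\alpha_T\) is exponential.

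There is no real obstacle here. The only point needing care is that the finiteness of \(\Out(T)\) is imported from \cite{autft} rather than proved. The rest is a direct application of results already established in \cref{sec:found} and the preceding theorem.
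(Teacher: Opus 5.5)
Your proposal is correct and is the paper's own proof: quote \cite{autft} for the finiteness of \(\Out(T)\), apply \cref{finiteOut} to identify automorphic and conjugacy growth of \(T\) up to \(\sim\), and conclude from \cref{Texpconj}. Your explicit remark that the standard growth gives an at most exponential upper bound is a harmless addition that the paper leaves implicit.
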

\begin{proof}
    By \cite{autft}*{Theorem 1}, \(\Out(T)\) is finite, and thus
    \cref{finiteOut} tells us that the conjugacy growth and automorphic growth
    rates coincide. The result this now follows from \cref{Texpconj}.
\end{proof}

\section{Further questions}
Since all of the groups we have considered have either exponential or polynomial
automorphic growth rates, it is natural to ask whether intermediate growth can occur,
as it does in both standard and conjugacy growth. The Grigorchuk group provides an
example of both \cites{Fink, Grigorchuk}, and thus is a reasonable candidate to consider.
\begin{question}
    Is there a finitely generated group with intermediate automorphic growth?
\end{question}

Gromov's Theorem \cite{bass, Gromov} tells us that all groups with a polynomially
bounded standard growth rate have a polynomial growth rate, forbidding growth rates
such as \(n \log n\). For conjugacy growth, this is not the case; the Heisenberg
group has a growth rate of \(n^2 \log n\). In the case of automorphic growth, the
Heisenberg group is no longer such an example; it has quadratic growth
(\cref{thm:Heis}). It thus remains open whether automorphic growth does indeed
exhibit this behaviour. 
\begin{question}
    Are there any groups with polynomially bounded, but not polynomial automorphic
    growth rates? 
\end{question}

From \cref{allpossibleVA}, we know that there are virtually abelian groups with polynomial automorphic growth rates of all possible degrees.
From \cref{VAmain1}, we know exactly when each virtually \(\Z^2\) group has which degree automorphic growth rate.
The description of when this happens is based on the inner automorphism group of the group in question. 
As such, there may be a more complex classification in terms of inner automorphisms for higher rank virtually abelian groups.
\begin{question}
    Can the automorphic growth be fully classified for all virtually abelian groups in
    a similar manner to \cref{VAmain1}?
\end{question}

We have shown that many virtually abelian groups have linear automorphic growth.
However, when we move to class 2 nilpotent groups, we see that the Heisenberg
group has quadratic automorphic growth. We do not know if this `collapsing' to
linear growth that was exhibited in certain virtually abelian examples can occur
in other settings. Hull and Osin's example of a group with bounded conjugacy
(and hence automorphic) growth \cite{HullOsin} is relevant here, and probably
with a direct product with \(\Z\) could produce a counterexample, but it
certainly would not be finitely presented.

\begin{question}
    Are there finitely presented non-virtually abelian groups with linear automorphic growth?
\end{question}

If a finitely generated group has exponential automorphic growth, it
automatically has exponential conjugacy growth as well. Conversely, all the
examples in this document with exponential conjugacy growth
also had exponential automorphic growth. 
\begin{question}
    Is there a finitely generated group with exponential conjugacy growth but polynomial automorphic growth?
\end{question}

There are, in fact, various conjectures suggesting that under certain
conditions, exponential conjugacy and standard growths are equivalent
\cite{GubaSapir}. Whilst not true in general \cite{HullOsin}, a finitely
presented example has yet to be found. Whilst we doubt that the class of
finitely presented groups is strong enough for this equivalence to occur, it
remains nonetheless, a class of groups that forbids certain pariah behaviour
as was used in \cite{HullOsin}, and thus any counterexample here would be
interesting.
\begin{question}
Is there a finitely presented group with exponential standard growth, but polynomial
automorphic growth?
\end{question}
We have shown that Thompson's groups \(T\) and \(V\) have exponential
automorphic growth, using the known descriptions of \(\actweird{T}{\Aut}\) and
\(\actweird{V}{\Aut}\). The groups \(T\) and \(V\) were first introduced
alongside the group \(F\), which has been shown to have exponential conjugacy
growth \cite{GubaSapir}. Moreover, the \(\actweird{F}{\Aut}\) has also received
attention in the literature \cite{autft}.
\begin{question}
    What is the automorphic growth of Thompson's group \(F\)? 
\end{question}

\section*{Acknowledgements}
All authors were partially supported by the Heilbronn Institute for Mathematical
Research. The third named author was also partially supported by the EPSRC Fellowship grant
EP/V032003/1 ‘Algorithmic, topological and geometric aspects of infinite groups,
monoids and inverse semigroups’. The authors would like to thank Corentin Bodart for some helpful comments.

\bibliography{references}

@book {Meier,
    AUTHOR = {Meier, J.},
     TITLE = {Groups, graphs and trees},
    SERIES = {London Mathematical Society Student Texts},
    VOLUME = {73},
      NOTE = {An introduction to the geometry of infinite groups},
 PUBLISHER = {Cambridge University Press, Cambridge},
      YEAR = {2008},
     PAGES = {xii+231},
      ISBN = {978-0-521-71977-3},
   MRCLASS = {20F65 (05-01 05C25 20-01)},
  MRNUMBER = {2498449},
MRREVIEWER = {Dmytro M. Savchuk},
       DOI = {10.1017/CBO9781139167505},
       URL = {https://doi.org/10.1017/CBO9781139167505},
}

@article {CollinMartyn,
    AUTHOR = {Bleak, Collin and Quick, Martyn},
     TITLE = {The infinite simple group {$V$} of {R}ichard {J}. {T}hompson:
              presentations by permutations},
   JOURNAL = {Groups Geom. Dyn.},
  FJOURNAL = {Groups, Geometry, and Dynamics},
    VOLUME = {11},
      YEAR = {2017},
    NUMBER = {4},
     PAGES = {1401--1436},
      ISSN = {1661-7207,1661-7215},
   MRCLASS = {20F05 (20E32 20F65)},
  MRNUMBER = {3737287},
MRREVIEWER = {Silvana\ Rinauro},
       DOI = {10.4171/GGD/433},
       URL = {https://doi-org.uea.idm.oclc.org/10.4171/GGD/433},
}

@article {MR4982576,
    AUTHOR = {Levine, Alex},
     TITLE = {Quadratic {D}iophantine equations, the {H}eisenberg group and
              formal languages},
   JOURNAL = {Israel J. Math.},
  FJOURNAL = {Israel Journal of Mathematics},
    VOLUME = {269},
      YEAR = {2025},
    NUMBER = {1},
     PAGES = {337--383},
      ISSN = {0021-2172,1565-8511},
   MRCLASS = {11U05 (11D09 20F10 20F18 20F65 68Q45)},
  MRNUMBER = {4982576},
       DOI = {10.1007/s11856-025-2746-x},
       URL = {https://doi-org.uea.idm.oclc.org/10.1007/s11856-025-2746-x},
}

@article{autfreehard,
 author = {Vogtmann, Karen},
 title = {Automorphisms of free groups and outer space},
 fjournal = {Geometriae Dedicata},
 journal = {Geom. Dedicata},
 issn = {0046-5755},
 volume = {94},
 pages = {1--31},
 year = {2002},
 language = {English},
 doi = {10.1023/A:1020973910646},
 zbMATH = {1864092},
 Zbl = {1017.20035}
}

@article{finitepresautfree,
    AUTHOR = {McCool, J.},
     TITLE = {A presentation for the automorphism group of a free group of
              finite rank},
   JOURNAL = {J. London Math. Soc. (2)},
  FJOURNAL = {Journal of the London Mathematical Society. Second Series},
    VOLUME = {8},
      YEAR = {1974},
     PAGES = {259--266},
      ISSN = {0024-6107,1469-7750},
   MRCLASS = {20E05},
  MRNUMBER = {340421},
MRREVIEWER = {B.\ H.\ Neumann},
       DOI = {10.1112/jlms/s2-8.2.259},
       URL = {https://doi-org.uea.idm.oclc.org/10.1112/jlms/s2-8.2.259},
}

@article{zbMATH06309498,
 author = {Belk, James and Matucci, Francesco},
 title = {Conjugacy and dynamics in {Thompson}'s groups.},
 fjournal = {Geometriae Dedicata},
 journal = {Geom. Dedicata},
 issn = {0046-5755},
 volume = {169},
 pages = {239--261},
 year = {2014},
 language = {English},
 doi = {10.1007/s10711-013-9853-2},
 zbMATH = {6309498},
 Zbl = {1321.20038}
}

@article{elliott2023description,
 author = {Elliott, L.},
 title = {A description of {{\(\operatorname{Aut} (d {V_n})\)}} and {{\(\operatorname{Out} (d {V_n})\)}} using transducers},
 fjournal = {Groups, Geometry, and Dynamics},
 journal = {Groups Geom. Dyn.},
 issn = {1661-7207},
 volume = {17},
 number = {1},
 pages = {293--312},
 year = {2023},
 language = {English},
 doi = {10.4171/GGD/697},
 zbMATH = {7672086},
 Zbl = {1522.20117}
}

@book {HoffmanKunze,
    AUTHOR = {Hoffman, K. and Kunze, R.},
     TITLE = {Linear algebra},
   EDITION = {Second},
 PUBLISHER = {Prentice-Hall, Inc., Englewood Cliffs, NJ},
      YEAR = {1971},
     PAGES = {viii+407},
   MRCLASS = {15.00},
  MRNUMBER = {276251},
}

@book {IsaacsFinGpTheory,
    AUTHOR = {Isaacs, I. Martin},
     TITLE = {Finite group theory},
    SERIES = {Graduate Studies in Mathematics},
    VOLUME = {92},
 PUBLISHER = {American Mathematical Society, Providence, RI},
      YEAR = {2008},
     PAGES = {xii+350},
      ISBN = {978-0-8218-4344-4},
   MRCLASS = {20Dxx (20-01)},
  MRNUMBER = {2426855},
MRREVIEWER = {Ronald\ Solomon},
       DOI = {10.1090/gsm/092},
       URL = {https://doi.org/10.1090/gsm/092},
}

@book {CGT,
    AUTHOR = {Lyndon, R. and Schupp, P.},
     TITLE = {Combinatorial group theory},
    SERIES = {Classics in Mathematics},
      NOTE = {Reprint of the 1977 edition},
 PUBLISHER = {Springer-Verlag, Berlin},
      YEAR = {2001},
     PAGES = {xiv+339},
      ISBN = {3-540-41158-5},
   MRCLASS = {20Fxx (20Exx 57M07)},
  MRNUMBER = {1812024},
       DOI = {10.1007/978-3-642-61896-3},
       URL = {https://doi.org/10.1007/978-3-642-61896-3},
}

@article {whitehead_algm,
    AUTHOR = {Whitehead, J. H. C.},
     TITLE = {On equivalent sets of elements in a free group},
   JOURNAL = {Ann. of Math. (2)},
  FJOURNAL = {Annals of Mathematics. Second Series},
    VOLUME = {37},
      YEAR = {1936},
    NUMBER = {4},
     PAGES = {782--800},
      ISSN = {0003-486X},
   MRCLASS = {DML},
  MRNUMBER = {1503309},
       DOI = {10.2307/1968618},
       URL = {https://doi.org/10.2307/1968618},
}

@article {Bleak2013centralizers,
    AUTHOR = {Bleak, Collin and Bowman, Hannah and Gordon Lynch, Alison and
              Graham, Garrett and Hughes, Jacob and Matucci, Francesco and
              Sapir, Eugenia},
     TITLE = {Centralizers in the {R}. {T}hompson group {$V_n$}},
   JOURNAL = {Groups Geom. Dyn.},
  FJOURNAL = {Groups, Geometry, and Dynamics},
    VOLUME = {7},
      YEAR = {2013},
    NUMBER = {4},
     PAGES = {821--865},
      ISSN = {1661-7207,1661-7215},
   MRCLASS = {20F65 (20E07 37C85)},
  MRNUMBER = {3134027},
MRREVIEWER = {Andrzej\ W.\ Bi\'s},
       DOI = {10.4171/GGD/207},
       URL = {https://doi-org.uea.idm.oclc.org/10.4171/GGD/207},
}

@article {Brin2004higher,
    AUTHOR = {Brin, Matthew G.},
     TITLE = {Higher dimensional {T}hompson groups},
   JOURNAL = {Geom. Dedicata},
  FJOURNAL = {Geometriae Dedicata},
    VOLUME = {108},
      YEAR = {2004},
     PAGES = {163--192},
      ISSN = {0046-5755,1572-9168},
   MRCLASS = {20B27 (20E32 37B99 57M07)},
  MRNUMBER = {2112673},
MRREVIEWER = {Wolfgang\ Knapp},
       DOI = {10.1007/s10711-004-8122-9},
       URL = {https://doi-org.uea.idm.oclc.org/10.1007/s10711-004-8122-9},
}

@article {GubaSapir,
    AUTHOR = {Guba, Victor and Sapir, Mark},
     TITLE = {On the conjugacy growth functions of groups},
   JOURNAL = {Illinois J. Math.},
  FJOURNAL = {Illinois Journal of Mathematics},
    VOLUME = {54},
      YEAR = {2010},
    NUMBER = {1},
     PAGES = {301--313},
      ISSN = {0019-2082,1945-6581},
   MRCLASS = {20F65 (20E45 20F69)},
  MRNUMBER = {2776997},
MRREVIEWER = {Michel\ Coornaert},
       URL = {http://projecteuclid.org.uea.idm.oclc.org/euclid.ijm/1299679750},
}

@article {GekhtmanYang,
    AUTHOR = {Gekhtman, Ilya and Yang, Wen-yuan},
     TITLE = {Counting conjugacy classes in groups with contracting
              elements},
   JOURNAL = {J. Topol.},
  FJOURNAL = {Journal of Topology},
    VOLUME = {15},
      YEAR = {2022},
    NUMBER = {2},
     PAGES = {620--665},
      ISSN = {1753-8416,1753-8424},
   MRCLASS = {20F65 (20E45 20F67)},
  MRNUMBER = {4441600},
MRREVIEWER = {Alexander\ Fel\cprime shtyn},
       DOI = {10.1112/topo.12221},
       URL = {https://doi-org.uea.idm.oclc.org/10.1112/topo.12221},
}

@article {Gromov,
    AUTHOR = {Gromov, Mikhael},
     TITLE = {Groups of polynomial growth and expanding maps},
   JOURNAL = {Inst. Hautes \'Etudes Sci. Publ. Math.},
  FJOURNAL = {Institut des Hautes \'Etudes Scientifiques. Publications
              Math\'ematiques},
    NUMBER = {53},
      YEAR = {1981},
     PAGES = {53--73},
      ISSN = {0073-8301,1618-1913},
   MRCLASS = {53C20 (22E40 58F15)},
  MRNUMBER = {623534},
MRREVIEWER = {J.\ A.\ Wolf},
       URL = {http://www.numdam.org/item?id=PMIHES_1981__53__53_0},
}

@article {Babenko,
    AUTHOR = {Babenko, I. K.},
     TITLE = {Closed geodesics, asymptotic volume and the characteristics of
              growth of groups},
   JOURNAL = {Izv. Akad. Nauk SSSR Ser. Mat.},
  FJOURNAL = {Izvestiya Akademii Nauk SSSR. Seriya Matematicheskaya},
    VOLUME = {52},
      YEAR = {1988},
    NUMBER = {4},
     PAGES = {675--711, 895},
      ISSN = {0373-2436},
   MRCLASS = {58F17 (22E40 53C22 58E10)},
  MRNUMBER = {966980},
MRREVIEWER = {Boris\ N.\ Apanasov},
       DOI = {10.1070/IM1989v033n01ABEH000811},
       URL = {https://doi-org.uea.idm.oclc.org/10.1070/IM1989v033n01ABEH000811},
}

@article {HullOsin,
    AUTHOR = {Hull, Michael and Osin, Denis},
     TITLE = {Conjugacy growth of finitely generated groups},
   JOURNAL = {Adv. Math.},
  FJOURNAL = {Advances in Mathematics},
    VOLUME = {235},
      YEAR = {2013},
     PAGES = {361--389},
      ISSN = {0001-8708,1090-2082},
   MRCLASS = {20F69 (20E45 20F65 20F67)},
  MRNUMBER = {3010062},
MRREVIEWER = {Alexander\ Fel\cprime shtyn},
       DOI = {10.1016/j.aim.2012.12.007},
       URL = {https://doi-org.uea.idm.oclc.org/10.1016/j.aim.2012.12.007},
}

@article {Blachere,
    AUTHOR = {Blach\`{e}re, S\'{e}bastien},
     TITLE = {Word distance on the discrete {H}eisenberg group},
   JOURNAL = {Colloq. Math.},
  FJOURNAL = {Colloquium Mathematicum},
    VOLUME = {95},
      YEAR = {2003},
    NUMBER = {1},
     PAGES = {21--36},
      ISSN = {0010-1354,1730-6302},
   MRCLASS = {20F65},
  MRNUMBER = {1967551},
MRREVIEWER = {Piotr\ Haj\l asz},
       DOI = {10.4064/cm95-1-2},
       URL = {https://doi-org.uea.idm.oclc.org/10.4064/cm95-1-2},
}

@incollection{GNS,
 author = {Grigorchuk, R. I. and Nekrashevich, V. V. and Sushchanskii, V. I.},
 title = {Automata, dynamical systems, and groups},
 booktitle = {Dynamical systems, automata, and infinite groups. Transl. from the Russian},
 pages = {128--203},
 year = {2000},
 publisher = {Moscow: MAIK Nauka/Interperiodica Publishing},
 language = {English},
 zbMATH = {1729301},
 Zbl = {1155.37311}
}

@article {Grigorchuk,
    AUTHOR = {Grigorchuk, R. I.},
     TITLE = {Degrees of growth of finitely generated groups and the theory
              of invariant means},
   JOURNAL = {Izv. Akad. Nauk SSSR Ser. Mat.},
  FJOURNAL = {Izvestiya Akademii Nauk SSSR. Seriya Matematicheskaya},
    VOLUME = {48},
      YEAR = {1984},
    NUMBER = {5},
     PAGES = {939--985},
      ISSN = {0373-2436},
   MRCLASS = {20F05 (43A07)},
  MRNUMBER = {764305},
 MRREVIEWER = {P.\ Gerl},
}

@book{collinaut,
 author = {Bleak, C. and Cameron, P. and Maissel, Y. and Navas, A. and Olukoya, F.},
 title = {The further chameleon groups of {Richard} {Thompson} and {Graham} {Higman}: {Automorphisms} via dynamics for the {Higman}-{Thompson} groups~{{\(G_{n,r}\)}}},
 fseries = {Memoirs of the American Mathematical Society},
 series = {Mem. Am. Math. Soc.},
 issn = {0065-9266},
 volume = {1510},
 isbn = {978-1-4704-7145-3; 978-1-4704-7954-1},
 year = {2024},
 publisher = {Providence, RI: American Mathematical Society (AMS)},
 language = {English},
 doi = {10.1090/memo/1510},
 zbMATH = {7927090},
 Zbl = {1552.20003}
}

@article {Osin,
    AUTHOR = {Osin, D.},
     TITLE = {Small cancellations over relatively hyperbolic groups and
              embedding theorems},
   JOURNAL = {Ann. of Math. (2)},
  FJOURNAL = {Annals of Mathematics. Second Series},
    VOLUME = {172},
      YEAR = {2010},
    NUMBER = {1},
     PAGES = {1--39},
      ISSN = {0003-486X},
   MRCLASS = {20F67 (20F06)},
  MRNUMBER = {2680416},
MRREVIEWER = {Richard Weidmann},
       DOI = {10.4007/annals.2010.172.1},
       URL = {https://doi.org/10.4007/annals.2010.172.1},
}

@article {Fink,
    AUTHOR = {Fink, Elisabeth},
     TITLE = {Conjugacy growth and width of certain branch groups},
   JOURNAL = {Internat. J. Algebra Comput.},
  FJOURNAL = {International Journal of Algebra and Computation},
    VOLUME = {24},
      YEAR = {2014},
    NUMBER = {8},
     PAGES = {1213--1231},
      ISSN = {0218-1967,1793-6500},
   MRCLASS = {20F65 (20F69)},
  MRNUMBER = {3296364},
MRREVIEWER = {Mustafa\ G\"okhan\ Benli},
       DOI = {10.1142/S0218196714500544},
       URL = {https://doi-org.uea.idm.oclc.org/10.1142/S0218196714500544},
}

@article {HuaReiner,
    AUTHOR = {Hua, L. K. and Reiner, I.},
     TITLE = {Automorphisms of the unimodular group},
   JOURNAL = {Trans. Amer. Math. Soc.},
  FJOURNAL = {Transactions of the American Mathematical Society},
    VOLUME = {71},
      YEAR = {1951},
     PAGES = {331--348},
      ISSN = {0002-9947,1088-6850},
   MRCLASS = {10.0X},
  MRNUMBER = {43847},
MRREVIEWER = {E.\ Grosswald},
       DOI = {10.2307/1990696},
       URL = {https://doi-org.uea.idm.oclc.org/10.2307/1990696},
}

@article {Eick,
    AUTHOR = {Eick, B.},
     TITLE = {The automorphism group of a finitely generated virtually
              abelian group},
   JOURNAL = {Groups Complex. Cryptol.},
  FJOURNAL = {Groups. Complexity. Cryptology},
    VOLUME = {8},
      YEAR = {2016},
    NUMBER = {1},
     PAGES = {35--45},
      ISSN = {1867-1144,1869-6104},
   MRCLASS = {20F28 (20H15)},
  MRNUMBER = {3498299},
MRREVIEWER = {Peeter\ Puusemp},
       DOI = {10.1515/gcc-2016-0007},
       URL = {https://doi.org/10.1515/gcc-2016-0007},
}

@article {ConjLinear,
    AUTHOR = {Breuillard, Emmanuel and Cornulier, Yves and Lubotzky,
              Alexander and Meiri, Chen},
     TITLE = {On conjugacy growth of linear groups},
   JOURNAL = {Math. Proc. Cambridge Philos. Soc.},
  FJOURNAL = {Mathematical Proceedings of the Cambridge Philosophical
              Society},
    VOLUME = {154},
      YEAR = {2013},
    NUMBER = {2},
     PAGES = {261--277},
      ISSN = {0305-0041,1469-8064},
   MRCLASS = {20H20 (20E45)},
  MRNUMBER = {3021813},
MRREVIEWER = {Timothy\ C.\ Burness},
       DOI = {10.1017/S030500411200059X},
       URL = {https://doi-org.uea.idm.oclc.org/10.1017/S030500411200059X},
}

@article {BreuillarddeCornulier,
    AUTHOR = {Breuillard, Emmanuel and de Cornulier, Yves},
     TITLE = {On conjugacy growth for solvable groups},
   JOURNAL = {Illinois J. Math.},
  FJOURNAL = {Illinois Journal of Mathematics},
    VOLUME = {54},
      YEAR = {2010},
    NUMBER = {1},
     PAGES = {389--395},
      ISSN = {0019-2082,1945-6581},
   MRCLASS = {20F16 (20E45 20F69 20G25)},
  MRNUMBER = {2777001},
MRREVIEWER = {Fausto\ De Mari},
       URL = {http://projecteuclid.org.uea.idm.oclc.org/euclid.ijm/1299679754},
}

@article {Greenfeld,
    AUTHOR = {Greenfeld, B.},
     TITLE = {Conjugacy growth of free nilpotent groups},
   JOURNAL = {Internat. J. Algebra Comput.},
  FJOURNAL = {International Journal of Algebra and Computation},
    VOLUME = {34},
      YEAR = {2024},
    NUMBER = {6},
     PAGES = {881--886},
      ISSN = {0218-1967,1793-6500},
   MRCLASS = {99-06},
  MRNUMBER = {4801961},
       DOI = {10.1142/S0218196724500334},
       URL = {https://doi.org/10.1142/S0218196724500334},
}

@article {HeisenbergConjugacy,
    AUTHOR = {Evetts, A.},
     TITLE = {Conjugacy growth in the higher {H}eisenberg groups},
   JOURNAL = {Glasg. Math. J.},
  FJOURNAL = {Glasgow Mathematical Journal},
    VOLUME = {65},
      YEAR = {2023},
    NUMBER = {S1},
     PAGES = {S148--S169},
      ISSN = {0017-0895,1469-509X},
   MRCLASS = {20F65 (20E45 20F18)},
  MRNUMBER = {4594272},
MRREVIEWER = {John\ M.\ Mackay},
       DOI = {10.1017/S0017089522000428},
       URL = {https://doi.org/10.1017/S0017089522000428},
}

@article {CiobanuEvettsHo,
    AUTHOR = {Ciobanu, Laura and Evetts, Alex and Ho, Meng-Che},
     TITLE = {The conjugacy growth of the soluble {B}aumslag-{S}olitar
              groups},
   JOURNAL = {New York J. Math.},
  FJOURNAL = {New York Journal of Mathematics},
    VOLUME = {26},
      YEAR = {2020},
     PAGES = {473--495},
      ISSN = {1076-9803},
   MRCLASS = {20F65 (05E16 20E45 20F69)},
  MRNUMBER = {4102998},
}

@book {Mann,
    AUTHOR = {Mann, A.},
     TITLE = {How groups grow},
    SERIES = {London Mathematical Society Lecture Note Series},
    VOLUME = {395},
 PUBLISHER = {Cambridge University Press, Cambridge},
      YEAR = {2012},
     PAGES = {x+199},
      ISBN = {978-1-107-65750-2},
   MRCLASS = {20F69 (20F05)},
  MRNUMBER = {2894945},
MRREVIEWER = {Wolfgang\ Woess},
}

@article {Lee1,
    AUTHOR = {Lee, Donghi},
     TITLE = {Counting words of minimum length in an automorphic orbit},
   JOURNAL = {J. Algebra},
  FJOURNAL = {Journal of Algebra},
    VOLUME = {301},
      YEAR = {2006},
    NUMBER = {1},
     PAGES = {35--58},
      ISSN = {0021-8693,1090-266X},
   MRCLASS = {20E05 (20F28)},
  MRNUMBER = {2230319},
MRREVIEWER = {Evgeny\ I.\ Timoshenko},
       DOI = {10.1016/j.jalgebra.2006.04.012},
       URL = {https://doi-org.uea.idm.oclc.org/10.1016/j.jalgebra.2006.04.012},
}

@article {Lee2,
    AUTHOR = {Lee, Donghi},
     TITLE = {A tighter bound for the number of words of minimum length in
              an automorphic orbit},
   JOURNAL = {J. Algebra},
  FJOURNAL = {Journal of Algebra},
    VOLUME = {305},
      YEAR = {2006},
    NUMBER = {2},
     PAGES = {1093--1101},
      ISSN = {0021-8693,1090-266X},
   MRCLASS = {20E05 (20F10)},
  MRNUMBER = {2266870},
MRREVIEWER = {Armando\ Martino},
       DOI = {10.1016/j.jalgebra.2006.03.038},
       URL = {https://doi-org.uea.idm.oclc.org/10.1016/j.jalgebra.2006.03.038},
}

@article {Coulon22,
    AUTHOR = {Coulon, R\'emi},
     TITLE = {Examples of groups whose automorphisms have exotic growth},
   JOURNAL = {Algebr. Geom. Topol.},
  FJOURNAL = {Algebraic \& Geometric Topology},
    VOLUME = {22},
      YEAR = {2022},
    NUMBER = {4},
     PAGES = {1497--1510},
      ISSN = {1472-2747,1472-2739},
   MRCLASS = {20E36 (20F06 20F28 20F65 20F67 20F69)},
  MRNUMBER = {4495664},
MRREVIEWER = {Alexander\ Fel\cprime shtyn},
       DOI = {10.2140/agt.2022.22.1497},
       URL = {https://doi-org.uea.idm.oclc.org/10.2140/agt.2022.22.1497},
}

@article {Levitt09,
    AUTHOR = {Levitt, Gilbert},
     TITLE = {Counting growth types of automorphisms of free groups},
   JOURNAL = {Geom. Funct. Anal.},
  FJOURNAL = {Geometric and Functional Analysis},
    VOLUME = {19},
      YEAR = {2009},
    NUMBER = {4},
     PAGES = {1119--1146},
      ISSN = {1016-443X,1420-8970},
   MRCLASS = {20E36 (20E05 20E08 20F65)},
  MRNUMBER = {2570318},
MRREVIEWER = {Arnaud\ Hilion},
       DOI = {10.1007/s00039-009-0016-4},
       URL = {https://doi-org.uea.idm.oclc.org/10.1007/s00039-009-0016-4},
}

@article{Fioravanti26,
 author = {Fioravanti, Elia},
 title = {Growth of automorphisms of virtually special groups},
 year = {2026},
 journal = {arxiv e-prints},
 number = {arxiv:2501.12321},
}

@article{arxiv:2004.00516,
 author = {Olukoya, F.},
 title = {The core growth of strongly synchronizing transducers},
 year = {2020},
 journal = {arxiv e-prints},
 url = {https://arxiv.org/abs/2004.00516},
 number = {arxiv:2004.00516},
}

@article{zbMATH07720878,
    AUTHOR = {Olukoya, Feyishayo},
     TITLE = {Automorphisms of the generalized {T}hompson's group
              {$T_{n,r}$}},
   JOURNAL = {Trans. London Math. Soc.},
  FJOURNAL = {Transactions of the London Mathematical Society},
    VOLUME = {9},
      YEAR = {2022},
    NUMBER = {1},
     PAGES = {86--135},
      ISSN = {2052-4986},
   MRCLASS = {20E36 (20E07 20E45)},
  MRNUMBER = {4535659},
MRREVIEWER = {Conchita\ Mart\'inez-P\'erez},
       DOI = {10.1112/tlm3.12044},
       URL = {https://doi-org.uea.idm.oclc.org/10.1112/tlm3.12044},
}

@misc{bishop2026periodgrowthcocontextfreegroups,
      title={Period growth and co-context-free groups}, 
      author={Alex Bishop and Corentin Bodart and Letizia Issini and Davide Perego},
      year={2026},
      eprint={2601.13058},
      archivePrefix={arXiv},
      primaryClass={math.GR},
      url={https://arxiv.org/abs/2601.13058}, 
}

@article{arXiv:2407.18720,
 author = {Olukoya, F.},
 title = {Automorphisms of the two-sided shift and the {Higman}--{Thompson} groups {III}: extensions},
 year = {2024},
 journal = {arXiv e-prints},
 number = {arXiv:2407.18720},
}

@article {salazar2010thompson,
    AUTHOR = {Salazar-D\'iaz, Olga Patricia},
     TITLE = {Thompson's group {$V$} from a dynamical viewpoint},
   JOURNAL = {Internat. J. Algebra Comput.},
  FJOURNAL = {International Journal of Algebra and Computation},
    VOLUME = {20},
      YEAR = {2010},
    NUMBER = {1},
     PAGES = {39--70},
      ISSN = {0218-1967,1793-6500},
   MRCLASS = {37B99 (20B07 20B27 37E99)},
  MRNUMBER = {2655915},
MRREVIEWER = {Dmytro\ M.\ Savchuk},
       DOI = {10.1142/S0218196710005534},
       URL = {https://doi-org.uea.idm.oclc.org/10.1142/S0218196710005534},
}

@misc{Visafullgroup,
 author = {Katzlinger, Leonhard},
 title = {Topological {Full} {Groups}},
 year = {2019},
 howpublished = {Preprint, {arXiv}:1907.07424 [math.{GR}] (2019)},
 url = {https://arxiv.org/abs/1907.07424},
 arXiv = {arXiv:1907.07424}
}

@article {bass,
    AUTHOR = {Bass, H.},
     TITLE = {The degree of polynomial growth of finitely generated
              nilpotent groups},
   JOURNAL = {Proc. London Math. Soc. (3)},
  FJOURNAL = {Proceedings of the London Mathematical Society. Third Series},
    VOLUME = {25},
      YEAR = {1972},
     PAGES = {603--614},
      ISSN = {0024-6115,1460-244X},
   MRCLASS = {20F05},
  MRNUMBER = {379672},
MRREVIEWER = {I.\ B. S. Passi},
       DOI = {10.1112/plms/s3-25.4.603},
       URL = {https://doi-org.uea.idm.oclc.org/10.1112/plms/s3-25.4.603},
}

@article{higman1974finitely,
  title={Finitely Presented Infinite Simple Groups, Notes on Pure Math. 8, {IAS}},
  author={Higman, G},
  journal={Austral. Nat. Univ., Canberra},
  year={1974}
}

@article{skipper2021almost,
  title={Almost-automorphisms of trees, cloning systems and finiteness properties},
  author={Skipper, Rachel and Zaremsky, Matthew CB},
  journal={Journal of Topology and Analysis},
  volume={13},
  number={01},
  pages={101--146},
  year={2021},
  publisher={World Scientific}
}

@article{SalazarDaz2010ThompsonsGV, title={Thompson's Group V from a Dynamical Viewpoint}, author={Olga Patricia Salazar-D{\'i}az}, journal={Int. J. Algebra Comput.}, year={2010}, volume={20}, pages={39-70}, url={https://api.semanticscholar.org/CorpusID:3024762} }

@article{barker2016power,
  title={The power conjugacy problem in Higman--Thompson groups},
  author={Barker, Nathan and Duncan, Andrew J and Robertson, David M},
  journal={International Journal of Algebra and Computation},
  volume={26},
  number={02},
  pages={309--374},
  year={2016},
  publisher={World Scientific}
}

@article{ConnonFloydParry,
 author = {Cannon, J. W. and Floyd, W. J. and Parry, W. R.},
 title = {Introductory notes on {Richard} {Thompson}'s groups},
 fjournal = {L'Enseignement Math{\'e}matique. 2e S{\'e}rie},
 journal = {Enseign. Math. (2)},
 issn = {0013-8584},
 volume = {42},
 number = {3-4},
 pages = {215--256},
 year = {1996},
 language = {English},
 zbMATH = {1013724},
 Zbl = {0880.20027}
}

@article{autft,
 author = {Brin, Matthew G.},
 title = {The {Chameleon} groups of {Richard} {J}. {Thompson}: {Automorphisms} and dynamics},
 fjournal = {Publications Math{\'e}matiques},
 journal = {Publ. Math., Inst. Hautes {\'E}tud. Sci.},
 issn = {0073-8301},
 volume = {84},
 pages = {5--33},
 year = {1996},
 language = {English},
 doi = {10.1007/BF02698834},
 url = {https://eudml.org/doc/104117},
 zbMATH = {1057022},
 Zbl = {0891.57037}
}

@article{belk2014conjugacy,
  title={Conjugacy and dynamics in Thompson’s groups},
  author={Belk, James and Matucci, Francesco},
  journal={Geometriae Dedicata},
  volume={169},
  number={1},
  pages={239--261},
  year={2014},
  publisher={Springer}
}

@article{rubinshort,
 author = {Belk, James and Elliott, Luna and Matucci, Francesco},
 title = {A short proof of {Rubin}'s theorem},
 fjournal = {Israel Journal of Mathematics},
 journal = {Isr. J. Math.},
 issn = {0021-2172},
 volume = {267},
 number = {1},
 pages = {157--169},
 year = {2025},
 language = {English},
 doi = {10.1007/s11856-024-2700-3},
 zbMATH = {8069401},
 Zbl = {1572.22023}
}

@article{rubin,
 author = {Rubin, Matatyahu},
 title = {On the reconstruction of topological spaces from their groups of homeomorphisms},
 fjournal = {Transactions of the American Mathematical Society},
 journal = {Trans. Am. Math. Soc.},
 issn = {0002-9947},
 volume = {312},
 number = {2},
 pages = {487--538},
 year = {1989},
 language = {English},
 doi = {10.2307/2001000},
 zbMATH = {4109619},
 Zbl = {0677.54029}
}
\bibliographystyle{alpha}
\end{document}